\newtheorem{definition}{Definition}[section]
\newtheorem{theorem}[definition]{Theorem}
\newtheorem{lemma}[definition]{Lemma}
\newtheorem{corollary}[definition]{Corollary}
\newtheorem{remark}[definition]{Remark}
\newtheorem{conjecture}[definition]{Conjecture}
\newtheorem{problem}[definition]{Problem}
\newtheorem{proposition}[definition]{Proposition}
\begin{document} 

\title{\bf Strengthening the balanced set condition \\
for the distance-regular graph \\
of the bilinear forms
}
\author{
Paul Terwilliger \\
Department of Mathematics \\
University of Wisconsin \\
480 Lincoln Drive \\
Madison, WI 53706-1388 USA \\
Email: {\tt terwilli@math.wisc.edu }\\
\\
Jason Williford \\
Department of Mathematics and Statistics \\
University of Wyoming \\
1000 E. University Ave. \\
 Laramie, WY 82071  USA \\
Email: {\tt jwillif1@uwyo.edu}
}

\date{}

\maketitle
\begin{abstract}  We consider a distance-regular graph $\Gamma=(X, \mathcal R)$ called the bilinear forms graph $H_q(D,N-D)$; we assume $N>2D\geq 6$ and $q \not=2$.
We show that $\Gamma$ satisfies the following strengthened version of the balanced set condition.
 For a vertex $x \in X$ and $0 \leq i \leq D$ define $\Gamma_i(x)=\lbrace y \in X\vert \partial(x,y)=i\rbrace$, where $\partial$ denotes the path-length distance function.
 Abbreviate $\Gamma(x)=\Gamma_1(x)$.
 Let $V={\mathbb R}^X$ denote the standard module for  ${\rm Mat}_X(\mathbb R)$.
 For $x\in X$ let $\hat x \in V$  have $x$-coordinate 1 and all other coordinates 0. 
  Let $E \in {\rm Mat}_X(\mathbb R)$ denote the primitive idempotent that corresponds to the second largest eigenvalue of the adjacency matrix of $\Gamma$.
 For a subset $\Omega \subseteq X$ define $\widehat \Omega = \sum_{x \in \Omega} \hat x$.
 We fix two vertices $x,y \in X$ and write $k=\partial(x,y)$. To avoid degenerate situations, we assume
 $2 \leq k \leq D-1$. Using $y$ we obtain an equitable partition $\lbrace O_i \rbrace_{i=1}^6$ of the local graph $\Gamma(x)$. 
 By construction
$O_1 = \Gamma (x) \cap \Gamma_{k-1}(y)$ and $O_6 = \Gamma(x) \cap \Gamma_{k+1}(y)$.
 We call  $\lbrace O_i \rbrace_{i=1}^6$  the $y$-partition of $\Gamma(x)$.
Let  $\lbrace O'_i \rbrace_{i=1}^6$ denote the $x$-partition of $\Gamma(y)$.
 According to the original balanced set condition, for $i \in \lbrace 1,6\rbrace$ the vector
$ E \widehat O_i - E \widehat O'_i$ is a scalar multiple of $E{\hat x}-E{\hat y}$. 
We show that for $1 \leq i \leq 6$ the vector
$ E \widehat O_i - E \widehat O'_i$ is a scalar multiple of $E{\hat x}-E{\hat y}$. 
We investigate the consequences of this result.
\medskip

\noindent
{\bf Keywords}.  Distance-regular graph; equitable partition; balanced set condition; $Q$-polynomial property.
\hfil\break
\noindent {\bf 2020 Mathematics Subject Classification}.
Primary: 05E30. Secondary: 05C50.
 \end{abstract}
 
 \section{Introduction}
 Let $\Gamma=(X, \mathcal R)$ denote a distance-regular graph with diameter $D\geq 3$ (formal definitions begin in Section 2).
 In  \cite{QPchar} the first author introduced a linear algebraic condition on $\Gamma$ that is now called the balanced set condition (BSC).
 The original purpose of the BSC was to characterize the $Q$-polynomial property \cite[Theorem~1.1]{QPchar}.
 Since that beginning, the BSC has found many applications as we now review. Throughout this review, we assume that $\Gamma$
 is $Q$-polynomial.
 In \cite{ex1}, the BSC is used to determine when $\Gamma$ has an antipodal distance-regular cover.
In \cite{newIneq}, the BSC is used to obtain some equations involving the intersection numbers of $\Gamma$.
In \cite{ex3}, the BSC is used to show that $\Gamma$  has no kites under the assumption that $\Gamma$ has negative type.
In \cite{ex4}, the BSC is used to show that  $\Gamma$ has girth at most 6 provided that $\Gamma$ has valency at least 3.
 In \cite{ex5}, the BSC is used to obtain a product formula for the cosines of $\Gamma$ under the assumption that $\Gamma$ is tight.
 In \cite{caughman1}, the BSC is used to show that for $\Gamma$ bipartite  the last subconstituent of $\Gamma$  supports another $Q$-polynomial distance-regular graph.
 In \cite{CN2H}, the BSC is used to show that  $\Gamma$ is 1-homogeneous under the assumption that $\Gamma$ supports a spin model.
 In \cite{mik1,mik2, mik4} the BSC is used to obtain an equitable partition of $X$, under various assumptions on the intersection numbers of $\Gamma$.
 In \cite{ex11}, the BSC is used to obtain bounds on the triple intersection numbers of $\Gamma$.
In \cite{ex12}, the BSC is used to classify the thick $Q$-polynomial regular near $2D$-gons.
In \cite{ex13}, the BSC is used to express the $Q$-polynomial property of $\Gamma$ in terms of the intersection numbers alone.
In \cite{ex14}, the BSC is used to obtain a connectivity result concerning the last two subconstituents of $\Gamma$.
In \cite{norton2}, the BSC is used to describe the Norton algebra product for $\Gamma$ in a symmetric way.
 In \cite{nortonBalanced}, a variation on the BSC is given that involves the Norton algebra product.
 \medskip
 
 \noindent In the present paper, we assume that $\Gamma$ is the bilinear forms graph $H_q(D,N-D)$ with $N>2D\geq 6$ and $q \not=2$ \cite[p.~280]{bcn}.
    Let $A\in {\rm Mat}_X(\mathbb R)$ denote the adjacency matrix of $\Gamma$.
     It is known \cite[p.~357]{bbit} that
  $\Gamma$ is $Q$-polynomial with respect to the natural ordering $\theta_0 > \theta_1 > \cdots > \theta_D$ of the eigenvalues of $A$.
  In our main results, we describe how $\Gamma$ satisfies a strengthened version of the BSC. These results will be summarized shortly.
  \medskip
  
  \noindent Before we get into the details, we would like to acknowledge that this paper is motivated by the recent work of Ian Seong \cite{seong} concerning
  the Grassmann graphs.
  \medskip
  
  \noindent We have some comments about notation.
  Let $\partial$ denote the path-length distance function for $\Gamma$. For $x \in X$ and $0 \leq i \leq D$ define the set $\Gamma_i(x)=\lbrace y \in X\vert \partial(x,y)=i\rbrace$.
  We abbreviate $\Gamma(x)=\Gamma_1(x)$.
 Let $V={\mathbb R}^X$ denote the standard module for  ${\rm Mat}_X(\mathbb R)$.
 For $x\in X$ let $\hat x \in V$  have $x$-coordinate 1 and all other coordinates 0. By construction, the vectors $\lbrace {\hat x} \vert x \in X\rbrace$
 form a basis for $V$.
 We endow $V$ with a symmetric bilinear form with respect to which the basis $\lbrace \hat x \vert x \in X\rbrace$ is orthonormal.
  Let $E \in {\rm Mat}_X(\mathbb R)$ denote the primitive idempotent of $A$ that corresponds to $\theta_1$.
 Note that $EV={\rm Span} \lbrace E{\hat x} \vert x \in X\rbrace$
 is the $\theta_1$-eigenspace of $A$. We will show that for $x \in X$ the vectors $\lbrace E{\hat y} \vert y \in \Gamma(x) \rbrace$ form a basis for $EV$.
 For a subset $\Omega \subseteq X$ define $\widehat \Omega = \sum_{x \in \Omega} \hat x$.
 \medskip
 
 \noindent 
For the rest of this section,
 we fix two vertices $x,y \in X$ and write $k=\partial(x,y)$. To avoid degenerate situations, we always assume
 $2 \leq k \leq D-1$. Using $y$ we obtain an equitable partition $\lbrace O_i \rbrace_{i=1}^6$ of the local graph $\Gamma(x)$. 
 By construction
 \begin{align*}
 O_1 = \Gamma (x) \cap \Gamma_{k-1}(y), \qquad \qquad O_6 = \Gamma(x) \cap \Gamma_{k+1}(y)
 \end{align*}
 and $\lbrace O_i \rbrace_{i=2}^5$ partition $\Gamma(x) \cap \Gamma_k(y)$. Following \cite[Section~6]{seong} we call  $\lbrace O_i \rbrace_{i=1}^6$  the $y$-partition of $\Gamma(x)$.
Let  $\lbrace O'_i \rbrace_{i=1}^6$ denote the $x$-partition of $\Gamma(y)$.
 According to the original BSC, for $i \in \lbrace 1,6\rbrace$ the vector
$ E \widehat O_i - E \widehat O'_i$ is a scalar multiple of $E{\hat x}-E{\hat y}$. 
We show that for $1 \leq i \leq 6$ the vector
$ E \widehat O_i - E \widehat O'_i$ is a scalar multiple of $E{\hat x}-E{\hat y}$. 
\medskip

\noindent 
 We show that $\lbrace E\widehat O_i \rbrace_{i=1}^6$ and $\lbrace E\widehat O'_i \rbrace_{i=1}^6$ 
 form bases for the same subspace of $EV$; we denote this subspace by $S$.  We show that $E{\hat x}, E{\hat y} \in S$.
 We decompose $S$ into an orthogonal direct sum of two subspaces, called the symmetric part ${\rm Sym}(S)$ and antisymmetric part ${\rm ASym}(S)$.
 We do this as follows.
  For $1 \leq i \leq 6$  define $O^\vee_i = {\widehat O}_i - \lambda_i {\hat x}$, where $\lambda_i \in \mathbb R$ satisfies
  $E\widehat O_i - E \widehat O'_i=\lambda_i(E{\hat x}-E{\hat y})$.
 By construction,
  \begin{align*}
 E O^\vee_i = E \widehat O_i - \lambda_i  E \hat x = E \widehat O'_i - \lambda_i  E \hat y.
 \end{align*}
We show that $0 =\sum_{i=1}^6 EO^\vee_i$.
Define 
\begin{align*}
{\rm Sym}(S) &= {\rm Span}\lbrace EO^\vee_i \vert 1 \leq i \leq 6\rbrace, \\
 {\rm ASym}(S) &= {\rm Span}\lbrace E{\hat x}- E{\hat y}\rbrace.
 \end{align*} 
 We show that
  \begin{align*}
  S = {\rm Sym} (S) + {\rm ASym}(S) \qquad \quad \hbox{\rm (orthogonal direct sum).}
  \end{align*}
  
 \noindent We display two orthogonal bases for $S$, denoted $\lbrace h_j \rbrace_{j=1}^6$ and  $\lbrace h'_j \rbrace_{j=1}^6$.
 We show that for $1 \leq j \leq 6$ the vector $h_j - h'_j$ is a scalar multiple of  $E{\hat x}-E{\hat y}$.
 For $1 \leq j \leq 6$ we define the vector
 \begin{align*}
 h^\vee_j = h_j - \mu_j E{\hat x} = h'_j - \mu_j E{\hat y},
 \end{align*}
 where $\mu_j \in \mathbb R$ satisfies $h_j - h'_j = \mu_j (E{\hat x} - E{\hat y})$.
 We show that $h^\vee_1=0$ and  $h^\vee_6=h_6 = h'_6$.
 We show that $\lbrace h^\vee_j \rbrace_{j=2}^6$ is a basis for ${\rm Sym}(S)$. 
 \medskip
 
\noindent  Next, we bring in the Norton algebra product $\star$ on $EV$.
 We show that
 \begin{align*}
 E{\hat x} \star S \subseteq S, \qquad \qquad E{\hat y} \star S \subseteq S.
 \end{align*}
 We show that
   \begin{align*}
 {\rm ASym}(S) \star {\rm ASym}(S) &\subseteq {\rm Sym}(S), \\
 {\rm Sym}(S) \star {\rm ASym}(S) &\subseteq {\rm ASym}(S), \\
 (E{\hat x} + E{\hat y} ) \star {\rm Sym}(S) &\subseteq {\rm Sym}(S).
 \end{align*}
We  show that for $1 \leq j \leq 6$,
 \begin{align*}
 E{\hat x} \star h_j \in {\rm Span}\lbrace h_j\rbrace, \qquad \qquad  E{\hat y} \star h'_j \in {\rm Span}\lbrace h'_j\rbrace.
 \end{align*}
 
 \noindent
We previously mentioned $h^\vee_6=h_6 = h'_6$; we denote this common value by $\omega$.
By construction $\omega \in {\rm Sym}(S)$.
We show that
 \begin{align*}
  E\hat x \star \omega = -q \vert X \vert^{-1} \omega, \qquad \qquad
  E \hat y \star \omega = - q \vert X \vert^{-1} \omega.
  \end{align*}
  Let $\omega^\perp$ denote the orthogonal complement of $\omega$ in $S$. By construction,
   \begin{align*}
    S={\rm Span}\lbrace \omega\rbrace + \omega^\perp \qquad \quad \hbox{\rm (orthogonal direct sum).}
  \end{align*}
We obtain
$E{\hat x}, E{\hat y}  \in \omega^\perp$
and 
\begin{align*}
E{\hat x} \star \omega^\perp \subseteq \omega^\perp, \qquad \qquad E{\hat y} \star \omega^\perp \subseteq \omega^\perp.
\end{align*}
We show that $\omega^\perp$ has a basis
\begin{align*}
&E{\hat y}, \qquad \quad E{\hat x} \star E{\hat y}, \qquad \quad E{\hat x} \star ( E{\hat x} \star E{\hat y}), \qquad \quad  E{\hat x} \star (E{\hat x} \star ( E{\hat x} \star E{\hat y})), \\
& E{\hat x} \star ( E{\hat x} \star (E{\hat x} \star ( E{\hat x} \star E{\hat y})))
\end{align*}
and another basis 
\begin{align*}
&E{\hat x}, \qquad \quad E{\hat y} \star E{\hat x}, \qquad \quad E{\hat y} \star ( E{\hat y} \star E{\hat x}), \qquad \quad  E{\hat y} \star (E{\hat y} \star ( E{\hat y} \star E{\hat x})), \\
& E{\hat y} \star ( E{\hat y} \star (E{\hat y} \star ( E{\hat y} \star E{\hat x}))).
\end{align*}
We show that $\omega^\perp$ is spanned by the vectors 
\begin{align*}
& E{\hat z}_1\star ( E{\hat z_2} \star (E{\hat z_3} \star \cdots \star ( E{\hat z_{n-1}} \star E{\hat z_n} ) \cdots )), \\
&  n\geq 1, \qquad \quad z_i \in \lbrace x,y \rbrace \quad (1 \leq i \leq n).
\end{align*}
Using the above results, we express our strengthened BSC in the following way.
\begin{theorem} \label{thm:main2Intro} For an integer $n\geq 1$ and $1 \leq i \leq n$ let $z_i, {\sf z}_i$ denote a permutation of
$x, y$. 
Then
\begin{align*}
& E{\hat z}_1\star ( E{\hat z_2} \star (E{\hat z_3} \star \cdots \star ( E{\hat z_{n-1}} \star E{\hat z_n} ) \cdots )) \\
&-
E{\hat {\sf z}}_1\star ( E{\hat {\sf z}_2} \star (E{\hat {\sf z}_3} \star \cdots \star ( E{\hat {\sf z}_{n-1}} \star E{\hat {\sf z}_n} ) \cdots )) \\
& \in {\rm Span} \lbrace E{\hat x} - E{\hat y}\rbrace.
\end{align*}
\end{theorem}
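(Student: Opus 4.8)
The plan is to prove, by induction on $n$, the following statement, which is stronger than the theorem and --- unlike the bare theorem --- survives the induction. For a word $w=z_1z_2\cdots z_n$ of length $n\geq 1$ with each $z_i\in\lbrace x,y\rbrace$, abbreviate
\[
P(w)=E{\hat z}_1\star ( E{\hat z_2} \star (E{\hat z_3} \star \cdots \star ( E{\hat z_{n-1}} \star E{\hat z_n} ) \cdots )),
\]
and let $\bar w$ be the word obtained from $w$ by interchanging the letters $x$ and $y$; in the notation of the theorem $\bar w={\sf z}_1{\sf z}_2\cdots {\sf z}_n$, since for each $i$ the pair $z_i,{\sf z}_i$ is a permutation of $x,y$. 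The claim is that for every such $w$,
\[
P(w)+P(\bar w)\in{\rm Sym}(S),\qquad\qquad P(w)-P(\bar w)\in{\rm ASym}(S),
\]
and the second relation is the assertion of the theorem because ${\rm ASym}(S)={\rm Span}\lbrace E{\hat x}-E{\hat y}\rbrace$. Throughout I would use two facts. First, $P(w)\in S$ for all $w$, which is immediate by induction on $n$ from $E{\hat x}\star S\subseteq S$ and $E{\hat y}\star S\subseteq S$; in particular the decomposition $S={\rm Sym}(S)+{\rm ASym}(S)$ applies to $P(w)$. Second, $E{\hat x}+E{\hat y}\in{\rm Sym}(S)$ (while $E{\hat x}-E{\hat y}\in{\rm ASym}(S)$ by definition), so that $E{\hat x}$ and $E{\hat y}$ have symmetric parts $\tfrac12(E{\hat x}+E{\hat y})$ and antisymmetric parts $\pm\tfrac12(E{\hat x}-E{\hat y})$; the relation $E{\hat x}+E{\hat y}\in{\rm Sym}(S)$ follows from the orthogonality of the decomposition together with $\langle E{\hat x},E{\hat x}\rangle=\langle E{\hat y},E{\hat y}\rangle$ (both equal the constant diagonal entry of $E$), since then $\langle E{\hat x}+E{\hat y},E{\hat x}-E{\hat y}\rangle=0$.

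The base case $n=1$ is immediate from these two facts. For the inductive step I would write $w=z_1w'$ with $w'$ of length $n-1\geq 1$, so that $\bar w={\sf z}_1\bar{w'}$, $P(w)=E{\hat z}_1\star P(w')$, and $P(\bar w)=E{\hat {\sf z}}_1\star P(\bar{w'})$. Put $u=P(w')$ and $v=P(\bar{w'})$; the inductive hypothesis gives $u+v\in{\rm Sym}(S)$ and $u-v\in{\rm ASym}(S)$, hence $u=s+a$ and $v=s-a$ with $s=\tfrac12(u+v)\in{\rm Sym}(S)$ and $a=\tfrac12(u-v)\in{\rm ASym}(S)$. Expanding $E{\hat z}_1$ and $E{\hat {\sf z}}_1$ in terms of $E{\hat x}\pm E{\hat y}$ and using that $\star$ is bilinear and commutative, each of $P(w)+P(\bar w)$ and $P(w)-P(\bar w)$ becomes a linear combination of the four vectors $(E{\hat x}+E{\hat y})\star s$, $(E{\hat x}-E{\hat y})\star a$, $(E{\hat x}-E{\hat y})\star s$, and $(E{\hat x}+E{\hat y})\star a$. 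The first two lie in ${\rm Sym}(S)$, by $(E{\hat x}+E{\hat y})\star{\rm Sym}(S)\subseteq{\rm Sym}(S)$ and ${\rm ASym}(S)\star{\rm ASym}(S)\subseteq{\rm Sym}(S)$; the last two lie in ${\rm ASym}(S)$, by ${\rm Sym}(S)\star{\rm ASym}(S)\subseteq{\rm ASym}(S)$, using $E{\hat x}+E{\hat y}\in{\rm Sym}(S)$ and the commutativity of $\star$. A short bookkeeping check --- the cases $z_1=x$ and $z_1=y$ being mirror images --- then shows that in $P(w)+P(\bar w)$ only the first two of the four vectors occur and in $P(w)-P(\bar w)$ only the last two, which closes the induction and proves the theorem.

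The step that takes the most thought is the choice of what to induct on. Carrying along only the antisymmetry of $P(w)-P(\bar w)$ does not work: in the inductive step one multiplies $u$ by $E{\hat x}$ on one side of the difference and by $E{\hat y}$ on the other, so the symmetric part of the result need not cancel unless one also knows that the symmetric parts of $u$ and $v$ agree while their antisymmetric parts are opposite --- exactly the extra information encoded in $P(w')+P(\bar{w'})\in{\rm Sym}(S)$. Once both relations are carried simultaneously, the argument is driven entirely by the three Norton-algebra containments and by $E{\hat x}+E{\hat y}\in{\rm Sym}(S)$, all recalled above, and what remains is the routine bilinear bookkeeping just indicated.
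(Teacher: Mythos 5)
Your proof is correct, and it is a genuinely different route from the one in the paper. The paper proves the statement (as Theorem \ref{thm:main2}) without induction: it first shows (Proposition \ref{eq:spanwperp}) that both iterated Norton products lie in $\omega^\perp$, then decomposes $\omega^\perp$ using the last line of Lemma \ref{lem:3facts} as the orthogonal direct sum ${\rm Span}\lbrace E\hat x - E\hat y\rbrace + \bigl(\omega^\perp \cap {\rm Sym}(S)\bigr)$, and finally argues by exchanging the roles of $x$ and $y$ that the $\omega^\perp \cap {\rm Sym}(S)$ components of the two products coincide, so their difference lies in ${\rm Span}\lbrace E\hat x - E\hat y\rbrace$. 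Your argument bypasses $\omega$, $\omega^\perp$, and Section 12 entirely: you strengthen the inductive hypothesis to the pair of claims $P(w)+P(\bar w)\in{\rm Sym}(S)$ and $P(w)-P(\bar w)\in{\rm ASym}(S)$, and close the induction by rewriting $E\hat z_1$, $E\hat{\sf z}_1$ in terms of $E\hat x \pm E\hat y$ and invoking only the three proved containments ${\rm ASym}(S)\star{\rm ASym}(S)\subseteq{\rm Sym}(S)$ (Proposition \ref{cor:xmy}), ${\rm Sym}(S)\star{\rm ASym}(S)\subseteq{\rm ASym}(S)$ (Proposition \ref{cor:SAS}), and $(E\hat x+E\hat y)\star{\rm Sym}(S)\subseteq{\rm Sym}(S)$ (Proposition \ref{prop:xpySS}), together with $E\hat x+E\hat y\in{\rm Sym}(S)$ (Lemma \ref{lem:xpyS}). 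What the paper's route buys is brevity, leaning on the heavy Section~12 machinery already built; what your route buys is self-containedness (no need for the $\omega^\perp$ results at all), a cleaner avoidance of the slightly informal role-swapping step, and the slightly stronger output statement $P(w)+P(\bar w)\in{\rm Sym}(S)$.
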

\noindent We call the condition in Theorem \ref{thm:main2Intro} the bbalanced set condition.
 \medskip
 
 \noindent At the end of the paper, we mention an open problem concerning the bbalanced set condition.
 \medskip
 
 \noindent The paper is organized as follows. Section 2 contains some preliminaries.
 In Section 3, we recall some basic definitions and facts about a distance-regular graph $\Gamma=(X, \mathcal R)$.
 In Section 4, we assume that $\Gamma$ is a bilinear forms graph, and describe some features of $\Gamma$.
 In Section 5, for a vertex $x \in X$ we describe the local graph $\Gamma(x)$.
 In Section 6, for $x,y \in X$ at distance $2 \leq \partial(x,y) \leq D-1$ we introduce the $y$-partition of $\Gamma(x)$.
 In Section 7, we use the $y$-partition of $\Gamma(x)$ to define a subspace $S$, that we describe in various ways.
 In Section 8, we show that $\Gamma$ satisfies a strengthened version of the BSC.
 In Section 9, we discuss  two orthogonal bases for $S$.
 In Section 10, we express $S$ as an orthogonal direct sum of  ${\rm Sym}(S)$ and  ${\rm ASym}(S)$.
 In Section 11, we bring in the Norton algebra. We investigate $S$, ${\rm Sym}(S)$, ${\rm ASym}(S)$  from the Norton algebra point of view.
 In Section 12, we introduce the vector $\omega$ and investigate it from a Norton algebra point of view.
 In Section 13, we give some directions for future research.
 \medskip
 
 \noindent The main results of the paper are Theorem  \ref{thm:bbalanced} and Theorem \ref{thm:main2}.

 \section{Preliminaries} 
In this section, we review some basic concepts and notation that will be used throughout the paper.
Recall the natural numbers $\mathbb N = \lbrace 0,1,2,\ldots\rbrace$ and integers $\mathbb Z = \lbrace 0, \pm 1, \pm 2,\ldots \rbrace$.
Let $\mathbb R$ denote the field of real numbers. Let $X$ denote a finite set with cardinality $\vert X \vert \geq 2$.
An element of $X$ is called a {\it vertex}. 
Let ${\rm Mat}_X(\mathbb R)$ denote the $\mathbb R$-algebra consisting of the matrices that have rows and columns indexed by $X$ 
and all entries in $\mathbb R$.
Let $V=\mathbb R^X$ denote the $\mathbb R$-vector space consisting of the column vectors that have coordinates indexed by $X$ and all entries in $\mathbb R$.
The algebra ${\rm Mat}_X(\mathbb R)$ acts on $V$ by left multiplication. We call $V$ the {\it standard module}.
We endow $V$ with a symmetric bilinear form $\langle \,,\,\rangle $ such that $\langle u,v \rangle =u^t v$ for $u,v \in V$ ($t$ denotes transpose). We abbreviate
$\Vert v \Vert^2 = \langle v,v \rangle $ for all $v \in V$. Note that $\langle Bu,v \rangle = \langle u, B^t v \rangle $ for all $B \in {\rm Mat}_X(\mathbb R)$ and $u,v \in V$.
For $x \in X$ let $\hat x$ denote the vector in $V$ that has $x$-coordinate $1$ and all other coordinates $0$. The vectors $\lbrace \hat x \vert x \in X\rbrace$
form an orthonormal basis for $V$.

\section{Distance-regular graphs}
In this section, we recall some definitions and basic facts related to distance-regular graphs.
Let $\Gamma=(X, \mathcal R)$ denote an undirected, connected graph, without loops or multiple edges, with vertex set $X$,
adjacency relation $\mathcal R$, and path-length distance function $\partial$.
By the {\it diameter} of $\Gamma$ we mean 
\begin{align*}
D = {\rm max} \lbrace \partial(x,y) \vert x,y \in X \rbrace.
\end{align*}
 We have $D\geq 1$ since $\vert X \vert \geq 2$.
Define a matrix $A \in {\rm Mat}_X(\mathbb R)$ that has $(x,y)$-entry
\begin{align*}
A_{x,y} = \begin{cases} 1 & \hbox{\rm if $\partial(x,y)=1$}; \\
                                      0 & \hbox{\rm if $\partial(x,y)\not=1$}
                 \end{cases} \qquad \qquad x,y \in X.
\end{align*}
We call $A$ the {\it adjacency matrix} of $\Gamma$. The matrix $A$ is real and symmetric, so $A$ is diagonalizable.
By an {\it eigenvalue of $\Gamma$} we mean a root of the minimal polynomial of $A$. Let $\theta$ denote an eigenvalue of $\Gamma$.
By the {\it multiplicity of $\theta$} we mean the dimension of the $\theta$-eigenspace of $A$. By the {\it primitive idempotent of $\Gamma$ associated with $\theta$},
we mean the matrix $E \in {\rm Mat}_X(\mathbb R)$ that acts as the identity on the $\theta$-eigenspace of $A$, and as $0$ on every other eigenspace of $A$. 
Note that $E^2=E$ and $AE=\theta E = E A$. The $\theta$-eigenspace of $A$ is given by
\begin{align*}
EV = {\rm Span}\lbrace E \hat x \vert x \in X\rbrace.
\end{align*}
For $x \in X$ and $0 \leq i \leq D$, define the set
\begin{align*}
\Gamma_i(x) = \lbrace y \in X\vert \partial(x,y)=i\rbrace.
\end{align*}
We abbreviate $\Gamma(x)=\Gamma_1(x)$. For an integer $\kappa \geq 1$, we say that $\Gamma$ is {\it regular with valency $\kappa$} whenever 
$\vert \Gamma(x) \vert=\kappa$ for all $x \in X$.
The graph $\Gamma$ is called {\it distance-regular} whenever for all $0 \leq h,i,j\leq D$ and $x,y \in X$ at distance $\partial(x,y)=h$, the number
\begin{align*}
p^h_{i,j} = \vert \Gamma_i(x) \cap \Gamma_j(y) \vert
\end{align*}
is independent of $x,y$ and depends only on $h,i,j$.
For the rest of this paper, we assume that $\Gamma$ is distance-regular with diameter $D\geq 3$. The numbers $p^h_{i,j}$ $(0\leq h,i,j\leq D)$ are called
the {\it intersection numbers} of $\Gamma$. We abbreviate
\begin{align*}
c_i = p^i_{1,i-1} \;\;(1 \leq i \leq D), \qquad \quad a_i = p^i_{1,i} \;\;(0 \leq i \leq D), \qquad \quad b_i = p^i_{1,i+1} \;\;(0 \leq i \leq D-1).
\end{align*}
Note that $c_1=1$ and $a_0=0$. The graph $\Gamma$ is regular with valency $\kappa = b_0$. Moreover
\begin{align*}
c_i + a_i + b_i = \kappa \qquad \qquad (0 \leq i \leq D),
\end{align*}
where $c_0=0$ and $b_D=0$. By \cite[p.~128]{bcn} the graph $\Gamma$ has exactly $D+1$ eigenvalues; we denote them by
 \begin{align*}
  \theta_0 > \theta_1 > \cdots > \theta_D.
  \end{align*}
By \cite[Proposition~3.1]{Biggs} we have $\theta_0=\kappa$.
\medskip

\noindent For more information about distance-regular graphs, see \cite{bbit, bannai, bcn, dkt, int}.

 \section{The bilinear forms graph $\Gamma$}
 
 We turn our attention to the following graph $\Gamma=(X, \mathcal R)$. Fix a finite field ${\rm GF}(q)$ with $q\not=2$. Fix integers $D, N$ such that $N>2D\geq 6$.
 The vertex set $X$ consists of the $D \times (N-D)$ matrices that have all entries in ${\rm GF}(q)$. 
 Thus $\vert X \vert = q^{D(N-D)}$.
 Two vertices
 are adjacent whenever their difference has rank one. The graph $\Gamma$ is called the {\it bilinear forms graph }
 and is sometimes denoted $H_q(D, N-D)$, see for example \cite[p.~280]{bcn}.
 The following facts about $\Gamma$ are taken from \cite[Sections~8.4, 9.5]{bcn}.
 The graph $\Gamma$ is distance-regular, with diameter $D$ and intersection numbers
 \begin{align}
 c_i = q^{i-1} \frac{q^i -1}{q-1}, \qquad \quad b_i = \frac{(q^{N-D}-q^i)(q^{D}-q^i)}{q-1} 
 \qquad \qquad (0 \leq i \leq D). \label{eq:cibi}
 \end{align} 
 We have
 \begin{align}
 a_i =      \frac{q^i -1}{q-1} \Bigl           (q^{N-D}  +q^D   -q^i - q^{i-1}-1\Bigr) \qquad \qquad (0 \leq i \leq D). \label{eq:ai}
 \end{align}
 The eigenvalues of $\Gamma$ are
 \begin{align}
 \theta_i = \frac{q^{N-i} + 1 - q^D-q^{N-D}}{q-1} \qquad \qquad (0 \leq i \leq D). \label{eq:theta}
 \end{align}
 \noindent The eigenvalue ordering \eqref{eq:theta}
  is $Q$-polynomial in the sense of \cite[p.~251]{bbit}
 and formally self-dual in the sense of \cite[p.~49]{bcn}.
\medskip

\noindent Let $E$ denote the primitive idempotent of $\Gamma$ associated with $\theta_1$. 
 By \cite[Lemma~3.9 and line (21)]{int}, for $0 \leq i \leq D$ there exists $\theta^*_i \in \mathbb R$ such that
 $\theta^*_i = \vert X \vert E_{x,y}$ for all $x,y \in X$ at distance $\partial(x,y)=i$. We call $\theta^*_i$
 the {\it $i$th dual eigenvalue} of $\Gamma$ with respect to $E$. By \cite[p.~357]{bbit} we have $\theta^*_i = \theta_i$ for $0 \leq i \leq D$.
 Thus 
  \begin{align}
 \theta^*_i =\frac{q^{N-i} + 1 - q^D-q^{N-D}}{q-1}
 \qquad \quad (0 \leq i \leq D). \label{eq:ths}
 \end{align}
 By \cite[Lemma~1.1]{newIneq} we have
 \begin{align} \label{eq:TTR}
  \theta^*_{i-1} c_i + \theta^*_i a_i + \theta^*_{i+1} b_i = \theta_1 \theta^*_i \qquad \qquad (0 \leq i \leq D),
 \end{align}
 where $\theta^*_{-1}$, $\theta^*_{D+1}$ denote indeterminates.
 \medskip
 
 \noindent By \cite[line~(21)]{int} the multiplicity of $\theta_1$ is equal to $\theta^*_0$. Therefore
 \begin{align}
 {\rm dim}\, EV = \theta^*_0 =  \frac{(q^{N-D}-1)(q^D-1)}{q-1}.  
 \label{eq:dimEV}
 \end{align}
 
 \begin{lemma} \label{lem:xyip} {\rm (See \cite[Lemma~1.1]{newIneq}.)} 
 For $x,y \in X$ we have
 \begin{align*}
 \langle E\hat x, E \hat y \rangle = \vert X \vert^{-1} \theta^*_i,
 \end{align*}
 where $i = \partial(x,y)$.
 \end{lemma}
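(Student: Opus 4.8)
The plan is to reduce the bilinear form $\langle E\hat x, E\hat y\rangle$ to a single matrix entry of $E$, and then read off that entry from the description of the dual eigenvalues recalled just above. First I would use that $E$ is a polynomial in the adjacency matrix $A$: since $\Gamma$ is distance-regular, $E$ lies in the Bose--Mesner algebra of $\Gamma$, and in particular $E$ is a scalar multiple of $\prod_{0\le j\le D,\ j\ne 1}(A-\theta_j I)$. As $A$ is real and symmetric, so is $E$; thus $E^t=E$. Using this together with $E^2=E$ and the identity $\langle Bu,v\rangle=\langle u,B^t v\rangle$ from Section 2, I would compute
\begin{align*}
\langle E\hat x, E\hat y\rangle=\langle \hat x, E^t E\hat y\rangle=\langle \hat x, E^2\hat y\rangle=\langle \hat x, E\hat y\rangle=E_{x,y}.
\end{align*}

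Next I would invoke the dual eigenvalue description stated before the lemma: for $0\le i\le D$ there is a real number $\theta^*_i$ with $\theta^*_i=\vert X\vert\, E_{x,y}$ whenever $\partial(x,y)=i$. Writing $i=\partial(x,y)$ and substituting gives $E_{x,y}=\vert X\vert^{-1}\theta^*_i$, and combining this with the previous display yields the claim.

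I do not expect a genuine obstacle here. The only two facts needing justification are that $E$ is symmetric, which follows from $A$ being symmetric and $E$ being a polynomial in $A$, and that the entry $E_{x,y}$ depends only on $\partial(x,y)$, which is exactly what distance-regularity provides (it places $E$ in the span of the distance matrices $A_0,A_1,\dots,A_D$, so $E$ is constant on each distance class). Alternatively, since the statement is quoted from \cite[Lemma~1.1]{newIneq}, one may simply cite that source.
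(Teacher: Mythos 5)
Your argument is correct. The paper gives no proof of this lemma and simply cites \cite[Lemma~1.1]{newIneq}; your computation $\langle E\hat x,E\hat y\rangle=\langle\hat x,E^tE\hat y\rangle=\langle\hat x,E\hat y\rangle=E_{x,y}=\vert X\vert^{-1}\theta^*_i$, using that $E$ is symmetric (being a polynomial in the symmetric matrix $A$) and idempotent, is exactly the standard argument behind the cited result.
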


 \begin{lemma} \label{lem:xyind}
  {\rm (See \cite[Section~4]{nortonBalanced}.)}
 For distinct $x,y \in X$ the vectors $E\hat x$, $E\hat y$ are linearly independent.
 \end{lemma}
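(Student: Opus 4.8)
The goal is to prove Lemma \ref{lem:xyind}: for distinct $x,y \in X$, the vectors $E\hat x$ and $E\hat y$ are linearly independent. The plan is to reduce this to a rank/norm computation using Lemma \ref{lem:xyip}, which gives all inner products among the $E\hat z$ in terms of the dual eigenvalues $\theta^*_i$.

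First I would observe that $E\hat x$ and $E\hat y$ are linearly dependent precisely when the Gram matrix
\begin{align*}
G = \begin{pmatrix} \langle E\hat x, E\hat x\rangle & \langle E\hat x, E\hat y\rangle \\ \langle E\hat x, E\hat y\rangle & \langle E\hat y, E\hat y\rangle \end{pmatrix}
= \vert X\vert^{-1}\begin{pmatrix} \theta^*_0 & \theta^*_i \\ \theta^*_i & \theta^*_0 \end{pmatrix}
\end{align*}
is singular, where $i = \partial(x,y) \geq 1$. Indeed, since the bilinear form $\langle\,,\,\rangle$ is positive definite on $V$ (the basis $\lbrace \hat z\rbrace$ is orthonormal), a finite set of vectors is linearly independent if and only if its Gram matrix is nonsingular. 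So it suffices to show $\det G \neq 0$, i.e. $(\theta^*_0)^2 \neq (\theta^*_i)^2$, equivalently $\theta^*_i \neq \pm \theta^*_0$ for $1 \leq i \leq D$.

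Next I would carry out this verification directly from the explicit formula \eqref{eq:ths}. Since $\theta^*_i = (q^{N-i} + 1 - q^D - q^{N-D})/(q-1)$ is a strictly decreasing function of $i$ on $0 \leq i \leq D$ (the term $q^{N-i}$ strictly decreases as $i$ grows, since $q \geq 2$ — here $q$ is a prime power with $q \neq 2$, so in fact $q \geq 3$), we immediately get $\theta^*_i < \theta^*_0$ for $i \geq 1$, ruling out $\theta^*_i = \theta^*_0$. For the possibility $\theta^*_i = -\theta^*_0$: adding, $\theta^*_0 + \theta^*_i = (q^{N} + q^{N-i} + 2 - 2q^D - 2q^{N-D})/(q-1)$, and I would check this is strictly positive using $N > 2D$, which forces $q^N$ and $q^{N-i}$ (for $i \leq D < N - D$) to dominate the subtracted terms; alternatively, note $\theta^*_i > \theta^*_D$ and compute that even $\theta^*_0 + \theta^*_D > 0$ under the hypotheses $N > 2D \geq 6$. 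Either way this is a short inequality chase. Hence $\det G \neq 0$ and the two vectors are linearly independent.

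The main obstacle is essentially bookkeeping: making sure the inequality $\theta^*_0 + \theta^*_i > 0$ genuinely follows from $N > 2D \geq 6$ and $q \geq 3$ for all relevant $i$, rather than only for small $i$; the worst case is $i = D$, so it is enough to confirm $q^N + q^{N-D} + 2 > 2q^D + 2q^{N-D}$, i.e. $q^N + 2 > 2q^D + q^{N-D}$, which holds comfortably since $q^N = q^{N-D}\cdot q^D \geq q^{N-D} \cdot q^D$ dwarfs both $2q^D$ and $q^{N-D}$ when $D \geq 3$ and $N - D > D \geq 3$. (One could also invoke the alternative route cited in the paper, \cite[Section~4]{nortonBalanced}, but the self-contained Gram-matrix argument above seems cleanest.)
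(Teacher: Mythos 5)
Your proof is correct. The paper does not prove this lemma itself; it cites \cite[Section~4]{nortonBalanced}, which treats the analogous statement for a general $Q$-polynomial distance-regular graph. Your Gram-matrix argument is a self-contained alternative tailored to $H_q(D,N-D)$: since the bilinear form on $V$ is positive definite, $E\hat x$ and $E\hat y$ are linearly independent iff their $2\times 2$ Gram matrix is nonsingular, which reduces to $\theta^*_i \neq \pm\theta^*_0$ for $i = \partial(x,y) \in \lbrace 1,\ldots,D\rbrace$, and you then verify both inequalities directly from the explicit formula \eqref{eq:ths} under the standing hypotheses $N > 2D \geq 6$ and $q \geq 3$. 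This is valid and avoids reliance on an external reference, at the cost of not generalizing beyond this family. One cosmetic slip: in the final inequality chase you wrote $q^N = q^{N-D}\cdot q^D \geq q^{N-D}\cdot q^D$, which is a tautology as printed; the intended estimate is $q^N = q^{N-D} q^D \geq 27\, q^{N-D} > q^{N-D} + 2 q^D$ (using $q \geq 3$, $D \geq 3$, and $q^{N-D} > q^D$ since $N - D > D$), which yields $q^N + 2 > q^{N-D} + 2q^D$ as required. The conclusion is unaffected.
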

 
  \noindent We finish this section with a few comments. 
  \medskip
  
\noindent  By \cite[p.~281]{bcn}, the graph $\Gamma$ is distance-transitive in the sense of \cite[p.~136]{bcn}.
\medskip

\noindent  In the literature on distance-regular graphs, there are some parameters $q^h_{i,j}$ $(0 \leq h,i,j\leq D)$ called the Krein parameters;
see for example \cite[Section~2.3]{bcn}.
By \cite[p.~49]{bcn} and formal self-duality, for $\Gamma$ we have $q^h_{i,j} = p^h_{i,j}$ for $0 \leq h,i,j\leq D$. In particular
\begin{align}
q^1_{1,1} = a_1 = q^{N-D} +q^D    -q-2. 
 \label{eq:q111}
\end{align}
 
 \noindent For more information about the bilinear forms graph, see  \cite{delsarte, qtet,kim1,tanaka, alt}.

\section{The local graph with respect to a vertex  in $\Gamma$}
We continue to discuss the bilinear forms graph $\Gamma=(X, \mathcal R)$.
Throughout this section, we fix a vertex $x \in X$. Recall that $\Gamma(x)$ is the set of
vertices in $X$ that are adjacent to $x$.
Note that
\begin{align}
\vert \Gamma(x) \vert = \kappa =  \frac{(q^{N-D}-1)(q^D-1)}{q-1}.       \label{eq:valency}
\end{align}
 The vertex subgraph of $\Gamma$ induced on $\Gamma(x)$
is called the {\it local graph of $\Gamma$ with respect to $x$}. For notational convenience, we denote this local graph by $\Gamma(x)$.
\medskip

\noindent By construction, the local graph $\Gamma(x)$ is regular with valency $a_1$.
To describe this graph in more detail, we use the notation
 \begin{align*}
 \lbrack n \rbrack = \frac{q^n-1}{q-1} \qquad \qquad n \in \mathbb N.
 \end{align*}
 By \cite[Section~6.1]{hobart}  the local graph $\Gamma(x)$ is a $(q-1)$-clique extension of a Cartesian product of complete graphs
 $K_{\lbrack D \rbrack} \times K_{\lbrack N-D\rbrack}$. 
  In the next result, we give the spectrum of the local graph $\Gamma(x)$.

\begin{lemma} \label{lem:localSpec} {\rm (See \cite[Theorem~6.1]{hobart}.)} 
For the local graph $\Gamma(x)$ the eigenvalues and
their multiplicities are given in the table below:
\begin{align*} 
\begin{tabular}[t]{c|c}
{\rm eigenvalue }& {\rm multiplicity}
 \\
 \hline
$a_1 = q^{N-D}+q^D-q-2$ & $1$ \\
$q^{N-D}-q-1$  &$ \frac{q^D-q}{q-1}$ \\
$q^D-q-1$ & $\frac{q^{N-D}-q}{q-1}$ \\
$-1$ & $ \frac{(q^D-1)(q^{N-D}-1)(q-2)}{(q-1)^2}$ \\
$-q$ & $\frac{(q^D-q)(q^{N-D}-q)}{(q-1)^2}$
    \end{tabular}
\end{align*}
\end{lemma}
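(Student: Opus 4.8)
The plan is to read the spectrum straight off the structural description recorded just above: by \cite[Section~6.1]{hobart} the local graph $\Gamma(x)$ is the $(q-1)$-clique extension of the Cartesian product $K_m\times K_n$ with $m=\lbrack D\rbrack$ and $n=\lbrack N-D\rbrack$. So I would compute the spectrum of a Cartesian product of two complete graphs, then compute what an $s$-clique extension does to a spectrum, and finally substitute $s=q-1$, $m=\lbrack D\rbrack$, $n=\lbrack N-D\rbrack$ and simplify, using $(q-1)\lbrack D\rbrack=q^D-1$ and $(q-1)\lbrack N-D\rbrack=q^{N-D}-1$.

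First I would record three standard facts. (1) The complete graph $K_m$ has adjacency matrix $J_m-I_m$, hence eigenvalues $m-1$ (multiplicity $1$, eigenvector $\mathbf 1$) and $-1$ (multiplicity $m-1$, eigenvectors in $\mathbf 1^{\perp}$). (2) For graphs $G,H$ the Cartesian product has adjacency matrix $A(G)\otimes I+I\otimes A(H)$, so its eigenvalues are the sums $\lambda+\mu$ with $\lambda$ an eigenvalue of $G$ and $\mu$ an eigenvalue of $H$, multiplicities multiplying; for $K_m\times K_n$ this gives $m+n-2$ (mult.\ $1$), $m-2$ (mult.\ $n-1$), $n-2$ (mult.\ $m-1$), $-2$ (mult.\ $(m-1)(n-1)$). (3) If $G$ has $\ell$ vertices and adjacency matrix $A$, then its $s$-clique extension has adjacency matrix $A\otimes J_s+I_{\ell}\otimes(J_s-I_s)$; restricted to $\mathbb R^{\ell}\otimes\mathbf 1$ this acts as $sA+(s-1)I$, and restricted to $\mathbb R^{\ell}\otimes\mathbf 1^{\perp}$ it acts as $-I$. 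Hence each eigenvalue $\lambda$ of $G$ yields an eigenvalue $s\lambda+s-1$ of the extension with the same multiplicity, and in addition $-1$ is an eigenvalue of multiplicity $\ell(s-1)$ (to be merged with any of the previous ones that happen to equal $-1$).

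Combining (2) and (3) with $s=q-1$ and $\ell=mn$, the $(q-1)$-clique extension of $K_m\times K_n$ has eigenvalues $s(m+n-2)+s-1$, $s(m-2)+s-1$, $s(n-2)+s-1$, $-s-1$, and $-1$, of multiplicities $1$, $n-1$, $m-1$, $(m-1)(n-1)$, $mn(s-1)$. Substituting $sm=q^D-1$, $sn=q^{N-D}-1$, $s=q-1$ turns these eigenvalues into $q^{N-D}+q^D-q-2=a_1$, $q^D-q-1$, $q^{N-D}-q-1$, $-q$, $-1$, and the multiplicities into the entries of the table. I would then note, using $N>2D\geq 6$ and $q\neq 2$, that these five eigenvalues are pairwise distinct (in particular $-1$ does not collide with any of the others, since $K_m\times K_n$ has no eigenvalue $-1$ here), so no merging of multiplicities actually occurs.

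Since the structural description is quoted from \cite{hobart}, there is no real obstacle; the step most prone to bookkeeping errors is tracking multiplicities through the clique extension, because the new $(-1)$-eigenspace is large and must not be conflated with a pre-existing eigenspace. As sanity checks I would verify that the largest eigenvalue equals $a_1$, the valency of the regular local graph $\Gamma(x)$, and that the five multiplicities sum to $\kappa=\vert\Gamma(x)\vert=(q^{N-D}-1)(q^D-1)/(q-1)$ — the latter by clearing the denominator $(q-1)^2$ and expanding. (If one wished to avoid citing \cite{hobart}, the remaining task would be to prove that $\Gamma(x)$ really is this clique extension: identify each rank-one $D\times(N-D)$ matrix over ${\rm GF}(q)$ with a pair consisting of a point of ${\rm PG}(D-1,q)$ and a point of ${\rm PG}(N-D-1,q)$ together with a nonzero scalar, and check that two such matrices differ in a matrix of rank $\leq 1$ exactly when they share a column space or a row space; that is where the genuine combinatorial work would lie.)
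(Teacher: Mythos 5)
Your proposal is correct. The paper's own ``proof'' is simply the citation to \cite[Theorem~6.1]{hobart}, with no computation shown, so any actual derivation differs from it in form; but your argument is precisely the one underlying the cited result, and it tracks the paper's own preparatory remark that $\Gamma(x)$ is the $(q-1)$-clique extension of $K_{\lbrack D\rbrack}\times K_{\lbrack N-D\rbrack}$. Your three standard facts (spectrum of $K_m$, spectrum of a Cartesian product, spectrum of an $s$-clique extension via $A\otimes J_s+I_\ell\otimes(J_s-I_s)$ decomposed over $\mathbf 1$ and $\mathbf 1^\perp$) are stated correctly, the substitutions $sm=q^D-1$, $sn=q^{N-D}-1$, $s=q-1$ produce exactly the five eigenvalues and multiplicities in the table, and your check that the eigenvalues are pairwise distinct under $N>2D\geq 6$ and $q\neq 2$ (so $q\geq 3$) correctly rules out the only potential bookkeeping hazard, namely a collision of the new $(-1)$-eigenspace with a transformed one. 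The sanity checks (largest eigenvalue equals $a_1$, multiplicities sum to $\kappa$) both go through.

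One small presentational point: you attribute ``$m-2$ with multiplicity $n-1$'' and ``$n-2$ with multiplicity $m-1$'' to $K_m\times K_n$, which is the correct pairing, and after the clique extension these become $q^D-q-1$ with multiplicity $\frac{q^{N-D}-q}{q-1}$ and $q^{N-D}-q-1$ with multiplicity $\frac{q^D-q}{q-1}$. It is easy to transpose this pair by accident; you got it right, and it agrees with the table's pairing, which is the only place where such a slip could hide.
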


\noindent We have some comments about the local graph $\Gamma(x)$.

 \begin{lemma} \label{lem:comA} 
 The vectors 
 \begin{align} \label{eq:localBasis}
  E \hat y \qquad \qquad  \bigl( y \in \Gamma(x) \bigr)
 \end{align} 
   form a basis for $EV$.
 \end{lemma}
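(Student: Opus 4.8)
The plan is to show that the vectors \eqref{eq:localBasis} span $EV$ and then count. We first establish spanning. Fix $y \in \Gamma(x)$ and consider the vector $A\hat x = \widehat{\Gamma(x)} = \sum_{y \in \Gamma(x)} \hat y$. Applying $E$ and using $AE = \theta_1 E$, we get $\theta_1 E\hat x = E A \hat x = \sum_{y \in \Gamma(x)} E\hat y$, so $E\hat x \in {\rm Span}\lbrace E\hat y \mid y \in \Gamma(x)\rbrace$. More generally, I would argue that the local graph $\Gamma(x)$, being connected (it is a clique extension of a Cartesian product of complete graphs, hence connected), together with the fact that $E$ restricted to the span of $\lbrace \hat z \mid z \in \Gamma(x) \cup \lbrace x\rbrace\rbrace$ interacts with the adjacency structure, forces $W := {\rm Span}\lbrace E\hat y \mid y \in \Gamma(x)\rbrace$ to be $A$-invariant in a suitable sense, and then to contain $E\hat z$ for all $z$ at distance at most... but this overcomplicates things. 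Instead I would use the known representation-theoretic fact: since $\Gamma$ is distance-transitive (noted after Lemma~\ref{lem:xyind}), the stabilizer of $x$ acts on $EV$, and $EV$ is generated as a module for that action by $E\hat x$; but the orbit of $E\hat x$ under vertices adjacent to $x$ is exactly $\lbrace E\hat y \mid y \in \Gamma(x)\rbrace$ up to the span. A cleaner route: the subconstituent (Terwilliger) algebra $T = T(x)$ acts on $EV$, and $E^*_1 E V$ --- where $E^*_1$ is the projection onto ${\rm Span}\lbrace \hat y \mid y \in \Gamma(x)\rbrace$ --- is nonzero and generates $EV$ as a $T$-module in the $Q$-polynomial, thin case; combined with $E^*_1 E\hat z \in W$ for all relevant $z$, this gives $W = EV$.

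The most self-contained argument, and the one I expect to use, is dimension-counting via an explicit surjection. Define the linear map $\phi : EV \to EV$ by noting that $W = E\, E^*_1 V$ where $E^*_1$ is the diagonal idempotent with $(E^*_1)_{z,z} = 1$ iff $z \in \Gamma(x)$. We have $\dim W = {\rm rank}(E E^*_1) = {\rm rank}(E^*_1 E)$. Now $E^*_1 E E^*_1$, as a matrix on ${\rm Span}\lbrace \hat y \mid y \in \Gamma(x)\rbrace$, has $(y,y')$-entry $\vert X\vert^{-1}\theta^*_{\partial(y,y')}$ by Lemma~\ref{lem:xyip}, and $\partial(y,y') \in \lbrace 0,1,2\rbrace$ for $y,y' \in \Gamma(x)$. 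Thus $\vert X\vert\, E^*_1 E E^*_1 = \theta^*_0 I + \theta^*_1 A_{\rm loc} + \theta^*_2 (J - I - A_{\rm loc})$ restricted to the local graph, where $A_{\rm loc}$ is the adjacency matrix of $\Gamma(x)$ and $J$ is all-ones. This is a linear combination of $I$, $J$, and $A_{\rm loc}$, so its rank can be read off from the spectrum of $\Gamma(x)$ in Lemma~\ref{lem:localSpec}: the eigenvalue of $\vert X\vert E^*_1 E E^*_1$ on a $\lambda$-eigenvector of $A_{\rm loc}$ (orthogonal to $\widehat{\Gamma(x)}$) is $(\theta^*_0 - \theta^*_2) + (\theta^*_1 - \theta^*_2)\lambda$, and on $\widehat{\Gamma(x)}$ itself it is $\theta^*_0 + \theta^*_1 a_1 + \theta^*_2(\kappa - 1 - a_1)$ where $\kappa = \vert\Gamma(x)\vert$. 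I would check, using the explicit values \eqref{eq:ths} for $\theta^*_0, \theta^*_1, \theta^*_2$ and the five eigenvalues $a_1, q^{N-D}-q-1, q^D-q-1, -1, -q$ from Lemma~\ref{lem:localSpec}, that $(\theta^*_0 - \theta^*_2) + (\theta^*_1 - \theta^*_2)\lambda \neq 0$ for each of these $\lambda$ and for the $\widehat{\Gamma(x)}$-eigenvalue; this forces $E^*_1 E E^*_1$ to be invertible on the whole local space, whence ${\rm rank}(E^*_1 E) \geq {\rm rank}(E^*_1 E E^*_1) = \kappa$. Since $\dim EV = \theta^*_0 = \kappa$ by \eqref{eq:dimEV} and \eqref{eq:valency}, we get $\dim W = \kappa = \dim EV$, so $W = EV$, and the $\kappa$ spanning vectors \eqref{eq:localBasis} are a basis.

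The main obstacle is the nonvanishing check: one must verify that none of the quantities $(\theta^*_0 - \theta^*_2) + (\theta^*_1 - \theta^*_2)\lambda$ (for $\lambda$ ranging over the five local eigenvalues) vanishes, and likewise for the all-ones eigenvector. After substituting the closed forms, $\theta^*_0 - \theta^*_2 = \frac{q^{N}-q^{N-2}}{q-1} = q^{N-2}(q+1)$ and $\theta^*_1 - \theta^*_2 = \frac{q^{N-1}-q^{N-2}}{q-1} = q^{N-2}$, so the condition on the reducible part becomes $(q+1) + \lambda \neq 0$, i.e. $\lambda \neq -q-1$; since the smallest local eigenvalue is $-q > -q-1$, this always holds (here $q \geq 3$ is used), and the $\widehat{\Gamma(x)}$-eigenvalue is easily seen positive since $a_1$ is the dominant term. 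A secondary, routine obstacle is justifying that the Gram-type matrix $E^*_1 E E^*_1$ is diagonalized by an orthogonal basis consisting of $A_{\rm loc}$-eigenvectors --- this is immediate because $A_{\rm loc}$ is symmetric and $E^*_1 E E^*_1$ is a polynomial-free linear combination of $I$, $J$, $A_{\rm loc}$, all of which commute with $A_{\rm loc}$, with $\widehat{\Gamma(x)}$ being the Perron eigenvector of $A_{\rm loc}$.
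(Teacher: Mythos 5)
Your argument is essentially the paper's: both compute the Gram matrix of $\{E\hat y : y\in\Gamma(x)\}$ (equivalently $E^*_1 E E^*_1$) and show it is nonsingular by evaluating it on the local eigenspectrum of Lemma~\ref{lem:localSpec}, and your reduction of the nonvanishing condition to $\lambda\neq -q-1$ matches the paper's factorization $(\theta^*_1-\theta^*_2)(\eta+q+1)$ exactly. The one spot where you are less precise is the eigenvalue on $\widehat{\Gamma(x)}$, which you wave off as positive because $a_1$ dominates; the paper instead rewrites it as $\theta^*_0 + \theta^*_1 a_1 + \theta^*_2 b_1 = \theta_1\theta^*_1$ via the three-term recurrence \eqref{eq:TTR} at $i=1$ (using $c_1=1$ and $\kappa - 1 - a_1 = b_1$), which makes nonvanishing immediate without any estimate.
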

 \begin{proof} 
 By  \eqref{eq:dimEV} and  \eqref{eq:valency}, the number of vectors listed in \eqref{eq:localBasis} is equal to the dimension of $EV$.
So it suffices to show that the vectors listed in \eqref{eq:localBasis} are linearly independent.
Let $\tilde A$ denote the adjacency matrix for the local graph $\Gamma(x)$. Let $\tilde I$ and $\tilde J$ denote square matrices of the same size as $\tilde A$, with
$\tilde I$ an identity matrix and $\tilde J$ having all entries 1.
By Lemma \ref{lem:xyip}, for the vectors \eqref{eq:localBasis} the inner product matrix is equal to $\vert X \vert^{-1} $ times
\begin{align}
\theta^*_0 \tilde I + \theta^*_1 \tilde A + \theta^*_2 (\tilde J - \tilde A - \tilde I).
\label{eq:LGM}
\end{align}
It suffices to show that the matrix \eqref{eq:LGM} is nonsingular. To do this, we show that
each eigenvalue of  \eqref{eq:LGM} is nonzero.
The eigenvalues of $\tilde A$ are given in Lemma \ref{lem:localSpec}.
There exists a unique polynomial $f$ of degree 4 that has roots
\begin{align}
q^{N-D}-q-1, \qquad \quad q^D-q-1, \qquad \quad -1, \qquad \quad -q \label{eq:nontriv}
\end{align}
and $f(a_1)=\kappa$. We have  $f(\tilde A)=\tilde J$ by
 Lemma \ref{lem:localSpec} and
\cite[Corollary~3.3]{Biggs}.
The matrix  \eqref{eq:LGM} is a polynomial in $\tilde A$; the eigenvalues of 
  \eqref{eq:LGM} 
  are obtained by applying this polynomial to the eigenvalues of $\tilde A$.
For the eigenvalue $a_1$ of $\tilde A$, the corresponding eigenvalue of  \eqref{eq:LGM} is equal to
\begin{align*}
&\theta^*_0 + \theta^*_1 a_1 + \theta^*_2 \bigl(f(a_1)-a_1-1\bigr) 
= \theta^*_0 + \theta^*_1 a_1 + \theta^*_2 \bigl(\kappa-a_1-1\bigr) \\
&= \theta^*_0 + \theta^*_1 a_1 + \theta^*_2 b_1 
= \theta_1 \theta^*_1 
\not=0.
\end{align*}
\noindent For an eigenvalue $\eta$ of $\tilde A$ listed in \eqref{eq:nontriv},  the corresponding eigenvalue of  \eqref{eq:LGM} is equal to
\begin{align*}
& \theta^*_0 + \theta^*_1 \eta + \theta^*_2 \bigl(f(\eta)-\eta-1\bigr) 
 =  \theta^*_0 + \theta^*_1 \eta + \theta^*_2 \bigl(0-\eta-1\bigr) \\
& = (\theta^*_1-\theta^*_2)\biggl( \eta + \frac{\theta^*_0-\theta^*_2}{\theta^*_1 - \theta^*_2} \biggr) 
 = (\theta^*_1-\theta^*_2)( \eta + q+1) 
\not=0.
 \end{align*}
We have shown that the matrix \eqref{eq:LGM} is nonsingular. The result follows.
 \end{proof}

 \begin{lemma} \label{eq:Gsum} For $x \in X$,
 \begin{align*}
  \theta_1 E\hat x = \sum_{y \in \Gamma(x)} E \hat y.
  \end{align*}
 \end{lemma}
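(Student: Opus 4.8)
The plan is to use the defining relation $AE = \theta_1 E$ together with the action of $A$ on the basis vectors $\hat x$. First I would observe that for any $x \in X$, the $x$-column of $A$, viewed as a vector in $V$, is precisely $\sum_{y \in \Gamma(x)} \hat y$; this is immediate from the definition of the adjacency matrix, since $A_{x,y} = 1$ exactly when $y \in \Gamma(x)$. Equivalently, $A\hat x = \sum_{y \in \Gamma(x)} \hat y$ because $A$ is symmetric, so multiplying $A$ on the left by the standard basis vector $\hat x$ picks out the $x$-column.

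Next I would apply $E$ to both sides of $A\hat x = \sum_{y \in \Gamma(x)} \hat y$, obtaining
\begin{align*}
EA\hat x = \sum_{y \in \Gamma(x)} E\hat y.
\end{align*}
Since $E$ is the primitive idempotent associated with $\theta_1$, we have $EA = \theta_1 E$ (this is recorded in Section 3, where it is noted that $AE = \theta E = EA$). Therefore the left-hand side equals $\theta_1 E\hat x$, and we conclude
\begin{align*}
\theta_1 E\hat x = \sum_{y \in \Gamma(x)} E\hat y,
\end{align*}
which is exactly the claimed identity.

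This argument is entirely formal and uses only general facts about primitive idempotents of distance-regular graphs, not anything specific to the bilinear forms graph; in particular I would not need the explicit eigenvalue formulas or Lemma~\ref{lem:localSpec}. There is no real obstacle here — the only point requiring a moment's care is the identification $A\hat x = \widehat{\Gamma(x)}$, which is a direct unpacking of the definition of $A$ and the orthonormality of the basis $\lbrace \hat x \rbrace_{x \in X}$. Optionally one could phrase the whole thing in terms of $\widehat{\Gamma(x)}$ notation introduced earlier, writing $A\hat x = \widehat{\Gamma(x)}$ and then $\theta_1 E\hat x = E A \hat x = E\widehat{\Gamma(x)} = \sum_{y \in \Gamma(x)} E\hat y$, but the two-line computation above already suffices.
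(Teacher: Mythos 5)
Your proof is correct and matches the paper's one-line argument exactly: the paper says to compare column $x$ on both sides of $\theta_1 E = EA$, which is precisely your step of applying $E$ to $A\hat x = \sum_{y \in \Gamma(x)}\hat y$. You simply spell out the column-extraction in slightly more detail.
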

 \begin{proof} Compare column $x$ on each side of $\theta_1 E = EA$.
 \end{proof}

 \section{The $y$-partition of $\Gamma(x)$}
 
 We continue to discuss the bilinear forms graph $\Gamma=(X, \mathcal R)$.
 Throughout this section we fix $x,y \in X$ and write $k=\partial(x,y)$. To avoid degenerate situations, we always assume $2 \leq k \leq D-1$.
 In this section, we use $y$ to obtain a partition 
  $\lbrace O_i\rbrace_{i=1}^6$ of $\Gamma(x)$. We call this partition the $y$-partition of $\Gamma(x)$.
 We show that the $y$-partition of $\Gamma(x)$ is equitable in the sense of \cite[p.~436]{bcn}.
  
\begin{proposition} \label{prop:parts}
There exists a partition of $\Gamma(x)$ into $6$ nonempty subsets 
\begin{align*}
O_i \qquad \qquad (1 \leq i \leq 6)
\end{align*}
such that the following {\rm (i)--\rm (vi)}  holds.
\begin{enumerate}
\item[\rm (i)] 
$O_1= \Gamma(x) \cap \Gamma_{k-1}(y)$.
\item[\rm (ii)] The set $O_2$ consists of the vertices $z \in \Gamma(x)\cap \Gamma_k(y)$ such that 
\begin{align*}
\vert \Gamma(x) \cap \Gamma(z) \cap \Gamma_{k-1}(y) \vert = 2 q^{k-1}, \qquad \qquad 
\vert \Gamma(x) \cap \Gamma(z) \cap \Gamma_{k+1}(y) \vert = 0.
 \end{align*}
\item[\rm (iii)] The set $O_3$ consists of the vertices $z \in  \Gamma(x)\cap \Gamma_k(y)$ such that 
\begin{align*}
\vert \Gamma(x) \cap \Gamma(z) \cap \Gamma_{k-1}(y) \vert = 2 q^{k-1}-1, \qquad \qquad 
\vert \Gamma(x) \cap \Gamma(z) \cap \Gamma_{k+1}(y) \vert = 0.
\end{align*}
\item[\rm (iv)] The set  $O_4$ consists of the vertices $z \in  \Gamma(x) \cap \Gamma_k(y)$ such that 
\begin{align*}
\vert \Gamma(x) \cap \Gamma(z) \cap \Gamma_{k-1}(y) \vert = q^{k-1}, \qquad \qquad 
\vert \Gamma(x) \cap \Gamma(z) \cap \Gamma_{k+1}(y) \vert = q^D-q^k.
\end{align*}
\item[\rm (v)] The set $O_5$ consists of the vertices $z \in  \Gamma(x) \cap \Gamma_k(y)$ such that 
\begin{align*}
\vert \Gamma(x) \cap \Gamma(z) \cap \Gamma_{k-1}(y) \vert = q^{k-1}, \qquad \qquad 
\vert \Gamma(x) \cap \Gamma(z) \cap \Gamma_{k+1}(y) \vert = q^{N-D}-q^k.
\end{align*}
\item[\rm (vi)]
$O_6= \Gamma(x) \cap \Gamma_{k+1}(y)$.
\end{enumerate}
\end{proposition}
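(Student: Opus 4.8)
The plan is to realize the vertices concretely. Recall $X$ is the set of $D\times(N-D)$ matrices over $\mathrm{GF}(q)$, adjacency means rank-one difference, and by translation invariance we may assume $x=0$, so $y\in X$ is a matrix of rank $k=\partial(0,y)$. The neighbors of $0$ are exactly the rank-one matrices, which we write as $uv^t$ with $u\in\mathrm{GF}(q)^D\setminus\{0\}$ and $v\in\mathrm{GF}(q)^{N-D}\setminus\{0\}$ (the pair $(u,v)$ determined up to the torus action $(u,v)\mapsto(\lambda u,\lambda^{-1}v)$). For such a neighbor $z=uv^t$ we have $\partial(z,y)=\mathrm{rank}(y-uv^t)$, which is $k-1$, $k$, or $k+1$ depending on how the line spanned by $u$ sits relative to the column space of $y$ and how $v$ sits relative to the row space; this is the standard rank-perturbation dichotomy. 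The first step is therefore to pin down, for each rank-one neighbor $z=uv^t$, the value $\partial(z,y)$ in terms of whether $u\in\mathrm{Col}(y)$, $v\in\mathrm{Row}(y)$, and (when both hold) whether the $1$-dimensional perturbation is "compatible" with $y$ in the sense that $y$ and $y-uv^t$ have the same column/row space or not. This gives the three-way split $O_1 = \Gamma(x)\cap\Gamma_{k-1}(y)$, $O_6=\Gamma(x)\cap\Gamma_{k+1}(y)$, and the middle layer $\Gamma(x)\cap\Gamma_k(y)$, and it lets us compute $|O_1|,|O_6|$ as well as cardinalities of the finer pieces.

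Next I would introduce the refinement of the middle layer by the two local statistics appearing in (ii)--(v), namely $a(z)=|\Gamma(x)\cap\Gamma(z)\cap\Gamma_{k-1}(y)|$ and $b(z)=|\Gamma(x)\cap\Gamma(z)\cap\Gamma_{k+1}(y)|$ for $z\in\Gamma(x)\cap\Gamma_k(y)$. A vertex $w$ counted by $a(z)$ is a rank-one matrix $w=u'v'^t$ adjacent to both $0$ and $z$ — so $w-z$ also has rank one, i.e. $z=w+(\text{rank one})$ — with $\partial(w,y)=k-1$; similarly for $b(z)$. The key computation is: fix $z=uv^t$ in the middle layer, parametrize the common neighbors $w$ of $0$ and $z$ (these form a well-understood set, governed by $a_1$ and the structure of the local graph $K_{[D]}\times K_{[N-D]}$ with its $(q-1)$-clique extension, cf.\ Lemma~\ref{lem:localSpec}), and count how many of them land in $\Gamma_{k-1}(y)$ versus $\Gamma_{k+1}(y)$. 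I expect the middle-layer vertices to split according to two binary conditions — whether $u\in\mathrm{Col}(y)$ and whether $v\in\mathrm{Row}(y)$ — with a further subtlety when exactly one of these holds, which should separate $O_2$ from $O_3$ on one side and $O_4$ from $O_5$ on the other; the numerology $2q^{k-1}$ vs.\ $2q^{k-1}-1$ vs.\ $q^{k-1}$, and $q^D-q^k$ vs.\ $q^{N-D}-q^k$ vs.\ $0$, should then fall out by direct enumeration using $c_i,b_i,a_i$ from \eqref{eq:cibi}, \eqref{eq:ai}. One should also check that all six sets are genuinely nonempty, which is where the hypotheses $2\le k\le D-1$ and $N>2D$ are used: $k\le D-1$ guarantees $\Gamma_{k+1}(y)\ne\emptyset$ so $O_6\ne\emptyset$, $k\ge 2$ guarantees enough room on the $O_1$ side, and $N>2D$ with $q\ne 2$ keeps the two "one-sided" families $O_4,O_5$ distinct and nonempty (the asymmetry between $q^D$ and $q^{N-D}$ is exactly what distinguishes $O_4$ from $O_5$).

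The main obstacle I anticipate is the bookkeeping in the middle-layer refinement: one must carefully track, for a common neighbor $w=u'v'^t$ of $0$ and $z=uv^t$, the rank of $y-w$, and this depends on the relation between $\mathrm{span}(u')$, $\mathrm{span}(u)$, $\mathrm{Col}(y)$ simultaneously (and dually on the $v$ side). The clean way to organize this is to first reduce, via the $\mathrm{GL}_D\times\mathrm{GL}_{N-D}$ action (which acts on $\Gamma$ by automorphisms fixing $0$ and is transitive on rank-$k$ matrices), to the normal form $y=\begin{pmatrix} I_k & 0\\ 0 & 0\end{pmatrix}$; then every rank count becomes an explicit count of $(u,v)$-pairs with prescribed support patterns, and the five middle-layer values become a finite case analysis. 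I would present the proof by (a) establishing the rank-perturbation lemma, (b) fixing the normal form, (c) partitioning $\Gamma(x)\cap\Gamma_k(y)$ by the pair of conditions $u\in\mathrm{Col}(y)$, $v\in\mathrm{Row}(y)$ into four families, splitting the "both" family further by a compatibility condition, (d) computing $a(z),b(z)$ on each family and matching them to (ii)--(v), and (e) verifying nonemptiness. The equitability claim mentioned just before the proposition is then immediate once the partition is in hand, since the defining conditions of the $O_i$ are themselves phrased in terms of counts that are constant on each block — but strictly speaking that is a separate assertion from the one I am asked to prove here, so I would not belabor it.
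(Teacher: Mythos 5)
Your approach is genuinely different from the paper's, which disposes of Proposition~\ref{prop:parts} in one line by citing Proposition~6.1 of the companion paper \cite{williford} together with distance-transitivity of $\Gamma$; the detailed matrix-model computation you sketch is precisely the kind of work that is outsourced to that reference. As a plan it is sound: passing to $x=0$, using the $\mathrm{GL}_D\times\mathrm{GL}_{N-D}$-action to normalize $y$ to $\begin{pmatrix} I_k & 0\\ 0 & 0\end{pmatrix}$, classifying rank-one neighbors $z=uv^t$ by the position of $u$ relative to $\mathrm{Col}(y)$ and $v$ relative to $\mathrm{Row}(y)^t$, and then counting common neighbors of $0$ and $z$ in $\Gamma_{k\pm 1}(y)$ is exactly how one would verify (i)--(vi) directly. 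What your approach buys is an explicit, self-contained argument; what the paper buys is brevity by delegating to \cite{williford}.

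One structural slip worth flagging before you carry out the bookkeeping: you place the ``further subtlety'' in the wrong branch. The two binary conditions $u\in\mathrm{Col}(y)$, $v\in\mathrm{Row}(y)^t$ give four families, of which ``neither'' is $O_6$ (rank $k+1$). Inside the middle layer $\Gamma(x)\cap\Gamma_k(y)$ there are therefore only three families, and it is the ``both'' family that must split in two. Concretely, with the normal form one has $u=(u_1,u_2)$, $v=(v_1,v_2)$, and in the case $u_2=0=v_2$ the rank of $y-uv^t$ is governed by $v_1^tu_1$: it equals $k-1$ when $v_1^tu_1=1$ (giving $O_1$), and equals $k$ otherwise, with the residual split being $v_1^tu_1=0$ versus $v_1^tu_1\in\mathrm{GF}(q)\setminus\{0,1\}$ — this is where the factor $q-2$ in $|O_3|$ and the hypothesis $q\ne 2$ come from, and it separates $O_2$ from $O_3$. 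The two ``exactly one'' families need no further split: $u\in\mathrm{Col}(y)$, $v\notin\mathrm{Row}(y)^t$ gives $b(z)=q^D-q^k$ (hence $O_4$), and the other gives $b(z)=q^{N-D}-q^k$ (hence $O_5$). With that correction, and with the nonemptiness checks you already identify ($2\le k\le D-1$, $N>2D$, $q\ne 2$), your plan reconstructs what \cite{williford} establishes.
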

\begin{proof} By \cite[Proposition~6.1]{williford} and since $\Gamma$ is distance-transitive.
\end{proof}

\noindent The following definition is motivated by \cite[Section~6]{seong}.
\begin{definition}\label{def:xyPartition} \rm
The partition of $\Gamma(x)$ from Proposition \ref{prop:parts}  is called the {\it $y$-partition of $\Gamma(x)$}.
\end{definition}

 \noindent The $y$-partition of $\Gamma(x)$ is roughly described as follows.
 \begin{lemma} \label{lem:rough}
 The following {\rm (i)--(iii)} hold:
 \begin{enumerate}
 \item[\rm (i)] $O_1 = \Gamma(x) \cap \Gamma_{k-1}(y)$;
 \item[\rm (ii)] $O_2, O_3, O_4, O_5$ form a partition of $\Gamma(x) \cap \Gamma_k (y)$;
  \item[\rm (iii)] $O_6 = \Gamma(x) \cap \Gamma_{k+1} (y)$.
 \end{enumerate}
 \end{lemma}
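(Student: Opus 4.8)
The plan is to deduce Lemma~\ref{lem:rough} directly from Proposition~\ref{prop:parts}. Parts (i) and (iii) are immediate: they are literally statements (i) and (vi) of Proposition~\ref{prop:parts}. So the only thing requiring an argument is part (ii), namely that $O_2 \cup O_3 \cup O_4 \cup O_5 = \Gamma(x) \cap \Gamma_k(y)$. Since the $O_i$ $(1 \leq i \leq 6)$ already form a partition of $\Gamma(x)$ by Proposition~\ref{prop:parts}, and since $\Gamma(x)$ is partitioned by distance from $y$ into the three pieces $\Gamma(x)\cap\Gamma_{k-1}(y)$, $\Gamma(x)\cap\Gamma_k(y)$, $\Gamma(x)\cap\Gamma_{k+1}(y)$ (these are the only possibilities: a neighbor $z$ of $x$ satisfies $|\partial(y,z)-\partial(y,x)|\le 1$, and $\partial(y,z)=k$ is impossible only if $a_1$-type restrictions forbid it, but here both $k-1$ and $k+1$ occur and $k$ may or may not), the claim is that the $O_2,\dots,O_5$ pieces are exactly the vertices landing in the middle block.

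Concretely, I would argue as follows. By parts (ii)--(v) of Proposition~\ref{prop:parts}, each of $O_2, O_3, O_4, O_5$ is by definition a subset of $\Gamma(x)\cap\Gamma_k(y)$, so $O_2\cup O_3\cup O_4\cup O_5 \subseteq \Gamma(x)\cap\Gamma_k(y)$. For the reverse inclusion, take $z \in \Gamma(x)\cap\Gamma_k(y)$. Since $\{O_i\}_{i=1}^6$ partitions $\Gamma(x)$, the vertex $z$ lies in exactly one $O_i$. It cannot lie in $O_1 = \Gamma(x)\cap\Gamma_{k-1}(y)$ because $\partial(y,z)=k\neq k-1$, and it cannot lie in $O_6 = \Gamma(x)\cap\Gamma_{k+1}(y)$ because $\partial(y,z)=k\neq k+1$. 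Hence $z \in O_2\cup O_3\cup O_4\cup O_5$. This gives $\Gamma(x)\cap\Gamma_k(y) \subseteq O_2\cup O_3\cup O_4\cup O_5$, and combined with the previous inclusion we get equality. The fact that $O_2,O_3,O_4,O_5$ are pairwise disjoint is inherited from the partition property in Proposition~\ref{prop:parts}, so they form a partition of $\Gamma(x)\cap\Gamma_k(y)$, proving (ii).

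There is essentially no obstacle here — the lemma is a bookkeeping consequence of Proposition~\ref{prop:parts} plus the elementary triangle-inequality observation that for adjacent vertices $x,z$ one has $|\partial(y,x)-\partial(y,z)| \leq 1$. The only point worth a sentence is confirming that $\Gamma(x)$ decomposes as the disjoint union $\bigl(\Gamma(x)\cap\Gamma_{k-1}(y)\bigr)\cup\bigl(\Gamma(x)\cap\Gamma_k(y)\bigr)\cup\bigl(\Gamma(x)\cap\Gamma_{k+1}(y)\bigr)$, which follows because every $z\in\Gamma(x)$ has $\partial(y,z)\in\{k-1,k,k+1\}$ by the triangle inequality applied to the path of length $1$ from $x$ to $z$ and $\partial(x,y)=k$. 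I would present the proof in three or four lines, citing Proposition~\ref{prop:parts} for (i) and (vi)$=$(iii), and giving the short disjointness/covering argument above for (ii).
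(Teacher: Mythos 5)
Your proof is correct and takes the same route as the paper, which simply cites Proposition~\ref{prop:parts}; you have just spelled out the bookkeeping (subsets of the middle block plus disjointness from $O_1$, $O_6$) that the paper leaves implicit.
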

 \begin{proof} By Proposition  \ref{prop:parts}.
 \end{proof}
 
 \noindent See \cite[Sections~6, 7]{williford} for some data involving the $y$-partition of $\Gamma(x)$.
 \medskip
 
 \noindent Our next general goal is to show that the $y$-partition of $\Gamma(x)$ is equitable.
 
 \begin{lemma} \label{lem:Osize}
We have
 \begin{align*} 
& \vert O_1 \vert = \frac{q^{k-1} (q^k - 1) }{q - 1},  \qquad \qquad \quad
                       \vert O_2 \vert = \frac{(q^k - 1)(q^{k - 1} - 1)}{q - 1}, \\
                &     \vert O_3\vert= \frac{q^{k-1}(q^k - 1) (q - 2)}{q - 1},  \qquad \quad             
\vert O_4 \vert= \frac{(q^{N-D} - q^k)(q^k - 1)}{q - 1}, \\           
&\vert O_5 \vert= \frac{(q^D - q^k)(q^k - 1)}{q - 1}, \qquad \qquad
                       \vert O_6 \vert =\frac{(q^{N-D} - q^k)(q^D - q^k)}{q - 1}.        
\end{align*}
\end{lemma}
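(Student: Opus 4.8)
The plan is to compute each $|O_i|$ by combining the combinatorial description of the $y$-partition in Proposition~\ref{prop:parts} with the known intersection numbers of $\Gamma$ from \eqref{eq:cibi}, \eqref{eq:ai}. The starting point is Lemma~\ref{lem:rough}: the sets $O_1$ and $O_6$ have a direct interpretation, so
\begin{align*}
|O_1| &= |\Gamma(x)\cap\Gamma_{k-1}(y)| = p^{k}_{1,k-1} = c_k = q^{k-1}\frac{q^k-1}{q-1},\\
|O_6| &= |\Gamma(x)\cap\Gamma_{k+1}(y)| = p^{k}_{1,k+1} = b_k = \frac{(q^{N-D}-q^k)(q^{D}-q^k)}{q-1},
\end{align*}
which already matches the claimed values. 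For the remaining four sets, first I would observe that $|O_2|+|O_3|+|O_4|+|O_5| = |\Gamma(x)\cap\Gamma_k(y)| = p^{k}_{1,k} = a_k$, using \eqref{eq:ai}; this gives one linear relation among the four unknowns and a useful consistency check at the end.

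To pin down the individual sizes, I would set up a double-counting (handshake) argument on the equitable partition. Since $\lbrace O_i\rbrace_{i=1}^6$ is equitable on $\Gamma(x)$, for each $i$ and $j$ the number of neighbors in $O_j$ of a vertex $z\in O_i$ is a constant $\beta_{ij}$, and $|O_i|\beta_{ij}=|O_j|\beta_{ji}$. The description in Proposition~\ref{prop:parts} already supplies several of these $\beta$-entries: for $z\in O_i$ with $i\in\lbrace2,3,4,5\rbrace$ it tells us $|\Gamma(x)\cap\Gamma(z)\cap\Gamma_{k-1}(y)|$, which is exactly the number of neighbors of $z$ in $O_1$, namely $\beta_{i1}$; and similarly $|\Gamma(x)\cap\Gamma(z)\cap\Gamma_{k+1}(y)| = \beta_{i6}$. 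Thus
\begin{align*}
\beta_{21}=2q^{k-1},\quad \beta_{31}=2q^{k-1}-1,\quad \beta_{41}=\beta_{51}=q^{k-1},\quad \beta_{26}=\beta_{36}=0,\quad \beta_{46}=q^{D}-q^k,\quad \beta_{56}=q^{N-D}-q^k.
\end{align*}
I would then compute the reverse counts $\beta_{1i}$ and $\beta_{6i}$ directly: $\beta_{1i}$ is the number of vertices of $O_i$ adjacent to a fixed $z\in O_1$, and the total over $i$ is the valency $a_1$ of $O_1$ within the local graph, while $\beta_{1i}=0$ for the pairs where $\beta_{i1}=0$ would force it by symmetry is not quite available — instead I would count $\beta_{1i}$ using the distance structure in $\Gamma$ itself (e.g.\ a neighbor $z$ of $x$ at distance $k-1$ from $y$ sees, among vertices of $\Gamma(x)\cap\Gamma_k(y)$, a number determined by triple-intersection data), or more efficiently run the handshake equalities $|O_1|\beta_{1i}=|O_i|\beta_{i1}$ and $|O_6|\beta_{6i}=|O_i|\beta_{i6}$ together with $|O_1|$, $|O_6|$ already known. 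Concretely, from $|O_i|\beta_{i1}=|O_1|\beta_{1i}$ for $i=2,3$ and $|O_i|\beta_{i6}=|O_6|\beta_{6i}$ for $i=4,5$, once the small counts $\beta_{1i}$ (for $i=2,3$) and $\beta_{6i}$ (for $i=4,5$) are known, each $|O_i|$ drops out. These last small counts $\beta_{12},\beta_{13},\beta_{64},\beta_{65}$ should be obtainable from the explicit bilinear-forms model (rank-one perturbations) or cited from \cite[Sections~6,7]{williford}, where the data for the $y$-partition is tabulated.

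The main obstacle I anticipate is getting the ``reverse'' intersection constants $\beta_{1i}$ and $\beta_{6i}$ — the numbers of neighbors that a vertex in $O_1$ or $O_6$ has in the middle blocks $O_2,\ldots,O_5$ — since Proposition~\ref{prop:parts} only records the constants pointing \emph{out of} the middle blocks. I would resolve this either by appeal to the equitable-partition quotient matrix already worked out in \cite{williford} (the reference explicitly promises ``data involving the $y$-partition of $\Gamma(x)$''), or, failing a clean citation, by a direct count in the $D\times(N-D)$ matrix model: fix $x=0$, let $y$ have rank $k$, and classify the rank-one matrices $z$ according to how $z$ and $y-z$ sit relative to the row and column spaces of $y$; this classification is exactly what produces the six blocks, and counting within it gives every entry of the quotient matrix. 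Once all $\beta$'s are in hand, the six size formulas follow from the handshake identities and \eqref{eq:valency}, and I would finish by verifying $\sum_{i=1}^{6}|O_i| = \kappa$ and $|O_2|+|O_3|+|O_4|+|O_5|=a_k$ as an arithmetic check against \eqref{eq:valency}, \eqref{eq:ai}.
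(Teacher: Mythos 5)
Your proposal is correct and ultimately lands in the same place as the paper: the paper's proof of this lemma is a one-line appeal to the combinatorial data in \cite[Sections~6,~7]{williford}, and your plan likewise bottoms out in either citing that data or redoing the rank-one matrix count there. What you add is useful scaffolding: the observation that $\vert O_1\vert = c_k$ and $\vert O_6\vert = b_k$ drop straight out of \eqref{eq:cibi} without any reference to the companion paper, and the handshake identities $\vert O_i\vert\beta_{i1}=\vert O_1\vert\beta_{1i}$ (for $i=2,3$) and $\vert O_i\vert\beta_{i6}=\vert O_6\vert\beta_{6i}$ (for $i=4,5$) then recover the four middle block sizes from a small number of quotient-matrix entries; one can check directly (using the values of $\beta_{12}=2(q^{k-1}-1)$, $\beta_{13}=(2q^{k-1}-1)(q-2)$, $\beta_{64}=\beta_{65}=q^k-1$ from Proposition~\ref{lem:Centry}, or equivalently from \cite{williford}) that this reproduces all six formulas. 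One small caveat you should make explicit: using the handshake with constant $\beta_{1i}$ presupposes equitability in the direction not supplied by Proposition~\ref{prop:parts}; in the paper's organization this is only stated later (Proposition~\ref{lem:Centry}), so your argument implicitly pulls that fact forward from \cite{williford} as well. Since \cite{williford} supplies all of this data at once, the dependency is harmless, but it is worth flagging to avoid an apparent circularity.
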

\begin{proof} By combinatorial counting using the data in \cite[Sections~6,~7]{williford}.
\end{proof}

 
 \begin{proposition} \label{lem:Centry}
 The partition $\lbrace O_i \rbrace_{i=1}^6$ is equitable. 
  Define a $6 \times 6$ matrix $C$ as follows: for $1\leq i,j\leq 6$ the entry $C_{i,j}$ is the number of vertices in $O_j$ that are adjacent to 
 a given vertex in $O_i$. We have $C=$
\begin{tiny}
 \begin{align*}
  \begin{pmatrix}
  2(q^{k-1} - 1) &2(q^{k - 1} - 1)& (2q^{k - 1} - 1)(q - 2)& q^{N-D} - q^k& q^D - q^k& 0 \\
  2q^{k - 1}&  2 q^{k-1} - 2 - q& 2 q^{k-1}(q - 2)& q^{N-D} - q^k& q^D- q^k& 0  \\
  2q^{k-1} - 1& 2(q^{k-1} - 1)& 2q^k - 4q^{k-1}- q + 1& q^{N-D} - q^k& q^D - q^k& 0 \\
  q^{k-1} & q^{k-1}- 1& q^{k-1}(q - 2) & q^{N-D} - q - 1& 0& q^D  - q^k \\
  q^{k-1} & q^{k-1}  - 1& q^{k-1} (q - 2)& 0& q^D - q - 1& q^{N-D} - q^k \\
  0& 0& 0& q^k - 1& q^k - 1& q^{N-D} + q^D - 2 q^k - q
  \end{pmatrix}.
 \end{align*}
\end{tiny}
 \end{proposition}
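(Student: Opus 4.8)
The plan is to verify that the partition $\lbrace O_i\rbrace_{i=1}^6$ is equitable by directly exhibiting the matrix $C$ and checking that each entry $C_{i,j}$ is constant on $O_i$; this is really a combinatorial/counting task carried out inside the explicit algebraic model of $\Gamma$, namely $D\times(N-D)$ matrices over ${\rm GF}(q)$ with the rank-one adjacency. First I would fix a vertex $z\in O_i$ and count $|\Gamma(z)\cap O_j|$ for each $j$. The rows corresponding to $O_1$ and $O_6$ are the subtlest, since those sets are $\Gamma(x)\cap\Gamma_{k-1}(y)$ and $\Gamma(x)\cap\Gamma_{k+1}(y)$; here one should combine Proposition~\ref{prop:parts} with the known triple-intersection data for the bilinear forms graph cited from \cite[Sections~6,~7]{williford}, together with the distance-transitivity of $\Gamma$ to guarantee that the counts do not depend on the particular choice of $z$ within $O_i$.

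The key steps, in order, are as follows. First I would record the relevant local structure: by Lemma~\ref{lem:localSpec} and the description of $\Gamma(x)$ as a $(q-1)$-clique extension of $K_{\lbrack D\rbrack}\times K_{\lbrack N-D\rbrack}$, every vertex $z\in\Gamma(x)$ has exactly $a_1$ neighbours inside $\Gamma(x)$, so for each row the entries of $C$ must sum to $a_1=q^{N-D}+q^D-q-2$; this gives a running consistency check. Second, I would use the defining conditions in Proposition~\ref{prop:parts}(ii)--(v), which already pin down $|\Gamma(x)\cap\Gamma(z)\cap\Gamma_{k-1}(y)|$ and $|\Gamma(x)\cap\Gamma(z)\cap\Gamma_{k+1}(y)|$ for $z\in O_2,\dots,O_5$; these numbers are exactly $C_{i,1}$ and $C_{i,6}$ for $i=2,\dots,5$, so those two columns are immediate. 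Third, for the remaining columns $C_{i,2},\dots,C_{i,5}$ (with $i=2,\dots,5$) I would further stratify $\Gamma(x)\cap\Gamma(z)\cap\Gamma_k(y)$ according to which of $O_2,\dots,O_5$ each common neighbour lies in, again reading off the combinatorial data from \cite{williford}; the sizes in Lemma~\ref{lem:Osize} serve as a global check via $\sum_i |O_i| C_{i,j}=\sum_j |O_j| C_{j,i}$ coming from counting edges between $O_i$ and $O_j$ two ways (the matrix $\mathrm{diag}(|O_i|)C$ is symmetric). Fourth, for rows $1$ and $6$ I would count $|\Gamma(z)\cap O_j|$ for $z\in O_1$ and $z\in O_6$: since $z\in O_1$ has $\partial(z,y)=k-1$, its neighbours in $\Gamma(x)$ are distributed among $\Gamma_{k-2}(y)\cap\Gamma(x)$ (which meets $\Gamma(x)$ only through $O_1$, as $k-2 \neq k-1$ forces membership issues handled by the equitability of distance partitions), $\Gamma_{k-1}(y)\cap\Gamma(x)=O_1$, and $\Gamma_k(y)\cap\Gamma(x)=O_2\cup\cdots\cup O_5$, with the split among the latter four determined by the local sub-structure near $z$; symmetrically for $O_6$.

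The main obstacle I anticipate is the case analysis for distinguishing $O_2$ from $O_3$ and, separately, $O_4$ from $O_5$ when counting neighbours: $O_2,O_3$ are separated only by a $\pm 1$ in the count of common neighbours at distance $k-1$ from $y$, and $O_4,O_5$ are separated by the asymmetric roles of $q^D$ versus $q^{N-D}$ (reflecting the two Cartesian factors $K_{\lbrack D\rbrack}$ and $K_{\lbrack N-D\rbrack}$ of the local graph). Keeping these two ``directions'' in the local graph straight, and making sure the counts from \cite[Sections~6,~7]{williford} are fed in with the correct orientation, is where a careful bookkeeping is essential; every other step is routine once the data are assembled. I would close by noting that constancy of all $C_{i,j}$ on each $O_i$ is exactly the assertion that $\lbrace O_i\rbrace_{i=1}^6$ is equitable, and that the displayed matrix $C$ is the quotient matrix of this equitable partition, completing the proof.
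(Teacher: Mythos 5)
Your approach is essentially the same as the paper's: the paper proves Proposition~\ref{lem:Centry} by direct combinatorial counting from the data in \cite[Sections~6,~7]{williford}, which is exactly the plan you lay out (with sensible sanity checks such as the row-sum $a_1$ and the symmetry of ${\rm diag}(|O_i|)\,C$, the latter being precisely Lemma~\ref{lem:Ceig}). One small slip worth correcting: for $z\in O_1$ you write that its neighbours in $\Gamma(x)$ may lie in $\Gamma_{k-2}(y)\cap\Gamma(x)$, but since $\partial(x,y)=k$ the triangle inequality forces $\Gamma(x)\cap\Gamma_i(y)=\emptyset$ for all $i<k-1$, so $\Gamma_{k-2}(y)\cap\Gamma(x)$ is empty and the neighbours of $z$ inside $\Gamma(x)$ split only among $O_1$ and $O_2\cup\cdots\cup O_5$ (consistent with $C_{1,6}=0$); this does not affect the validity of the overall argument.
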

 \begin{proof}
 By combinatorial counting  using the data in \cite[Sections~6,~7]{williford}.
 \end{proof}

  \begin{lemma} \label{lem:Ceig}  The following matrix is symmetric:
  \begin{align*}
  {\rm diag}\bigl( \vert O_1 \vert, \vert O_2 \vert, \vert O_3 \vert, \vert O_4 \vert, \vert O_5 \vert, \vert O_6 \vert \bigr)  C.
  \end{align*}
  \end{lemma}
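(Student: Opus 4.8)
The plan is to use a double-counting argument; the explicit formulas in Lemma \ref{lem:Osize} and Proposition \ref{lem:Centry} are not needed. Write $M = {\rm diag}\bigl(\vert O_1 \vert, \ldots, \vert O_6 \vert\bigr)\, C$, so that for $1 \leq i,j \leq 6$ we have $M_{i,j} = \vert O_i \vert\, C_{i,j}$. Fix $i$ and $j$, and let $N_{i,j}$ denote the number of ordered pairs $(u,v)$ with $u \in O_i$, $v \in O_j$, and $u$ adjacent to $v$ in the local graph $\Gamma(x)$. First I would count $N_{i,j}$ by summing over $u \in O_i$: since the partition $\lbrace O_\ell \rbrace_{\ell=1}^6$ is equitable by Proposition \ref{lem:Centry}, each $u \in O_i$ is adjacent to exactly $C_{i,j}$ vertices of $O_j$, so $N_{i,j} = \vert O_i \vert\, C_{i,j} = M_{i,j}$.

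Next I would count $N_{i,j}$ the other way. Because adjacency in $\Gamma(x)$ is a symmetric relation, the map $(u,v) \mapsto (v,u)$ is a bijection from the set of pairs counted by $N_{i,j}$ onto the set of pairs counted by $N_{j,i}$; hence $N_{i,j} = N_{j,i}$. Combining this with the previous paragraph gives $M_{i,j} = N_{i,j} = N_{j,i} = M_{j,i}$, which is exactly the asserted symmetry of $M$.

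I do not anticipate a real obstacle here: the statement is a general feature of equitable partitions of an undirected graph, and the argument above is essentially bookkeeping. As a sanity check one could instead verify the six identities $\vert O_i \vert\, C_{i,j} = \vert O_j \vert\, C_{j,i}$ for $1 \leq i < j \leq 6$ directly from the displayed formulas, but this is a tedious computation that the conceptual argument makes unnecessary.
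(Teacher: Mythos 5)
Your proof is correct and uses the same double-counting argument the paper gives: counting ordered adjacent pairs $(z,z')$ with $z \in O_i$, $z' \in O_j$ in two ways to obtain $\vert O_i \vert\, C_{i,j} = \vert O_j \vert\, C_{j,i}$.
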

  \begin{proof} For $1 \leq i,j\leq 6$ we count the number of ordered pairs $(z,z')$ of vertices such that $z \in O_i$ and $z' \in O_j$ and $z,z'$ are adjacent.
  Performing this count in two ways, we obtain
  \begin{align*}
  \vert O_i \vert C_{i,j} = \vert O_j \vert C_{j,i}.
  \end{align*}
  The result follows.
  \end{proof}
  
  \begin{lemma} \label{lem:CSpec} 
  The matrix $C$ is diagonalizable.  Its eigenvalues and
their multiplicities are given in the table below:
\begin{align*} 
\begin{tabular}[t]{c|c}
{\rm eigenvalue }& {\rm multiplicity}
 \\
 \hline
$a_1=q^{N-D} +q^D-q-2$ & $1$ \\
$q^{N-D}-q-1$  &$1$ \\
$q^D-q-1$ & $1$ \\
$-1$ & $1$ \\
$-q$ & $2$
    \end{tabular}
\end{align*}
\end{lemma}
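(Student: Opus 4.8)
The plan is to use the fact that $C$ is similar, after a diagonal scaling, to a symmetric matrix (Lemma~\ref{lem:Ceig}), which immediately gives diagonalizability; so the real content is the determination of the spectrum. I would proceed as follows. First, since $C$ is the quotient matrix of the equitable partition $\lbrace O_i\rbrace_{i=1}^6$ of the local graph $\Gamma(x)$, each eigenvalue of $C$ is an eigenvalue of the adjacency matrix $\tilde A$ of $\Gamma(x)$; hence by Lemma~\ref{lem:localSpec} the eigenvalues of $C$ lie in the set $\lbrace a_1,\ q^{N-D}-q-1,\ q^D-q-1,\ -1,\ -q\rbrace$. Since $C$ is $6\times 6$ and diagonalizable, its eigenvalue multiplicities are nonnegative integers summing to $6$. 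It therefore suffices to pin down those six multiplicities.

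For the multiplicity of $a_1$: the all-ones vector $(1,1,1,1,1,1)^t$ is a $C$-eigenvector with eigenvalue $\sum_j C_{i,j}=a_1$ (the valency of the regular graph $\Gamma(x)$), so $a_1$ occurs; and it occurs with multiplicity exactly $1$ because, using the symmetrized matrix $\mathrm{diag}(|O_i|)\,C$ from Lemma~\ref{lem:Ceig} together with Perron--Frobenius applied to the (nonnegative, and in fact irreducible by inspection of the zero pattern of $C$ via the connectivity of the partition) matrix $C$, the top eigenvalue is simple. For the remaining eigenvalues, I would compute the characteristic polynomial of $C$ directly, or more cheaply compute $\det(C-\eta I)$ at the four candidate values and the trace and sum of $2\times 2$ principal minors to fix the multiplicity pattern. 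A clean route: write $\mathrm{tr}(C)=\sum_i C_{i,i}$; by \eqref{eq:cibi}, \eqref{eq:ai} and the entries displayed in Proposition~\ref{lem:Centry}, this should simplify to $a_1 + (q^{N-D}-q-1)+(q^D-q-1)+(-1)+2(-q)$, which already forces the asserted multiplicities $(1,1,1,1,2)$ provided one also checks that each of the four nontrivial candidates is genuinely an eigenvalue (i.e., $\det(C-\eta I)=0$) and that no candidate drops out; a second symmetric-function check (e.g.\ $\mathrm{tr}(C^2)$ matching $a_1^2+(q^{N-D}-q-1)^2+(q^D-q-1)^2+1+2q^2$) removes any ambiguity between, say, $(1,1,1,1,2)$ and $(1,0,2,1,2)$ and its variants. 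Since all multiplicities are then positive, diagonalizability also follows without the Lemma~\ref{lem:Ceig} argument, but I would keep Lemma~\ref{lem:Ceig} as the conceptually cleanest justification of diagonalizability.

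The main obstacle is the bookkeeping in these symmetric-function computations: the entries of $C$ are unwieldy polynomials in $q$, $q^k$, $q^D$, $q^{N-D}$, and verifying that $\mathrm{tr}(C)$ and $\mathrm{tr}(C^2)$ collapse to the stated combinations of the five eigenvalues is a nontrivial algebraic simplification. One should be careful that the cancellations genuinely occur and do not secretly depend on further relations among $k, D, N$ beyond $2\le k\le D-1$ and $N>2D$. An alternative that sidesteps heavy computation: exhibit explicit eigenvectors. For the eigenvalues $q^{N-D}-q-1$ and $q^D-q-1$ one expects one-dimensional eigenspaces spanned by simple integer vectors supported mainly on the coordinates corresponding to $O_4,O_5,O_6$ (these eigenvalues reflect the two Cartesian factors $K_{[D]}$, $K_{[N-D]}$); for $-1$ and $-q$ one similarly writes down vectors using the block structure of $C$. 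If such eigenvectors can be guessed from the combinatorics of the $(q-1)$-clique extension of $K_{[D]}\times K_{[N-D]}$, then verifying $Cv=\eta v$ for each is a short direct check, and counting them gives the multiplicities; the $-q$ eigenspace being two-dimensional is then the one point needing a genuine argument, which I would settle by the trace identity $\mathrm{tr}(C)=a_1+(q^{N-D}-q-1)+(q^D-q-1)-1-2q$.
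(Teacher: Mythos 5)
Your route is correct but genuinely different from the paper's. The paper's ``By linear algebra'' is fleshed out immediately afterward: Definitions~\ref{def:vth} and~\ref{def:Hmatrix} write down an explicit $6\times 6$ matrix $H$ of eigenvectors, Lemma~\ref{lem:Hcol} checks $CH=H\,\mathrm{diag}(\vartheta_1,\dots,\vartheta_6)$ by matrix multiplication, and Lemmas~\ref{lem:HGorthog}--\ref{lem:Hinv} show $H$ is invertible via the orthogonality relation $H^t\,\mathrm{diag}(|O_i|)\,H=\mathrm{diag}(\eta_i)$; this gives the spectrum and diagonalizability in one stroke. Your primary route instead combines three general facts: (i) the quotient matrix of an equitable partition of $\Gamma(x)$ has its eigenvalues (with multiplicities) embedded in those of $\tilde A$, so by Lemma~\ref{lem:localSpec} the eigenvalues of $C$ lie in the listed five-element set; (ii) the symmetrization in Lemma~\ref{lem:Ceig} gives diagonalizability; (iii) the all-ones vector plus a simplicity argument pins $m(a_1)=1$, and the trace then forces the remaining multiplicities. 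This is more conceptual and avoids having to guess $H$, but you still pay the price of verifying the trace identity $\mathrm{tr}(C)=2q^{N-D}+2q^D-5q-5$ against the messy diagonal entries of $C$ -- roughly the same bookkeeping the paper defers to matrix multiplication. Two small tightenings of your argument: you can avoid Perron--Frobenius entirely, since (i) already bounds the $C$-multiplicity of $a_1$ by its $\tilde A$-multiplicity, which is $1$ by Lemma~\ref{lem:localSpec}; and the $\mathrm{tr}(C^2)$ check is unnecessary once you read the trace equation as a polynomial identity in $q^{N-D}$, $q^D$, $q$ (the $q^k$-terms cancel) -- matching coefficients of $q^{N-D}$ and $q^D$ and then of $q$ and $1$ gives $(m_2,m_3,m_5,m_4)=(1,1,2,1)$ uniquely, with no integrality or nonnegativity needed. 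The ``alternative'' you sketch at the end is essentially the paper's method; your guess that the eigenvectors for $q^{N-D}-q-1$ and $q^D-q-1$ are supported mainly on $O_4,O_5,O_6$ is not quite what the actual columns of $H$ look like (they are nonzero in every coordinate but take only two distinct values), though the underlying intuition about the two Cartesian factors is sound.
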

\begin{proof} By linear algebra.
\end{proof}

\begin{definition} \label{def:vth}
\rm For notational convenience, we define
\begin{align*}
&\vartheta_1 = a_1=q^{N-D}+q^D-q-2, \qquad \qquad \vartheta_2 = q^{N-D}-q-1, \\
&\vartheta_3 = q^D-q-1, \qquad \qquad \vartheta_4 = -1, \qquad \qquad \vartheta_5=\vartheta_6=-q.
\end{align*}
\end{definition}

\noindent Our next general goal is to display some column eigenvectors for $C$.

\begin{definition} \label{def:Hmatrix} \rm Let $H$ denote the following $6 \times 6$ matrix:
\begin{align*}
\begin{pmatrix}
1 &\frac{q^D-q^k}{q-1}&\frac{q^{N-D}-q^k}{q-1}&q-2&\frac{q^N-q^{N-D+1}-q^{D+1} + q^{k+1} -q^k+q}{(q-1)^2}&\frac{(q^D-q^k)(q^{N-D}-q^k)(q^{k-1}-1)}{q^k(q-1)} \\
1 &\frac{q^D-q^k}{q-1}&\frac{q^{N-D}-q^k}{q-1}&0&\frac{q^k-q^N}{q-1}& \frac{(q^D-q^k)(q^{N-D}-q^k)}{q(q-1)} \\
1 &\frac{q^D-q^k}{q-1}&\frac{q^{N-D}-q^k}{q-1}&-1&\frac{q^N-q^{N-D+1}-q^{D+1} + q^{k+1} -q^k+q}{(q-1)^2}&\frac{(q^D-q^k)(q^{N-D}-q^k)(q^{k-1}-1)}{q^k(q-1)} \\
1 &\frac{q^D-q^k}{q-1}& - \frac{q^k-1}{q-1}&0&\frac{q^k-q^D}{q-1}&- \frac{(q^D-q^k)(q^{k-1}-1)}{q-1}\\
1 &- \frac{q^k-1}{q-1}&\frac{q^{N-D}-q^k}{q-1}&0&\frac{q^k-q^{N-D}}{q-1}& - \frac{(q^{N-D}-q^k)(q^{k-1}-1)}{q-1}\\
1 &- \frac{q^k-1}{q-1}&- \frac{q^k-1}{q-1}&0&\frac{q^k-1}{q-1}& \frac{(q^k-1)(q^{k-1}-1)}{q-1}
\end{pmatrix}
\end{align*}
\end{definition}

\begin{lemma} \label{lem:Hcol} The columns of $H$ are eigenvectors for $C$. More precisely,
\begin{align*}
 C H = H {\rm diag} \bigl(\vartheta_1, \vartheta_2, \vartheta_3, \vartheta_4, \vartheta_5, \vartheta_6\bigr). 
\end{align*}
\end{lemma}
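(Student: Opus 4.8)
The claim is that the six displayed columns of the matrix $H$ from Definition \ref{def:Hmatrix} are eigenvectors of $C$ from Proposition \ref{lem:Centry}, with eigenvalues $\vartheta_1,\dots,\vartheta_6$ in order. The plan is a direct verification: for each $j\in\{1,\dots,6\}$, compute the matrix-vector product $C$ times the $j$th column of $H$ and check it equals $\vartheta_j$ times that column, i.e., check the six scalar identities $(CH)_{i,j}=\vartheta_j H_{i,j}$ for $1\le i\le 6$. Since $C$ and $H$ have explicit entries that are rational functions of $q,q^k,q^D,q^{N-D}$ (with the extra factor $q^{k-1}$ appearing), each identity reduces to a polynomial identity in the variables $Q:=q$, $u:=q^k$, $v:=q^D$, $w:=q^{N-D}$ after clearing denominators, which can be checked by expansion.

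First I would organize the bookkeeping to keep the computation manageable. Columns $1$, $2$, $3$ of $H$ are particularly simple: column $1$ is the all-ones vector, and the eigenvalue identity $C\mathbf 1=\vartheta_1\mathbf 1=a_1\mathbf 1$ is just the statement that each row of $C$ sums to $a_1$ (equivalently, each row sum equals the valency of the local graph), which follows since $\{O_i\}$ partitions $\Gamma(x)$ and $\Gamma(x)$ is regular with valency $a_1$; this can be read off by summing the rows of the displayed matrix $C$ and using $c_i+a_i+b_i=\kappa$-type identities, or simply cited from the fact that the quotient matrix of an equitable partition of a $\kappa$-regular graph has constant row sum $\kappa$. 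For columns $2$ and $3$ the first three entries are constant (equal to $\tfrac{q^D-q^k}{q-1}$, resp.\ $\tfrac{q^{N-D}-q^k}{q-1}$) and the block structure of $C$ (rows $1$--$3$ versus rows $4$--$6$) makes the relevant sums collapse to a few terms. So the real work is columns $4$, $5$, $6$, with eigenvalues $-1$, $-q$, $-q$.

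A cleaner route that I would actually prefer is to avoid six separate brute-force checks by exploiting Lemma \ref{lem:Ceig}: the matrix $\tilde C:=\mathrm{diag}(|O_1|,\dots,|O_6|)\,C$ is symmetric. Hence if $H$ is set up so that its columns are mutually orthogonal with respect to the inner product $\langle u,v\rangle_O := u^t\,\mathrm{diag}(|O_i|)\,v$ — which one checks directly from the explicit entries in Definition \ref{def:Hmatrix} and Lemma \ref{lem:Osize}, again a finite collection of polynomial identities — then $H^t\,\mathrm{diag}(|O_i|)\,C\,H$ is both symmetric (because $\tilde C$ is) and, once we know $CH=H\Lambda$ would give it diagonal, ... but this is circular. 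Instead: compute $\tilde C H = \mathrm{diag}(|O_i|) C H$ and check it equals $\mathrm{diag}(|O_i|) H \Lambda$ where $\Lambda=\mathrm{diag}(\vartheta_1,\dots,\vartheta_6)$; symmetry of $\tilde C$ halves the number of independent entries to verify. Alternatively, since Lemma \ref{lem:CSpec} already tells us $C$ is diagonalizable with exactly these eigenvalues and these multiplicities, it suffices to verify, for each $j$, that the $j$th column $h_j$ of $H$ is nonzero and lies in the $\vartheta_j$-eigenspace; and for the repeated eigenvalue $\vartheta_5=\vartheta_6=-q$ one must additionally check that $h_5$ and $h_6$ are linearly independent (visible from, e.g., their fourth or fifth coordinates). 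With the multiplicities known, confirming $Ch_j=\vartheta_j h_j$ for one representative per eigenvalue plus independence in the $2$-dimensional case is logically enough, but in practice one still must expand $Ch_j$.

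\textbf{Main obstacle.} There is no conceptual difficulty; the entire lemma is a determinacy computation. The main obstacle is purely the algebraic bulk: the entries of $C$ and $H$ are messy rational expressions in $q$ and in $q^{k},q^{D},q^{N-D}$, and verifying $(CH)_{i,j}=\vartheta_j H_{i,j}$ for the bottom-right entries (rows $4$--$6$, columns $4$--$6$, where the factors $(q^{k-1}-1)$, $q^k$ in denominators, and the quadratic numerators $q^N-q^{N-D+1}-q^{D+1}+q^{k+1}-q^k+q$ all appear) involves clearing denominators and expanding polynomials with on the order of a dozen monomials in $q,q^k,q^D,q^{N-D}$. I would handle this by substituting independent indeterminates $u=q^k,v=q^D,w=q^{N-D}$ (keeping $q$ itself as a separate indeterminate, noting $q^{k-1}=u/q$ and $q^{k+1}=uq$), so that each identity becomes a genuine polynomial identity in $\mathbb{Q}[q,u,v,w]$ after multiplying through by $(q-1)^3 q^k$, and then expand term by term — ideally with computer algebra, citing it as a routine verification. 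The one spot requiring a little care is ensuring the stated eigenvalue-to-column matching is correct for the $-q$ eigenspace and that columns $5$ and $6$ are independent; that is immediate from inspection of the explicit matrix.
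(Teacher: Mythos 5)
Your proposal is correct and matches the paper's approach: the paper simply states that the identity $CH = H\,\mathrm{diag}(\vartheta_1,\dots,\vartheta_6)$ "is checked by matrix multiplication," which is the direct verification you describe. The additional bookkeeping ideas (exploiting row sums for column 1, symmetry of $\mathrm{diag}(|O_i|)\,C$, substituting $u=q^k$, $v=q^D$, $w=q^{N-D}$) are sensible ways to organize that computation but are not needed as separate mathematical input.
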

  \begin{proof} The above equation is checked by matrix multiplication.
  \end{proof}

  \begin{lemma} \label{lem:HGorthog} We have
  \begin{align*}
 & H^t   {\rm diag}\bigl( \vert O_1 \vert, \vert O_2 \vert, \vert O_3 \vert, \vert O_4 \vert, \vert O_5 \vert, \vert O_6 \vert \bigr) H
 = {\rm diag} \bigl( \eta_1, \eta_2, \eta_3, \eta_4, \eta_5, \eta_6\bigr),
   \end{align*}
   where 
   \begin{align*}
   \eta_1 &= \frac{(q^D-1)(q^{N-D}-1)}{q-1}, \\
   \eta_2 &= \frac{(q^D-q^k)(q^k-1)(q^D-1)(q^{N-D}-1)}{(q-1)^3}, \\
     \eta_3 &= \frac{(q^{N-D}-q^k)(q^k-1)(q^D-1)(q^{N-D}-1)}{(q-1)^3}, \\
     \eta_4 & = (q-2)q^{k-1} (q^k-1), \\
     \eta_5 &= \frac{ (q^{N-D}-q)(q^{D-1}-1)+q^{N-k}(q^{k-1}-1)(q-1)}{(q-1)^2}\\
     & \qquad \qquad \times \frac{ q^k(q^k-1)(q^D-1)(q^{N-D}-1)}{(q-1)^2},\\
      \eta_6 &= \frac{ (q^{N-D}-q)(q^{D-1}-1)+q^{N-k}(q^{k-1}-1)(q-1)}{(q-1)^2} \\
      & \qquad \qquad \times \frac{ (q^k-1)(q^{k-1}-1)(q^D-q^k)(q^{N-D}-q^k)}{(q-1)q}.
   \end{align*}
  \end{lemma}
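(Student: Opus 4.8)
\textbf{Proof proposal for Lemma~\ref{lem:HGorthog}.}

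The plan is to verify the stated identity by reducing it to a finite check combined with a structural observation, rather than by computing all $36$ entries of the product $H^t\,{\rm diag}(|O_1|,\dots,|O_6|)\,H$ directly. First I would invoke Lemma~\ref{lem:Ceig}, which says that $G C$ is symmetric, where $G = {\rm diag}(|O_1|,\dots,|O_6|)$. Combined with Lemma~\ref{lem:Hcol}, which gives $CH = H\,{\rm diag}(\vartheta_1,\dots,\vartheta_6)$, this forces the off-diagonal entries of $H^t G H$ to vanish whenever the corresponding $\vartheta$'s are distinct: indeed, for columns $h_i, h_j$ of $H$ we have $\vartheta_i \langle h_i, G h_j\rangle_{\mathbb R} = (Ch_i)^t G h_j = h_i^t (C^t G) h_j = h_i^t (G C) h_j = h_i^t G (C h_j) = \vartheta_j h_i^t G h_j$, using symmetry of $GC$. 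Hence $(\vartheta_i - \vartheta_j) h_i^t G h_j = 0$. Since $\vartheta_1,\vartheta_2,\vartheta_3,\vartheta_4$ and the common value $\vartheta_5 = \vartheta_6 = -q$ are mutually distinct for $q\neq 2$ (and $N > 2D \geq 6$, so $q^{N-D} \neq q^D$ except in the degenerate case which our hypotheses exclude), the only possibly nonzero off-diagonal entries are $h_5^t G h_6$ and $h_6^t G h_5$.

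Next I would dispatch the entry $h_5^t G h_6$ directly. This is a single scalar computed from the fifth and sixth columns of $H$ (as given in Definition~\ref{def:Hmatrix}) together with the sizes $|O_i|$ from Lemma~\ref{lem:Osize}; one writes out the six-term sum $\sum_{i=1}^6 |O_i| H_{i,5} H_{i,6}$ and checks it equals zero. This is a routine, if somewhat lengthy, rational-function identity in $q$, $q^k$, $q^D$, $q^{N-D}$, and I would carry it out by common-denominator manipulation. Once the off-diagonal entries are all shown to vanish, the matrix $H^t G H$ is diagonal, and it remains to compute its six diagonal entries $\eta_j = \sum_{i=1}^6 |O_i| H_{i,j}^2$ for $1 \leq j \leq 6$, again using Lemma~\ref{lem:Osize} and Definition~\ref{def:Hmatrix}. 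Each $\eta_j$ is another explicit rational-function computation; for instance $\eta_1 = \sum_i |O_i|$ collapses to $\kappa = (q^D-1)(q^{N-D}-1)/(q-1)$ by \eqref{eq:valency}, and the others follow similarly with more bookkeeping.

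The main obstacle is the algebraic bulk of the diagonal-entry computations for $\eta_5$ and $\eta_6$, whose stated closed forms involve the factor $(q^{N-D}-q)(q^{D-1}-1) + q^{N-k}(q^{k-1}-1)(q-1)$, which does not obviously factor and must be produced by careful collection of terms; similarly the vanishing of $h_5^t G h_6$ relies on a nontrivial cancellation between the two eigenvectors spanning the $(-q)$-eigenspace of $C$. Everything else — the vanishing of the remaining off-diagonal entries — is essentially automatic from the symmetry argument above and requires no computation beyond confirming the $\vartheta_i$ are distinct. I would organize the write-up so that the symmetry argument is stated first (handling $30$ of the $36$ entries at once), then present the six diagonal evaluations and the single off-diagonal check as explicit polynomial identities, each verified by clearing denominators and matching coefficients.
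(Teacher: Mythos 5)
Your proof is correct, and it takes a genuinely more structured route than the paper, whose entire proof is the single phrase ``By matrix multiplication,'' i.e.\ a brute-force evaluation of all $36$ entries of $H^t\,{\rm diag}(|O_1|,\dots,|O_6|)\,H$. You instead observe that, writing $G={\rm diag}(|O_1|,\dots,|O_6|)$, the symmetry of $GC$ from Lemma~\ref{lem:Ceig} makes $C$ self-adjoint for the bilinear form $(u,v)\mapsto u^tGv$; combined with $CH=H\,{\rm diag}(\vartheta_1,\dots,\vartheta_6)$ from Lemma~\ref{lem:Hcol}, this gives $(\vartheta_i-\vartheta_j)(H^tGH)_{i,j}=0$, so every off-diagonal entry with $\vartheta_i\neq\vartheta_j$ vanishes for free. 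Since $\vartheta_1,\dots,\vartheta_4,-q$ are mutually distinct under the paper's hypotheses, that leaves only the diagonal entries and the single pair $(5,6)$ (its mate $(6,5)$ being equal by symmetry of $H^tGH$) to check by hand, cutting the direct computation from $36$ entries to $7$. The trade-off is that your route requires invoking two earlier lemmas and a small conceptual argument up front, and it still cannot avoid the genuinely nontrivial cancellations in $h_5^tGh_6=0$ and in the closed forms for $\eta_5,\eta_6$ (the two vectors in the $(-q)$-eigenspace of $C$ are not $G$-orthogonal for structural reasons, so this really is a coincidence of the chosen $H$ that must be verified). One small inaccuracy: the symmetry argument disposes of $28$, not $30$, of the $36$ entries (it says nothing about the $6$ diagonal entries or the pair $(5,6),(6,5)$); this does not affect the validity of the argument.
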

\begin{proof} By matrix multiplication.
\end{proof}
\begin{lemma} \label{lem:Hinv} The matrix $H$ is invertible.
\end{lemma}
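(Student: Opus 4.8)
The plan is to read off the invertibility of $H$ from the orthogonality relation of Lemma~\ref{lem:HGorthog}. Taking determinants on both sides of that identity gives
\[
\det(H)^2 \,\prod_{i=1}^6 \vert O_i\vert = \prod_{j=1}^6 \eta_j .
\]
By Proposition~\ref{prop:parts} each set $O_i$ is nonempty, so $\prod_{i=1}^6\vert O_i\vert\neq 0$ (this also follows from the explicit values in Lemma~\ref{lem:Osize}). Hence it suffices to show $\eta_j\neq 0$ for $1\le j\le 6$; then $\det(H)^2\neq 0$ and $H$ is invertible. Equivalently, one can argue directly that if $Hv=0$ then ${\rm diag}(\eta_1,\dots,\eta_6)\,v=H^t\,{\rm diag}(\vert O_1\vert,\dots,\vert O_6\vert)\,Hv=0$, forcing $v=0$.

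To check $\eta_j\neq 0$ I would use the standing hypotheses: $q\geq 3$ (since $q$ is the order of a finite field with $q\neq 2$), and $N>2D\geq 6$ with $2\leq k\leq D-1$, which give $1\leq k-1\leq D-2$ and the chain of strict inequalities $k<D<N-D$. Consequently each of $q-2$, $q^k-1$, $q^{k-1}-1$, $q^D-1$, $q^{N-D}-1$, $q^D-q^k$, $q^{N-D}-q^k$ is nonzero, as is every power of $q$. Inspecting the formulas in Lemma~\ref{lem:HGorthog}, the quantities $\eta_1,\eta_2,\eta_3,\eta_4$ are, up to powers of $q$ and of $q-1$, products of the factors just listed, hence nonzero. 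For $\eta_5$ and $\eta_6$ the only point needing attention is the common factor $(q^{N-D}-q)(q^{D-1}-1)+q^{N-k}(q^{k-1}-1)(q-1)$; here $q^{N-D}-q>0$, $q^{D-1}-1>0$ (as $D-1\geq 2$), and $q^{N-k}(q^{k-1}-1)(q-1)>0$ (as $k-1\geq 1$), so this factor is a sum of positive terms and in particular is nonzero. Therefore $\prod_{j=1}^6\eta_j\neq 0$, and the lemma follows.

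The only genuinely delicate part is this sign analysis of the shared factor appearing in $\eta_5,\eta_6$; everything else reduces to the elementary fact that $q^a-q^b\neq 0$ whenever $a\neq b$. I would avoid the alternative route through Lemma~\ref{lem:Hcol} (the columns of $H$ are eigenvectors of $C$): since the eigenvalue $-q$ of $C$ has multiplicity $2$ by Lemma~\ref{lem:CSpec}, one would still have to show by hand that columns $5$ and $6$ of $H$ are linearly independent, whereas the determinant computation above handles all six columns at once.
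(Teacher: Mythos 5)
Your argument is correct and follows essentially the same route as the paper: the paper also deduces invertibility of $H$ directly from Lemma~\ref{lem:HGorthog} together with the nonvanishing of $\eta_1,\dots,\eta_6$. You simply supply the (routine but worth recording) verification that each $\eta_j\neq 0$, which the paper leaves unstated.
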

\begin{proof} By Lemma \ref{lem:HGorthog} and since $\eta_i \not=0$ for $1 \leq i \leq 6$.
\end{proof}

  \section{The subspace $S$}
   We continue to discuss the bilinear forms graph $\Gamma=(X, \mathcal R)$ and the fixed vertices $x,y \in X$ at distance $\partial(x,y)=k$. Recall
   the $y$-partition $\lbrace O_i \rbrace_{i=1}^6$ of $\Gamma(x)$ from Definition \ref{def:xyPartition}.
   In this section, we use the partition $\lbrace O_i \rbrace_{i=1}^6$ to obtain a subspace $S$ of $EV$. We describe $S$ in various ways.
 \medskip
  
  \noindent For a subset $ \Omega \subseteq X$ define $\widehat \Omega = \sum_{z \in \Omega} \hat z$.
  
  \begin{lemma} \label{lem:6b} The vectors $\lbrace E \widehat O_i\rbrace_{i=1}^6$ are linearly independent.
  \end{lemma}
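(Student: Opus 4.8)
The plan is to reduce linear independence of $\{E\widehat{O}_i\}_{i=1}^6$ to the invertibility of a concrete $6\times 6$ matrix, using the inner product formula of Lemma~\ref{lem:xyip} together with the equitable partition data from Proposition~\ref{lem:Centry}. Form the Gram matrix $G$ with $G_{i,j}=\langle E\widehat{O}_i, E\widehat{O}_j\rangle$. Since the vectors live in a space equipped with a positive semidefinite (in fact, here effectively positive definite on $EV$ by Lemma~\ref{lem:xyind}) form, the vectors are linearly independent if and only if $G$ is nonsingular. So it suffices to show $\det G \neq 0$.

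Next I would compute $G$ explicitly. Expanding $\langle E\widehat{O}_i, E\widehat{O}_j\rangle = \sum_{z\in O_i}\sum_{z'\in O_j}\langle E\hat z, E\hat{z'}\rangle$ and applying Lemma~\ref{lem:xyip}, each term equals $\vert X\vert^{-1}\theta^*_{\partial(z,z')}$. For $z,z'$ in the local graph $\Gamma(x)$, the distance $\partial(z,z')$ is $0$, $1$, or $2$ (adjacent or non-adjacent in $\Gamma(x)$), so $G = \vert X\vert^{-1}\big(\theta^*_0 N_0 + \theta^*_1 N_1 + \theta^*_2 N_2\big)$, where $(N_0)_{i,j}=\delta_{i,j}\vert O_i\vert$, $(N_1)_{i,j}$ is the number of adjacent pairs between $O_i$ and $O_j$, i.e. $\vert O_i\vert C_{i,j}$ from Proposition~\ref{lem:Centry}, and $(N_2)_{i,j}=\vert O_i\vert\vert O_j\vert - (N_0)_{i,j} - (N_1)_{i,j}$ accounts for the remaining (distance-$2$) pairs. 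Thus $G = \vert X\vert^{-1}\,\mathrm{diag}(\vert O_1\vert,\dots,\vert O_6\vert)\big((\theta^*_0-\theta^*_2)I + (\theta^*_1-\theta^*_2)C + \theta^*_2 \mathbf{1}\mathbf{1}^t D\big)$ for the appropriate arrangement, where $D=\mathrm{diag}(\vert O_i\vert)$; in any case $G$ is $\vert X\vert^{-1}$ times a fixed polynomial expression in $C$ and the size vector, all of which are known from Lemmas~\ref{lem:Osize}--\ref{lem:Hcol}.

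Finally I would diagonalize. By Lemma~\ref{lem:Hcol} and Lemma~\ref{lem:Hinv}, the columns of $H$ form an eigenbasis for $C$ with eigenvalues $\vartheta_1,\dots,\vartheta_6$; moreover $\mathbf{1}$ is exactly the first column of $H$ (the all-ones eigenvector for $\vartheta_1=a_1$). Conjugating $G$ by $H$ via $H^t D H = \mathrm{diag}(\eta_1,\dots,\eta_6)$ (Lemma~\ref{lem:HGorthog}) turns the Gram matrix into a diagonal-plus-rank-one (in fact, in the $H$-basis the $\mathbf{1}\mathbf{1}^t$ term becomes a rank-one perturbation supported only in the first coordinate, hence still diagonal), so $H^t G H$ is diagonal with entries $\vert X\vert^{-1}\eta_j\big((\theta^*_1-\theta^*_2)\vartheta_j + \theta^*_0 - \theta^*_2 + (\text{first-coordinate correction when }j=1)\big)$. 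For $j=1$ this reproduces the computation $\theta^*_0 + \theta^*_1 a_1 + \theta^*_2 b_1 = \theta_1\theta^*_1 \neq 0$ already seen in the proof of Lemma~\ref{lem:comA}; for $2\le j\le 6$ it reproduces $(\theta^*_1-\theta^*_2)(\vartheta_j+q+1)\neq 0$, using $(\theta^*_0-\theta^*_2)/(\theta^*_1-\theta^*_2)=q+1$ and $\vartheta_j\in\{q^{N-D}-q-1,\ q^D-q-1,\ -1,\ -q\}$, none of which equals $-(q+1)$ since $q$ is a prime power with $q\neq 2$ and $N>2D\ge 6$. Since $\eta_j\neq 0$ for all $j$ (Lemma~\ref{lem:HGorthog}), every diagonal entry of $H^t G H$ is nonzero, so $G$ is nonsingular and the result follows. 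The main obstacle is purely bookkeeping: correctly tracking how the $\theta^*_2\mathbf{1}\mathbf{1}^t$ contribution sits in the $H$-basis so that the shifts match those in Lemma~\ref{lem:comA}; once that is pinned down, each factor is manifestly nonzero under the standing hypotheses.
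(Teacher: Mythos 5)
Your argument is correct, but it takes a different route than the paper's. The paper's own proof of Lemma~\ref{lem:6b} is a one-liner: by Lemma~\ref{lem:comA} the $\kappa$ vectors $\{E\hat z \,:\, z\in\Gamma(x)\}$ are linearly independent, and since the $O_i$ are nonempty and pairwise disjoint subsets of $\Gamma(x)$, the sums $E\widehat O_i = \sum_{z\in O_i} E\hat z$ are disjoint partial sums of that independent family, hence automatically independent. (The heavy spectral lifting was done once and for all in the proof of Lemma~\ref{lem:comA}, working with the $\kappa\times\kappa$ local adjacency matrix $\tilde A$.) You instead compute the $6\times 6$ Gram matrix $G$ of $\{E\widehat O_i\}$ directly, express $|X|\,D^{-1}G$ as $(\theta^*_0-\theta^*_2)I + (\theta^*_1-\theta^*_2)C + \theta^*_2\mathbf{1}\mathbf{1}^tD$ with $D=\mathrm{diag}(|O_i|)$, and diagonalize via $H$; since $\mathbf{1}$ is the first column of $H$, the rank-one term contributes only to the $(1,1)$ slot of $H^tGH$, and the diagonal entries come out to $\vert X\vert^{-1}\eta_j\theta_1^2$ for $j=1$ and $\vert X\vert^{-1}\eta_j(\theta^*_1-\theta^*_2)(\vartheta_j+q+1)$ for $j\ge 2$, all nonzero. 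This is exactly the content of the paper's later Lemma~\ref{lem:HtGH} (and Proposition~\ref{prop:horthog}); you have essentially transported that computation back to prove Lemma~\ref{lem:6b} in a self-contained way, replacing the paper's $\kappa\times\kappa$ Gram argument on $\tilde A$ by its $6\times 6$ quotient on $C$. Both are valid; the paper's structure factors the work through the basis result Lemma~\ref{lem:comA}, which it needs anyway, while your version is more direct but duplicates the Section 9 Gram computation.
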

  \begin{proof} By Lemma \ref{lem:comA} and since the sets $\lbrace  O_i\rbrace_{i=1}^6$ are nonempty and partition $\Gamma(x)$.
  \end{proof} 
  
  \begin{definition}\label{def:N} \rm Let $S=S(x,y)$ denote the subspace of $EV$ with basis  $\lbrace E \widehat O_i\rbrace_{i=1}^6$.
  The dimension of $S$ is $6$.
  \end{definition}
  
  \noindent Our next general goal is to show that $S$ contains $E\hat x$ and $E \hat y$.
  
  \begin{lemma} \label{lem:Osum} We have
  \begin{align*}
 \theta_1  E \hat x = \sum_{i=1}^6 E \widehat O_i.
 \end{align*}
  \end{lemma}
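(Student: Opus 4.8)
The statement to prove is $\theta_1 E\hat x = \sum_{i=1}^6 E\widehat O_i$.

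The plan is to start from Lemma \ref{eq:Gsum}, which states that $\theta_1 E\hat x = \sum_{y' \in \Gamma(x)} E\hat{y'}$ (here I write $y'$ for the dummy vertex to avoid collision with the fixed vertex $y$). The right-hand side is a sum of $E\hat{y'}$ over all $y' \in \Gamma(x)$. By Lemma \ref{lem:rough} (or directly Proposition \ref{prop:parts}), the sets $\{O_i\}_{i=1}^6$ form a partition of $\Gamma(x)$, so the index set $\Gamma(x)$ decomposes as the disjoint union $O_1 \cup O_2 \cup \cdots \cup O_6$. Therefore I can split the sum:
\begin{align*}
\theta_1 E\hat x = \sum_{y' \in \Gamma(x)} E\hat{y'} = \sum_{i=1}^6 \sum_{y' \in O_i} E\hat{y'}.
\end{align*}

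Next I would observe that for each $i$, by linearity of $E$ and the definition $\widehat{O_i} = \sum_{z \in O_i} \hat z$ (from the start of Section 7, just before Lemma \ref{lem:6b}), we have $\sum_{y' \in O_i} E\hat{y'} = E\bigl(\sum_{y' \in O_i} \hat{y'}\bigr) = E\widehat{O_i}$. Substituting this into the previous display yields $\theta_1 E\hat x = \sum_{i=1}^6 E\widehat{O_i}$, which is the claim.

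There is no real obstacle here: the proof is a one-line consequence of Lemma \ref{eq:Gsum} together with the fact that $\{O_i\}_{i=1}^6$ partitions $\Gamma(x)$ and the linearity of the idempotent $E$. The only thing to be slightly careful about is notation — the statement of Lemma \ref{eq:Gsum} uses $y$ as the summation variable, whereas in Section 7 the symbol $y$ is a fixed vertex, so in writing up the argument one should rename the dummy index. A clean writeup would simply read: ``Apply Lemma \ref{eq:Gsum} to the vertex $x$ and partition the sum on the right-hand side according to the $y$-partition $\{O_i\}_{i=1}^6$ of $\Gamma(x)$, using the linearity of $E$.''
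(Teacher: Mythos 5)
Your proof is correct and follows the same route as the paper, which simply cites Lemma \ref{eq:Gsum} together with the fact that $\lbrace O_i \rbrace_{i=1}^6$ partition $\Gamma(x)$. You have merely spelled out the partition-of-the-sum and linearity steps explicitly, and your note about renaming the dummy index is a sensible stylistic point.
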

  \begin{proof} By Lemma \ref{eq:Gsum} and since $\lbrace O_i \rbrace_{i=1}^6$ partition $\Gamma(x)$.
  \end{proof}
  
 \begin{lemma} \label{lem:dep} We have
 \begin{align}
 0 =  E{\hat y} +\frac{1-q^{1-k}}{q-1} E{\hat x} - q^{1-k} E \widehat O_1 + q^{1-k} E \widehat O_2. \label{eq:dep}
 \end{align}
 \end{lemma}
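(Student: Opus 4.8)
The plan is to verify the asserted identity
\[
0 = E\hat y + \frac{1-q^{1-k}}{q-1}\,E\hat x - q^{1-k}\,E\widehat O_1 + q^{1-k}\,E\widehat O_2
\]
by working in the $\theta_1$-eigenspace $EV$ and using the fact, from Lemma~\ref{lem:comA}, that $\{E\hat z : z\in\Gamma(x)\}$ spans $EV$. Since all five vectors appearing in the identity lie in $EV$, and $EV$ is spanned by such vectors, a natural route is to pair the putative relation against a generating set and check that both sides agree. Concretely, I would fix an arbitrary vertex $w\in X$ and compute the inner product of the right-hand side with $E\hat w$, using $\langle E\hat u, E\hat w\rangle = |X|^{-1}\theta^*_{\partial(u,w)}$ from Lemma~\ref{lem:xyip}. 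Because $E$ is symmetric and idempotent, $\langle E\widehat O_i, E\hat w\rangle = |X|^{-1}\sum_{z\in O_i}\theta^*_{\partial(z,w)}$, so the whole claim reduces to a scalar identity in the dual eigenvalues $\theta^*_j$, to be checked for each possible value of $\partial(x,w)$, $\partial(y,w)$, and the distribution of distances from $w$ to the vertices of $O_1$ and $O_2$.

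A cleaner variant, which I would prefer to present, avoids vertex-by-vertex bookkeeping: I would instead exhibit the relation as the image under $E$ of a relation that is "visible" inside the local graph $\Gamma(x)$. The sets $O_1, O_2$ are part of the equitable $y$-partition, so their characteristic vectors, together with $\hat x$ and the relevant distance-classes toward $y$, satisfy linear relations coming from the intersection numbers $c_k, a_k, b_k$ of $\Gamma$ at distance $k$ and from the column-sum data in the matrix $C$ of Proposition~\ref{lem:Centry}. Applying $A$ (or rather using $EA = \theta_1 E$ together with $AE=\theta_1 E$) collapses all distance information at distances $\neq k-1,k,k+1$ from $y$, which is exactly why only $O_1$, $O_2$, $\hat x$, $\hat y$ survive. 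The coefficients $-q^{1-k}$, $q^{1-k}$, $(1-q^{1-k})/(q-1)$ should then emerge from the formulas \eqref{eq:cibi}, \eqref{eq:ai} for $c_i, b_i$ at $i=k$, since $c_k = q^{k-1}(q^k-1)/(q-1)$ and $|O_1| = q^{k-1}(q^k-1)/(q-1) = c_k$.

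The main obstacle I anticipate is pinning down precisely which local relation to push through $E$: one must identify a vector $v$, supported on $\Gamma_{k-1}(y)\cup\Gamma_k(y)\cup\{x,y\}$, with $Av$ lying in a controlled space and with $Ev$ equal (up to the stated scalars) to the right-hand side above. A convenient candidate is to start from $\hat y$ and push it one step via $A$ toward $x$: the vector $A\hat y$ restricted to $\Gamma(x)$ equals $\widehat{O_1}$ by Proposition~\ref{prop:parts}(i), since the neighbors of $y$ that are also neighbors of $x$ are exactly those at distance $k-1$ from $y$. This suggests examining $E A \hat y = \theta_1 E\hat y$ and comparing with $E$ applied to the sum $\widehat{O_1} + (\text{neighbors of }y\text{ not adjacent to }x)$; the "not adjacent to $x$" part must be re-expressed, via another application of the intersection-number relations at distance $k$, in terms of $\widehat{O_2}$, $\hat x$, $\hat y$. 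Once the correct auxiliary vector is found, the verification is a finite computation using \eqref{eq:cibi}, \eqref{eq:ai}, \eqref{eq:ths}, \eqref{eq:TTR} and Lemma~\ref{lem:Osize}, and I would close by invoking Lemma~\ref{lem:comA} (equivalently Lemma~\ref{lem:xyip}) to upgrade the verified scalar identities to the claimed vector identity in $EV$.
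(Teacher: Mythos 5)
Your first route is conceptually aligned with the paper's, but you miss the economy that makes the computation finite and painless. The paper sets $\xi$ equal to the right-hand side and proves $\Vert \xi\Vert^2 = 0$. The point is that $\xi$ lies, by definition, in the span of the vectors $E\hat x$, $E\hat y$, $E\hat z$ ($z\in O_1$), $E\hat w$ ($w\in O_2$), so to force $\Vert\xi\Vert^2=0$ it suffices to check that $\xi$ is orthogonal to precisely those four families. For each of them the needed inner products are immediate from Lemma~\ref{lem:xyip}, the orbit sizes $\vert O_1\vert,\vert O_2\vert$ in Lemma~\ref{lem:Osize}, and the adjacency counts $C_{1,1},C_{1,2},C_{2,1},C_{2,2}$ in Proposition~\ref{lem:Centry}, together with the recursion~\eqref{eq:TTR}. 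Your version pairs against $E\hat w$ for an \emph{arbitrary} $w\in X$, which would require knowing, for every possible position of $w$, the full distance distribution from $w$ to $O_1$ and $O_2$; this is a large combinatorial burden that the paper does not need and that you have not supplied. Had you stayed with the generating set $\lbrace E\hat z : z\in\Gamma(x)\rbrace$ from Lemma~\ref{lem:comA}, you would have six orbit cases, each handled by the matrix $C$ alone, and that would be a legitimate variant; but you immediately broaden to all of $X$.

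Your second, ``cleaner,'' route --- pushing a local relation through $E$ via $EA=\theta_1 E$ --- is a genuinely different idea, but you explicitly acknowledge the key gap yourself: you have not identified the auxiliary vector whose image under $E$ gives the claimed relation. The observation that $A\hat y$ restricted to $\Gamma(x)$ is $\widehat{O_1}$ is a promising start, but the re-expression of the rest of $A\hat y$ (the neighbors of $y$ not adjacent to $x$) in terms of $\widehat{O_2}$, $\hat x$, $\hat y$ modulo $\ker E$ is exactly the nontrivial content, and it is not carried out. As written, neither route constitutes a proof; the first needs the restriction of $w$ to the support of $\xi$ (which is the paper's trick), and the second is incomplete.
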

 \begin{proof} Let $\xi$ denote the vector on the right-hand side of  \eqref{eq:dep}. We will show $\xi=0$ by showing
 $\Vert \xi \Vert^2 = 0$. In the following calculations we use \eqref{eq:ths} and Lemmas \ref{lem:xyip},
 \ref{lem:rough}, \ref{lem:Osize},  \ref{lem:Centry}.
  First we show that $\langle E{\hat x}, \xi \rangle =0$.
 We have
 \begin{align*}
 \vert X \vert \langle E{\hat x}, \xi \rangle = \theta^*_k 
+\frac{1-q^{1-k}}{q-1} \theta^*_0  - q^{1-k}  \vert O_1\vert  \theta^*_1 + q^{1-k} \vert O_2 \vert \theta^*_1 = 0.
 \end{align*}
 Next we show that $\langle E{\hat y}, \xi \rangle =0$.
 We have
 \begin{align*}
 \vert X \vert \langle E{\hat y}, \xi \rangle = \theta^*_0 
+\frac{1-q^{1-k}}{q-1} \theta^*_k  - q^{1-k}  \vert O_1\vert  \theta^*_{k-1} + q^{1-k} \vert O_2 \vert \theta^*_k = 0.
 \end{align*}
 Next we show that $\langle E{\hat z}, \xi \rangle =0$ for $z \in O_1$. We have
  \begin{align*}
 \vert X \vert \langle E{\hat z}, \xi \rangle &= \theta^*_{k-1}
+\frac{1-q^{1-k}}{q-1} \theta^*_1  - q^{1-k} \Bigl( \theta^*_0 + C_{1,1} \theta^*_1+  \bigl(\vert O_1\vert -C_{1,1}-1\bigr) \theta^*_2 \Bigr) 
\\
& \qquad + q^{1-k} \Bigl( C_{1,2} \theta^*_1 + \bigl( \vert O_2\vert -C_{1,2} \bigr) \theta^*_2 \Bigr) = 0.
 \end{align*}
 Next we show that $\langle E{\hat w}, \xi \rangle =0$ for $w \in O_2$. We have
  \begin{align*}
 \vert X \vert \langle E{\hat w}, \xi \rangle &= \theta^*_{k}
+\frac{1-q^{1-k}}{q-1} \theta^*_1  - q^{1-k} \Bigl(  C_{2,1} \theta^*_1+  \bigl(\vert O_1\vert -C_{2,1}\bigr) \theta^*_2 \Bigr) 
\\
& \qquad + q^{1-k} \Bigl( \theta^*_0 + C_{2,2} \theta^*_1 + \bigl( \vert O_2\vert -C_{2,2} -1\bigr) \theta^*_2 \Bigr) = 0.
 \end{align*}
By these comments $\Vert \xi \Vert^2=0$, so $\xi=0$.
 \end{proof}
 
 \begin{corollary} \label{cor:xy} The subspace $S$ contains $E\hat x$ and $E\hat y$.
 \end{corollary}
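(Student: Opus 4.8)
The plan is to deduce Corollary \ref{cor:xy} directly from Lemma \ref{lem:dep} together with the results already established about $S$. First I would observe that $S$ is by definition the span of $\lbrace E\widehat O_i\rbrace_{i=1}^6$, and in particular $E\widehat O_1, E\widehat O_2 \in S$. Next, rewrite the identity \eqref{eq:dep} of Lemma \ref{lem:dep} as
\begin{align*}
E{\hat y} = -\frac{1-q^{1-k}}{q-1}\, E{\hat x} + q^{1-k} E\widehat O_1 - q^{1-k} E\widehat O_2.
\end{align*}
This exhibits $E{\hat y}$ as a linear combination of $E{\hat x}$, $E\widehat O_1$, $E\widehat O_2$, so it suffices to show $E{\hat x}\in S$; then $E{\hat y}\in S$ follows automatically.

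For $E{\hat x}$, I would invoke Lemma \ref{lem:Osum}, which gives $\theta_1 E{\hat x} = \sum_{i=1}^6 E\widehat O_i$. Since $\lbrace E\widehat O_i\rbrace_{i=1}^6$ is a basis for $S$ (Definition \ref{def:N}, via Lemma \ref{lem:6b}), the right-hand side lies in $S$. The scalar $\theta_1$ is nonzero: indeed $\theta_1 = (q^{N-1}+1-q^D-q^{N-D})/(q-1)$ from \eqref{eq:theta}, and under the standing hypotheses $N>2D\geq 6$ and $q\neq 2$ this is strictly positive (in any case $\theta_0 > \theta_1$ and the eigenvalues are distinct, so $\theta_1\neq 0$ unless $D$ were such that $\theta_D=0$; more simply, $\theta_1$ is the second-largest eigenvalue of a connected graph with $D\geq 3$, hence $\theta_1\neq\theta_0=\kappa>0$, and one checks $\theta_1>0$ here). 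Dividing by $\theta_1$ gives $E{\hat x}\in S$.

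Combining the two steps: $E{\hat x}\in S$ by Lemma \ref{lem:Osum}, and then $E{\hat y}\in S$ since it is a linear combination of $E{\hat x}, E\widehat O_1, E\widehat O_2$, all of which lie in $S$. This completes the argument. I do not anticipate any genuine obstacle here — the only mild point requiring care is confirming $\theta_1\neq 0$ so that the scalar in Lemma \ref{lem:Osum} can be cleared, but this is immediate from the explicit formula \eqref{eq:theta} under the standing assumptions, or from the general fact that $\theta_1$ is the second-largest eigenvalue of the connected graph $\Gamma$ and hence distinct from $\theta_0=\kappa$. The whole proof is two lines once Lemmas \ref{lem:dep} and \ref{lem:Osum} are in hand.

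\begin{proof}
By Lemma \ref{lem:Osum} we have $\theta_1 E\hat x = \sum_{i=1}^6 E\widehat O_i$. Since $\theta_1 \not=0$ and $\lbrace E\widehat O_i\rbrace_{i=1}^6$ is a basis for $S$, we obtain $E\hat x \in S$. By Lemma \ref{lem:dep},
\begin{align*}
E{\hat y} = -\frac{1-q^{1-k}}{q-1}\, E{\hat x} + q^{1-k} E\widehat O_1 - q^{1-k} E\widehat O_2,
\end{align*}
and the right-hand side lies in $S$ since $E{\hat x}, E\widehat O_1, E\widehat O_2 \in S$. Hence $E{\hat y} \in S$.
\end{proof}
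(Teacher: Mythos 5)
Your proof is correct and follows the same route the paper takes: Lemma \ref{lem:Osum} (with $\theta_1\neq 0$) gives $E\hat x\in S$, and Lemma \ref{lem:dep} then gives $E\hat y\in S$. The paper's own proof cites exactly these two lemmas and the nonvanishing of $\theta_1$, so your argument is an expanded version of the intended one.
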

 \begin{proof} By Lemmas \ref{lem:Osum}, \ref{lem:dep} and since $\theta_1 \not=0$.
 \end{proof}
 
 \section{Strengthening the balanced set condition}
 We continue to discuss the bilinear forms graph $\Gamma=(X, \mathcal R)$ and the fixed vertices $x,y \in X$ at distance $\partial(x,y)=k$.
 Recall the $y$-partition $\lbrace O_i \rbrace_{i=1}^6$ of $\Gamma(x)$  from Definition  \ref{def:xyPartition}.  Let $\lbrace O'_i \rbrace_{i=1}^6$ denote the $x$-partition of $\Gamma(y)$.
 By the balanced set condition \cite[Theorem~1.1]{QPchar}, for $i\in \lbrace 1,6 \rbrace$ the vector $E \widehat O_i- E\widehat O'_i$ is a scalar multiple of $E\hat x - E \hat y$.
 In this section, we show that for $1 \leq i \leq 6$ the vector $E \widehat O_i- E\widehat O'_i$ is a scalar multiple of $E\hat x - E \hat y$. 
 \medskip

 \begin{theorem}
 \label{thm:bbalanced} 
For $1 \leq i \leq 6$ we have
 \begin{align*}
 E \widehat O_i- E \widehat O'_i = \lambda_i ( E \hat x - E \hat y),
 \end{align*}
 where
 \begin{align}
 &\lambda_i = \vert O_i \vert \frac{\theta^*_1 - \theta^*_k}{\theta^*_0 - \theta^*_k} \qquad \qquad (2 \leq i \leq 5), \label{eq:lam2345}\\
   &\lambda_1 = \vert O_1 \vert \frac{\theta^*_1 - \theta^*_{k-1}}{\theta^*_0 - \theta^*_k}, \qquad \qquad
 \lambda_6 = \vert O_6 \vert \frac{\theta^*_1 - \theta^*_{k+1}}{\theta^*_0 - \theta^*_k}. \label{eq:lam16}
 \end{align}
 \end{theorem}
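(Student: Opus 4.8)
The plan is to imitate the proof of Lemma \ref{lem:dep}: for each $i$ we exhibit an explicit vector $\xi_i = E\widehat O_i - E\widehat O'_i - \lambda_i(E\hat x - E\hat y)$ with the claimed $\lambda_i$, and prove $\xi_i=0$ by showing $\Vert \xi_i\Vert^2=0$. Since Lemma \ref{lem:comA} tells us $\{E\hat z \mid z \in \Gamma(x)\}$ is a basis for $EV$ (and likewise for $\Gamma(y)$), it is enough to check $\langle E\hat z, \xi_i\rangle = 0$ for $z$ ranging over a spanning set of vertices; concretely I would check it against $E\hat x$, $E\hat y$, and against $E\hat z$ for $z$ in each block $O_j$ of the $y$-partition of $\Gamma(x)$ and each block $O'_j$ of the $x$-partition of $\Gamma(y)$. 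Each such inner product is a finite linear combination of dual eigenvalues $\theta^*_\ell$ with coefficients coming from $|O_j|$, $|O'_j|$, the matrix $C$ of Proposition \ref{lem:Centry}, and the dual eigenvalue table \eqref{eq:ths}.

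First I would record the relevant inner products. By Lemma \ref{lem:xyip}, $|X|\langle E\hat x, E\widehat O_i\rangle$ depends only on the distances from $x$ to the vertices of $O_i$, so it equals $|O_i|\theta^*_1$ since $O_i\subseteq\Gamma(x)$; similarly $|X|\langle E\hat y, E\widehat O_i\rangle$ equals $|O_i|\theta^*_{k-1}$, $|O_i|\theta^*_k$, or $|O_i|\theta^*_{k+1}$ according to whether $i=1$, $i\in\{2,3,4,5\}$, or $i=6$ (Lemma \ref{lem:rough}). By symmetry $E\widehat O'_i$ behaves the same way with the roles of $x,y$ swapped. From these two facts one already sees that $\langle E\hat x, \xi_i\rangle = 0$ and $\langle E\hat y,\xi_i\rangle=0$ force exactly the stated value of $\lambda_i$: for $2\le i\le5$ we need $|O_i|\theta^*_1 - |O_i|\theta^*_k = \lambda_i(\theta^*_0 - \theta^*_k)$, giving \eqref{eq:lam2345}, and similarly for $i\in\{1,6\}$ we get \eqref{eq:lam16}. (Here one should note $|O_i| = |O'_i|$, which follows from distance-transitivity, or directly from the explicit formulas in Lemma \ref{lem:Osize} since these are symmetric under the substitution making $x,y$ interchange; and one uses $\theta^*_0 \ne \theta^*_k$.)

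Next I would check $\langle E\hat z, \xi_i\rangle = 0$ for $z$ in each block of each partition. For $z \in O_j$, the quantity $|X|\langle E\hat z, E\widehat O_i\rangle = \theta^*_0\,[i=j] + C_{j,i}\theta^*_1 + (|O_i| - C_{j,i} - [i=j])\theta^*_2$, using that $z$ has distance $1$ from $C_{j,i}$ vertices of $O_i$, distance $0$ from itself if $i=j$, and distance $2$ from the rest (all vertices of $\Gamma(x)$ are within distance $2$ of each other since the local graph has diameter $2$). The term $|X|\langle E\hat z, E\widehat O'_i\rangle$ requires knowing, for $z\in O_j\subseteq\Gamma(x)$, how the vertices of $O'_i\subseteq\Gamma(y)$ distribute by distance from $z$; this is the one genuinely new combinatorial ingredient, and it is exactly the kind of triple-intersection data tabulated in \cite[Sections~6,~7]{williford}. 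Finally $|X|\langle E\hat z, E\hat x\rangle = \theta^*_1$ and $|X|\langle E\hat z, E\hat y\rangle = \theta^*_j$ (with $\theta^*_{k-1},\theta^*_k,\theta^*_{k+1}$ as appropriate). Substituting \eqref{eq:ths} and the formulas from Lemmas \ref{lem:Osize}, \ref{lem:Centry} reduces each equation $\langle E\hat z,\xi_i\rangle=0$ to a rational-function identity in $q$ (with parameters $N,D,k$) that can be verified directly.

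The main obstacle is the bookkeeping for the cross terms $\langle E\hat z, E\widehat O'_i\rangle$ with $z$ in a block of the $y$-partition of $\Gamma(x)$: this needs the joint refinement of the $y$-partition of $\Gamma(x)$ and the $x$-partition of $\Gamma(y)$, i.e. the number of common neighbors of $z$ in each $O'_i$, which is more refined than what Proposition \ref{lem:Centry} alone provides and must be extracted from the explicit bilinear-forms data in \cite{williford}. Once that table is in hand, the remaining verifications are routine (if lengthy) polynomial identities, and the argument closes exactly as in Lemma \ref{lem:dep}: having shown $\langle E\hat z,\xi_i\rangle=0$ for all $z$ in a basis-spanning set, we conclude $\Vert\xi_i\Vert^2=0$, hence $\xi_i=0$.
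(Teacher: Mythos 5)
Your proposal is correct and follows essentially the same route as the paper: fix $\xi_i = E\widehat O_i - E\widehat O'_i - \lambda_i(E\hat x - E\hat y)$, compute $\langle E\hat z,\xi_i\rangle$ via Lemma \ref{lem:xyip}, \eqref{eq:ths}, Proposition \ref{lem:Centry}, and the cross-intersection data from \cite{williford} (which the paper packages as the matrices $D^{(\ell)}$ of Proposition \ref{def:Dmatrix}), and conclude $\Vert\xi_i\Vert^2 = 0$. The one economy the paper exploits is that $\xi_i$ is a combination of $E\hat z$ over $z \in O_i \cup O'_i \cup \lbrace x,y\rbrace$ only, so $\Vert\xi_i\Vert^2$ involves only those inner products and hence only the diagonal cross-intersection counts $D^{(\ell)}_{i,i}$; checking against every block $O_j$ and $O'_j$ as you suggest is sufficient but not necessary, and keeping the check to the $j=i$ block substantially reduces the bookkeeping you flagged as the main obstacle.
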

 
 \noindent We will prove Theorem  \ref{thm:bbalanced} shortly.
 \medskip
 
 \noindent For notational convenience, for $1 \leq i \leq 6$ we define a scalar $\varepsilon(i)$ as follows:
 \begin{align}
 \begin{tabular}[t]{c|cccccc}
 $i$ &$1$ &$2$&$3$&$4$&$5$&$6$\\
 \hline
$\varepsilon(i)$ &$-1$&$0$&$0$&$0$&$0$&$1$
    \end{tabular}
    \label{eq:eps}
 \end{align}
 Then \eqref{eq:lam2345}, \eqref{eq:lam16} become
  \begin{align}
 \lambda_i =  \vert O_i \vert \frac{\theta^*_1 - \theta^*_{k+\varepsilon(i)}}{\theta^*_0 - \theta^*_k}
 \qquad \qquad (1 \leq i \leq 6). \label{eq:lambdaNotation}
 \end{align}

 \begin{lemma} \label{lem:step1}
 For $1 \leq i \leq 6$ the vector
  \begin{align*}
 E \widehat O_i- E \widehat O'_i - \lambda_i ( E \hat x - E \hat y)       
 \end{align*}
 is orthogonal to $E \hat x$ and $E\hat y$.
 \end{lemma}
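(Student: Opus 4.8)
The plan is to verify the two stated orthogonality relations by a direct inner product computation, the only inputs being Lemma~\ref{lem:xyip}, the description of the $y$-partition in Section~6, and the value of $\lambda_i$ recorded in \eqref{eq:lambdaNotation}. Throughout, fix $i$ with $1 \leq i \leq 6$ and write $v = E \widehat O_i - E \widehat O'_i - \lambda_i ( E \hat x - E \hat y)$.

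First I would collect the needed inner products. For any $u \in X$ and any subset $\Omega \subseteq X$, linearity of $\langle\,,\,\rangle$ in the second argument together with Lemma~\ref{lem:xyip} gives $\langle E\hat u, E\widehat\Omega\rangle = \sum_{z \in \Omega}\langle E\hat u, E\hat z\rangle = \vert X\vert^{-1}\sum_{z\in\Omega}\theta^*_{\partial(u,z)}$. Two distance facts are needed. Since $O_i \subseteq \Gamma(x)$, every $z \in O_i$ satisfies $\partial(x,z)=1$, and by Lemma~\ref{lem:rough} every $z \in O_i$ satisfies $\partial(y,z)=k+\varepsilon(i)$ with $\varepsilon(i)$ as in \eqref{eq:eps}. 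Interchanging the roles of $x$ and $y$ (and using $\partial(y,x)=k$), every $z \in O'_i$ satisfies $\partial(y,z)=1$ and $\partial(x,z)=k+\varepsilon(i)$; moreover Lemma~\ref{lem:Osize} exhibits $\vert O_i\vert$ as a function of $k$ alone, so the same lemma applied with $x,y$ interchanged yields $\vert O'_i\vert = \vert O_i\vert$. Hence
\begin{align*}
\vert X\vert\langle E\hat x, E\widehat O_i\rangle &= \vert O_i\vert\,\theta^*_1, &
\vert X\vert\langle E\hat y, E\widehat O_i\rangle &= \vert O_i\vert\,\theta^*_{k+\varepsilon(i)}, \\
\vert X\vert\langle E\hat x, E\widehat O'_i\rangle &= \vert O_i\vert\,\theta^*_{k+\varepsilon(i)}, &
\vert X\vert\langle E\hat y, E\widehat O'_i\rangle &= \vert O_i\vert\,\theta^*_1,
\end{align*}
while Lemma~\ref{lem:xyip} gives $\vert X\vert\langle E\hat x, E\hat x\rangle = \vert X\vert\langle E\hat y, E\hat y\rangle = \theta^*_0$ and $\vert X\vert\langle E\hat x, E\hat y\rangle = \theta^*_k$.

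Finally I would substitute. Since $\theta^*_0 > \theta^*_k$ by \eqref{eq:ths}, the scalar $\lambda_i = \vert O_i\vert(\theta^*_1 - \theta^*_{k+\varepsilon(i)})/(\theta^*_0 - \theta^*_k)$ of \eqref{eq:lambdaNotation} is well-defined, and
\begin{align*}
\vert X\vert\langle E\hat x, v\rangle &= \vert O_i\vert\,\theta^*_1 - \vert O_i\vert\,\theta^*_{k+\varepsilon(i)} - \lambda_i(\theta^*_0 - \theta^*_k) = 0, \\
\vert X\vert\langle E\hat y, v\rangle &= \vert O_i\vert\,\theta^*_{k+\varepsilon(i)} - \vert O_i\vert\,\theta^*_1 - \lambda_i(\theta^*_k - \theta^*_0) = 0.
\end{align*}
This is the whole argument. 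There is no genuine obstacle; the only points demanding care are the bookkeeping of the distances from $x$ and $y$ to the cells $O_i$ and $O'_i$ (immediate from Proposition~\ref{prop:parts} and Lemma~\ref{lem:rough}) and the symmetry $\vert O'_i\vert = \vert O_i\vert$ (immediate from Lemma~\ref{lem:Osize}), after which the choice of $\lambda_i$ makes both inner products telescope to zero.
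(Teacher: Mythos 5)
Your proof is correct and takes essentially the same approach as the paper: both compute $\vert X\vert\langle E\hat x, v\rangle$ and $\vert X\vert\langle E\hat y, v\rangle$ directly via Lemma~\ref{lem:xyip}, use $\vert O'_i\vert = \vert O_i\vert$, and observe that the defining formula \eqref{eq:lambdaNotation} for $\lambda_i$ makes each expression vanish. You have merely spelled out the distance bookkeeping and the symmetry $\vert O'_i\vert = \vert O_i\vert$ that the paper's proof leaves implicit.
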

 \begin{proof} 
 By Lemma  \ref{lem:xyip} and \eqref{eq:lambdaNotation},
 \begin{align*}
\vert X \vert  \Bigl \langle E\hat x,  E \widehat O_i- E \widehat O'_i - \lambda_i ( E \hat x - E \hat y) \Bigr\rangle 
&= \vert O_i \vert \theta^*_1 - \vert O'_i \vert \theta^*_{k+\varepsilon(i)} - \lambda_i (\theta^*_0 - \theta^*_k) \\
&= \vert O_i \vert \theta^*_1 - \vert O_i \vert \theta^*_{k+\varepsilon(i)} - \lambda_i (\theta^*_0 - \theta^*_k) \\
&= 0.
 \end{align*}
 Similarly,
  \begin{align*}
\vert X \vert  \Bigl \langle E\hat y,  E \widehat O_i- E \widehat O'_i - \lambda_i ( E \hat x - E \hat y) \Bigr \rangle 
&= \vert O_i \vert \theta^*_{k+\varepsilon(i)} - \vert O'_i \vert \theta^*_1 - \lambda_i (\theta^*_k - \theta^*_0) \\
&= \vert O_i \vert \theta^*_{k+\varepsilon(i)} - \vert O_i \vert \theta^*_1 - \lambda_i (\theta^*_k - \theta^*_0) \\
&= 0.
 \end{align*}
 The result follows.
 \end{proof}
 
  \noindent We have a comment.
 \begin{lemma}
 \label{lem:lambdaValues}
 The scalars $\lbrace \lambda_i \rbrace_{i=1}^6$ from Theorem \ref{thm:bbalanced} are given by
 \begin{align*}
& \lambda_1 = q^k \frac{q^{k-2}-1}{q-1}, \qquad \qquad 
 \lambda_2 = \frac{(q^{k-1}-1)^2}{q-1}, \\
& \lambda_3 = q^{k-1} \frac{(q^{k-1}-1)(q-2)}{q-1}, \qquad \qquad 
 \lambda_4 = \frac{(q^{N-D}-q^k)(q^{k-1}-1)}{q-1}, \\
& \lambda_5 = \frac{(q^D-q^k)(q^{k-1}-1)}{q-1}, \qquad \qquad 
 \lambda_6= \frac{(q^D-q^k)(q^{N-D}-q^k)}{q(q-1)}.
 \end{align*}
 \end{lemma}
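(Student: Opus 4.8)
The plan is to prove Lemma~\ref{lem:lambdaValues} by direct substitution of the known parameter values into the formula \eqref{eq:lambdaNotation}, namely $\lambda_i = \vert O_i \vert \,(\theta^*_1 - \theta^*_{k+\varepsilon(i)})/(\theta^*_0 - \theta^*_k)$. All the ingredients are already available: the cardinalities $\vert O_i \vert$ are given in Lemma~\ref{lem:Osize}, the dual eigenvalues $\theta^*_i$ are given by the closed form \eqref{eq:ths}, and $\varepsilon(i)$ is tabulated in \eqref{eq:eps}. So the proof is really a bookkeeping exercise, and the main task is to organize the common denominator $\theta^*_0 - \theta^*_k$ so that the cancellation is transparent.

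First I would record the key difference identity coming from \eqref{eq:ths}: for integers $r,s$ we have
\begin{align*}
\theta^*_r - \theta^*_s = \frac{q^{N-r} - q^{N-s}}{q-1}.
\end{align*}
In particular $\theta^*_0 - \theta^*_k = (q^N - q^{N-k})/(q-1) = q^{N-k}(q^k-1)/(q-1)$, and $\theta^*_1 - \theta^*_k = (q^{N-1} - q^{N-k})/(q-1) = q^{N-k}(q^{k-1}-1)/(q-1)$. Hence for $2 \leq i \leq 5$,
\begin{align*}
\frac{\theta^*_1 - \theta^*_k}{\theta^*_0 - \theta^*_k} = \frac{q^{k-1}-1}{q^k-1}.
\end{align*}
Similarly $\theta^*_1 - \theta^*_{k-1} = (q^{N-1} - q^{N-k+1})/(q-1) = q^{N-k+1}(q^{k-2}-1)/(q-1)$, so
\begin{align*}
\frac{\theta^*_1 - \theta^*_{k-1}}{\theta^*_0 - \theta^*_k} = \frac{q(q^{k-2}-1)}{q^k-1},
\end{align*}
and $\theta^*_1 - \theta^*_{k+1} = (q^{N-1} - q^{N-k-1})/(q-1) = q^{N-k-1}(q^k-1)/(q-1)$, so
\begin{align*}
\frac{\theta^*_1 - \theta^*_{k+1}}{\theta^*_0 - \theta^*_k} = \frac{1}{q}.
\end{align*}

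Then I would substitute these three ratios together with the values of $\vert O_i \vert$ from Lemma~\ref{lem:Osize}. For $i=2$: $\vert O_2 \vert \cdot \frac{q^{k-1}-1}{q^k-1} = \frac{(q^k-1)(q^{k-1}-1)}{q-1}\cdot\frac{q^{k-1}-1}{q^k-1} = \frac{(q^{k-1}-1)^2}{q-1}$. For $i=3$: the factor $(q^k-1)$ cancels again, leaving $q^{k-1}\frac{(q^{k-1}-1)(q-2)}{q-1}$. For $i=4$ and $i=5$ the same cancellation produces $\frac{(q^{N-D}-q^k)(q^{k-1}-1)}{q-1}$ and $\frac{(q^D-q^k)(q^{k-1}-1)}{q-1}$ respectively. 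For $i=1$: $\vert O_1\vert\cdot\frac{q(q^{k-2}-1)}{q^k-1} = \frac{q^{k-1}(q^k-1)}{q-1}\cdot\frac{q(q^{k-2}-1)}{q^k-1} = q^k\frac{q^{k-2}-1}{q-1}$. For $i=6$: $\vert O_6\vert\cdot\frac{1}{q} = \frac{(q^{N-D}-q^k)(q^D-q^k)}{q(q-1)}$. Each of these matches the claimed value, so the lemma follows. There is no real obstacle here; the only thing to be careful about is keeping the powers of $q$ straight in the differences $\theta^*_r - \theta^*_s$ and making sure the $(q^k-1)$ factors cancel correctly.
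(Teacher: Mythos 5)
Your proof is correct and takes exactly the approach the paper intends: substitute the cardinalities from Lemma~\ref{lem:Osize} and the dual eigenvalues from \eqref{eq:ths} into \eqref{eq:lam2345}--\eqref{eq:lam16} and simplify. The paper's own proof is a one-line statement of this plan, and your computations (including the three ratio identities for $(\theta^*_1-\theta^*_{k+\varepsilon})/(\theta^*_0-\theta^*_k)$) check out.
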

 \begin{proof} Evaluate \eqref{eq:lam2345}, \eqref{eq:lam16} using \eqref{eq:ths}
  and Lemma \ref{lem:Osize}.
 \end{proof}

 \noindent We now describe how the sets $\lbrace O_i \rbrace_{i=1}^6$ and  $\lbrace O'_i \rbrace_{i=1}^6$ are related.
 This description will involve some $6 \times 6$ matrices 
 \begin{align*}
 D^{(k-2)}, \quad D^{(k-1)}, \quad D^{(k)}, \quad  D^{(k+1)}, \quad D^{(k+2)}.
 \end{align*}

 \begin{proposition}\label{def:Dmatrix} 
  For $k-2 \leq \ell \leq k+2$ 
 the matrix $D^{(\ell)}$  given below has the following feature.
  For $1\leq i,j\leq 6$ the $(i,j)$-entry $D^{(\ell)}_{i,j}$ is the number of vertices in $O'_j$ that are at distance $\ell$ from a given vertex in $O_i$.
%
We have
\begin{align*}
D^{(k-2)} = \begin{pmatrix}
\frac{q^{k-2}(q^{k-1}-1)}{q-1}&0&0&0&0&0 \\
 0&0&0&0&0&0 \\
 0&0&0&0&0&0 \\
 0&0&0&0&0&0 \\
 0&0&0&0&0&0 \\
 0&0&0&0&0&0 \\
\end{pmatrix},
\end{align*}

\begin{landscape}
    
\maketitle

\vspace{1in}
\resizebox{9in}{!}{

$D^{(k-1)} = \begin{pmatrix}
2q^{k-2}(q^{k-1}-1) &(q^{k-1}-1)(q^{k-2} + \frac{q^{k-1}-1}{q-1})&\frac{q^{k-1}-1}{q-1}q^{k-2}(2q-1)(q-2)&\frac{(q^{k-1}-1)(q^{N-D}-q^k)}{q-1}& \frac{(q^{k-1}-1)(q^D-q^k)}{q-1}&0\\
 q^{k-1}(q^{k-2} + \frac{q^{k-1}-1}{q-1})&q^{2k-3}&q^{2k-3}(q-2)&0&0&0 \\
 \frac{q^{k-1}-1}{q-1}q^{k-2}(2q-1)&q^{k-2}(q^{k-1}-1)&q^{k-2}(q^k-2q^{k-1}+2)&0&0&0 \\
\frac{(q^{k-1}-1)q^{k-1}}{q-1} &0&0&q^{2(k-1)}&0&0 \\
\frac{(q^{k-1}-1)q^{k-1}}{q-1} &0&0&0&q^{2(k-1)}&0 \\
 0&0&0&0&0&0 \\
\end{pmatrix},$
}

\vspace{1in}
\resizebox{9in}{!}{
$D^{(k)} = \begin{pmatrix}
q^{k-2}(q^k-q^{k-1}+1) &(q^{k-1}-1)(q^{k-1}-q^{k-2})&q^{k-2}(q^k-q^{k-1}+1)(q-2)&q^{k-1}(q^{N-D}-q^k)&q^{k-1}(q^D-q^k)& \frac{(q^{N-D}-q^k)(q^D-q^k)}{q-1}\\
 q^{2k-3}(q-1)&\frac{(q^k-1)(q^{k-1}-1)}{q-1}-q^{2k-3}&(q-2)q^{k-1}\frac{q^k-q^{k-1}+q^{k-2}-1}{q-1}&\frac{(q^{N-D}-q^k)(q^k-1)}{q-1}&\frac{(q^D-q^k)(q^k-1)}{q-1}&0 \\
 q^{k-2}(q^k-q^{k-1}+1)&(q^{k-1}-1)\frac{q^k-q^{k-1}+q^{k-2}-1}{q-1}&q^{k-2}(q-2)(q^k+\frac{q^{k-1}-1}{q-1})-q^{k-1}&\frac{(q^{N-D}-q^k)(q^k-1)}{q-1}&\frac{(q^D-q^k)(q^k-1)}{q-1}&0 \\
 q^{2(k-1)}&\frac{(q^{k-1}-1)(q^k-1)}{q-1}&\frac{q^{k-1}(q-2)(q^k-1)}{q-1}&\frac{(q^{N-D}-q^k)(q^k-1)}{q-1}-q^{2(k-1)}&\frac{(q^D-q^k)(q^{k-1}-1)}{q-1}&q^{k-1}(q^D-q^k) \\
 q^{2(k-1)}&\frac{(q^{k-1}-1)(q^k-1)}{q-1}&\frac{q^{k-1}(q-2)(q^k-1)}{q-1}&
 \frac{(q^{N-D}-q^k)(q^{k-1}-1)}{q-1}&\frac{(q^D-q^k)(q^k-1)}{q-1}-q^{2(k-1)}&(q^{N-D}-q^k)q^{k-1} \\
 \frac{(q^k-1)q^{k-1}}{q-1}&0&0&q^{k-1}(q^k-1)&q^{k-1}(q^k-1)&(q-1)q^{2k-1}+q^{k-1} \\
\end{pmatrix},$
}

\vspace{1in}
\resizebox{9in}{!}{

$D^{(k+1)} = \begin{pmatrix}
0 &0&0&0&0&0 \\
0&0&0&0&0&\frac{(q^{N-D}-q^k)(q^D-q^k)}{q-1} \\
0 &0&0&0&0&\frac{(q^{N-D}-q^k)(q^D-q^k)}{q-1} \\
0 &0&0&0&(q^D-q^k)q^{k-1}&\frac{(q^D-q^k)(q^{N-D}-2q^k+q^{k-1})}{q-1} \\
0 &0&0&(q^{N-D}-q^k)q^{k-1}&0&\frac{(q^{N-D}-q^k)(q^D-2q^k+q^{k-1})}{q-1} \\
0 &\frac{(q^k-1)(q^{k-1}-1)}{q-1}&\frac{(q^k-1)q^{k-1}(q-2)}{q-1}&\frac{(q^k-1)(q^{N-D}-2q^k+q^{k-1})}{q-1}&\frac{(q^k-1)(q^D-2q^k+q^{k-1})}{q-1}&q^{k-1}(q^{D+1}+q^{N-D+1}-q^{k+2}-2q^{k+1}+q^k-1) \\
 \end{pmatrix},$
}

\end{landscape}

\begin{align*}
D^{(k+2)} = \begin{pmatrix}
 0&0&0&0&0&0 \\
 0&0&0&0&0&0 \\
 0&0&0&0&0&0 \\
 0&0&0&0&0&0 \\
 0&0&0&0&0&0 \\
 0&0&0&0&0&\frac{(q^D-q^{k+1})(q^{N-D}-q^{k+1})}{q-1} \\
 \end{pmatrix}.
 \end{align*}
 \end{proposition}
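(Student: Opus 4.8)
The plan is to reduce the whole statement to a rank computation in the matrix model of $\Gamma$, carry it out case by case, and control the bookkeeping with several global consistency checks. First note that the five superscripts $k-2,k-1,k,k+1,k+2$ exhaust all possibilities: if $z\in O_i\subseteq\Gamma(x)$ and $z'\in O'_j\subseteq\Gamma(y)$, then the triangle inequality along $z,x,y,z'$ gives $k-2\le\partial(z,z')\le k+2$. Thus the content of the proposition is that for each of the $36$ pairs $(i,j)$ the distance distribution of $O'_j$ seen from a vertex $z\in O_i$ is given by the $(i,j)$-entries of $D^{(k-2)},\dots,D^{(k+2)}$, and in particular is independent of the choice of $z\in O_i$.

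Next I would pass to the rank model: vertices of $\Gamma$ are $D\times(N-D)$ matrices over ${\rm GF}(q)$ and $\partial(u,v)={\rm rank}(u-v)$. By distance-transitivity we may take $x$ to be the zero matrix and $y$ to be the matrix with an identity block $I_k$ in its upper-left $k\times k$ corner and zeros elsewhere. Then every $z\in\Gamma(x)$ is a rank-one matrix $\alpha\beta^t$ with $\alpha\in{\rm GF}(q)^D$, $\beta\in{\rm GF}(q)^{N-D}$ nonzero, and every $z'\in\Gamma(y)$ has the form $y+\alpha'(\beta')^t$. Splitting $\alpha$ into its first $k$ and last $D-k$ coordinates $\alpha_\uparrow,\alpha_\downarrow$, and $\beta$ similarly into $\beta_\uparrow,\beta_\downarrow$, one extracts from \cite[Sections~6,~7]{williford} --- or re-derives by computing the rank of $\alpha\beta^t-y$ in block form --- that the conditions cutting out $O_1,\dots,O_6$ inside $\Gamma(x)$ are boolean conditions on the vanishing of $\alpha_\downarrow$ and $\beta_\downarrow$ together with the value of the scalar $\beta_\uparrow^t\alpha_\uparrow$; the conditions for $O'_1,\dots,O'_6$ inside $\Gamma(y)$ are the same with the roles of the rows and columns of $y$ interchanged. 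Here $\partial(z,y)$ is pinned down by $i$ and $\partial(x,z')$ by $j$ --- each equals $k-1$, $k$, or $k+1$ according as the index is $1$, lies in $\{2,3,4,5\}$, or is $6$ (Lemma~\ref{lem:rough} and its $x\leftrightarrow y$ analogue) --- which forces the block/zero pattern of the five $D^{(\ell)}$ before any counting is done; and the entries are well defined because the $O_i$ are precisely the orbits of ${\rm Stab}_{{\rm Aut}(\Gamma)}(x,y)$ on $\Gamma(x)$ (implicit in \cite{williford}; note $N>2D$ rules out the duality automorphism), so the distance distribution from $z$ to $O'_j$ is constant on $O_i$.

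With a representative $z=\alpha\beta^t\in O_i$ fixed, $D^{(\ell)}_{i,j}$ is the number of rank-one matrices $\alpha'(\beta')^t$ that satisfy the $O'_j$-conditions and have ${\rm rank}\bigl((\alpha\beta^t-y)-\alpha'(\beta')^t\bigr)=\ell$. The matrix $M:=\alpha\beta^t-y$ has rank $k-1$, $k$, or $k+1$ (according as $i=1$, $i\in\{2,3,4,5\}$, or $i=6$) with explicit image and kernel, and counting the rank-one perturbations of a fixed matrix that lower, preserve, or raise its rank is exactly the computation underlying the intersection numbers \eqref{eq:cibi}, \eqref{eq:ai} of the bilinear forms graph; the only new work is to intersect those perturbation conditions with the linear conditions placing $\alpha'(\beta')^t$ in the prescribed $O'_j$, and this I would do for each $(i,j)$ in turn.

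The main obstacle is purely bookkeeping --- there are $5\cdot 36$ entries, and although most vanish, the nonzero ones require careful inclusion--exclusion among the rank conditions --- so I would lean on three global checks that the answer must pass: (a) for each $i,j$, $\sum_{\ell=k-2}^{k+2}D^{(\ell)}_{i,j}=|O'_j|=|O_j|$ by Lemma~\ref{lem:Osize} and the $x\leftrightarrow y$ symmetry; (b) for each $i$ and $\ell$, $\sum_{j=1}^{6}D^{(\ell)}_{i,j}$ is the number of vertices of $\Gamma(y)$ at distance $\ell$ from $z$, which by distance-regularity equals the intersection number $p^{\,m}_{1,\ell}$ with $m=\partial(z,y)$, hence equals $c_m$, $a_m$, $b_m$, or $0$ according as $\ell=m-1$, $m$, $m+1$, or otherwise; and (c) the symmetry $|O_i|\,D^{(\ell)}_{i,j}=|O_j|\,D^{(\ell)}_{j,i}$, obtained by double-counting the ordered pairs $(z,z')$ with $z\in O_i$, $z'\in O'_j$, $\partial(z,z')=\ell$ and applying the graph automorphism $M\mapsto y-M$, which interchanges $x$ and $y$ and carries $O_i$ to $O'_i$. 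Check (c) halves the number of independent entries, and once the nonzero pattern of each row is known, checks (a) and (b) sharply constrain --- and usually pin down --- that row.
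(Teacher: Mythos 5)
Your proposal is correct and takes essentially the same route as the paper, which simply cites combinatorial counting based on the data in Terwilliger--Williford (\cite{williford}); you have fleshed out what that counting entails (the matrix model with canonical representatives for $x,y$, block decomposition of $\alpha,\beta$, rank perturbation of $M=\alpha\beta^t-y$) and supplemented it with the global consistency checks (a)--(c). The observation that $N>2D$ rules out the duality automorphism (so that $O_4$, $O_5$ really are distinct orbits of the two-vertex stabilizer) and the double-counting symmetry (c) obtained from the automorphism $M\mapsto y-M$ are nice bookkeeping aids, but the underlying argument is the same case-by-case count the authors are invoking.
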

 \begin{proof} By combinatorial counting using the data in \cite[Sections~6,~7]{williford}.
 \end{proof}
 
 \begin{remark} \rm If $k=D-1$ then $D^{(k+2)}=0$.
 \end{remark}
 
 \noindent Next, we explain how the matrices $D^{(\ell)}$ $(k-2 \leq \ell \leq k+2)$ are related to the matrix $C$ from Proposition \ref{lem:Centry}.
 
 
 \begin{lemma} \label{lem:step2} For $1 \leq i,j\leq 6$ we have
 \begin{align*}
 \delta_{i,j} \theta^*_0 + C_{i,j} \theta^*_1 + \Bigl( \vert O_j \vert - C_{i,j} - \delta_{i,j}\Bigr) \theta^*_2
 - \sum_{\ell=k-2}^{k+2} D^{(\ell)}_{i,j} \theta^*_\ell = \lambda_j \bigl(\theta^*_1 - \theta^*_{k+ \varepsilon(i)}\bigr),
 \end{align*}
 where $\delta_{i,j}$ is the Kronecker delta and $\varepsilon(i)$ is from   \eqref{eq:eps}.
 \end{lemma}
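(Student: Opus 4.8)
The plan is to prove this by direct computation, but it helps first to record what the identity says. Fix $1 \le i,j \le 6$ and a vertex $w \in O_i$. Any two vertices of the local graph $\Gamma(x)$ lie at distance at most $2$ (they are both adjacent to $x$), so among the vertices of $O_j$ exactly $\delta_{i,j}$ lie at distance $0$ from $w$, exactly $C_{i,j}$ at distance $1$, and the remaining $\vert O_j\vert - C_{i,j} - \delta_{i,j}$ at distance $2$; likewise every vertex of $O'_j \subseteq \Gamma(y)$ lies at distance between $k-2$ and $k+2$ from $w$, with $D^{(\ell)}_{i,j}$ of them at distance $\ell$. Since also $\partial(w,x)=1$ and $\partial(w,y)=k+\varepsilon(i)$, Lemma~\ref{lem:xyip} shows that the left-hand side of the asserted identity equals $\vert X\vert\,\langle E\hat w,\, E\widehat O_j - E\widehat O'_j\rangle$ and the right-hand side equals $\vert X\vert\,\lambda_j\,\langle E\hat w,\, E\hat x - E\hat y\rangle$. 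Thus the lemma is precisely the statement that $E\widehat O_j - E\widehat O'_j - \lambda_j(E\hat x - E\hat y)$ is orthogonal to $E\hat w$ for every $w \in O_i$; this is exactly the input which, combined with Lemma~\ref{lem:comA}, yields Theorem~\ref{thm:bbalanced}. Since that theorem may not be used here, the lemma must be established by computation.

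To make the computation manageable I would first simplify the dual eigenvalues. By \eqref{eq:ths} one may write $\theta^*_\ell = \alpha q^{-\ell} + \beta$ with $\alpha = q^N/(q-1)$ and $\beta = (1 - q^D - q^{N-D})/(q-1)$, neither depending on $\ell$. Because $\Gamma$ is distance-transitive there is an automorphism interchanging $x$ and $y$; it carries the $y$-partition of $\Gamma(x)$ onto the $x$-partition of $\Gamma(y)$, so $\vert O'_j\vert = \vert O_j\vert$ and hence $\sum_{\ell = k-2}^{k+2} D^{(\ell)}_{i,j} = \vert O_j\vert = \delta_{i,j} + C_{i,j} + \bigl(\vert O_j\vert - C_{i,j} - \delta_{i,j}\bigr)$. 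Consequently the contributions of $\beta$ cancel from both sides, and after dividing through by $\alpha \neq 0$ the identity becomes the ($N$-free) statement
\begin{align*}
\delta_{i,j} + C_{i,j}\,q^{-1} + \bigl(\vert O_j\vert - C_{i,j} - \delta_{i,j}\bigr)q^{-2} - \sum_{\ell = k-2}^{k+2} D^{(\ell)}_{i,j}\,q^{-\ell} = \lambda_j\bigl(q^{-1} - q^{-k-\varepsilon(i)}\bigr).
\end{align*}

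I would then verify this reduced identity for all $36$ pairs $(i,j)$, inserting $C_{i,j}$ from Proposition~\ref{lem:Centry}, $\vert O_j\vert$ from Lemma~\ref{lem:Osize}, the entries $D^{(\ell)}_{i,j}$ from Proposition~\ref{def:Dmatrix}, and $\lambda_j$ from Lemma~\ref{lem:lambdaValues}. Several observations keep the work in check: $D^{(k-2)}$ contributes only in position $(1,1)$ and $D^{(k+2)}$ only in position $(6,6)$, and the latter entry vanishes automatically when $k = D-1$, so no boundary case is needed; the right-hand side depends on the row $i$ only through $\varepsilon(i)$, so the four rows $i \in \{2,3,4,5\}$ must all collapse to $\lambda_j(q^{-1} - q^{-k})$; and it is cleanest to work one column at a time, with $\lambda_j$ fixed. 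A good consistency check is that the left-hand side, assembled as a $6 \times 6$ matrix, must come out as the rank-one outer product of the column vector with entries $\theta^*_1 - \theta^*_{k+\varepsilon(i)}$ and the row vector with entries $\lambda_j$.

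The only real obstacle is the sheer bulk of this verification: the nonzero entries of $D^{(k-1)}$ and $D^{(k)}$ are complicated rational functions of $q$, $q^D$, $q^{N-D}$, $q^k$, so each of the individual checks is a substantial algebraic manipulation even after the reduction above. There is nothing conceptually hard; the genuine danger is arithmetic error, which the reduction and the rank-one consistency check are designed to control and which a symbolic computation would remove entirely.
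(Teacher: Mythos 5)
Your proposal is correct and takes essentially the same route as the paper: Lemma~\ref{lem:step2} is proved by direct substitution of \eqref{eq:ths}, Lemma~\ref{lem:Osize}, Proposition~\ref{lem:Centry}, Lemma~\ref{lem:lambdaValues}, and Proposition~\ref{def:Dmatrix} into both sides. The opening interpretation of the identity as $\vert X\vert\langle E\hat w,\, E\widehat O_j - E\widehat O'_j - \lambda_j(E\hat x - E\hat y)\rangle = 0$ for $w\in O_i$ correctly explains the role the lemma plays in the proof of Theorem~\ref{thm:bbalanced}. Your preprocessing step is a genuine improvement on the paper's terse instruction to ``evaluate each side'': writing $\theta^*_\ell = \alpha q^{-\ell} + \beta$ and using $\sum_{\ell}D^{(\ell)}_{i,j}=\vert O'_j\vert=\vert O_j\vert$ (which follows, as you say, from distance-transitivity and is what the paper itself invokes in Lemma~\ref{lem:orthog}) cancels the $\beta$ contribution on both sides and, after dividing by $\alpha$, leaves an $N$-free identity in $q$, $q^D$, $q^{N-D}$, $q^k$. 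This materially shrinks the bookkeeping in the $36$-entry verification, and the rank-one consistency check (the assembled left-hand side must be the outer product of the $\varepsilon$-dependent column with the row $(\lambda_1,\ldots,\lambda_6)$) is a sensible guard against arithmetic slips. Nothing is missing; the only remaining work is the mechanical case check you describe.
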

 \begin{proof} Evaluate each side using \eqref{eq:ths} along with Lemmas \ref{lem:Osize}, \ref{lem:Centry},  \ref{lem:lambdaValues} and Proposition \ref{def:Dmatrix}. 
 \end{proof}

 \noindent {\it Proof of Theorem  \ref{thm:bbalanced}.} Define the vector  $\xi =E\widehat O_i - E \widehat O'_i -\lambda_i (E\hat x - E \hat y)$.
 We will show $\xi =0$ by showing  $\Vert \xi \Vert^2 =0$.  By Lemma  \ref{lem:step1} we have $\langle E\hat x, \xi\rangle=0$ and $\langle E \hat y, \xi \rangle =0$.
 Next we show that $\langle E \hat z, \xi\rangle =0$ for $z \in O_i$. Using Lemmas  \ref{lem:xyip},  \ref{lem:step2} we obtain
 \begin{align*}
\vert X \vert  \langle E\hat z, \xi \rangle &=
  \theta^*_0 + C_{i,i} \theta^*_1 + \Bigl( \vert O_i \vert - C_{i,i} - 1\Bigr) \theta^*_2
 - \sum_{\ell=k-2}^{k+2} D^{(\ell)}_{i,i} \theta^*_\ell - \lambda_i \bigl(\theta^*_1 - \theta^*_{k+ \varepsilon(i)}\bigr) \\
 &=0.
 \end{align*}
  Next we show that $\langle E \hat w, \xi\rangle =0$ for $w \in O'_i$. This is obtained by interchanging the roles of $x,y$ in the previous argument.
  By these comments $\Vert \xi \Vert^2 =0$, so $\xi=0$.
$ \hfill \Box$
\medskip

 \begin{proposition} \label{prop:symS} We have $S(x,y) = S(y,x)$.
 \end{proposition}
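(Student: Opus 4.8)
The plan is to show that the two subspaces $S(x,y)$ and $S(y,x)$ coincide by proving that each is contained in the other; since both have dimension $6$ (Definition \ref{def:N}), a single containment would already suffice, but the symmetry of the argument makes it natural to record both. Recall that $S(x,y)$ has basis $\{E\widehat O_i\}_{i=1}^6$ and $S(y,x)$ has basis $\{E\widehat O'_i\}_{i=1}^6$. The key input is Theorem \ref{thm:bbalanced}, which says that for $1\le i\le 6$,
\begin{align*}
E\widehat O'_i = E\widehat O_i - \lambda_i(E\hat x - E\hat y).
\end{align*}
By Corollary \ref{cor:xy}, both $E\hat x$ and $E\hat y$ lie in $S(x,y)$, so the right-hand side lies in $S(x,y)$ for each $i$. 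Hence every basis vector of $S(y,x)$ lies in $S(x,y)$, giving $S(y,x)\subseteq S(x,y)$.

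For the reverse containment, I would run the same argument with the roles of $x$ and $y$ interchanged. Note that the $x$-partition of $\Gamma(y)$ plays the role of the "$y$-partition" when we start from the vertex $y$, so applying Theorem \ref{thm:bbalanced} (with $x,y$ swapped — legitimate since the hypothesis $2\le k\le D-1$ on $k=\partial(x,y)=\partial(y,x)$ is symmetric, and Corollary \ref{cor:xy} applies equally to $S(y,x)$) yields $E\widehat O_i = E\widehat O'_i + \lambda_i(E\hat x - E\hat y)$, with $E\hat x, E\hat y \in S(y,x)$. This gives $S(x,y)\subseteq S(y,x)$. Combining the two containments yields $S(x,y)=S(y,x)$.

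There is really no serious obstacle here: the statement is an essentially immediate corollary of Theorem \ref{thm:bbalanced} together with Corollary \ref{cor:xy}. The only point requiring a moment's care is the bookkeeping about what "$S(y,x)$" means — namely that it is built from the $x$-partition $\{O'_i\}_{i=1}^6$ of $\Gamma(y)$ exactly as $S(x,y)$ is built from the $y$-partition $\{O_i\}_{i=1}^6$ of $\Gamma(x)$ — so that Theorem \ref{thm:bbalanced} can be invoked verbatim after swapping $x$ and $y$. Once that is noted, the proof is a two-line containment argument, so I would keep it short.

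\begin{proof}
By Theorem \ref{thm:bbalanced}, for $1\le i\le 6$ we have $E\widehat O'_i = E\widehat O_i - \lambda_i(E\hat x - E\hat y)$. By Corollary \ref{cor:xy}, $E\hat x, E\hat y \in S(x,y)$, so $E\widehat O'_i \in S(x,y)$ for $1\le i\le 6$. Since $\{E\widehat O'_i\}_{i=1}^6$ is a basis for $S(y,x)$, we obtain $S(y,x)\subseteq S(x,y)$. Interchanging the roles of $x$ and $y$ gives $S(x,y)\subseteq S(y,x)$. The result follows.
\end{proof}
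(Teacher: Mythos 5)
Your proof is correct and takes the same approach as the paper: show $E\widehat O'_i\in S(x,y)$ via Theorem \ref{thm:bbalanced} and Corollary \ref{cor:xy}, giving $S(y,x)\subseteq S(x,y)$. The only cosmetic difference is that the paper closes with a dimension count ($\dim S(y,x)=\dim S(x,y)=6$) rather than rerunning the argument with $x,y$ swapped; both finishes are valid.
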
 
 \begin{proof} By Definition \ref{def:N}
  and the construction,
 the vectors $\lbrace E \widehat O'_i \rbrace_{i=1}^6$ form a basis for $S(y,x)$. For $1 \leq i \leq 6$
 we have $E \widehat O'_i \in S(x,y)$ by Corollary \ref{cor:xy} and Theorem  \ref{thm:bbalanced}.
 Therefore $S(y,x) \subseteq S(x,y)$. In this inclusion, equality holds because $S(y,x)$ and $S(x,y)$ have the same dimension.
 \end{proof}
 
 \noindent The following result is for later use.
 \begin{lemma} \label{lem:com} We have
$ \theta_1 = \sum_{i=1}^6 \lambda_i$.
 \end{lemma}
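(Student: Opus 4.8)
The plan is to prove $\theta_1 = \sum_{i=1}^6 \lambda_i$ by applying the result of Theorem \ref{thm:bbalanced} together with the decomposition of $\theta_1 E\hat x$ from Lemma \ref{lem:Osum}. First I would sum the six identities $E\widehat O_i - E\widehat O'_i = \lambda_i(E\hat x - E\hat y)$ over $i = 1,\ldots,6$, obtaining
\begin{align*}
\sum_{i=1}^6 E\widehat O_i - \sum_{i=1}^6 E\widehat O'_i = \Bigl(\sum_{i=1}^6 \lambda_i\Bigr)(E\hat x - E\hat y).
\end{align*}
By Lemma \ref{lem:Osum} applied at $x$, the first sum equals $\theta_1 E\hat x$, and by Lemma \ref{lem:Osum} applied at $y$ (using that $\{O'_i\}_{i=1}^6$ is the $x$-partition of $\Gamma(y)$, hence partitions $\Gamma(y)$), the second sum equals $\theta_1 E\hat y$. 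So the left-hand side is $\theta_1(E\hat x - E\hat y)$.

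This gives $\theta_1(E\hat x - E\hat y) = \bigl(\sum_{i=1}^6 \lambda_i\bigr)(E\hat x - E\hat y)$. To conclude $\theta_1 = \sum_{i=1}^6 \lambda_i$, I would invoke Lemma \ref{lem:xyind}: since $x \neq y$ (as $k = \partial(x,y) \geq 2$), the vectors $E\hat x$ and $E\hat y$ are linearly independent, so in particular $E\hat x - E\hat y \neq 0$. Cancelling this nonzero scalar multiple yields the claim.

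Alternatively, and perhaps more in keeping with the computational style of the surrounding lemmas, one could simply verify the identity directly: substitute the explicit values of $\lambda_1,\ldots,\lambda_6$ from Lemma \ref{lem:lambdaValues}, add them, and check that the telescoping/cancellation produces $\theta_1 = \theta^*_1 = \frac{q^{N-1}+1-q^D-q^{N-D}}{q-1}$ from \eqref{eq:ths}. I expect there is essentially no obstacle here — the first approach is clean and uses only results already established, so the main (minor) point is just to be careful that Lemma \ref{lem:Osum} applies verbatim with the roles of $x$ and $y$ exchanged, which is immediate since $\{O'_i\}$ partitions $\Gamma(y)$ by construction. I would present the first (conceptual) argument as the proof.
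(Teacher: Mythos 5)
Your proof is correct and matches the paper's argument essentially verbatim: the paper also sums Theorem \ref{thm:bbalanced} over $i$, applies Lemma \ref{lem:Osum} at both $x$ and $y$, and cancels the nonzero vector $E\hat x - E\hat y$ using Lemma \ref{lem:xyind}. The paper likewise notes the alternative of verifying the identity directly from Lemma \ref{lem:lambdaValues}, so both of your suggested routes appear there.
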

 \begin{proof} This is verified using Lemma  \ref{lem:lambdaValues}. Alternatively, by Lemma \ref{lem:Osum} and the construction
 \begin{align}
 \theta_1 E\hat x = \sum_{i=1}^6 E \widehat O_i, \qquad \qquad \theta_1 E \hat y = \sum_{i=1}^6 E \widehat O'_i.    \label{eq:step1}
 \end{align}
 By \eqref{eq:step1} and Theorem  \ref{thm:bbalanced},
 \begin{align*}
 \theta_1 \bigl(E \hat x - E \hat y\bigr) = \sum_{i=1}^6 \Bigl( E \widehat O_i - E \widehat O'_i \Bigr) = \bigl(E\hat x - E \hat y\bigr) \sum_{i=1}^6 \lambda_i.
 \end{align*}
 The vector $E\hat x - E \hat y$ is nonzero by Lemma  \ref{lem:xyind}. The result follows.
 \end{proof}

 \section{Two orthogonal bases for $S$}
  We continue to discuss the bilinear forms graph $\Gamma=(X, \mathcal R)$ and the fixed vertices $x,y \in X$ at distance $\partial(x,y)=k$.
 Recall the subspace $S=S(x,y) = S(y,x)$ of $EV$ from Definition \ref{def:N}  and Proposition \ref{prop:symS}. In this section,
 we display two orthogonal bases for $S$. We describe how these two bases are related to each other. We also describe how these two bases
 are related to $E{\hat x}$, $E{\hat y}$.
 \medskip
 
 \noindent Recall the matrix $H$ from Definition  \ref{def:Hmatrix}.
 
 \begin{definition} \label{def:hvec} For $1 \leq j \leq 6$ define the vectors
 \begin{align*}
 h_j = \sum_{i=1}^6 H_{i,j} E \widehat O_i, \qquad \qquad h'_j = \sum_{i=1}^6 H_{i,j} E \widehat O'_i.
 \end{align*}
 \end{definition}
 
 \begin{lemma} \label{lem:hbasis} 
 Each of $\lbrace h_j \rbrace_{j=1}^6$, $\lbrace h'_j \rbrace_{j=1}^6$ is a basis for $S$.
 \end{lemma}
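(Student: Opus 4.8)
The plan is to show that each of $\lbrace h_j \rbrace_{j=1}^6$ and $\lbrace h'_j \rbrace_{j=1}^6$ is obtained from a known basis of $S$ by an invertible change of basis, so spanning $S$ and linear independence follow at once. Recall from Definition \ref{def:N} that $\lbrace E \widehat O_i \rbrace_{i=1}^6$ is a basis for $S = S(x,y)$, and from Proposition \ref{prop:symS} that $S(x,y) = S(y,x)$, so $\lbrace E \widehat O'_i \rbrace_{i=1}^6$ is also a basis for $S$.

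First I would observe that Definition \ref{def:hvec} expresses the vectors $h_j$ in terms of the $E \widehat O_i$ precisely via the matrix $H$: in matrix form, the row vector $(h_1, \ldots, h_6)$ equals the row vector $(E \widehat O_1, \ldots, E \widehat O_6)$ times $H$. By Lemma \ref{lem:Hinv} the matrix $H$ is invertible, so this is an invertible linear transformation of the basis $\lbrace E \widehat O_i \rbrace_{i=1}^6$. Hence $\lbrace h_j \rbrace_{j=1}^6$ spans the same space $S$ and is linearly independent, i.e.\ is a basis for $S$.

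The argument for $\lbrace h'_j \rbrace_{j=1}^6$ is identical with $O_i$ replaced by $O'_i$ throughout: by Definition \ref{def:hvec}, $(h'_1, \ldots, h'_6) = (E \widehat O'_1, \ldots, E \widehat O'_6) H$, and since $\lbrace E \widehat O'_i \rbrace_{i=1}^6$ is a basis for $S(y,x) = S(x,y) = S$ and $H$ is invertible by Lemma \ref{lem:Hinv}, the vectors $\lbrace h'_j \rbrace_{j=1}^6$ form a basis for $S$ as well.

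There is essentially no obstacle here; the only point requiring care is the appeal to Proposition \ref{prop:symS} to know that $\lbrace E \widehat O'_i \rbrace_{i=1}^6$ lies in (and spans) the same subspace $S$, rather than in some a priori different subspace $S(y,x)$. Everything else is the standard fact that applying an invertible matrix to a basis yields a basis.

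\begin{proof}
By Definition \ref{def:hvec}, the $6$-tuple $(h_1, \ldots, h_6)$ is obtained from the $6$-tuple $(E\widehat O_1, \ldots, E\widehat O_6)$ by right multiplication by the matrix $H$ of Definition \ref{def:Hmatrix}. By Definition \ref{def:N} the vectors $\lbrace E\widehat O_i\rbrace_{i=1}^6$ form a basis for $S$, and by Lemma \ref{lem:Hinv} the matrix $H$ is invertible. Therefore $\lbrace h_j\rbrace_{j=1}^6$ is a basis for $S$. Similarly, $(h'_1, \ldots, h'_6)$ is obtained from $(E\widehat O'_1, \ldots, E\widehat O'_6)$ by right multiplication by $H$. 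By Definition \ref{def:N} and the construction, the vectors $\lbrace E\widehat O'_i\rbrace_{i=1}^6$ form a basis for $S(y,x)$, and $S(y,x) = S$ by Proposition \ref{prop:symS}. Since $H$ is invertible, $\lbrace h'_j\rbrace_{j=1}^6$ is a basis for $S$.
\end{proof}
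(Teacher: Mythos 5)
Your proof is correct and follows exactly the paper's route: it combines Definition \ref{def:N} (basis $\lbrace E\widehat O_i\rbrace$ for $S$), Lemma \ref{lem:Hinv} (invertibility of $H$), and Proposition \ref{prop:symS} ($S(x,y)=S(y,x)$), which is precisely what the paper cites. You merely spell out the change-of-basis step that the paper leaves implicit.
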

 \begin{proof} By Definition \ref{def:N}, Lemma \ref{lem:Hinv},  and Proposition \ref{prop:symS}.
 \end{proof}
  
 \begin{lemma} \label{lem:h1} We have
 \begin{align*}
 h_1 = \theta_1 E{\hat x}, \qquad \qquad h'_1 = \theta_1 E{\hat y}.
 \end{align*}
 \end{lemma}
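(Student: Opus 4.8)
The plan is to read off the first column of the matrix $H$ and then invoke Lemma \ref{lem:Osum}. By Definition \ref{def:Hmatrix}, the first column of $H$ has every entry equal to $1$; that is, $H_{i,1}=1$ for $1\le i\le 6$. Hence by Definition \ref{def:hvec} we get $h_1=\sum_{i=1}^6 H_{i,1}\,E\widehat O_i=\sum_{i=1}^6 E\widehat O_i$, and the right-hand side equals $\theta_1 E{\hat x}$ by Lemma \ref{lem:Osum}. This gives the first assertion with essentially no computation.

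For the second assertion I would argue symmetrically. The vectors $\lbrace O'_i\rbrace_{i=1}^6$ form the $x$-partition of $\Gamma(y)$, so the same reasoning applied with the roles of $x$ and $y$ interchanged gives $h'_1=\sum_{i=1}^6 H_{i,1}\,E\widehat O'_i=\sum_{i=1}^6 E\widehat O'_i=\theta_1 E{\hat y}$. Concretely, this last equality is exactly the second half of \eqref{eq:step1} in the proof of Lemma \ref{lem:com}, so I can simply cite that line rather than re-deriving it.

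There is no real obstacle here: the only content is the observation that the leading column of $H$ is the all-ones vector (which one verifies by inspecting Definition \ref{def:Hmatrix}), together with Lemma \ref{lem:Osum} and its $x\leftrightarrow y$ counterpart. The proof will be two sentences long.

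\begin{proof}
By Definition \ref{def:Hmatrix} we have $H_{i,1}=1$ for $1\le i\le 6$, so by Definition \ref{def:hvec},
\begin{align*}
h_1=\sum_{i=1}^6 H_{i,1}\,E\widehat O_i=\sum_{i=1}^6 E\widehat O_i=\theta_1 E{\hat x},
\end{align*}
where the last equality holds by Lemma \ref{lem:Osum}. Similarly, using the $x$-partition $\lbrace O'_i\rbrace_{i=1}^6$ of $\Gamma(y)$ together with \eqref{eq:step1},
\begin{align*}
h'_1=\sum_{i=1}^6 H_{i,1}\,E\widehat O'_i=\sum_{i=1}^6 E\widehat O'_i=\theta_1 E{\hat y}.
\end{align*}
The result follows.
\end{proof}
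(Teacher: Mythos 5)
Your proof is correct and matches the paper's argument: both rely on the observation that $H_{i,1}=1$ for all $i$, then apply Definition \ref{def:hvec} and Lemma \ref{lem:Osum}, with the second equation obtained by symmetry.
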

 \begin{proof} The equation on the left follows from Lemma  \ref{lem:Osum}, Definition \ref{def:hvec}, and $H_{i,1} = 1$ $(1 \leq i \leq 6)$. The
 equation on the right is similarly obtained.
 \end{proof}

 \begin{definition} \label{def:Gmat} Let $G$ denote the $ 6 \times 6$ matrix with entries
 \begin{align*}
  G_{i,j} =\vert X \vert  \langle E \widehat O_i, E\widehat O_j \rangle \qquad \qquad (1 \leq i,j\leq 6).
 \end{align*}
 \end{definition}

 \begin{lemma} \label{lem:Gentries} For $1 \leq i,j\leq 6$ we have
 \begin{align*}
 G_{i,j} &= \vert O_i \vert \Bigl( \delta_{i,j} \theta^*_0 + C_{i,j} \theta^*_1 + \bigl( \vert O_j \vert-C_{i,j} -\delta_{i,j} \bigr) \theta^*_2        \Bigr) \\
    &=\vert O_j \vert \Bigl( \delta_{i,j} \theta^*_0 + C_{j,i} \theta^*_1 + \bigl( \vert O_i \vert-C_{j,i} -\delta_{i,j} \bigr) \theta^*_2        \Bigr).
 \end{align*}
 \end{lemma}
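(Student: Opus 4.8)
The plan is to expand the inner product $\langle E\widehat O_i, E\widehat O_j\rangle$ as a double sum over the vertices of $O_i$ and $O_j$, and to use Lemma~\ref{lem:xyip} to replace each term $\langle E\hat z, E\hat w\rangle$ by $|X|^{-1}\theta^*_{\partial(z,w)}$. The crucial observation is that whenever $z,w\in\Gamma(x)$ we have $\partial(z,w)\le\partial(z,x)+\partial(x,w)=2$, so $\partial(z,w)\in\{0,1,2\}$; hence only the dual eigenvalues $\theta^*_0,\theta^*_1,\theta^*_2$ occur, and it remains to count how many ordered pairs $(z,w)\in O_i\times O_j$ fall into each of the three cases.

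First I would record the three counts. The pairs with $z=w$ number $\delta_{i,j}\,|O_i|$, since $O_i$ and $O_j$ are disjoint for $i\neq j$. Next, by Proposition~\ref{lem:Centry} each fixed $z\in O_i$ is adjacent to exactly $C_{i,j}$ vertices of $O_j$, so the pairs with $\partial(z,w)=1$ number $|O_i|\,C_{i,j}$. The remaining pairs, namely those with $\partial(z,w)=2$, therefore number $|O_i|\,|O_j|-\delta_{i,j}\,|O_i|-|O_i|\,C_{i,j}=|O_i|\bigl(|O_j|-C_{i,j}-\delta_{i,j}\bigr)$. Multiplying $\langle E\widehat O_i,E\widehat O_j\rangle$ by $|X|$ and collecting terms via Definition~\ref{def:Gmat} gives
\begin{align*}
G_{i,j}=|O_i|\Bigl(\delta_{i,j}\theta^*_0+C_{i,j}\theta^*_1+\bigl(|O_j|-C_{i,j}-\delta_{i,j}\bigr)\theta^*_2\Bigr),
\end{align*}
which is the first displayed equality.

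For the second equality I would invoke that $G$ is symmetric, since the bilinear form $\langle\,,\,\rangle$ is symmetric; thus $G_{i,j}=G_{j,i}$, and applying the formula just obtained with the roles of $i$ and $j$ interchanged (and $\delta_{i,j}=\delta_{j,i}$) yields the second expression. Alternatively, one can use the identity $|O_i|C_{i,j}=|O_j|C_{j,i}$ coming from Lemma~\ref{lem:Ceig} to pass directly between the two forms. I do not expect any genuine obstacle here: the only points to be careful about are that a vertex is not adjacent to itself, so the $z=w$ case must be separated from the $\partial(z,w)=1$ case, and that the distance-$2$ count is obtained by subtraction rather than directly.
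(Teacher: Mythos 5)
Your proof is correct and takes essentially the same approach as the paper, which simply cites Lemma~\ref{lem:xyip} and Proposition~\ref{lem:Centry}; you have filled in the expected details (the distance in $\Gamma(x)$ is at most $2$, the three counts, and the symmetry of $G$) correctly.
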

 \begin{proof} Use Lemma \ref{lem:xyip} and Proposition \ref{lem:Centry}.
 \end{proof}

 \begin{lemma} \label{lem:Gp} We have
  \begin{align*}
  G_{i,j} = \vert X \vert \langle E\widehat O'_i, E\widehat O'_j \rangle \qquad \qquad (1 \leq i,j\leq 6).
 \end{align*}
\end{lemma}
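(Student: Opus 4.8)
The plan is to prove Lemma~\ref{lem:Gp} by reducing it to an already-established symmetry. First I would observe that by Definition~\ref{def:Gmat} and the symmetry of the bilinear form, the matrix $G$ is symmetric, so it suffices to show that $\vert X \vert \langle E\widehat O'_i, E\widehat O'_j\rangle$ depends on $i,j$ in the same way that $G_{i,j}$ does. The key point is that the formula for $G_{i,j}$ in Lemma~\ref{lem:Gentries} is expressed entirely in terms of the quantities $\vert O_i\vert$, $\vert O_j\vert$, $C_{i,j}$, and the dual eigenvalues $\theta^*_0,\theta^*_1,\theta^*_2$. So I would repeat the computation of Lemma~\ref{lem:Gentries} with $O_i$ replaced by $O'_i$ throughout: by Lemma~\ref{lem:xyip}, $\vert X\vert\langle E\widehat O'_i, E\widehat O'_j\rangle = \sum_{z\in O'_i}\sum_{w\in O'_j}\theta^*_{\partial(z,w)}$, and for $z\in O'_i$ the number of $w\in O'_j$ at distance $0$, $1$, and $\geq 2$ from $z$ is $\delta_{i,j}$, $C'_{i,j}$, and $\vert O'_j\vert - C'_{i,j} - \delta_{i,j}$ respectively, where $C'$ is the quotient matrix of the $x$-partition of $\Gamma(y)$ (the primed analogue of $C$). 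Since $\Gamma(z)\subseteq \Gamma_1(z)$ and any two vertices of $\Gamma(y)$ are at distance $0$, $1$, or $2$ in $\Gamma$ (they are all adjacent to $y$), this gives
\begin{align*}
\vert X \vert \langle E\widehat O'_i, E\widehat O'_j \rangle = \vert O'_i \vert \Bigl( \delta_{i,j} \theta^*_0 + C'_{i,j} \theta^*_1 + \bigl( \vert O'_j \vert-C'_{i,j} -\delta_{i,j} \bigr) \theta^*_2 \Bigr).
\end{align*}

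Next I would invoke the fact that $\Gamma$ is distance-transitive (stated in Section~4, following \cite[p.~281]{bcn}): there is an automorphism of $\Gamma$ carrying $x\mapsto y$ and $y\mapsto x$. Such an automorphism carries $\Gamma(x)$ to $\Gamma(y)$ and, because the $y$-partition of $\Gamma(x)$ is defined purely in terms of the distances $\partial(x,y)$, $\partial(x,\cdot)$, $\partial(y,\cdot)$ and adjacency counts (Proposition~\ref{prop:parts}), it carries $O_i$ to $O'_i$ for each $i$. Consequently $\vert O'_i\vert = \vert O_i\vert$ and $C'_{i,j} = C_{i,j}$ for all $i,j$. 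Substituting these equalities into the displayed formula for $\vert X\vert\langle E\widehat O'_i, E\widehat O'_j\rangle$ yields exactly the first expression for $G_{i,j}$ in Lemma~\ref{lem:Gentries}, completing the proof.

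An alternative, slightly more self-contained route avoids the automorphism: combine the first and second expressions for $G_{i,j}$ in Lemma~\ref{lem:Gentries} (which already encode $\vert O_i\vert C_{i,j} = \vert O_j\vert C_{j,i}$ from Lemma~\ref{lem:Ceig}) with Theorem~\ref{thm:bbalanced} to expand $E\widehat O'_i = E\widehat O_i - \lambda_i(E\hat x - E\hat y)$, and then compute $\langle E\widehat O'_i, E\widehat O'_j\rangle$ directly using Lemma~\ref{lem:xyip} for the inner products $\langle E\hat x, E\widehat O_j\rangle$, $\langle E\hat y, E\widehat O_j\rangle$, $\langle E\hat x, E\hat x\rangle$, etc. This reduces everything to the scalar identities in Lemma~\ref{lem:lambdaValues} and Lemma~\ref{lem:Osize}. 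I expect the main obstacle in the direct approach to be bookkeeping: one must carefully track the cross terms and verify that the $\lambda_i$-dependent contributions cancel to restore the symmetric form, which is routine but lengthy. For this reason I would present the distance-transitivity argument as the primary proof, since it is short and conceptually transparent, and merely remark that the identity can also be checked by direct computation.
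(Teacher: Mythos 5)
Your proposal is correct and takes essentially the same approach as the paper: the paper's proof is simply ``Interchange the roles of $x,y$ in Definition~\ref{def:Gmat} and Lemma~\ref{lem:Gentries},'' which is exactly your main argument (you just spell out the equalities $\vert O'_i\vert = \vert O_i\vert$ and $C'_{i,j} = C_{i,j}$ that justify the interchange, and which the paper leaves implicit). Your remarks about the alternative direct computation are a sensible observation but not needed.
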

 \begin{proof} Interchange the roles of $x,y$ in Definition \ref{def:Gmat} and Lemma \ref{lem:Gentries}.
 \end{proof}
 
 \begin{lemma} \label{lem:HtGH} We have
\begin{align*}
H^t G H = {\rm diag} \bigl( \varepsilon_1 \eta_1, \varepsilon_2 \eta_2,  \varepsilon_3\eta_3, \varepsilon_4\eta_4, \varepsilon_5\eta_5,  \varepsilon_6\eta_6\bigr),
\end{align*}
where  $\lbrace \eta_i \rbrace_{i=1}^6$ are from Lemma \ref{lem:HGorthog} and
\begin{align*}
& \varepsilon_1 = \theta^2_1, \qquad \quad 
\varepsilon_2 = q^{2N-D-2}, \qquad \quad 
\varepsilon_3 = q^{N+D-2}, \\
&
\varepsilon_4 = q^{N-1}, \qquad \quad
\varepsilon_5 =  q^{N-2}, \qquad \quad
\varepsilon_6 =  q^{N-2}.
\end{align*}

\end{lemma}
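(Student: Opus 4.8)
The plan is to compute $H^t G H$ directly by exploiting the structure already assembled in Section 7. The key observation is that the matrix $G$ from Definition \ref{def:Gmat} can be written in terms of $C$ and the diagonal matrix of orbit sizes. Indeed, by Lemma \ref{lem:Gentries} we have
\begin{align*}
G = {\rm diag}\bigl(\vert O_1 \vert, \ldots, \vert O_6 \vert\bigr)\Bigl( \theta^*_0 I + (\theta^*_1 - \theta^*_2) C + \theta^*_2 \bigl(J\,{\rm diag}(\vert O_j \vert) - I\bigr)\Bigr),
\end{align*}
where $I$ is the identity, $J$ is the all-ones matrix, and I am using $\sum_j \vert O_j\vert C_{i,j} = a_1$ together with $\sum_j \vert O_j\vert = \kappa$ to simplify where needed. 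The cleaner route, though, is to write $G = {\rm diag}(\vert O_i\vert)\, \Phi$ where $\Phi_{i,j} = \delta_{i,j}\theta^*_0 + C_{i,j}\theta^*_1 + (\vert O_j\vert - C_{i,j} - \delta_{i,j})\theta^*_2$, and to note that $\Phi$ is itself (up to the orbit-size scaling on the right) expressible as a polynomial-type combination of $C$ and the rank-one piece coming from $\widehat{\Gamma(x)}$.

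First I would use Lemma \ref{lem:Hcol}, which says $CH = H\,{\rm diag}(\vartheta_1,\ldots,\vartheta_6)$, so that the $C$-part of $G$ gets diagonalized by $H$ on the right. Second, I would handle the rank-one contribution: the term involving $J\,{\rm diag}(\vert O_j\vert)$ acts as projection onto the all-ones direction, and since the first column of $H$ is the all-ones vector (by Definition \ref{def:Hmatrix}) while $H^t {\rm diag}(\vert O_i\vert) H$ is diagonal by Lemma \ref{lem:HGorthog}, this rank-one part only affects the $(1,1)$-entry of $H^t G H$. Third, I would assemble: for $j \geq 2$, the $j$th diagonal entry of $H^t G H$ is $\eta_j$ times the scalar by which $G$ acts on the $j$th column of $H$, and that scalar, using Lemma \ref{lem:Hcol} and Lemma \ref{lem:step2} (or a direct evaluation via \eqref{eq:ths}), works out to $\varepsilon_j = \theta^*_0 + \vartheta_j \theta^*_1 - (\vartheta_j + 1)\theta^*_2$ — one checks this equals $q^{2N-D-2}$, $q^{N+D-2}$, $q^{N-1}$, $q^{N-2}$, $q^{N-2}$ for $j = 2,\ldots,6$ respectively, using $\theta^*_1 - \theta^*_2 = q^{N-2}$ and $\theta^*_0 - \theta^*_2 = q^{N-2}(q+1)$ from \eqref{eq:ths}. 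For $j = 1$, Lemma \ref{lem:h1} gives $h_1 = \theta_1 E\hat x$, so $H^t G H$ has $(1,1)$-entry $\vert X\vert \langle \theta_1 E\hat x, \theta_1 E\hat x\rangle = \theta_1^2 \theta^*_0 = \varepsilon_1 \eta_1$, using Lemma \ref{lem:xyip} and $\eta_1 = \theta^*_0$.

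The off-diagonal entries of $H^t G H$ vanish by a symmetry/self-adjointness argument: by Lemma \ref{lem:Ceig} the matrix ${\rm diag}(\vert O_i\vert) C$ is symmetric, hence $C$ is self-adjoint with respect to the inner product given by ${\rm diag}(\vert O_i\vert)$; since its eigenvectors (the columns of $H$) lie in distinct eigenspaces for $j$ in the blocks $\{1\},\{2\},\{3\},\{4\},\{5,6\}$, the columns of $H$ are ${\rm diag}(\vert O_i\vert)$-orthogonal (this is exactly Lemma \ref{lem:HGorthog}), and since $G$ is also a polynomial-plus-rank-one combination built from $C$ and the all-ones direction — which $H$ respects — the same columns are $G$-orthogonal except possibly within the repeated-eigenvalue block $\{5,6\}$. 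Within that block one checks directly by matrix multiplication, or observes that the specific choice of $H_{\cdot,5}, H_{\cdot,6}$ in Definition \ref{def:Hmatrix} was made precisely to diagonalize $G$ there as well (consistent with Lemma \ref{lem:HGorthog} already handling ${\rm diag}(\vert O_i\vert)$).

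The main obstacle I anticipate is the repeated-eigenvalue block $\vartheta_5 = \vartheta_6 = -q$: the eigenvectors $H_{\cdot,5}$ and $H_{\cdot,6}$ are not canonically determined by $C$ alone, so vanishing of the $(5,6)$-entry of $H^t G H$ is not automatic from the spectral argument and genuinely uses the explicit form of $G$ (equivalently, of the matrices $D^{(\ell)}$ and Lemma \ref{lem:step2}) together with the particular normalization chosen in Definition \ref{def:Hmatrix}. In practice this reduces to a finite verification by matrix multiplication, as the paper's proof indicates, but it is the step where the structural argument does not suffice on its own. Everything else follows by combining Lemma \ref{lem:Hcol}, Lemma \ref{lem:HGorthog}, Lemma \ref{lem:h1}, and the evaluations from \eqref{eq:ths}.
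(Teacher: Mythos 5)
Your proposal is essentially correct and takes a genuinely different, more structural route than the paper's proof (which just says ``By matrix multiplication''). Writing $\Delta = {\rm diag}(|O_i|)$, Lemma \ref{lem:Gentries} gives $G = \Delta\bigl[(\theta^*_0-\theta^*_2)I + (\theta^*_1-\theta^*_2)C + \theta^*_2 J\Delta\bigr]$, and then $H^tGH = (\theta^*_0-\theta^*_2)H^t\Delta H + (\theta^*_1-\theta^*_2)(H^t\Delta H)\Theta + \theta^*_2 (H^t\Delta\mathbf{1})(H^t\Delta\mathbf{1})^t$, where $\Theta = {\rm diag}(\vartheta_j)$. The first two terms are diagonal because Lemma \ref{lem:HGorthog} gives $H^t\Delta H = {\rm diag}(\eta_j)$ and Lemma \ref{lem:Hcol} gives $CH = H\Theta$; the third term contributes only to the $(1,1)$-entry because $\Delta\mathbf{1} = \Delta H_{\cdot,1}$ (first column of $H$ is all ones), so $H^t\Delta\mathbf{1} = \eta_1 e_1$. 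This yields $\varepsilon_j = (\theta^*_0-\theta^*_2) + \vartheta_j(\theta^*_1-\theta^*_2) = q^{N-2}(q+1+\vartheta_j)$ for $j\geq 2$, matching your computed values, and for $j=1$ it gives $\theta^*_0 - \theta^*_2 + a_1(\theta^*_1-\theta^*_2) + \theta^*_2\theta^*_0 = \theta_1\theta^*_1 = \theta_1^2$ by \eqref{eq:TTR} and formal self-duality (or, equivalently, your direct argument via Lemma \ref{lem:h1}). The paper's proof buys reliability with no bookkeeping beyond a symbolic computation, while your route explains \emph{why} the same $H$ diagonalizes both $\Delta$ and $G$.

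One genuine flaw: your worry about the repeated-eigenvalue block $\{5,6\}$ is unfounded, and the claim that vanishing of the $(5,6)$-entry ``genuinely uses the explicit form of $G$ (equivalently, of the matrices $D^{(\ell)}$ and Lemma \ref{lem:step2})'' is wrong --- the $D^{(\ell)}$ matrices and Lemma \ref{lem:step2} are entirely irrelevant to this lemma. Once Lemma \ref{lem:HGorthog} is granted, $(H^t\Delta H)\Theta$ is a product of two diagonal matrices and so is itself diagonal, including its $(5,6)$-entry, regardless of $\vartheta_5 = \vartheta_6$; and the rank-one piece only touches position $(1,1)$. The specific choice of $H_{\cdot,5}, H_{\cdot,6}$ within the $C$-eigenspace matters only for making $H^t\Delta H$ diagonal, which is exactly what Lemma \ref{lem:HGorthog} checks --- after that, no further ``finite verification by matrix multiplication'' is needed. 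If you tighten this point, your structural proof is complete.
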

 \begin{proof}
 By matrix multiplication.
 \end{proof} 
 
\begin{proposition} \label{prop:horthog}
The vectors $\lbrace h_j \rbrace_{j=1}^6$ are mutually orthogonal and
\begin{align*}
\Vert h_j \Vert^2 = \vert X \vert^{-1} \varepsilon_j \eta_j     \qquad \qquad  (1 \leq j \leq 6).
\end{align*}
Moreover, the vectors $\lbrace h'_j \rbrace_{j=1}^6$ are mutually orthogonal and
\begin{align*}
\Vert h'_j \Vert^2 = \vert X \vert^{-1} \varepsilon_j \eta_j     \qquad \qquad  (1 \leq j \leq 6).
\end{align*}
\end{proposition}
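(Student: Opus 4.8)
The plan is to deduce Proposition~\ref{prop:horthog} directly from the linear-algebraic identity in Lemma~\ref{lem:HtGH}, using the definitions of $h_j$, $h'_j$ and the Gram matrix $G$. First I would recall from Definition~\ref{def:hvec} that $h_j = \sum_{i=1}^6 H_{i,j} E\widehat O_i$, so that by bilinearity of $\langle\,,\,\rangle$ and Definition~\ref{def:Gmat},
\begin{align*}
\vert X \vert \langle h_j, h_\ell \rangle = \sum_{i=1}^6 \sum_{m=1}^6 H_{i,j} H_{m,\ell} \,\vert X \vert \langle E\widehat O_i, E\widehat O_m \rangle = \sum_{i,m} H_{i,j} G_{i,m} H_{m,\ell} = (H^t G H)_{j,\ell}.
\end{align*}
By Lemma~\ref{lem:HtGH} the matrix $H^t G H$ is the diagonal matrix with $(j,j)$-entry $\varepsilon_j \eta_j$. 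Hence $\langle h_j, h_\ell \rangle = 0$ whenever $j \neq \ell$, and $\Vert h_j \Vert^2 = \langle h_j, h_j \rangle = \vert X \vert^{-1} \varepsilon_j \eta_j$, which is the first assertion.

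For the second assertion, I would invoke Lemma~\ref{lem:Gp}, which states that the Gram matrix of $\lbrace E\widehat O'_i \rbrace_{i=1}^6$ is the same matrix $G$. Since $h'_j = \sum_{i=1}^6 H_{i,j} E\widehat O'_i$ by Definition~\ref{def:hvec}, the identical computation gives $\vert X \vert \langle h'_j, h'_\ell \rangle = (H^t G H)_{j,\ell}$, and the conclusion for the primed vectors follows in exactly the same way.

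There is essentially no obstacle here: the only nontrivial ingredients are Lemma~\ref{lem:HtGH} (the explicit diagonalization $H^t G H = \mathrm{diag}(\varepsilon_1\eta_1,\ldots,\varepsilon_6\eta_6)$, proved by matrix multiplication) and Lemma~\ref{lem:Gp} (the symmetry between the unprimed and primed Gram matrices). Given these, the proof is a one-line change-of-basis observation: conjugating the Gram matrix by the transition matrix $H$ yields the Gram matrix in the new basis, and diagonality of the result is precisely mutual orthogonality with the stated squared norms. The one point to state carefully is that $G$ is defined with the factor $\vert X \vert$, so the $\vert X \vert^{-1}$ reappears when solving for $\Vert h_j \Vert^2$; everything else is immediate.
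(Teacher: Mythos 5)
Your proof is correct and matches the paper's approach exactly: the paper's proof is the one-line remark ``Routine using Definitions~\ref{def:hvec}, \ref{def:Gmat} and Lemmas~\ref{lem:Gp}, \ref{lem:HtGH},'' and you have simply unfolded that routine change-of-basis computation on the Gram matrix.
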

 \begin{proof} 
 Routine using Definitions  \ref{def:hvec}, \ref{def:Gmat} and Lemmas \ref{lem:Gp}, \ref{lem:HtGH}.
 \end{proof}
 
 \begin{lemma}
 \label{lem:orthogxh}
  For $2 \leq j \leq 6$ we have
  \begin{align*}
  \langle E{\hat x}, h_j \rangle =0, \qquad \qquad \langle E{\hat y}, h'_j \rangle =0.
  \end{align*}
  Moreover,
  \begin{align*}
  \langle E{\hat x}, h_1 \rangle =  \vert X \vert^{-1} \theta_1 \eta_1, \qquad \qquad   \langle E{\hat y}, h'_1 \rangle = \vert X \vert^{-1} \theta_1 \eta_1.
  \end{align*}
 \end{lemma}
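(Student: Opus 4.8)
The plan is to compute the inner products $\langle E\hat x, h_j\rangle$ directly from the definition of $h_j$ in Definition~\ref{def:hvec}, namely $h_j = \sum_{i=1}^6 H_{i,j} E\widehat O_i$, together with the formula for $\langle E\hat x, E\widehat O_i\rangle$ coming from Lemma~\ref{lem:xyip}. First I would note that for each $i$, $\langle E\hat x, E\widehat O_i\rangle = \sum_{z\in O_i}\langle E\hat x, E\hat z\rangle = |O_i|\,|X|^{-1}\theta^*_{k+\varepsilon(i)}$, since every $z\in O_i$ lies at distance $k+\varepsilon(i)$ from $x$ for $i\notin\{2,3,4,5\}$ and at distance $k$ from $x$ — wait, the relevant distance here is from $x$, and every vertex of $\Gamma(x)$ is at distance $1$ from $x$; so in fact $\langle E\hat x, E\hat z\rangle = |X|^{-1}\theta^*_1$ for all $z\in\Gamma(x)$. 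Hence $\langle E\hat x, E\widehat O_i\rangle = |O_i|\,|X|^{-1}\theta^*_1$, and therefore
\begin{align*}
\langle E\hat x, h_j\rangle = |X|^{-1}\theta^*_1 \sum_{i=1}^6 H_{i,j}|O_i|.
\end{align*}

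The key computation is thus the value of $\sum_{i=1}^6 H_{i,j}|O_i|$ for each $j$. Here I would exploit the orthogonality relation already established in Lemma~\ref{lem:HGorthog}: the first column of $H$ is the all-ones vector, so for $1\le j\le 6$,
\begin{align*}
\sum_{i=1}^6 H_{i,1}|O_i| H_{i,j} = \bigl(H^t\,{\rm diag}(|O_1|,\ldots,|O_6|)\,H\bigr)_{1,j} = \delta_{1,j}\,\eta_1.
\end{align*}
Since $H_{i,1}=1$ for all $i$, the left side is exactly $\sum_{i=1}^6 |O_i| H_{i,j}$. Therefore $\sum_{i=1}^6 H_{i,j}|O_i| = \delta_{1,j}\eta_1$, which vanishes for $2\le j\le 6$ and equals $\eta_1$ for $j=1$. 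Combined with the displayed formula, this gives $\langle E\hat x, h_j\rangle = 0$ for $2\le j\le 6$ and $\langle E\hat x, h_1\rangle = |X|^{-1}\theta^*_1\eta_1 = |X|^{-1}\theta_1\eta_1$, using $\theta^*_1=\theta_1$ from \eqref{eq:ths}. The claims about $h'_j$ and $E\hat y$ follow by interchanging the roles of $x$ and $y$: each $z\in\Gamma(y)$ is at distance $1$ from $y$, so $\langle E\hat y, E\widehat O'_i\rangle = |O'_i|\,|X|^{-1}\theta^*_1 = |O_i|\,|X|^{-1}\theta^*_1$ by Theorem~\ref{thm:bbalanced} (which gives $|O'_i|=|O_i|$, e.g.\ via Lemma~\ref{lem:step1} applied to $E\hat x$), and the same orthogonality argument with $H$ applies verbatim.

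The only subtlety worth double-checking is that $h_1 = \theta_1 E\hat x$ already by Lemma~\ref{lem:h1}, so the assertion $\langle E\hat x, h_1\rangle = |X|^{-1}\theta_1\eta_1$ should be consistent with $\langle E\hat x, h_1\rangle = \theta_1\langle E\hat x, E\hat x\rangle = \theta_1 |X|^{-1}\theta^*_0 = \theta_1|X|^{-1}\eta_1$, which indeed matches since $\eta_1 = (q^D-1)(q^{N-D}-1)/(q-1) = \theta^*_0$ by \eqref{eq:dimEV}. I expect no real obstacle here; the proof is a short bookkeeping argument, and the main point is simply to recognize that the needed sum $\sum_i H_{i,j}|O_i|$ is the first row of the matrix already diagonalized in Lemma~\ref{lem:HGorthog}. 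I would write the proof as: "Routine using Definition~\ref{def:hvec}, Lemma~\ref{lem:xyip}, and Lemma~\ref{lem:HGorthog}, together with $|O'_i|=|O_i|$ and $\theta^*_1 = \theta_1$."
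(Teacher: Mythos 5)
Your proof is correct, but it takes a somewhat different route from the paper's. The paper's own proof is a one-liner: it cites Lemma~\ref{lem:h1} (giving $h_1 = \theta_1 E\hat x$ and $h'_1=\theta_1 E\hat y$) together with Proposition~\ref{prop:horthog} (orthogonality of $\{h_j\}$ with $\Vert h_j\Vert^2 = |X|^{-1}\varepsilon_j\eta_j$), so that $\langle E\hat x, h_j\rangle = \theta_1^{-1}\langle h_1, h_j\rangle = \theta_1^{-1}\delta_{1,j}\Vert h_1\Vert^2$ and one uses $\varepsilon_1 = \theta_1^2$. Your argument instead computes $\langle E\hat x, E\widehat O_i\rangle = |O_i|\,|X|^{-1}\theta^*_1$ directly from Lemma~\ref{lem:xyip} (since every vertex of $\Gamma(x)$ is at distance $1$ from $x$), which reduces $\langle E\hat x, h_j\rangle$ to the weighted column sum $\sum_i H_{i,j}|O_i|$, and then you use the orthogonality relation $H^t\,{\rm diag}(|O_i|)\,H = {\rm diag}(\eta_i)$ from Lemma~\ref{lem:HGorthog} (not the Gram-matrix relation $H^t G H$ of Lemma~\ref{lem:HtGH} underlying Proposition~\ref{prop:horthog}) together with the all-ones first column of $H$. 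Both approaches are valid and comparable in effort; the paper's exploits already-established orthogonality of the $h_j$ basis for economy, while yours makes transparent that the vanishing comes from the simpler combinatorial orthogonality of $H$ against the block sizes and gives an independent consistency check $\eta_1 = \theta^*_0$. Your handling of the $E\hat y,h'_j$ case by interchanging $x,y$ and invoking $|O'_i|=|O_i|$ is also fine.
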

 \begin{proof}
 By Lemma \ref{lem:h1}  and Proposition  \ref{prop:horthog}.
 \end{proof}

 \begin{lemma} \label{lem:gamma}
 We have
 \begin{align*}
 E{\hat y} = \sum_{j=1}^6 \gamma_j h_j, \qquad \qquad E{\hat x} = \sum_{j=1}^6 \gamma_j h'_j     
 \end{align*}
 where
 \begin{align*}
&\gamma_1 = \frac{1}{\theta_1}\,\frac{q^{N-k}-q^{N-D}-q^D+1}{(q^{N-D}-1)(q^D-1)}, \qquad \quad \gamma_2 = \frac{ q^{1-k}(q-1)}{(q^{N-D}-1)(q^D-1)}, \\
& \gamma_3 = \frac{ q^{1-k}(q-1)}{(q^{N-D}-1)(q^D-1)}, \qquad \qquad \qquad \gamma_4 = \frac{q^{1-k}}{q-1}, \\
& \gamma_5 = \frac{ q^{1-k}(q-1)}{(q^{N-D}-1)(q^D-1)}, \qquad \qquad \qquad
\gamma_6=0.
\end{align*}
 \end{lemma}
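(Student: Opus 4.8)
The plan is to expand $E\hat y$ in the orthogonal basis $\lbrace h_j\rbrace_{j=1}^6$ of $S$, which is legitimate since $E\hat y\in S$ by Corollary \ref{cor:xy}. For an orthogonal basis the coefficient of $h_j$ is simply $\langle E\hat y, h_j\rangle / \Vert h_j\Vert^2$, so the proof reduces to computing the six inner products $\langle E\hat y, h_j\rangle$ and dividing by the known norms $\Vert h_j\Vert^2 = \vert X\vert^{-1}\varepsilon_j\eta_j$ from Proposition \ref{prop:horthog}. The second statement, $E\hat x = \sum_j \gamma_j h'_j$, will follow by interchanging the roles of $x$ and $y$ (using Proposition \ref{prop:symS}, which gives $S(x,y)=S(y,x)$, and the symmetry built into Lemmas \ref{lem:Gp}, \ref{lem:HtGH}), so I will only carry out the first expansion in detail.

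First I would compute $\langle E\hat y, h_j\rangle$ using Definition \ref{def:hvec}: $\langle E\hat y, h_j\rangle = \sum_{i=1}^6 H_{i,j}\langle E\hat y, E\widehat O_i\rangle$. Now $\langle E\hat y, E\widehat O_i\rangle = \sum_{z\in O_i}\langle E\hat y, E\hat z\rangle$, and by Lemma \ref{lem:xyip} each term equals $\vert X\vert^{-1}\theta^*_{\partial(y,z)}$. Since the $y$-partition is organized by distance from $y$ (Lemma \ref{lem:rough}), for $i\in\lbrace 2,3,4,5\rbrace$ every $z\in O_i$ has $\partial(y,z)=k$, while $z\in O_1$ has $\partial(y,z)=k-1$ and $z\in O_6$ has $\partial(y,z)=k+1$. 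Hence $\langle E\hat y, E\widehat O_i\rangle = \vert X\vert^{-1}\vert O_i\vert\,\theta^*_{k+\varepsilon(i)}$ with $\varepsilon(i)$ as in \eqref{eq:eps}. Substituting the explicit values of $\vert O_i\vert$ (Lemma \ref{lem:Osize}), $\theta^*_\ell$ \eqref{eq:ths}, and the entries $H_{i,j}$ (Definition \ref{def:Hmatrix}) gives a closed form for each $\langle E\hat y, h_j\rangle$.

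Then I would divide by $\Vert h_j\Vert^2 = \vert X\vert^{-1}\varepsilon_j\eta_j$, reading $\varepsilon_j$ from Lemma \ref{lem:HtGH} and $\eta_j$ from Lemma \ref{lem:HGorthog}, to obtain $\gamma_j = \langle E\hat y, h_j\rangle/\Vert h_j\Vert^2 = (\sum_i H_{i,j}\vert O_i\vert\theta^*_{k+\varepsilon(i)})/(\varepsilon_j\eta_j)$, and check that this equals the stated value. A useful sanity check along the way: for $j\ge 2$ we know from Lemma \ref{lem:orthogxh} that $\langle E\hat x, h_j\rangle = 0$, and for $j=1$ that $h_1=\theta_1 E\hat x$ by Lemma \ref{lem:h1}, so $\gamma_1 = \langle E\hat y, h_1\rangle/\Vert h_1\Vert^2 = \theta_1\langle E\hat x, E\hat y\rangle/\Vert h_1\Vert^2 = \theta_1\vert X\vert^{-1}\theta^*_k/(\vert X\vert^{-1}\theta_1^2\eta_1) = \theta^*_k/(\theta_1\eta_1)$, which matches the claimed $\gamma_1$ after inserting $\theta^*_k$ from \eqref{eq:ths} and $\eta_1$ from Lemma \ref{lem:HGorthog}. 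Likewise, summing over $j$ and using $\sum_j\gamma_j h_j = E\hat y$ paired against $E\hat x$ recovers $\langle E\hat x, E\hat y\rangle = \vert X\vert^{-1}\theta^*_k$ via Lemma \ref{lem:orthogxh}, another consistency check; and the fact that $\gamma_6=0$ reflects that $E\hat y$ lies in $\omega^\perp$ (with $\omega=h_6$), consistent with the later claim $E\hat x, E\hat y\in\omega^\perp$.

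The main obstacle is purely computational: verifying that the somewhat unwieldy sums $\sum_{i=1}^6 H_{i,j}\vert O_i\vert\theta^*_{k+\varepsilon(i)}$ collapse to the stated compact expressions for $\gamma_j$ after dividing by $\varepsilon_j\eta_j$. There is no conceptual difficulty — it is matrix/polynomial algebra in $q$ with a parameter $k$ — but the algebra for $j=5$ is heavy because both $H_{i,5}$ and $\eta_5$, $\varepsilon_5$ involve the cumbersome factor $q^{N-D+1} - q^2 + \cdots$; careful bookkeeping of the $q$-powers and the factorizations $(q^{N-D}-1)(q^D-1)$ will be needed. I would organize the computation so that the common denominator $(q^{N-D}-1)(q^D-1)$ is factored out first, making the match with the stated $\gamma_j$ transparent; the case $\gamma_6=0$ is immediate since the pairing of $h_6=\omega$ against $E\hat y$ vanishes, which can be seen directly from the column $H_{i,6}$ against $\vert O_i\vert\theta^*_{k+\varepsilon(i)}$ or cited from the orthogonality $E\hat y\perp\omega$.
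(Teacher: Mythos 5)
Your proof is correct, and it takes a genuinely different route from the paper's. The paper defines $Y = E\hat y - \sum_{j=1}^6 \gamma_j h_j$, expands both $E\hat y$ (via Lemmas \ref{lem:Osum} and \ref{lem:dep}) and $\sum_j \gamma_j h_j$ (via Definition \ref{def:hvec}) in the $\{E\widehat O_i\}_{i=1}^6$ basis, and then checks that the six coefficients of $Y$ in that basis all vanish. You instead appeal to the orthogonal projection formula: since $E\hat y \in S$ (Corollary \ref{cor:xy}) and $\{h_j\}_{j=1}^6$ is an orthogonal basis for $S$ with known squared norms (Proposition \ref{prop:horthog}), the coefficient of $h_j$ must be $\langle E\hat y, h_j\rangle / \|h_j\|^2$, which you then evaluate from Lemma \ref{lem:xyip} and the entries of $H$. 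Your route has two advantages: it bypasses Lemma \ref{lem:dep} (the paper's remaining ingredient, which was itself proven by a brute-force norm computation), and it explains structurally why $\gamma_1$ carries a $\theta_1$ in its denominator (coming from $\|h_1\|^2 = |X|^{-1}\theta_1^2\eta_1$) and why $\gamma_6 = 0$ (because $E\hat y = \theta_1^{-1} h'_1 \perp h'_6 = h_6 = \omega$, by the orthogonality of the $\{h'_j\}$ and Lemma \ref{lem:h1check}(ii)). Both approaches ultimately reduce to six polynomial identities in $q,N,D,k$; yours computes $\sum_i H_{i,j}|O_i|\theta^*_{k+\varepsilon(i)}$ column by column, while the paper's checks that $\sum_j \gamma_j H_{i,j}$ matches the expansion coefficients of $E\hat y$ row by row. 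The derivation of the second displayed identity from the first by interchanging $x$ and $y$, which you mention briefly, is sound and is exactly what the paper does.
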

 \begin{proof} We  verify the first assertion.
  We define $Y= E{\hat y} - \sum_{j=1}^6 \gamma_j h_j$ and show $Y=0$.
 Using Lemmas \ref{lem:Osum}, \ref{lem:dep} and Definition \ref{def:hvec}, we express $Y$ as a linear combination of  $\lbrace E\widehat O_i \rbrace_{i=1}^6$.
 One checks that in this linear combination, each coefficient is 0. Therefore, $Y=0$ and the first assertion is verified.
 The second assertion is similarly verified.
 \end{proof}
 
 \begin{proposition} \label{prop:hmhp}
 For $1 \leq j \leq 6$ we have
 \begin{align}
 h_j - h'_j = \mu_j (E{\hat x} - E{\hat y}),     \label{eq:hmhp}
 \end{align}
 where
 \begin{align}
 \mu_j = \sum_{i=1}^6 \lambda_i H_{i,j}.       \label{eq:mu}
 \end{align}
 \end{proposition}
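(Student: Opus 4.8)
The plan is to prove Proposition~\ref{prop:hmhp} by combining Definition~\ref{def:hvec} with the already-established strengthened balanced set condition (Theorem~\ref{thm:bbalanced}). By Definition~\ref{def:hvec} we have $h_j = \sum_{i=1}^6 H_{i,j} E\widehat O_i$ and $h'_j = \sum_{i=1}^6 H_{i,j} E\widehat O'_i$, so termwise subtraction gives
\begin{align*}
h_j - h'_j = \sum_{i=1}^6 H_{i,j} \bigl( E\widehat O_i - E\widehat O'_i \bigr).
\end{align*}
The point is that the difference $E\widehat O_i - E\widehat O'_i$ has already been computed in Theorem~\ref{thm:bbalanced}: it equals $\lambda_i (E\hat x - E\hat y)$.

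Substituting this into the sum, I would obtain
\begin{align*}
h_j - h'_j = \sum_{i=1}^6 H_{i,j} \lambda_i (E\hat x - E\hat y) = \Bigl( \sum_{i=1}^6 \lambda_i H_{i,j}\Bigr)(E\hat x - E\hat y).
\end{align*}
Setting $\mu_j = \sum_{i=1}^6 \lambda_i H_{i,j}$ then gives exactly \eqref{eq:hmhp} and \eqref{eq:mu}, which is all that is claimed. So the proof is essentially a two-line formal manipulation: expand via the definition, apply Theorem~\ref{thm:bbalanced} term by term, and factor out the common vector $E\hat x - E\hat y$.

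There is no real obstacle here; the substantive work (establishing the strengthened balanced set condition with the explicit scalars $\lambda_i$) was done in Theorem~\ref{thm:bbalanced}, and the matrix $H$ is already in hand from Definition~\ref{def:Hmatrix}. The only minor point of care is that the proposition does not ask for a closed form for $\mu_j$ — just the expression $\sum_{i=1}^6 \lambda_i H_{i,j}$ — so I would not need to invoke Lemma~\ref{lem:lambdaValues} or grind through any evaluation; the statement is left at the level of this compact sum. If one did want explicit values, Lemma~\ref{lem:lambdaValues} together with Definition~\ref{def:Hmatrix} would supply them, but that is not required for the statement as written.

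One sanity check worth noting (though not strictly needed for the proof): for $j=1$ the first column of $H$ is all ones, so $\mu_1 = \sum_{i=1}^6 \lambda_i = \theta_1$ by Lemma~\ref{lem:com}, and indeed $h_1 - h_1' = \theta_1 E\hat x - \theta_1 E\hat y = \theta_1(E\hat x - E\hat y)$ by Lemma~\ref{lem:h1}, consistent with \eqref{eq:hmhp}. This is a useful cross-check that the identity $\mu_j = \sum_{i=1}^6 \lambda_i H_{i,j}$ is correctly stated.
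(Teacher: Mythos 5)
Your proof is correct and matches the paper's argument exactly: both expand $h_j - h'_j$ via Definition~\ref{def:hvec} and then apply Theorem~\ref{thm:bbalanced} term by term to factor out $E\hat x - E\hat y$. The added sanity check at $j=1$ is a nice touch but not required.
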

 \begin{proof} 
To verify \eqref{eq:hmhp}, in the left-hand side
eliminate $h_j$, $h'_j$ using Definition  \ref{def:hvec}, and evaluate
the result using Theorem  \ref{thm:bbalanced}.
\end{proof}
 
 \begin{lemma} \label{lem:moremu} The scalars $\lbrace \mu_j \rbrace_{j=1}^6$ from Proposition
 \ref{prop:hmhp} are given by
 \begin{align*}
& \mu_1 = \theta_1, \qquad \qquad 
 \mu_2 = -q^{N-D-1} \frac{q^D-q^k}{q-1}, \\
& \mu_3 = -q^{D-1} \frac{q^{N-D}-q^k}{q-1},
 \qquad \qquad \mu_4 = -(q-2) q^{k-1}, \\
 &\mu_5 = -q^{k-1} \frac{(q^{N-D}-q)(q^{D-1}-1)+q^{N-k}(q^{k-1}-1)(q-1)     }{(q-1)^2}, \qquad \quad \mu_6=0.
 \end{align*}
 \end{lemma}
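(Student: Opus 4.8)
The plan is to evaluate the defining sum $\mu_j=\sum_{i=1}^{6}\lambda_i H_{i,j}$ from \eqref{eq:mu} using the closed forms for the $\lambda_i$ in Lemma~\ref{lem:lambdaValues} and the explicit entries of $H$ in Definition~\ref{def:Hmatrix}. The case $j=1$ is immediate, since the first column of $H$ is the all-ones vector and hence $\mu_1=\sum_{i=1}^{6}\lambda_i=\theta_1$ by Lemma~\ref{lem:com}. For $2\le j\le 6$ one substitutes and simplifies a rational function of $q$ whose exponents involve $D,N,k$; these are finite algebraic identities, with $\mu_6$ collapsing to $0$.

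Because the direct expansion (especially for $j=5$) is bookkeeping-heavy, I would prefer a conceptual derivation that also serves as a cross-check. By Proposition~\ref{prop:hmhp} we have $h_j-h'_j=\mu_j(E\hat x-E\hat y)$. Pairing with $E\hat x$ and using $\langle E\hat x,E\hat x-E\hat y\rangle=\vert X\vert^{-1}(\theta^*_0-\theta^*_k)$ gives
\[
\mu_j\,\vert X\vert^{-1}(\theta^*_0-\theta^*_k)=\langle E\hat x,h_j\rangle-\langle E\hat x,h'_j\rangle .
\]
Here $\langle E\hat x,h_j\rangle$ is given by Lemma~\ref{lem:orthogxh}: it equals $0$ for $j\ge 2$ and $\vert X\vert^{-1}\theta_1\eta_1$ for $j=1$. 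For the other term, note that the vertices of $O'_i$ all lie at distance $k+\varepsilon(i)$ from $x$ (Proposition~\ref{prop:parts} with $x,y$ interchanged) and $\vert O'_i\vert=\vert O_i\vert$, so $\langle E\hat x,E\widehat O'_i\rangle=\vert X\vert^{-1}\vert O_i\vert\theta^*_{k+\varepsilon(i)}=\langle E\hat y,E\widehat O_i\rangle$; summing against $H_{i,j}$ yields $\langle E\hat x,h'_j\rangle=\langle E\hat y,h_j\rangle$. Finally, Lemma~\ref{lem:gamma} together with the orthogonality in Proposition~\ref{prop:horthog} gives $\langle E\hat y,h_j\rangle=\gamma_j\Vert h_j\Vert^2=\gamma_j\vert X\vert^{-1}\varepsilon_j\eta_j$. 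Combining these with the identity $\theta^*_0-\theta^*_k=q^{N-k}(q^k-1)/(q-1)$ (a one-line consequence of \eqref{eq:ths}), we obtain $\mu_j=-\gamma_j\varepsilon_j\eta_j/(\theta^*_0-\theta^*_k)$ for $2\le j\le 6$ and $\mu_1=\eta_1(\theta_1-\gamma_1\varepsilon_1)/(\theta^*_0-\theta^*_k)$; inserting the values of $\gamma_j$, $\varepsilon_j$, $\eta_j$ from Lemmas~\ref{lem:HGorthog}, \ref{lem:HtGH}, \ref{lem:gamma} then yields the stated formulas, with nearly every factor cancelling (and $\mu_6=0$ because $\gamma_6=0$).

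The main obstacle is purely computational. On the direct route it is the simplification of $\mu_5$, which involves the unwieldy factor $(q^{N-D}-q)(q^{D-1}-1)+q^{N-k}(q^{k-1}-1)(q-1)$; on the conceptual route it is checking the single cancellation for each $j$ and verifying the symmetry $\langle E\hat x,E\widehat O'_i\rangle=\langle E\hat y,E\widehat O_i\rangle$, which rests only on Proposition~\ref{prop:parts} and the equality $\vert O'_i\vert=\vert O_i\vert$ (itself a consequence of distance-transitivity). No idea beyond the already-established lemmas is required; the content is the arithmetic.
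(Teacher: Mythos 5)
Your first route—substituting the closed forms of $\lambda_i$ from Lemma~\ref{lem:lambdaValues} and $H_{i,j}$ from Definition~\ref{def:Hmatrix} into $\mu_j=\sum_i\lambda_iH_{i,j}$, with $\mu_1=\theta_1$ falling out of the all-ones first column and Lemma~\ref{lem:com}—is exactly what the paper does. Your second route is a genuine alternative and it is correct. Pairing $h_j-h'_j=\mu_j(E\hat x-E\hat y)$ with $E\hat x$, using Lemma~\ref{lem:orthogxh} for $\langle E\hat x,h_j\rangle$, the symmetry $\langle E\hat x,h'_j\rangle=\langle E\hat y,h_j\rangle$ (which follows from $|O'_i|=|O_i|$ and $O'_i\subseteq\Gamma_{k+\varepsilon(i)}(x)$ via Proposition~\ref{prop:parts} with $x,y$ swapped, cf.\ Lemma~\ref{lem:Gp}), and the expansion $E\hat y=\sum_j\gamma_jh_j$ with $\Vert h_j\Vert^2=\vert X\vert^{-1}\varepsilon_j\eta_j$ from Proposition~\ref{prop:horthog}, yields the clean closed form $\mu_j=-\gamma_j\varepsilon_j\eta_j/(\theta^*_0-\theta^*_k)$ for $2\le j\le 6$ and $\mu_1=\eta_1(\theta_1-\gamma_1\varepsilon_1)/(\theta^*_0-\theta^*_k)=\theta_1$. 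I spot-checked $j=1,4,6$ and they match. What the alternative buys is a structural identity relating $\mu_j$ to $\gamma_j,\varepsilon_j,\eta_j$ and a cheap way to see $\mu_6=0$ from $\gamma_6=0$; what it does not buy is a reduction in total arithmetic, since Lemmas~\ref{lem:gamma}, \ref{lem:HGorthog}, \ref{lem:HtGH} are themselves established by direct matrix computation. One must also be careful that this route is not circular: it is safe because Proposition~\ref{prop:hmhp} and Lemma~\ref{lem:gamma} are proved independently of the present lemma, which you implicitly respect. As a cross-check and as a source of insight into the origin of the $\mu_j$, your second argument is a worthwhile addition even though the paper opts for the one-line direct evaluation.
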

 \begin{proof} Evaluate \eqref{eq:mu} using  Definition \ref{def:Hmatrix}
and
 Lemma    \ref{lem:lambdaValues}.
 \end{proof}

 \noindent The following result is for later use.
 \begin{lemma} \label{lem:mugam} We have
 \begin{align*}
  -1= \sum_{j=1}^6 \gamma_j \mu_j, \qquad \qquad 0 = \sum_{j=1}^6 \vartheta_j \gamma_j \mu_j.
  \end{align*}
  \end{lemma}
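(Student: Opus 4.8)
The first identity is almost free. Substitute $h'_j = h_j - \mu_j(E\hat x - E\hat y)$ from Proposition~\ref{prop:hmhp} into the relation $E\hat x = \sum_{j=1}^6 \gamma_j h'_j$ of Lemma~\ref{lem:gamma}; using $\sum_{j=1}^6\gamma_j h_j = E\hat y$ (again Lemma~\ref{lem:gamma}) this gives $E\hat x = E\hat y - \bigl(\sum_{j=1}^6\gamma_j\mu_j\bigr)(E\hat x - E\hat y)$, hence $\bigl(1 + \sum_{j=1}^6\gamma_j\mu_j\bigr)(E\hat x - E\hat y) = 0$. Since $E\hat x - E\hat y\neq 0$ by Lemma~\ref{lem:xyind}, we conclude $\sum_{j=1}^6\gamma_j\mu_j = -1$.

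For the second identity I would bring in the quotient matrix $C$. As $\{E\widehat O_i\}_{i=1}^6$ and $\{E\widehat O'_i\}_{i=1}^6$ are each a basis of $S$ (Lemma~\ref{lem:6b}, Proposition~\ref{prop:symS}), define linear maps $\Psi,\Psi'$ on $S$ by $\Psi(E\widehat O_i) = \sum_{l=1}^6 C_{l,i}E\widehat O_l$ and $\Psi'(E\widehat O'_i) = \sum_{l=1}^6 C_{l,i}E\widehat O'_l$. Since the columns of $H$ are eigenvectors of $C$ with eigenvalues $\vartheta_j$ (Lemma~\ref{lem:Hcol}), a direct computation from Definition~\ref{def:hvec} shows $\Psi h_j = \vartheta_j h_j$ and $\Psi' h'_j = \vartheta_j h'_j$ for $1\leq j\leq 6$. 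Feeding Lemma~\ref{lem:gamma} and Proposition~\ref{prop:hmhp} into this yields $\Psi(E\hat y) - \Psi'(E\hat x) = \sum_{j=1}^6\gamma_j\vartheta_j(h_j - h'_j) = \bigl(\sum_{j=1}^6\vartheta_j\gamma_j\mu_j\bigr)(E\hat x - E\hat y)$, so it suffices to prove $\Psi(E\hat y) = \Psi'(E\hat x)$.

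To do this, first note $\Psi(E\hat x) = a_1 E\hat x$ and $\Psi'(E\hat y) = a_1 E\hat y$: indeed $E\hat x = \theta_1^{-1}\sum_i E\widehat O_i$ by Lemma~\ref{lem:Osum}, and each row of $C$ sums to $a_1$, the valency of the local graph $\Gamma(x)$. Now substitute the relation $E\hat y = q^{1-k}E\widehat O_1 - q^{1-k}E\widehat O_2 - \tfrac{1-q^{1-k}}{q-1}E\hat x$ of Lemma~\ref{lem:dep} into $\Psi$, do the $x\leftrightarrow y$ mirror substitution into $\Psi'$, and use Theorem~\ref{thm:bbalanced} to replace each $E\widehat O_l - E\widehat O'_l$ by $\lambda_l(E\hat x - E\hat y)$; this collapses $\Psi(E\hat y) - \Psi'(E\hat x)$ to $\bigl(q^{1-k}\sum_{l=1}^6(C_{l,1}-C_{l,2})\lambda_l - \tfrac{(1-q^{1-k})a_1}{q-1}\bigr)(E\hat x - E\hat y)$. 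The only genuine computation — and the mildly tricky step — is then the scalar identity $q^{1-k}\sum_l(C_{l,1}-C_{l,2})\lambda_l = \tfrac{(1-q^{1-k})a_1}{q-1}$: from Proposition~\ref{lem:Centry} the differences $C_{l,1}-C_{l,2}$ are $0,\,q+2,\,1,\,1,\,1,\,0$ for $l=1,\dots,6$, and then Lemma~\ref{lem:lambdaValues} gives $\sum_l(C_{l,1}-C_{l,2})\lambda_l = \tfrac{(q^{k-1}-1)a_1}{q-1}$; multiplying by $q^{1-k}$ and using $q^{1-k}(q^{k-1}-1) = 1-q^{1-k}$ finishes it. Hence $\Psi(E\hat y) = \Psi'(E\hat x)$, and since $E\hat x - E\hat y\neq 0$ we conclude $\sum_{j=1}^6\vartheta_j\gamma_j\mu_j = 0$. (Both identities can instead be checked by brute substitution of the closed forms in Lemmas~\ref{lem:gamma},~\ref{lem:moremu} and Definition~\ref{def:vth}, reducing each to a polynomial identity in $q,q^D,q^{N-D},q^k$; that is shorter to write out but less illuminating, and in either approach the computational content is very light.)
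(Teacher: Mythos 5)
Your proof is correct, but it takes a genuinely different route from the paper's. The paper's proof of Lemma~\ref{lem:mugam} is a one-line brute-force verification: substitute the explicit formulas for $\vartheta_j$, $\gamma_j$, $\mu_j$ from Definition~\ref{def:vth} and Lemmas~\ref{lem:gamma}, \ref{lem:moremu}, and check the two polynomial identities. You instead give a structural argument. Your derivation of the first identity is short and conceptual and I have nothing to add. For the second identity, your maps $\Psi$ and $\Psi'$ coincide (up to the scalar $\vert X\vert$) with the Norton multiplications $E\hat x\star(\cdot)$ and $E\hat y\star(\cdot)$ restricted to $S$, via Proposition~\ref{cor:Nadj}; the key assertion $\Psi(E\hat y)=\Psi'(E\hat x)$ is then nothing but commutativity $E\hat x\star E\hat y = E\hat y\star E\hat x$ of the Norton product, which would let you skip the whole computation involving Lemma~\ref{lem:dep}, Theorem~\ref{thm:bbalanced}, the columns of $C$, and the $\lambda_l$. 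You may have chosen the longer route deliberately to avoid a forward reference (the Norton algebra only appears in Section 11, while Lemma~\ref{lem:mugam} sits in Section 9), and your self-contained calculation using the data from Sections 6--8 is indeed correct (I verified $C_{l,1}-C_{l,2}=0,\,q+2,\,1,\,1,\,1,\,0$ and that $(q+2)\lambda_2+\lambda_3+\lambda_4+\lambda_5 = (q^{k-1}-1)a_1/(q-1)$). In short: your argument is sound, more illuminating than the paper's bare substitution, and it quietly exposes the Norton-algebra content of the identity; the paper's is faster to state but opaque.
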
 
  \begin{proof}
  By  Definition \ref{def:vth} and Lemmas \ref{lem:gamma}, \ref{lem:moremu}.
  \end{proof}

 \section{Decomposing $S$ into its symmetric and antisymmetric part}
 
 We continue to discuss the bilinear forms graph $\Gamma=(X, \mathcal R)$ and the fixed vertices $x,y \in X$ at distance $\partial(x,y)=k$.
 Recall the subspace $S=S(x,y) = S(y,x)$ of $EV$ from Definition \ref{def:N}  and Proposition \ref{prop:symS}. In this section,
 we introduce two subspaces of $S$, called the symmetric part and antisymmetric part. We show that $S$ is an orthogonal direct sum
  its symmetric and antisymmetric part.
 
 \begin{definition} \label{def:Ocheck} For $1 \leq i \leq 6$ we define the vector
 \begin{align*}
 O^\vee_i  = \widehat O_i - \lambda_i  \hat x,
 \end{align*}
 where $\lambda_i $ is from Theorem  \ref{thm:bbalanced}.  
 \end{definition}
 
 \begin{lemma} \label{lem:EOC} 
 For $1 \leq i \leq 6$,
 \begin{align*}
 E O^\vee_i = E \widehat O_i - \lambda_i  E \hat x = E \widehat O'_i - \lambda_i  E \hat y.
 \end{align*}
 \end{lemma}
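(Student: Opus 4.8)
\textit{Proof proposal.} The first equality is immediate: by Definition \ref{def:Ocheck} we have $O^\vee_i = \widehat O_i - \lambda_i \hat x$, so applying the linear map $E$ gives $E O^\vee_i = E\widehat O_i - \lambda_i E\hat x$.

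The second equality is a restatement of Theorem \ref{thm:bbalanced}. Indeed, the claimed identity $E\widehat O_i - \lambda_i E\hat x = E\widehat O'_i - \lambda_i E\hat y$ is equivalent, after rearranging, to $E\widehat O_i - E\widehat O'_i = \lambda_i(E\hat x - E\hat y)$, which is exactly the conclusion of Theorem \ref{thm:bbalanced} (with the same scalars $\lambda_i$, since both Definition \ref{def:Ocheck} and the target identity take $\lambda_i$ from that theorem).

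There is no real obstacle here; the content of the lemma has already been done in Theorem \ref{thm:bbalanced}, and this statement merely repackages it together with the defining formula for $O^\vee_i$. The only thing to be careful about is bookkeeping of which $\lambda_i$ is meant, but this is fixed once and for all by Theorem \ref{thm:bbalanced}.
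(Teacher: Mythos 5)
Your proof is correct and follows the same route as the paper: the first equality is just the definition of $O^\vee_i$ with $E$ applied, and the second is a rearrangement of Theorem \ref{thm:bbalanced}.
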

 \begin{proof} The first equality is from Definition \ref{def:Ocheck}. The second equality
 is from  Theorem \ref{thm:bbalanced}.
 \end{proof}
 
 \begin{lemma} \label{lem:OCdep}
 We have
 \begin{align*}
 0 = \sum_{i=1}^6 E O^\vee_i.
 \end{align*}
 \end{lemma}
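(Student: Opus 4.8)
The statement to prove is Lemma~\ref{lem:OCdep}: that $\sum_{i=1}^6 E O^\vee_i = 0$. The plan is to reduce this to facts already established. By Lemma~\ref{lem:EOC} we have $E O^\vee_i = E\widehat O_i - \lambda_i E\hat x$ for each $i$, so summing over $i$ gives
\begin{align*}
\sum_{i=1}^6 E O^\vee_i = \sum_{i=1}^6 E\widehat O_i - \Bigl(\sum_{i=1}^6 \lambda_i\Bigr) E\hat x.
\end{align*}
First I would invoke Lemma~\ref{lem:Osum}, which says $\sum_{i=1}^6 E\widehat O_i = \theta_1 E\hat x$. Substituting this in, the right-hand side becomes $\theta_1 E\hat x - \bigl(\sum_{i=1}^6 \lambda_i\bigr) E\hat x = \bigl(\theta_1 - \sum_{i=1}^6\lambda_i\bigr) E\hat x$.

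Next I would apply Lemma~\ref{lem:com}, which states precisely that $\theta_1 = \sum_{i=1}^6 \lambda_i$. Hence the coefficient of $E\hat x$ vanishes and the sum is $0$, as desired. (One could equally well use the other half of Lemma~\ref{lem:EOC}, namely $E O^\vee_i = E\widehat O'_i - \lambda_i E\hat y$, together with the companion identity $\theta_1 E\hat y = \sum_{i=1}^6 E\widehat O'_i$ from \eqref{eq:step1}; this gives the same conclusion with $x$ replaced by $y$, which is a useful consistency check but not logically necessary.)

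There is essentially no obstacle here: the lemma is a two-line bookkeeping consequence of Lemma~\ref{lem:Osum} and Lemma~\ref{lem:com}, both already available. The only thing to be slightly careful about is that we are working inside $EV$ and applying $E$ to the defining relation $O^\vee_i = \widehat O_i - \lambda_i \hat x$ of Definition~\ref{def:Ocheck}; since $E$ is linear this is immediate, and the identification of $E O^\vee_i$ with $E\widehat O'_i - \lambda_i E\hat y$ is exactly the content of Theorem~\ref{thm:bbalanced} as recorded in Lemma~\ref{lem:EOC}.
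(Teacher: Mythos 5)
Your proof is correct and matches the paper's approach exactly: the paper's proof of Lemma~\ref{lem:OCdep} cites Lemmas~\ref{lem:Osum}, \ref{lem:com} and Definition~\ref{def:Ocheck}, which is precisely the bookkeeping you carried out (with Lemma~\ref{lem:EOC} being just $E$ applied to Definition~\ref{def:Ocheck}).
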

 \begin{proof}  By Lemmas \ref{lem:Osum}, \ref{lem:com} and Definition \ref{def:Ocheck}. 
 \end{proof} 
 
 \begin{lemma} \label{lem:xpy} We have
 \begin{align*}
    E\hat x + E \hat y =   q^{1-k} E O^\vee_1  - q^{1-k} E O^\vee_2.
 \end{align*}
 \end{lemma}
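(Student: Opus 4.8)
The plan is to deduce this from the linear dependence already established in Lemma \ref{lem:dep}, rather than to rerun a norm computation. Recall that Lemma \ref{lem:dep} gives
\begin{align*}
0 = E{\hat y} + \frac{1 - q^{1-k}}{q-1}\, E{\hat x} - q^{1-k} E\widehat O_1 + q^{1-k} E\widehat O_2 .
\end{align*}
By Lemma \ref{lem:EOC} we have $E\widehat O_1 = E O^\vee_1 + \lambda_1 E{\hat x}$ and $E\widehat O_2 = E O^\vee_2 + \lambda_2 E{\hat x}$. Substituting these into the displayed identity and solving for $q^{1-k}(E O^\vee_1 - E O^\vee_2)$ yields
\begin{align*}
q^{1-k}\bigl( E O^\vee_1 - E O^\vee_2 \bigr) = E{\hat y} + \left( \frac{1 - q^{1-k}}{q-1} - q^{1-k}(\lambda_1 - \lambda_2) \right) E{\hat x}.
\end{align*}

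It therefore remains to check that the scalar coefficient of $E{\hat x}$ on the right-hand side equals $1$; the coefficient of $E{\hat y}$ is already $1$, which matches the claimed identity. For this I would invoke Lemma \ref{lem:lambdaValues}, which gives $\lambda_1 = q^k(q^{k-2}-1)/(q-1)$ and $\lambda_2 = (q^{k-1}-1)^2/(q-1)$; a short computation gives $\lambda_1 - \lambda_2 = (2q^{k-1} - q^k - 1)/(q-1)$, so that $q^{1-k}(\lambda_1 - \lambda_2) = (q + q^{1-k} - 2)/(q-1)$, whence
\begin{align*}
\frac{1 - q^{1-k}}{q-1} - q^{1-k}(\lambda_1 - \lambda_2) = \frac{(1 - q^{1-k}) - (q + q^{1-k} - 2)}{q-1} \cdot \;
\end{align*}
wait, more carefully: the difference equals $\frac{(1 - q^{1-k}) - (q + q^{1-k} - 2)}{q-1}$ is not right; rather it is $\frac{1-q^{1-k}}{q-1} - \frac{q + q^{1-k} - 2}{q-1}$, which I will combine over the common denominator $q-1$ to obtain $\frac{q-1}{q-1} = 1$. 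Substituting back gives $q^{1-k}(EO^\vee_1 - EO^\vee_2) = E{\hat x} + E{\hat y}$, as desired.

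I do not anticipate a genuine obstacle here: the argument is purely a substitution followed by a rational-function identity in $q$ and $k$, and all the needed ingredients (Lemmas \ref{lem:dep}, \ref{lem:EOC}, \ref{lem:lambdaValues}) are already in hand. The only point requiring care is bookkeeping the signs and powers of $q$ when simplifying $\lambda_1 - \lambda_2$; this should be displayed explicitly so the reader can verify the cancellation to $1$.
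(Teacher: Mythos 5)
Your approach is essentially the same as the paper's: substitute the relation $E\widehat O_i = E O^\vee_i + \lambda_i E\hat x$ (from Lemma~\ref{lem:EOC}, equivalently Definition~\ref{def:Ocheck}) into the identity of Lemma~\ref{lem:dep} and collect terms. The set-up is correct and all the right ingredients are in play. However, there is a concrete sign error in the intermediate computation that propagates to an inconsistent display. You correctly obtain $\lambda_1-\lambda_2 = (2q^{k-1}-q^k-1)/(q-1)$, but then assert $q^{1-k}(\lambda_1-\lambda_2) = (q+q^{1-k}-2)/(q-1)$. Multiplying $2q^{k-1}-q^k-1$ by $q^{1-k}$ gives $2-q-q^{1-k}$, so in fact
\begin{align*}
q^{1-k}(\lambda_1-\lambda_2) = \frac{2-q-q^{1-k}}{q-1} = -\,\frac{q+q^{1-k}-2}{q-1}.
\end{align*}
With your (wrong-sign) value, the quantity you then write, $\dfrac{(1-q^{1-k})-(q+q^{1-k}-2)}{q-1}$, simplifies to $\dfrac{3-q-2q^{1-k}}{q-1}$, which is not $1$ (for $q=3$, $k=2$ it equals $-1/3$). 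The step where you claim it collapses to $\frac{q-1}{q-1}$ only works once the minus sign is restored: with the correct value,
\begin{align*}
\frac{1-q^{1-k}}{q-1} - q^{1-k}(\lambda_1-\lambda_2) = \frac{(1-q^{1-k}) - (2-q-q^{1-k})}{q-1} = \frac{q-1}{q-1} = 1,
\end{align*}
which is what you want. So the argument is sound and follows the paper's route; fix the sign in $q^{1-k}(\lambda_1-\lambda_2)$ and the final simplification goes through cleanly.
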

 \begin{proof} To verify this equation, eliminate $O^\vee_1$ and $ O^\vee_2$ using Definition \ref{def:Ocheck},
 and evaluate the result using Lemma \ref{lem:dep}.
 \end{proof}

  \noindent The following formulas are handy.
  \begin{lemma} \label{lem:handy}
  For $1 \leq i \leq 6$ we have
  \begin{align*}
  E \widehat O_i &= E O^\vee_i + \frac{\lambda_i}{2} \bigl( E\hat x + E \hat y\bigr) + \frac{\lambda_i}{2} \bigl( E\hat x - E \hat y\bigr),
  \\
  E \widehat O'_i &= E O^\vee_i + \frac{\lambda_i}{2} \bigl( E\hat x + E \hat y\bigr) - \frac{\lambda_i}{2} \bigl( E\hat x - E \hat y\bigr).
  \end{align*}
  \end{lemma}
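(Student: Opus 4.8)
The plan is to verify Lemma~\ref{lem:handy} by a direct algebraic manipulation starting from the defining relations already established, rather than by any further combinatorial counting. The two asserted formulas are symmetric to each other under interchanging $x$ and $y$ (which also swaps $\widehat O_i$ with $\widehat O'_i$ and fixes $E\hat x + E\hat y$ while negating $E\hat x - E\hat y$), so it suffices to prove the first one and then invoke this symmetry, exactly as in the proofs of Lemmas~\ref{lem:Gp} and~\ref{lem:orthogxh}.

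For the first formula, I would add together the two equalities in Lemma~\ref{lem:EOC}:
\begin{align*}
2\, E O^\vee_i = \bigl(E\widehat O_i - \lambda_i E\hat x\bigr) + \bigl(E\widehat O'_i - \lambda_i E\hat y\bigr)
= E\widehat O_i + E\widehat O'_i - \lambda_i\bigl(E\hat x + E\hat y\bigr).
\end{align*}
On the other hand, Theorem~\ref{thm:bbalanced} gives $E\widehat O'_i = E\widehat O_i - \lambda_i(E\hat x - E\hat y)$, so $E\widehat O_i + E\widehat O'_i = 2\,E\widehat O_i - \lambda_i(E\hat x - E\hat y)$. Substituting,
\begin{align*}
2\, E O^\vee_i = 2\, E\widehat O_i - \lambda_i\bigl(E\hat x - E\hat y\bigr) - \lambda_i\bigl(E\hat x + E\hat y\bigr),
\end{align*}
and dividing by $2$ and solving for $E\widehat O_i$ yields
\begin{align*}
E\widehat O_i = E O^\vee_i + \frac{\lambda_i}{2}\bigl(E\hat x + E\hat y\bigr) + \frac{\lambda_i}{2}\bigl(E\hat x - E\hat y\bigr),
\end{align*}
which is the claimed identity. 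The second identity follows by interchanging the roles of $x$ and $y$ as noted above, or equivalently by subtracting $\lambda_i(E\hat x - E\hat y)$ from the first identity and using Theorem~\ref{thm:bbalanced} once more.

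There is essentially no obstacle here: the lemma is a formal rearrangement of Lemma~\ref{lem:EOC} together with Theorem~\ref{thm:bbalanced}, and the only point requiring a word of care is confirming that the $x\leftrightarrow y$ symmetry indeed maps the first displayed equation to the second — this hinges on the facts, already available, that $E O^\vee_i$ is fixed under the interchange (both expressions in Lemma~\ref{lem:EOC} describe the same vector) and that $\lambda_i$ depends only on $|O_i|=|O'_i|$ and the dual eigenvalues, hence is symmetric in $x,y$.
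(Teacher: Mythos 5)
Your proof is correct and is essentially the same as the paper's, which cites Lemma~\ref{lem:EOC} alone: since $E O^\vee_i = E\widehat O_i - \lambda_i E\hat x = E\widehat O'_i - \lambda_i E\hat y$, one simply rewrites $\lambda_i E\hat x = \tfrac{\lambda_i}{2}(E\hat x + E\hat y) + \tfrac{\lambda_i}{2}(E\hat x - E\hat y)$ and likewise for $\lambda_i E\hat y$. Your route (averaging the two equalities in Lemma~\ref{lem:EOC} and then invoking Theorem~\ref{thm:bbalanced}) reaches the identical endpoint with a couple of extra steps, since the second equality in Lemma~\ref{lem:EOC} already encodes Theorem~\ref{thm:bbalanced}.
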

  \begin{proof} By Lemma \ref{lem:EOC}.
  \end{proof}

 \begin{lemma} \label{lem:orthog} 
 For $1 \leq i \leq 6$,
 \begin{align*}
 \langle E O^\vee_i, E\hat x - E \hat y \rangle = 0.
 \end{align*}
 \end{lemma}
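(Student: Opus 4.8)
The plan is to compute the inner product $\langle EO^\vee_i, E\hat x - E\hat y\rangle$ directly and show it vanishes, mirroring the structure of the proof of Lemma~\ref{lem:step1} and the inner-product bookkeeping already used throughout Section~8. Using Lemma~\ref{lem:EOC}, write $EO^\vee_i = E\widehat O_i - \lambda_i E\hat x$ and also $EO^\vee_i = E\widehat O'_i - \lambda_i E\hat y$. Taking the average of these two expressions, $EO^\vee_i = \tfrac12(E\widehat O_i + E\widehat O'_i) - \tfrac{\lambda_i}{2}(E\hat x + E\hat y)$, and this symmetric form is what makes orthogonality to $E\hat x - E\hat y$ plausible, since everything in sight is symmetric in $x,y$ except the factor we are pairing against.

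Concretely, I would expand
\begin{align*}
\vert X\vert\,\langle EO^\vee_i, E\hat x - E\hat y\rangle
&= \vert X\vert\,\langle E\widehat O_i - \lambda_i E\hat x, E\hat x\rangle
  - \vert X\vert\,\langle E\widehat O'_i - \lambda_i E\hat y, E\hat y\rangle.
\end{align*}
By Lemma~\ref{lem:xyip}, $\vert X\vert\langle E\widehat O_i, E\hat x\rangle = \vert O_i\vert\,\theta^*_1$ (every vertex of $O_i$ is adjacent to $x$), while $\vert X\vert\langle E\widehat O'_i, E\hat y\rangle = \vert O'_i\vert\,\theta^*_1 = \vert O_i\vert\,\theta^*_1$ since $\vert O'_i\vert = \vert O_i\vert$. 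Likewise $\vert X\vert\langle E\hat x, E\hat x\rangle = \vert X\vert\langle E\hat y, E\hat y\rangle = \theta^*_0$. Hence the two bracketed terms are equal and their difference is zero. This is essentially the first half of the computation in Lemma~\ref{lem:step1}, now applied with $O'_i$ playing its symmetric role, and it uses nothing beyond Lemma~\ref{lem:xyip}, the equality $\vert O'_i\vert = \vert O_i\vert$, and Lemma~\ref{lem:EOC}.

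There is no serious obstacle here; the only point requiring a moment of care is the identity $\vert O'_i\vert = \vert O_i\vert$, which holds because $\Gamma$ is distance-transitive (Proposition~\ref{prop:parts} and the surrounding discussion), so the $x$-partition of $\Gamma(y)$ has the same block sizes as the $y$-partition of $\Gamma(x)$ — indeed Lemma~\ref{lem:Osize} gives sizes depending only on $k = \partial(x,y)$. One could alternatively avoid even this by invoking Lemma~\ref{lem:handy}: writing $E\widehat O_i$ and $E\widehat O'_i$ in terms of $EO^\vee_i$, $E\hat x + E\hat y$, and $E\hat x - E\hat y$, one sees that $2\,EO^\vee_i = E\widehat O_i + E\widehat O'_i - \lambda_i(E\hat x + E\hat y)$, and then pairing with $E\hat x - E\hat y$ reduces to showing $\langle E\widehat O_i + E\widehat O'_i, E\hat x - E\hat y\rangle = \lambda_i\langle E\hat x + E\hat y, E\hat x - E\hat y\rangle$; both sides vanish term by term by Lemma~\ref{lem:xyip} together with $\|E\hat x\| = \|E\hat y\|$ and $\langle E\widehat O_i, E\hat x\rangle = \langle E\widehat O'_i, E\hat y\rangle$. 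Either route is a two-line verification.
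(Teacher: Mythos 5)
Your proposal is correct and uses essentially the same argument as the paper: evaluate $\langle EO^\vee_i, E\hat x\rangle$ via the first equality of Lemma~\ref{lem:EOC} and $\langle EO^\vee_i, E\hat y\rangle$ via the second, apply Lemma~\ref{lem:xyip}, and cancel using $\vert O_i\vert = \vert O'_i\vert$. The alternative route you sketch through Lemma~\ref{lem:handy} is a harmless repackaging of the same computation.
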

 \begin{proof}  By Lemma  \ref{lem:xyip} and the first equality in Lemma \ref{lem:EOC},
 \begin{align*}
\vert X \vert  \langle E O^\vee_i, E\hat x \rangle = \vert X \vert \langle E \widehat O_i - \lambda_i E \hat x, E \hat x \rangle = \vert O_i \vert \theta^*_1-\lambda_i \theta^*_0.
 \end{align*}
 \noindent  By Lemma  \ref{lem:xyip} and the second equality in Lemma \ref{lem:EOC},
  \begin{align*}
\vert X \vert  \langle E O^\vee_i, E\hat y \rangle = \vert X \vert \langle E \widehat O'_i - \lambda_i E \hat y, E \hat y \rangle = \vert O'_i \vert \theta^*_1-\lambda_i \theta^*_0.
 \end{align*}
 By construction $\vert O_i \vert = \vert O'_i\vert$. The result follows.
 \end{proof}

 \begin{definition} \label{def:symS} \rm Define the subspaces
  \begin{align*}
  {\rm Sym}(S) &= {\rm Span} \lbrace E O^\vee_i \vert 1 \leq i \leq 6\rbrace, \\
  {\rm ASym}(S) &= {\rm Span} \lbrace E \hat x - E \hat y \rbrace.
  \end{align*}
  We call ${\rm Sym}(S)$ (resp.  ${\rm ASym}(S)$) the {\it symmetric part} (resp. {\it antisymmetric part}) of $S$.
  \end{definition} 
  
  \begin{lemma} \label{lem:xpyS} We have
  \begin{align*}
  E \hat x + E \hat y \in {\rm Sym}(S).
  \end{align*}
  \end{lemma}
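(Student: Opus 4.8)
The plan is to invoke Lemma \ref{lem:xpy} directly. That lemma already expresses $E\hat x + E\hat y$ as the explicit linear combination $q^{1-k}\,E O^\vee_1 - q^{1-k}\,E O^\vee_2$ of the two vectors $E O^\vee_1$ and $E O^\vee_2$. By Definition \ref{def:symS}, the subspace ${\rm Sym}(S)$ is spanned by $\{E O^\vee_i \mid 1 \leq i \leq 6\}$, so in particular $E O^\vee_1, E O^\vee_2 \in {\rm Sym}(S)$; since ${\rm Sym}(S)$ is closed under linear combinations, it follows that $E\hat x + E\hat y \in {\rm Sym}(S)$, as desired.

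There is essentially no obstacle here: the combinatorial and linear-algebraic content was already carried out in proving Lemma \ref{lem:xpy}, which in turn (via Definition \ref{def:Ocheck}) reduces to the single linear dependence among $E\hat x$, $E\hat y$, $E\widehat O_1$, $E\widehat O_2$ recorded in Lemma \ref{lem:dep}. Given that Lemma \ref{lem:xpy} is already available, the proof of the present statement is a one-line deduction.

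If instead one wanted a self-contained argument, the only work would be to re-derive the identity of Lemma \ref{lem:xpy}: using Definition \ref{def:Ocheck} rewrite $q^{1-k}E O^\vee_1 - q^{1-k}E O^\vee_2$ as $q^{1-k}E\widehat O_1 - q^{1-k}E\widehat O_2 - q^{1-k}(\lambda_1-\lambda_2)E\hat x$, then apply the relation of Lemma \ref{lem:dep} to see that this equals $E\hat x + E\hat y$. But this merely re-traces the proof of Lemma \ref{lem:xpy}, so it is cleaner to cite that lemma together with Definition \ref{def:symS} and conclude immediately.
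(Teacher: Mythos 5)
Your proof is correct and matches the paper's proof exactly: both cite Lemma \ref{lem:xpy} together with Definition \ref{def:symS}. Nothing further is needed.
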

  \begin{proof} By Lemma \ref{lem:xpy} and Definition \ref{def:symS}.
  \end{proof}

  \begin{proposition} \label{prop:symDecomp} We have
  \begin{align*}
  S = {\rm Sym} (S) + {\rm ASym}(S) \qquad \quad \hbox{\rm (orthogonal direct sum).}
  \end{align*}
  The dimensions of ${\rm Sym} (S)$ and  ${\rm ASym}(S)$ are 5 and 1, respectively.
  \end{proposition}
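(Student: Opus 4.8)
The plan is to establish the three claims in turn: that $S$ is spanned by ${\rm Sym}(S)$ and ${\rm ASym}(S)$, that this sum is orthogonal, and that it is direct with the stated dimensions. First I would prove the spanning statement. By Definition~\ref{def:N} the vectors $\lbrace E\widehat O_i\rbrace_{i=1}^6$ form a basis for $S$, and by Lemma~\ref{lem:handy} each $E\widehat O_i$ is a linear combination of the $EO^\vee_j$ (namely $EO^\vee_i$), of $E\hat x + E\hat y$, and of $E\hat x - E\hat y$. Since $E\hat x + E\hat y \in {\rm Sym}(S)$ by Lemma~\ref{lem:xpyS} and $E\hat x - E\hat y \in {\rm ASym}(S)$ by definition, we get $S \subseteq {\rm Sym}(S) + {\rm ASym}(S)$. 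The reverse inclusion is immediate: ${\rm Sym}(S) \subseteq S$ because each $EO^\vee_i = E\widehat O_i - \lambda_i E\hat x \in S$ (using Corollary~\ref{cor:xy}), and ${\rm ASym}(S) \subseteq S$ because $E\hat x, E\hat y \in S$ by Corollary~\ref{cor:xy}. Hence $S = {\rm Sym}(S) + {\rm ASym}(S)$.

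Next I would check orthogonality. Every spanning vector of ${\rm Sym}(S)$ is $EO^\vee_i$ for some $i$, and by Lemma~\ref{lem:orthog} each such vector is orthogonal to $E\hat x - E\hat y$, which spans ${\rm ASym}(S)$. Therefore the two subspaces are orthogonal, and in particular the sum ${\rm Sym}(S) + {\rm ASym}(S)$ is a direct sum. Combined with the spanning statement, this gives $S = {\rm Sym}(S) \oplus {\rm ASym}(S)$ as an orthogonal direct sum.

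Finally I would pin down the dimensions. The dimension of ${\rm ASym}(S)$ is $1$ because $E\hat x - E\hat y \neq 0$ by Lemma~\ref{lem:xyind}. Since $S$ has dimension $6$ (Definition~\ref{def:N}) and decomposes as an orthogonal direct sum of ${\rm Sym}(S)$ and the $1$-dimensional ${\rm ASym}(S)$, we get $\dim {\rm Sym}(S) = 5$. Alternatively, ${\rm Sym}(S)$ is spanned by the six vectors $\lbrace EO^\vee_i\rbrace_{i=1}^6$, which satisfy the single relation $\sum_{i=1}^6 EO^\vee_i = 0$ from Lemma~\ref{lem:OCdep}; one would still need to know there is no further relation, but that follows from the dimension count just given.

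I do not anticipate a serious obstacle here: all the hard work — Theorem~\ref{thm:bbalanced} giving the scalars $\lambda_i$, Lemma~\ref{lem:xpy} placing $E\hat x + E\hat y$ in ${\rm Sym}(S)$, Lemma~\ref{lem:orthog} giving the orthogonality, and Lemma~\ref{lem:OCdep} giving the relation among the $EO^\vee_i$ — has already been done. The only mild subtlety is making sure the argument for the spanning inclusion $S \subseteq {\rm Sym}(S) + {\rm ASym}(S)$ is cleanly phrased via Lemma~\ref{lem:handy}, and that the dimension of ${\rm Sym}(S)$ is deduced from the orthogonal direct sum rather than attempted directly from the six generators and one relation (which would leave open whether the relation is the only one). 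So the proof is essentially an assembly of prior lemmas in the right order.
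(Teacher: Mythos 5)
Your proof is correct and follows essentially the same route as the paper: the spanning inclusion $\subseteq$ via Lemma~\ref{lem:handy} and Lemma~\ref{lem:xpyS}, the reverse via Corollary~\ref{cor:xy} and Definition~\ref{def:Ocheck}, orthogonality via Lemma~\ref{lem:orthog}, and the dimensions from Lemma~\ref{lem:xyind} plus the count $6-1=5$. The remark about the relation $\sum_i EO^\vee_i=0$ is a nice aside but not needed, as you yourself note.
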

\begin{proof}
We first show that  $S = {\rm Sym} (S) + {\rm ASym}(S)$. The inclusion $\subseteq $ follows from Definition  \ref{def:symS} and Lemmas \ref{lem:handy}, \ref{lem:xpyS}.
The inclusion $\supseteq$ follows from Corollary \ref{cor:xy}  and Definition \ref{def:Ocheck}. We have shown $S = {\rm Sym} (S) + {\rm ASym}(S)$. By Lemma \ref{lem:orthog},
the sum is orthogonal and hence direct. The vector $E\hat x - E\hat y$ is nonzero by Lemma  \ref{lem:xyind}, so ${\rm ASym}(S)$ has dimension 1. The dimension of
${\rm Sym}(S)$ is $6-1=5$.
\end{proof}

\begin{definition}\label{def:hcheck} \rm
For $1 \leq j \leq 6$ we define the vector
\begin{align*}
h^\vee_j= h_j - \mu_j E{\hat x} = h'_j - \mu_j E{\hat y},
\end{align*}
where $\mu_j$ is from  \eqref{eq:mu} and Lemma \ref{lem:moremu}.
\end{definition}

\begin{lemma} \label{lem:h1check} The following hold:
\begin{enumerate}
\item[\rm (i)]  $h^\vee_1=0$;
\item[\rm (ii)]  $h^\vee_6 = h_6 = h'_6$.
\end{enumerate}
\end{lemma}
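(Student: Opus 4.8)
The plan is to prove both parts by evaluating the defining expression for $h^\vee_j$ in terms of the $E\widehat O_i$ and the scalars already computed, using Lemma~\ref{lem:moremu} to know $\mu_1$ and $\mu_6$ explicitly.

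For part (i), recall from Lemma~\ref{lem:moremu} that $\mu_1 = \theta_1$, and from Lemma~\ref{lem:h1} that $h_1 = \theta_1 E\hat x$. Then directly from Definition~\ref{def:hcheck},
\begin{align*}
h^\vee_1 = h_1 - \mu_1 E\hat x = \theta_1 E\hat x - \theta_1 E\hat x = 0.
\end{align*}
(One can also double-check using the other description $h^\vee_1 = h'_1 - \mu_1 E\hat y = \theta_1 E\hat y - \theta_1 E\hat y = 0$, with $h'_1 = \theta_1 E\hat y$ from Lemma~\ref{lem:h1}.) Consistency of the two descriptions is guaranteed by Proposition~\ref{prop:hmhp}.

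For part (ii), Lemma~\ref{lem:moremu} gives $\mu_6 = 0$, so from Definition~\ref{def:hcheck} we immediately read off $h^\vee_6 = h_6 - 0\cdot E\hat x = h_6$ and likewise $h^\vee_6 = h'_6 - 0\cdot E\hat y = h'_6$; hence $h^\vee_6 = h_6 = h'_6$. (Again, Proposition~\ref{prop:hmhp} with $\mu_6 = 0$ gives $h_6 = h'_6$ directly, so the two expressions are compatible.)

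There is essentially no obstacle here: the lemma is a bookkeeping consequence of the already-established values $\mu_1 = \theta_1$, $\mu_6 = 0$ together with Lemma~\ref{lem:h1}. The only thing one must be careful about is citing Proposition~\ref{prop:hmhp} (or Lemma~\ref{lem:h1}) to justify that the two defining expressions for $h^\vee_j$ in Definition~\ref{def:hcheck} genuinely agree in these two cases, rather than treating them as independent claims.
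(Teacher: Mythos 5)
Your proof is correct and matches the paper's argument: part (i) uses $\mu_1=\theta_1$ from Lemma~\ref{lem:moremu} together with $h_1=\theta_1 E\hat x$ from Lemma~\ref{lem:h1}, and part (ii) follows directly from $\mu_6=0$ via Definition~\ref{def:hcheck}. Your extra remarks about consistency with Proposition~\ref{prop:hmhp} are sound but not needed, since Definition~\ref{def:hcheck} already asserts the equality of the two expressions.
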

\begin{proof} (i) By Lemmas \ref{lem:h1}, \ref{lem:moremu} and Definition \ref{def:hcheck},
\begin{align*}
h^\vee_1 = h_1 - \mu_1 E{\hat x} = \theta_1 E{\hat x} -\theta_1 E{\hat x} = 0.
\end{align*}
\noindent (ii) By Definition \ref{def:hcheck} and  $\mu_6=0$.
\end{proof}

\begin{lemma} \label{lem:hO}
For $1 \leq j \leq 6$,
\begin{align*}
h^\vee_j = \sum_{i=1}^6 H_{i,j} EO^\vee_i.
\end{align*}
\end{lemma}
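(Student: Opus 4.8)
The plan is to simply unwind the definitions; there is no real obstacle here, only bookkeeping, so I would present it as a one-line computation. Fix $j$ with $1\le j\le 6$. By Definition \ref{def:hvec} we have $h_j=\sum_{i=1}^6 H_{i,j}E\widehat O_i$, and by the first equality of Lemma \ref{lem:EOC} we have $E\widehat O_i=EO^\vee_i+\lambda_i E\hat x$ for $1\le i\le 6$. Substituting the latter into the former gives
\begin{align*}
h_j=\sum_{i=1}^6 H_{i,j}\bigl(EO^\vee_i+\lambda_i E\hat x\bigr)=\sum_{i=1}^6 H_{i,j}EO^\vee_i+\Bigl(\sum_{i=1}^6 \lambda_i H_{i,j}\Bigr)E\hat x.
\end{align*}

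By \eqref{eq:mu} the coefficient of $E\hat x$ on the right is exactly $\mu_j$, so $h_j-\mu_j E\hat x=\sum_{i=1}^6 H_{i,j}EO^\vee_i$. The left-hand side equals $h^\vee_j$ by Definition \ref{def:hcheck}, which proves the claim. As a consistency check one may run the same computation on the primed side: Definition \ref{def:hvec} gives $h'_j=\sum_{i=1}^6 H_{i,j}E\widehat O'_i$, the second equality of Lemma \ref{lem:EOC} gives $E\widehat O'_i=EO^\vee_i+\lambda_i E\hat y$, and the same substitution yields $h'_j-\mu_j E\hat y=\sum_{i=1}^6 H_{i,j}EO^\vee_i$, in agreement with $h^\vee_j=h'_j-\mu_j E\hat y$ from Definition \ref{def:hcheck}.

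The only point requiring a moment's care is matching the scalar $\mu_j$ appearing in Definition \ref{def:hcheck} with the sum $\sum_{i=1}^6\lambda_i H_{i,j}$ produced by the substitution; this is precisely the content of \eqref{eq:mu} (equivalently Proposition \ref{prop:hmhp}), so no new verification is needed. Thus the proof is immediate from Definitions \ref{def:hvec}, \ref{def:hcheck}, Lemma \ref{lem:EOC}, and \eqref{eq:mu}.
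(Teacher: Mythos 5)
Your proof is correct and is exactly the argument the paper indicates (the paper's proof simply cites Definitions \ref{def:hvec}, \ref{def:hcheck}, Lemma \ref{lem:EOC}, and \eqref{eq:mu}); you have merely spelled out the substitution that the paper leaves to the reader.
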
 
\begin{proof} By Definitions \ref{def:hvec}, \ref{def:hcheck} along with Lemma \ref{lem:EOC}   and  \eqref{eq:mu}.
\end{proof}

 \begin{lemma} \label{lem:xhy} We have
 \begin{align*}
 E{\hat x} + E{\hat y} = \sum_{j=2}^5 \gamma_j h^\vee_j,
 \end{align*}
 where the scalars $\gamma_j$ are from Lemma \ref{lem:gamma}.
 \end{lemma}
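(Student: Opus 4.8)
The plan is to combine the two expansions in Lemma \ref{lem:gamma} and then convert from the basis $\lbrace h_j\rbrace$, $\lbrace h'_j\rbrace$ to the vectors $\lbrace h^\vee_j\rbrace$ using Definition \ref{def:hcheck}. First I would add the two identities of Lemma \ref{lem:gamma} to obtain
\begin{align*}
E{\hat x} + E{\hat y} = \sum_{j=1}^6 \gamma_j\,(h_j + h'_j).
\end{align*}
Next, from Definition \ref{def:hcheck} we have $h_j = h^\vee_j + \mu_j E{\hat x}$ and $h'_j = h^\vee_j + \mu_j E{\hat y}$, so $h_j + h'_j = 2 h^\vee_j + \mu_j (E{\hat x} + E{\hat y})$. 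Substituting this in gives
\begin{align*}
E{\hat x} + E{\hat y} = 2\sum_{j=1}^6 \gamma_j h^\vee_j + \Bigl(\sum_{j=1}^6 \gamma_j \mu_j\Bigr)(E{\hat x} + E{\hat y}).
\end{align*}

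The key step is then to invoke the first identity of Lemma \ref{lem:mugam}, namely $\sum_{j=1}^6 \gamma_j \mu_j = -1$. This collapses the displayed equation to $2(E{\hat x} + E{\hat y}) = 2\sum_{j=1}^6 \gamma_j h^\vee_j$, hence $E{\hat x} + E{\hat y} = \sum_{j=1}^6 \gamma_j h^\vee_j$. Finally, I would truncate the sum: the $j=1$ term vanishes because $h^\vee_1 = 0$ by Lemma \ref{lem:h1check}(i), and the $j=6$ term vanishes because $\gamma_6 = 0$ by Lemma \ref{lem:gamma}. What remains is exactly $\sum_{j=2}^5 \gamma_j h^\vee_j$, as claimed.

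I do not anticipate a real obstacle here; the only thing to notice is that Lemma \ref{lem:mugam} supplies precisely the scalar needed to solve the resulting linear equation for $E{\hat x} + E{\hat y}$, and that the two boundary terms drop out for independent reasons ($h^\vee_1 = 0$ on one end, $\gamma_6 = 0$ on the other). An alternative, more computational route would be to expand each $h^\vee_j$ in terms of $\lbrace EO^\vee_i\rbrace_{i=1}^6$ via Lemma \ref{lem:hO}, expand $E{\hat x} + E{\hat y}$ via Lemma \ref{lem:xpy}, and match coefficients using the invertibility of $H$ (Lemma \ref{lem:Hinv}); but the argument above via Lemmas \ref{lem:gamma}, \ref{lem:mugam} and Definition \ref{def:hcheck} is shorter and avoids any matrix bookkeeping.
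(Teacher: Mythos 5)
Your proposal is correct and is essentially the same argument as in the paper: both hinge on Definition \ref{def:hcheck}, the identity $\sum_{j} \gamma_j \mu_j = -1$ from Lemma \ref{lem:mugam}, and the truncation via $h^\vee_1=0$ and $\gamma_6=0$. The only (cosmetic) difference is that the paper substitutes just the expansion of $E\hat y$ from Lemma \ref{lem:gamma} and keeps $E\hat x$ explicit, whereas you symmetrize by adding both expansions before applying Definition \ref{def:hcheck}; the computation collapses the same way.
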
 
 \begin{proof} Using Lemmas \ref{lem:gamma}, \ref{lem:mugam} and Definition \ref{def:hcheck},
 \begin{align*}
 E{\hat x} + E{\hat y} &= E{\hat x} + \sum_{j=1}^6 \gamma_j h_j 
 =   E{\hat x} + \sum_{j=1}^6 \gamma_j \bigl( h^\vee_j + \mu_j E{\hat x} \bigr) \\
 &= E{\hat x} \Biggl( 1 + \sum_{j=1}^6 \gamma_j \mu_j \Biggr) + \sum_{j=1}^6 \gamma_j h^\vee_j 
 = \sum_{j=1}^6 \gamma_j h^\vee_j.
 \end{align*}
 \noindent The result follows in view of   $h^\vee_1=0$   and $\gamma_6=0$.
 \end{proof}

\begin{lemma} \label{lem:hSAS} For $1 \leq j \leq 6$ we have
\begin{align*}
h_j &= h^\vee_j + \frac{\mu_j}{2} \bigl( E{\hat x} + E{\hat y}\bigr) + \frac{\mu_j}{2} \bigl( E{\hat x} - E{\hat y}\bigr), \\ 
h'_j &= h^\vee_j + \frac{\mu_j}{2} \bigl( E{\hat x} + E{\hat y}\bigr) - \frac{\mu_j}{2} \bigl( E{\hat x} - E{\hat y}\bigr).
\end{align*}
\end{lemma}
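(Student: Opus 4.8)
The plan is to verify the two identities of Lemma~\ref{lem:hSAS} by a direct algebraic manipulation that uses only Definition~\ref{def:hcheck} and Proposition~\ref{prop:hmhp}. The point is that these two displayed equations are nothing more than a rewriting of the single defining relation $h^\vee_j = h_j - \mu_j E\hat x = h'_j - \mu_j E\hat y$ once one splits $\mu_j E\hat x$ and $\mu_j E\hat y$ symmetrically.

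First I would establish the equation for $h_j$. Starting from $h^\vee_j = h_j - \mu_j E\hat x$ (the left-hand equality in Definition~\ref{def:hcheck}), solve for $h_j$ to get $h_j = h^\vee_j + \mu_j E\hat x$. Then write $\mu_j E\hat x = \frac{\mu_j}{2}(E\hat x + E\hat y) + \frac{\mu_j}{2}(E\hat x - E\hat y)$, which is an identity in $EV$, and substitute. This yields exactly the asserted formula for $h_j$. Second, I would do the same for $h'_j$: from $h^\vee_j = h'_j - \mu_j E\hat y$ we get $h'_j = h^\vee_j + \mu_j E\hat y$, and writing $\mu_j E\hat y = \frac{\mu_j}{2}(E\hat x + E\hat y) - \frac{\mu_j}{2}(E\hat x - E\hat y)$ gives the second formula. (One could alternatively subtract the two candidate formulas and check the difference equals $\mu_j(E\hat x - E\hat y)$, recovering Proposition~\ref{prop:hmhp}, but the direct substitution is cleaner.)

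There is essentially no obstacle here; the lemma is a bookkeeping step that records $h_j$ and $h'_j$ in the basis-adapted form ``symmetric part plus antisymmetric part,'' parallel to Lemma~\ref{lem:handy} for the $E\widehat O_i$. The only thing to be careful about is that the splitting of $\mu_j E\hat x$ used for $h_j$ and the splitting of $\mu_j E\hat y$ used for $h'_j$ differ by the sign in front of the antisymmetric term $E\hat x - E\hat y$; this sign is precisely what makes $h_j - h'_j = \mu_j(E\hat x - E\hat y)$ consistent with Proposition~\ref{prop:hmhp}, so the two formulas are compatible with everything established so far. I would therefore present the proof in one or two lines: ``Solve Definition~\ref{def:hcheck} for $h_j$ and $h'_j$, then split $\mu_j E\hat x$ and $\mu_j E\hat y$ symmetrically.''
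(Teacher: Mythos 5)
Your proof is correct and follows exactly the route the paper intends: the paper's proof is simply ``By Definition~\ref{def:hcheck},'' and your write-up spells out the one-line algebra (solve for $h_j$, $h'_j$ and split $\mu_j E\hat x$, $\mu_j E\hat y$ into symmetric and antisymmetric parts) that this citation compresses.
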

\begin{proof} By Definition \ref{def:hcheck}.
\end{proof} 

\begin{proposition}\label{lem:hSbasis} The vectors $\lbrace h^\vee_j \rbrace_{j=2}^6$ form a basis for ${\rm Sym}(S)$.
\end{proposition}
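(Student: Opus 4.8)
The plan is to show that $\{h^\vee_j\}_{j=2}^6$ is a spanning set for ${\rm Sym}(S)$ and then invoke a dimension count. By Proposition~\ref{prop:symDecomp}, ${\rm Sym}(S)$ has dimension $5$, so it suffices to establish that the five vectors $\{h^\vee_j\}_{j=2}^6$ are linearly independent (equivalently, that they span ${\rm Sym}(S)$).

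First I would recall from Lemma~\ref{lem:hO} that $h^\vee_j = \sum_{i=1}^6 H_{i,j} E O^\vee_i$ for $1 \leq j \leq 6$, so that the $h^\vee_j$ are obtained from the $EO^\vee_i$ by applying the invertible matrix $H$ (Lemma~\ref{lem:Hinv}). Consequently $\{h^\vee_j\}_{j=1}^6$ spans ${\rm Sym}(S) = {\rm Span}\{EO^\vee_i \mid 1 \leq i \leq 6\}$ (Definition~\ref{def:symS}). Since $h^\vee_1 = 0$ by Lemma~\ref{lem:h1check}(i), the vectors $\{h^\vee_j\}_{j=2}^6$ already span ${\rm Sym}(S)$.

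It then remains only to note that a spanning set of five vectors for a $5$-dimensional space must be a basis; this is immediate from Proposition~\ref{prop:symDecomp}, which gives $\dim {\rm Sym}(S) = 5$.

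There is no real obstacle here: the result is essentially a formal consequence of the change of basis matrix $H$ being invertible, the relation $\sum_{i=1}^6 EO^\vee_i = 0$ (Lemma~\ref{lem:OCdep}) being ``absorbed'' into the vanishing of $h^\vee_1$, and the dimension count from Proposition~\ref{prop:symDecomp}. The only point requiring a moment's care is to confirm that dropping $h^\vee_1$ does not lose any of the span — which is clear since $h^\vee_1 = 0$ contributes nothing — so that exactly five vectors remain, matching $\dim {\rm Sym}(S)$.
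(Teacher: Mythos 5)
Your proof is correct, but it takes a genuinely different route from the paper's. The paper first observes (as you do) that $h^\vee_j \in {\rm Sym}(S)$ via Lemma~\ref{lem:hO}, but then establishes linear independence directly: it uses the identity $h^\vee_j = h_j - \frac{\mu_j}{\mu_1} h_1$ (which follows from Definition~\ref{def:hcheck}, Lemma~\ref{lem:h1}, and $\mu_1 = \theta_1$) together with the linear independence of $\{h_j\}_{j=1}^6$ from Lemma~\ref{lem:hbasis}, and then appeals to the dimension count. You instead establish spanning: since $H$ is invertible (Lemma~\ref{lem:Hinv}), $\{h^\vee_j\}_{j=1}^6$ and $\{EO^\vee_i\}_{i=1}^6$ have the same span, namely ${\rm Sym}(S)$; discarding $h^\vee_1 = 0$ changes nothing; five spanning vectors in a five-dimensional space are a basis. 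Both arguments are complete; yours is arguably more conceptual in that it exploits the change-of-basis structure built into $H$ rather than a one-off identity, while the paper's avoids explicitly invoking Lemma~\ref{lem:Hinv}. One small remark: your parenthetical ``(equivalently, that they span)'' is stated before you commit to either, and you then prove spanning; that is internally consistent since for $n$ vectors in an $n$-dimensional space spanning and independence are indeed equivalent, but the sentence reads as if you might prove independence, so it could mildly confuse a reader.
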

\begin{proof} By Definition \ref{def:symS}  and Lemma \ref{lem:hO}, the following holds for 
$2 \leq j \leq 6$:
\begin{align*}
h^\vee_j \in {\rm Span}\lbrace E O^\vee_i \vert 1 \leq i \leq 6 \rbrace = {\rm Sym}(S).
\end{align*}
By Proposition \ref{prop:symDecomp}, the dimension of ${\rm Sym}(S)$ is 5.
The vectors $\lbrace h^\vee_j \rbrace_{j=2}^6$ are linearly independent, because $\lbrace h_j \rbrace_{j=1}^6$ are linearly independent and
$h^\vee_j = h_j - \frac{\mu_j}{\mu_1} h_1$ for $2 \leq j \leq 6$. The result follows.
\end{proof}

 \section{The Norton algebra}
  We continue to discuss the bilinear forms graph $\Gamma=(X, \mathcal R)$.
  In this section, we bring in the Norton algebra structure on $EV$ \cite{norton, norton2}. We investigate the subspaces $S$, ${\rm Sym}(S)$, ${\rm ASym}(S)$ from
  the Norton algebra point of view.
  \medskip
  
  \noindent We recall the entry-wise product $\circ: V \times V \to V$. Pick $u,v \in V$ and write
  \begin{align*}
   u = \sum_{x \in X} u_x \hat x, \qquad  \qquad  v = \sum_{x \in X} v_x \hat x, \qquad \qquad u_x, v_x \in \mathbb R.
   \end{align*}
   Then
   \begin{align*}
   u \circ v = \sum_{x \in X} u_x v_x \hat x.
   \end{align*}
   
  \begin{definition}  \rm (See \cite[Proposition~5.2]{norton}.) 
 The {\it Norton algebra} $EV$ is the vector space $EV$ together with the  product 
    \begin{align*}
  u \star v = E(u \circ v) \qquad \qquad u, v \in EV.
  \end{align*} 
  \end{definition}
  
  \noindent The Norton algebra $EV$
  is commutative but not associative. See \cite{norton, norton2} for background information about the Norton algebra.
   
   \begin{lemma} \label{lem:uvw} Pick $u,v,w \in EV$ and write
   \begin{align*}
   u = \sum_{x \in X} u_x \hat x, \qquad   v = \sum_{x \in X} v_x \hat x, \qquad   w = \sum_{x \in X} w_x \hat x \qquad \quad u_x, v_x, w_x \in \mathbb R.
   \end{align*}
   Then 
   \begin{align*}
   \langle u \star v, w \rangle = \langle v \star w, u \rangle = \langle w \star u, v\rangle = \sum_{x \in X} u_x v_x w_x.
   \end{align*}
   \end{lemma}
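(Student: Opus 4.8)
The plan is to reduce all three inner products to the single manifestly symmetric quantity $\sum_{x\in X} u_x v_x w_x$. The only structural inputs needed are recorded in Section~3: since $E$ is a polynomial in the symmetric matrix $A$, it is symmetric, so from $\langle Bu,v\rangle = \langle u, B^t v\rangle$ we get $\langle Eu,v\rangle = \langle u,Ev\rangle$ for all $u,v\in V$; and since $E$ is the primitive idempotent for $\theta_1$, it acts as the identity on $EV$, so $Ew=w$ whenever $w\in EV$. I would state these two facts first.

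Next I would compute directly. For $w\in EV$,
\begin{align*}
\langle u\star v, w\rangle = \langle E(u\circ v), w\rangle = \langle u\circ v, Ew\rangle = \langle u\circ v, w\rangle,
\end{align*}
using the definition of $\star$, the symmetry of $E$, and $Ew=w$. Because the basis $\lbrace \hat x\vert x\in X\rbrace$ is orthonormal, $\langle a,b\rangle = \sum_{x\in X} a_x b_x$ for all $a,b\in V$; applying this with $a=u\circ v$, whose $x$-coordinate is $u_x v_x$ by the definition of $\circ$, and $b=w$, I obtain $\langle u\star v,w\rangle = \sum_{x\in X} u_x v_x w_x$.

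Finally, the expression $\sum_{x\in X} u_x v_x w_x$ is unchanged under any permutation of $u,v,w$, in particular under the cyclic shift $(u,v,w)\mapsto(v,w,u)\mapsto(w,u,v)$. So the identical computation gives $\langle v\star w,u\rangle = \sum_{x\in X} u_x v_x w_x$ and $\langle w\star u,v\rangle = \sum_{x\in X} u_x v_x w_x$ as well, which establishes the full chain of equalities. There is no real obstacle: the argument is two lines once one has noted that $E$ is symmetric and fixes $EV$ pointwise, and everything else is just unwinding the definitions of $\circ$, $\star$, and $\langle\,,\,\rangle$.
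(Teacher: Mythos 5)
Your proof is correct and follows essentially the same route as the paper's: compute $\langle u\star v, w\rangle = \langle E(u\circ v), w\rangle = \langle u\circ v, E^t w\rangle = \langle u\circ v, Ew\rangle = \langle u\circ v, w\rangle = \sum_{x\in X}u_xv_xw_x$, then observe the remaining equalities follow by the same computation (or the symmetry of the final expression). No issues.
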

   \begin{proof} Observe that
   \begin{align*}
  & \langle u \star v, w \rangle = \langle E(u \circ v), w\rangle = \langle u \circ v, E^t w \rangle \\
  &= \langle u \circ v, E w \rangle = \langle u \circ v, w \rangle = \sum_{x \in X} u_x v_x w_x.
   \end{align*}
   The remaining equalities are similarly obtained.
   \end{proof}

  \begin{lemma}  \label{lem:xsx} {\rm (See \cite[Lemma~3.2]{norton2}.)} 
  For $x \in X$,
  \begin{align*}
  E\hat x \star E \hat x = \vert X \vert^{-1} q^1_{1,1} E\hat x.
  \end{align*}
  \end{lemma}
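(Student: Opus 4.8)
The plan is to recognize $E\hat x\star E\hat x$ as a matrix product inside the Bose--Mesner algebra of $\Gamma$ and then extract the constant from the definition of the Krein parameter $q^1_{1,1}$. The first step is the observation that the entry-wise product is compatible with taking columns: for each $z\in X$ the $z$-coordinate of $E\hat x\circ E\hat x$ equals $(E_{z,x})^2$, which is exactly the $(z,x)$-entry of the Hadamard square $E\circ E$. Hence $E\hat x\circ E\hat x=(E\circ E)\hat x$, and therefore
\begin{align*}
E\hat x\star E\hat x = E\bigl(E\hat x\circ E\hat x\bigr) = E(E\circ E)\hat x.
\end{align*}

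Next I would invoke the defining relation for the Krein parameters. Writing $E_0,E_1=E,\dots,E_D$ for the primitive idempotents of $\Gamma$, these satisfy $E_i\circ E_j=\vert X\vert^{-1}\sum_{h=0}^{D}q^h_{i,j}E_h$ (see \cite[Section~2.3]{bcn}); in particular $E\circ E=\vert X\vert^{-1}\sum_{h=0}^{D}q^h_{1,1}E_h$. Multiplying on the left by $E=E_1$ and using the idempotent orthogonality $E_1E_h=\delta_{1,h}E_1$ collapses the sum to a single term:
\begin{align*}
E(E\circ E)=\vert X\vert^{-1}\sum_{h=0}^{D}q^h_{1,1}\,E_1E_h=\vert X\vert^{-1}q^1_{1,1}\,E.
\end{align*}
Applying both sides to $\hat x$ gives $E\hat x\star E\hat x=\vert X\vert^{-1}q^1_{1,1}\,E\hat x$, which is the assertion.

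The argument is short, and the only point that genuinely needs care is the normalization convention, namely that the factor $\vert X\vert^{-1}$ sits in front of the sum in the definition of $q^h_{i,j}$, since a different placement would change the constant. If one prefers an argument phrased in the language already developed in this paper, one can instead test $E\hat x\star E\hat x-\vert X\vert^{-1}q^1_{1,1}E\hat x$ against the spanning set $\lbrace E\hat y\mid y\in\Gamma(x)\rbrace$ of $EV$ from Lemma~\ref{lem:comA}: by Lemma~\ref{lem:uvw} together with Lemma~\ref{lem:xyip}, the vanishing of these inner products reduces to the scalar identity
\begin{align*}
\sum_{z\in X}(\theta^*_{\partial(z,x)})^2\,\theta^*_{\partial(z,y)}=\vert X\vert\,q^1_{1,1}\,\theta^*_{\partial(x,y)},
\end{align*}
which is $\vert X\vert^{3}$ times the $(x,y)$-entry of $(E\circ E)E$ and hence again follows from the Krein relation. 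Since the bilinear form is positive definite on $EV$, orthogonality to a spanning set forces the vector to be $0$, and the lemma follows.
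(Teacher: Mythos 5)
Your proof is correct. The paper itself gives no proof of this lemma; it simply cites \cite[Lemma~3.2]{norton2}. Your primary argument — identifying $E\hat x\circ E\hat x$ with $(E\circ E)\hat x$ and then invoking the Krein relation $E_i\circ E_j=\vert X\vert^{-1}\sum_h q^h_{i,j}E_h$ together with the orthogonality $E_1E_h=\delta_{1,h}E_1$ — is exactly the standard derivation, and it is the proof that appears (in essence) in the cited reference. The normalization caveat you flag is real but handled correctly: you have put the $\vert X\vert^{-1}$ where \cite[Section~2.3]{bcn} puts it, and the constant comes out right. Your alternative argument, testing the difference against the spanning set from Lemma~\ref{lem:comA} via Lemma~\ref{lem:uvw}, is also sound and has the merit of staying entirely within the machinery the paper develops; it is stylistically closer to how the paper proves its other vanishing statements (e.g.\ Lemma~\ref{lem:dep} and Theorem~\ref{thm:bbalanced}), though it is, as you note, ultimately the same computation repackaged.
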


  \begin{lemma} \label{lem:N1} {\rm (See \cite[Lemma~3.6 and Corollary~3.8]{norton2}.)} For adjacent $x,y \in X$,
  \begin{align*}
  E \hat x \star E \hat y = \vert X \vert^{-1} \sum_{z \in \Gamma(x) \cap \Gamma(y)} E \hat z.
  \end{align*}
  \end{lemma}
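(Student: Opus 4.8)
The plan is to prove that the two sides are equal as vectors of $EV$ by the device used already in the proof of Lemma~\ref{lem:dep} and in the proof of Theorem~\ref{thm:bbalanced}: set
\[
\xi = E\hat x \star E\hat y - \vert X\vert^{-1}\sum_{z\in\Gamma(x)\cap\Gamma(y)}E\hat z,
\]
note $\xi\in EV$, and show $\Vert\xi\Vert^2=0$. Since the form $\langle\,,\,\rangle$ is positive definite on $V$ (the basis $\{\hat z\}$ is orthonormal), it is enough to show $\langle\xi,\hat w\rangle=0$ for every $w\in X$; and because $\xi\in EV$ we have $\langle\xi,\hat w\rangle=\langle\xi,E\hat w\rangle$, so equivalently $\langle\xi,E\hat w\rangle=0$ for all $w\in X$. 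One could instead pair $\xi$ only against the basis $\{E\hat w : w\in\Gamma(x)\}$ of $EV$ furnished by Lemma~\ref{lem:comA}.

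For the first term of $\xi$, apply Lemma~\ref{lem:uvw} with $u=E\hat x$, $v=E\hat y$ and third argument $E\hat w$, all of which lie in $EV$; writing $(E\hat x)_z=\vert X\vert^{-1}\theta^*_{\partial(x,z)}$ and similarly for $E\hat y$ and $E\hat w$ gives
\[
\langle E\hat x\star E\hat y,\ E\hat w\rangle=\sum_{z\in X}(E\hat x)_z(E\hat y)_z(E\hat w)_z=\vert X\vert^{-3}\sum_{z\in X}\theta^*_{\partial(x,z)}\theta^*_{\partial(y,z)}\theta^*_{\partial(w,z)}.
\]
For the second term, Lemma~\ref{lem:xyip} gives $\langle E\hat z,E\hat w\rangle=\vert X\vert^{-1}\theta^*_{\partial(z,w)}$, so
\[
\Bigl\langle \vert X\vert^{-1}\sum_{z\in\Gamma(x)\cap\Gamma(y)}E\hat z,\ E\hat w\Bigr\rangle=\vert X\vert^{-2}\sum_{z\in\Gamma(x)\cap\Gamma(y)}\theta^*_{\partial(z,w)}.
\]
Thus the lemma is reduced to the scalar identity
\[
\vert X\vert^{-1}\sum_{z\in X}\theta^*_{\partial(x,z)}\theta^*_{\partial(y,z)}\theta^*_{\partial(w,z)}=\sum_{z\in\Gamma(x)\cap\Gamma(y)}\theta^*_{\partial(z,w)}\qquad(w\in X),
\]
valid whenever $\partial(x,y)=1$.

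I would establish this identity in one of two ways. The conceptual route: the diagonal matrix $\hat z\mapsto\theta^*_{\partial(x,z)}\hat z$ is the dual adjacency matrix $A^*=A^*(x)$, so $E\hat x\circ v=\vert X\vert^{-1}A^*v$ for all $v$, hence $E\hat x\star E\hat y=\vert X\vert^{-1}EA^*E\hat y$; since $\Gamma$ is $Q$-polynomial with respect to $E$, the Krein parameters $q^{h}_{1,1}$ vanish for $h\ge 3$, so $E\hat x\circ E\hat y\in(E_0+E_1+E_2)V$ and $E\hat x\star E\hat y$ is merely its $E$-component, which one identifies with the right-hand side using the $Q$-polynomial recurrence \eqref{eq:TTR} and $\langle E\hat x,E\hat y\rangle=\vert X\vert^{-1}\theta^*_1$ — this is exactly the content of \cite[Lemma~3.6 and Corollary~3.8]{norton2}, which may simply be quoted. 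The route internal to this paper is to expand the right-hand sum by grouping the $z\in\Gamma(x)\cap\Gamma(y)$ according to $\partial(z,w)$, expand the left-hand sum by the pair $(\partial(x,z),\partial(y,z))$, and verify the resulting equality of rational functions of $q$ from \eqref{eq:ths} and the combinatorial data for $\Gamma$. The main obstacle is the bookkeeping in this second approach: it requires the triple counts $\vert\Gamma_i(x)\cap\Gamma_j(y)\cap\Gamma_\ell(w)\vert$, which are not among the intersection numbers recorded so far, so in practice I would invoke \cite{norton2} rather than redo that count here.
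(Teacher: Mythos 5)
Your proposal is correct, and in the end it takes exactly the same route as the paper: the paper gives no internal proof of Lemma~\ref{lem:N1} but simply cites \cite[Lemma~3.6 and Corollary~3.8]{norton2}, which is where your argument also lands. The preliminary reduction you set up (pairing $\xi$ against $E\hat w$ via Lemma~\ref{lem:uvw} and Lemma~\ref{lem:xyip} to reach a scalar identity in the $\theta^*_i$) is sound, but you correctly observe that completing it internally would require triple intersection data not developed in the paper, so invoking \cite{norton2} is the right call and matches the authors' choice.
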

  
  \noindent For the rest of this section, we fix vertices $x,y \in X$ at distance $\partial(x,y)=k$,  as in Sections 6--10.
  Recall the subspace $S=S(x,y) = S(y,x)$ of $EV$ from Definition \ref{def:N}  and Proposition \ref{prop:symS}. 
   We will show that $E\hat x \star S \subseteq S$ and  $E\hat y \star S \subseteq S$. We conjecture that $S \star S \subseteq S$.

\medskip

\noindent
   Recall from Proposition \ref{prop:symDecomp} the direct sum decomposition
  $S = {\rm Sym} (S) + {\rm ASym}(S)$.
   We will show that 
   \begin{align*}
 {\rm ASym}(S) \star {\rm ASym}(S) &\subseteq {\rm Sym}(S), \\
 {\rm Sym}(S) \star {\rm ASym}(S) &\subseteq {\rm ASym}(S), \\
 (E{\hat x} + E{\hat y} ) \star {\rm Sym}(S) &\subseteq {\rm Sym}(S).
 \end{align*}
  
  \begin{proposition} \label{cor:Nadj}
  For $1 \leq j \leq 6$,
  \begin{align*}
  E\hat x \star E \widehat O_j = \vert X \vert^{-1} \sum_{i=1}^6 C_{i,j} E \widehat O_i.
  \end{align*}
  \end{proposition}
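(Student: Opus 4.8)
The plan is to compute $E\hat x \star E\widehat O_j$ directly from the definition of the Norton product and Lemma \ref{lem:N1}. First I would write $E\widehat O_j = \sum_{w \in O_j} E\hat w$ by linearity, so that
\begin{align*}
E\hat x \star E\widehat O_j = \sum_{w \in O_j} E\hat x \star E\hat w.
\end{align*}
Each $w \in O_j$ lies in $\Gamma(x)$, hence $x$ and $w$ are adjacent, so Lemma \ref{lem:N1} applies and gives
\begin{align*}
E\hat x \star E\hat w = \vert X \vert^{-1} \sum_{z \in \Gamma(x) \cap \Gamma(w)} E\hat z.
\end{align*}
Here every such $z$ is adjacent to $x$, so $z \in \Gamma(x)$ and therefore $z$ lies in exactly one of the cells $O_1, \dots, O_6$ of the $y$-partition of $\Gamma(x)$.

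Next I would reorganize the double sum by grouping the vertices $z$ according to which cell $O_i$ they belong to. Summing over $w \in O_j$ and $z \in \Gamma(x) \cap \Gamma(w)$, the coefficient of $E\hat z$ for a fixed $z \in O_i$ is the number of $w \in O_j$ with $w$ adjacent to $z$. Since $\{O_i\}_{i=1}^6$ is an equitable partition of $\Gamma(x)$ (Proposition \ref{lem:Centry}), this count depends only on $i$ and $j$ and equals $C_{j,i}$ by the definition of $C$ — wait, one must be careful about the indexing convention: $C_{i,j}$ is the number of vertices in $O_j$ adjacent to a given vertex in $O_i$, so the number of vertices in $O_j$ adjacent to a given vertex $z \in O_i$ is indeed $C_{i,j}$. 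Thus the coefficient of $\sum_{z \in O_i} E\hat z = E\widehat O_i$ is exactly $C_{i,j}$, and collecting terms yields
\begin{align*}
E\hat x \star E\widehat O_j = \vert X \vert^{-1} \sum_{i=1}^6 C_{i,j}\, E\widehat O_i,
\end{align*}
which is the claim.

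The only real subtlety is the bookkeeping with the indices of $C$ and making sure no vertex $z$ is counted in the wrong cell or outside $\Gamma(x)$ altogether; this is guaranteed because $\Gamma(x) \cap \Gamma(w) \subseteq \Gamma(x)$ for $w \in \Gamma(x)$, and the cells partition $\Gamma(x)$. I do not anticipate any genuine obstacle here — the equitability from Proposition \ref{lem:Centry} does all the work, and Lemma \ref{lem:N1} is exactly the needed local computation. It is worth noting as an immediate corollary (for use in the sequel) that, expressed on the basis $\{E\widehat O_i\}$, left Norton multiplication by $E\hat x$ is represented by the matrix $\vert X\vert^{-1} C^{t}$, so in particular $E\hat x \star S \subseteq S$.
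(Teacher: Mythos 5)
Your proof is correct and matches the paper's approach exactly: the paper's one-line proof cites Lemma \ref{lem:N1} and the definition of $C$ from Proposition \ref{lem:Centry}, which is precisely the expansion, regrouping, and counting you carry out. Your index bookkeeping for $C_{i,j}$ is right, and the closing remark about $\vert X \vert^{-1} C^t$ representing left Norton multiplication by $E\hat x$ on the basis $\lbrace E\widehat O_i \rbrace$ is a correct and useful observation.
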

  \begin{proof} By the definition of $C$ in Proposition \ref{lem:Centry}, together with Lemma \ref{lem:N1}.
  \end{proof}
  
  \begin{lemma} \label{lem:Norton}
  We have
  \begin{align*}
  E\hat x \star S \subseteq S, \qquad \qquad E\hat y \star S \subseteq S.
  \end{align*}
  \end{lemma}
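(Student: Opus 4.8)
The plan is to prove $E\hat x \star S \subseteq S$ and then obtain $E\hat y \star S \subseteq S$ by symmetry, using the decomposition $S(x,y)=S(y,x)$ from Proposition \ref{prop:symS}. Since $\lbrace E\widehat O_i\rbrace_{i=1}^6$ is a basis for $S$ by Definition \ref{def:N}, it suffices to show $E\hat x \star E\widehat O_j \in S$ for each $1\le j\le 6$. But this is exactly what Proposition \ref{cor:Nadj} gives: $E\hat x \star E\widehat O_j = \vert X\vert^{-1}\sum_{i=1}^6 C_{i,j} E\widehat O_i$, which is manifestly a linear combination of the basis vectors of $S$. Hence $E\hat x \star E\widehat O_j \in S$ for all $j$, and $E\hat x \star S \subseteq S$ follows by linearity of $\star$ in each argument.

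For the second inclusion, I would apply the first inclusion with the roles of $x$ and $y$ interchanged. Interchanging $x$ and $y$ sends the $y$-partition of $\Gamma(x)$ to the $x$-partition of $\Gamma(y)$, so the analogue of Proposition \ref{cor:Nadj} reads $E\hat y \star E\widehat O'_j = \vert X\vert^{-1}\sum_{i=1}^6 C_{i,j} E\widehat O'_i$, and the right-hand side lies in $S(y,x)$. By Proposition \ref{prop:symS} we have $S(y,x)=S(x,y)=S$, and $\lbrace E\widehat O'_i\rbrace_{i=1}^6$ is a basis for $S(y,x)=S$; therefore $E\hat y \star E\widehat O'_j \in S$ for each $j$, and $E\hat y \star S \subseteq S$ by linearity.

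There is no real obstacle here: the substantive work has already been done in establishing Proposition \ref{cor:Nadj} (via Lemma \ref{lem:N1} and the equitable-partition structure of Proposition \ref{lem:Centry}) and in establishing $S(x,y)=S(y,x)$ (Proposition \ref{prop:symS}, which rests on Theorem \ref{thm:bbalanced}). The only point requiring a moment's care is making sure the $x\leftrightarrow y$ swap genuinely turns the matrix $C$ for the $y$-partition of $\Gamma(x)$ into the corresponding matrix for the $x$-partition of $\Gamma(y)$; this is immediate from the fact that $\Gamma$ is distance-transitive (so the intersection numbers governing both partitions are the same) and from the construction of the $y$-partition being symmetric in its defining conditions under interchange of the two vertices. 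Thus the proof is just: cite Proposition \ref{cor:Nadj}, invoke bilinearity of $\star$, and then repeat with $x,y$ swapped using Proposition \ref{prop:symS}.

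\begin{proof}
The vectors $\lbrace E\widehat O_i \rbrace_{i=1}^6$ form a basis for $S$, so by bilinearity of $\star$ it suffices to show $E\hat x \star E\widehat O_j \in S$ for $1 \leq j \leq 6$. This holds by Proposition \ref{cor:Nadj}. Hence $E\hat x \star S \subseteq S$. Interchanging the roles of $x,y$ in Proposition \ref{cor:Nadj}, we obtain $E\hat y \star E\widehat O'_j = \vert X\vert^{-1}\sum_{i=1}^6 C_{i,j} E\widehat O'_i$ for $1\leq j\leq 6$. The vectors $\lbrace E\widehat O'_i \rbrace_{i=1}^6$ form a basis for $S(y,x)$, and $S(y,x)=S$ by Proposition \ref{prop:symS}. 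Therefore $E\hat y \star E\widehat O'_j \in S$ for all $j$, and by bilinearity $E\hat y \star S \subseteq S$.
\end{proof}
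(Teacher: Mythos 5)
Your proof is correct and follows essentially the same route as the paper: the first inclusion from Definition \ref{def:N} and Proposition \ref{cor:Nadj}, and the second by swapping $x$ and $y$ and invoking $S(x,y)=S(y,x)$ from Proposition \ref{prop:symS}. You simply spell out the $x\leftrightarrow y$ swap a bit more explicitly than the paper does.
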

  \begin{proof} The inclusion on the left follows from Definition \ref{def:N} and Proposition \ref{cor:Nadj}. The inclusion
  on the right holds by Proposition  \ref{prop:symS}.
  \end{proof}
 
 \noindent By Corollary \ref{cor:xy} and Lemma \ref{lem:Norton}, we have $E \hat x \star E \hat y \in S$. In the next two
 results we give more detail.
 
 \begin{lemma}
 \label{lem:xStary} {\rm (See \cite[Theorem~3.7]{norton2}.)}
 We have
 \begin{align*}
 E \hat x \star E \hat y =
 \frac{(\theta^*_{k-1}-\theta^*_k) E \widehat O_1 + (\theta^*_{k+1}-\theta^*_k) E \widehat O_6 + (\theta_1-\theta_2)\theta^*_k E \hat x + (\theta_2 - \theta_0) E\hat y }{\vert X \vert (\theta_1 - \theta_2)}.
 \end{align*}
 \end{lemma}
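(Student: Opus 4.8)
The plan is to verify the stated identity by showing that the difference of the two sides is the zero vector. Call the right-hand side $\rho$, and let $\xi = E\hat x \star E\hat y - \rho$. Since we already know $E\hat x \star E\hat y \in S$ (by Corollary~\ref{cor:xy} and Lemma~\ref{lem:Norton}) and each of $E\widehat O_1$, $E\widehat O_6$, $E\hat x$, $E\hat y$ lies in $S$, we have $\xi \in S$; so it suffices to show $\xi$ is orthogonal to a spanning set of $S$, or more economically to show $\Vert \xi \Vert^2 = 0$. Because $S \subseteq EV$ and the vectors $\lbrace E\hat z \mid z \in \Gamma(x)\rbrace$ span $EV$ (Lemma~\ref{lem:comA}), and in fact $S$ is already spanned by $\lbrace E\widehat O_i \rbrace_{i=1}^6$, it is enough to check $\langle E\hat z, \xi \rangle = 0$ for a vertex $z$ representative of each block $O_i$ together with the ``boundary'' vectors; following the pattern of Lemma~\ref{lem:dep} and the proof of Theorem~\ref{thm:bbalanced}, I would check $\langle E\hat x, \xi \rangle = 0$, $\langle E\hat y, \xi \rangle = 0$, and $\langle E\hat z, \xi \rangle = 0$ for $z \in O_i$ with $1 \le i \le 6$.

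The key computational input is Lemma~\ref{lem:N1}, which gives
$\vert X \vert\, E\hat x \star E\hat y = \sum_{z \in \Gamma(x)\cap\Gamma(y)} E\hat z$,
so that for any $w \in X$,
$\vert X \vert\, \langle E\hat w, E\hat x \star E\hat y \rangle = \sum_{z \in \Gamma(x)\cap\Gamma(y)} \langle E\hat w, E\hat z\rangle = \vert X\vert^{-1}\sum_{z} \theta^*_{\partial(w,z)}$,
where the last step uses Lemma~\ref{lem:xyip}. Thus for $w = \hat x$ the sum becomes $\vert \Gamma(x)\cap\Gamma(y)\vert$ copies of $\theta^*_1$, i.e.\ $p^k_{1,1}\theta^*_1$; for $w = \hat y$ it is $p^k_{1,1}\theta^*_1$ by symmetry; and for $w$ in a block $O_i$ it is a sum over the neighbours of $x$ and $y$ that one reads off from the intersection-number data. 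On the other side, $\langle E\hat w, \rho\rangle$ is computed directly from Lemma~\ref{lem:xyip}, the block sizes in Lemma~\ref{lem:Osize}, and the $C$-matrix in Proposition~\ref{lem:Centry} (to know how many neighbours of a vertex in $O_i$ land in $O_1$ versus $O_6$), along with the explicit values of $\theta^*_i$ in \eqref{eq:ths} and of $\theta_1, \theta_2$ in \eqref{eq:theta}. Matching the two expressions block by block reduces, after using \eqref{eq:TTR} where convenient, to polynomial identities in $q$, $q^D$, $q^{N-D}$, $q^k$ that hold identically.

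The main obstacle is purely bookkeeping: one must correctly enumerate, for a vertex $z \in O_i$, the common neighbours of $x$ and $y$ that are adjacent to $z$ — that is, decompose $\Gamma(x)\cap\Gamma(y)$ into its intersections with the blocks $O_j$ and with $\Gamma(z)$, and similarly track which of these neighbours lie in $\Gamma_{k-1}(y)$, $\Gamma_k(y)$, $\Gamma_{k+1}(y)$. This is exactly the kind of count that Proposition~\ref{lem:Centry} and the matrices $D^{(\ell)}$ in Proposition~\ref{def:Dmatrix} encode, so the verification is mechanical once those tables are in hand; the risk is only in sign and index errors. If a fully explicit per-block computation is too long to display, the cleaner route is to note that both sides lie in $S$, expand both in the basis $\lbrace E\widehat O_i\rbrace_{i=1}^6$ using Proposition~\ref{cor:Nadj} for $E\hat x \star(\cdot)$ together with the expression of $E\hat x \star E\hat y$ coming from Lemma~\ref{lem:N1} rewritten via the $O_i$, and check that the six coefficients agree — a finite check in $q, q^D, q^{N-D}, q^k$.
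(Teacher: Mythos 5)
Your proposal has a genuine gap: you invoke Lemma~\ref{lem:N1} to write
$\vert X\vert\,E\hat x\star E\hat y=\sum_{z\in\Gamma(x)\cap\Gamma(y)}E\hat z$,
but that lemma is stated only for \emph{adjacent} $x,y$. In the present setting $\partial(x,y)=k$ with $2\le k\le D-1$, so $x$ and $y$ are not adjacent and that simplification is not available. (For non-adjacent $x,y$ the entrywise product $E\hat x\circ E\hat y$ involves all triple intersections, and it does not collapse to a sum over $\Gamma(x)\cap\Gamma(y)$ --- indeed for $k\ge 3$ that set is empty while $E\hat x\star E\hat y\neq 0$.) This invalidates both your main computation of $\langle E\hat w,E\hat x\star E\hat y\rangle$ and your suggested ``cleaner route'' at the end, since both lean on the same formula.

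The paper's proof avoids this entirely: it never tries to expand $E\hat x\star E\hat y$ intrinsically. Instead, it first uses Lemma~\ref{lem:dep} to write $E\hat y$ as a linear combination of $E\hat x$, $E\widehat O_1$, $E\widehat O_2$, so that
\begin{align*}
E\hat x\star E\hat y
= -\tfrac{1-q^{1-k}}{q-1}\,(E\hat x\star E\hat x)
+ q^{1-k}\,(E\hat x\star E\widehat O_1)
- q^{1-k}\,(E\hat x\star E\widehat O_2),
\end{align*}
and then expands the right-hand side using Lemma~\ref{lem:xsx} for $E\hat x\star E\hat x$ and Proposition~\ref{cor:Nadj} for $E\hat x\star E\widehat O_j=\vert X\vert^{-1}\sum_i C_{i,j}E\widehat O_i$. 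After also rewriting the displayed right-hand side of the lemma in the basis $\lbrace E\widehat O_i\rbrace_{i=1}^6$ (again via Lemmas~\ref{lem:Osum} and~\ref{lem:dep}), one matches coefficients --- a finite polynomial check. Your instinct to compare coefficients in the basis $\lbrace E\widehat O_i\rbrace$ is exactly right; you just need to obtain the left-hand side's expansion from Lemma~\ref{lem:dep} and Proposition~\ref{cor:Nadj} rather than from Lemma~\ref{lem:N1}.
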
 
 \begin{proof} To verify this equation, on the left-hand side eliminate $E\hat y$ using Lemma  \ref{lem:dep}, and evaluate the result using Proposition \ref{cor:Nadj}.
 \end{proof}
 
  \begin{lemma}
 \label{lem:xStary2}  
 {\rm (See \cite[Theorem~4.4]{norton2}.)}
 We have
 \begin{align*}
 E \hat x \star E \hat y =
 \frac{(\theta^*_{k-1}-\theta^*_k) E O^\vee_1 + (\theta^*_{k+1}-\theta^*_k) E O^\vee_6 + (\theta_2 - \theta_0)(E\hat x + E\hat y) }{\vert X \vert (\theta_1 - \theta_2)}.
 \end{align*}
 \end{lemma}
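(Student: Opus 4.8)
The plan is to derive this formula directly from the expression for $E\hat x \star E\hat y$ already obtained in Lemma~\ref{lem:xStary}, simply rewriting the terms $E\widehat O_1$ and $E\widehat O_6$ in terms of the vectors $EO^\vee_1$, $EO^\vee_6$. First I would recall from Definition~\ref{def:Ocheck} together with Lemma~\ref{lem:EOC} that
\[
E\widehat O_1 = EO^\vee_1 + \lambda_1 E\hat x, \qquad \qquad E\widehat O_6 = EO^\vee_6 + \lambda_6 E\hat x.
\]
Substituting these into the numerator appearing in Lemma~\ref{lem:xStary} leaves the coefficients of $EO^\vee_1$, $EO^\vee_6$, and $E\hat y$ exactly as in the claimed formula (namely $\theta^*_{k-1}-\theta^*_k$, $\theta^*_{k+1}-\theta^*_k$, and $\theta_2-\theta_0$), while the coefficient of $E\hat x$ becomes
\[
\lambda_1(\theta^*_{k-1}-\theta^*_k) + \lambda_6(\theta^*_{k+1}-\theta^*_k) + (\theta_1-\theta_2)\theta^*_k.
\]

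It therefore remains only to verify the scalar identity
\[
\lambda_1(\theta^*_{k-1}-\theta^*_k) + \lambda_6(\theta^*_{k+1}-\theta^*_k) + (\theta_1-\theta_2)\theta^*_k = \theta_2-\theta_0,
\]
for then the $E\hat x$-coefficient collapses to $\theta_2-\theta_0$, matching the $E\hat y$-coefficient, and the numerator takes the symmetric form claimed. This identity is checked by direct computation: from \eqref{eq:theta} and \eqref{eq:ths} one has $\theta_1-\theta_2 = q^{N-2}$, $\theta_2-\theta_0 = -q^{N-2}(q+1)$, $\theta^*_{k-1}-\theta^*_k = q^{N-k}$, and $\theta^*_{k+1}-\theta^*_k = -q^{N-k-1}$; substituting these together with the explicit values of $\lambda_1$ and $\lambda_6$ from Lemma~\ref{lem:lambdaValues} and clearing the common factor $(q-1)^{-1}$ reduces both sides to $-q^{N-2}(q-1)(q+1)$, so the identity holds. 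Note this also uses that $E\hat x \star E\hat y$ lies in $S$ (Corollary~\ref{cor:xy} and Lemma~\ref{lem:Norton}), which guarantees the two sides are legitimately being compared inside $S$.

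The only real work is the elementary but slightly tedious bookkeeping of powers of $q$ in the scalar identity; there is no structural obstacle, since everything rests on Lemma~\ref{lem:xStary} and the closed forms of the parameters. An equivalent substitution-free route, if one prefers a symmetric derivation, is to average the identity of Lemma~\ref{lem:xStary} with its image under interchanging $x$ and $y$ (valid because $E\hat x \star E\hat y = E\hat y \star E\hat x$ and $S(x,y) = S(y,x)$ by Proposition~\ref{prop:symS}), and then use $E\widehat O_i + E\widehat O'_i = 2EO^\vee_i + \lambda_i(E\hat x + E\hat y)$ for $i \in \lbrace 1,6\rbrace$, which is immediate from Lemma~\ref{lem:EOC}; this again reduces to the same displayed scalar identity.
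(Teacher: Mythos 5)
Your argument is correct and is essentially the same as the paper's: the paper eliminates $EO^\vee_1$, $EO^\vee_6$ via Lemma~\ref{lem:EOC} and compares with Lemma~\ref{lem:xStary}, whereas you run the same substitution in the opposite direction, and in either direction the verification reduces to the same scalar identity for the coefficient of $E\hat x$, which you check correctly. The alternative averaging route you sketch is a harmless repackaging of the same computation.
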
 
 \begin{proof} To verify this equation, eliminate $E O^\vee_1$ and $E O^\vee_6$ using the first equation in Lemma \ref{lem:EOC}, and evaluate the result using
 Lemma \ref{lem:xStary}.
 \end{proof}

\begin{corollary} \label{cor:xStary3}
We have
\begin{align*}
 E \hat x \star E \hat y \in {\rm Sym}(S).
 \end{align*}
 \end{corollary}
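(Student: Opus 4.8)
The plan is to read off the result directly from the explicit formula for $E\hat x \star E\hat y$ already in hand. By Lemma~\ref{lem:xStary2} we have
\begin{align*}
E \hat x \star E \hat y =
\frac{(\theta^*_{k-1}-\theta^*_k) E O^\vee_1 + (\theta^*_{k+1}-\theta^*_k) E O^\vee_6 + (\theta_2 - \theta_0)(E\hat x + E\hat y) }{\vert X \vert (\theta_1 - \theta_2)},
\end{align*}
so $E\hat x \star E\hat y$ is a linear combination of $EO^\vee_1$, $EO^\vee_6$, and $E\hat x + E\hat y$. First I would note that $EO^\vee_1, EO^\vee_6 \in {\rm Sym}(S)$ by Definition~\ref{def:symS}. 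Next I would recall that $E\hat x + E\hat y \in {\rm Sym}(S)$ by Lemma~\ref{lem:xpyS}. Since ${\rm Sym}(S)$ is a subspace of $EV$, it is closed under linear combinations, and therefore $E\hat x \star E\hat y \in {\rm Sym}(S)$.

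There is essentially no obstacle here: all the work has been front-loaded into Lemma~\ref{lem:xStary2} (and into Theorem~\ref{thm:bbalanced}, which makes the $O^\vee_i$ well-defined and puts $E\hat x, E\hat y \in S$ via Corollary~\ref{cor:xy}). One should only double-check that $\theta_1 - \theta_2 \neq 0$, which holds because the eigenvalues $\theta_0 > \theta_1 > \cdots > \theta_D$ are distinct, so the displayed expression is legitimate. Given that, the corollary follows in one line as above.
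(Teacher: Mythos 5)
Your proof is correct and is essentially the same as the paper's: both read the result off Lemma~\ref{lem:xStary2}, using Definition~\ref{def:symS} to place $EO^\vee_1, EO^\vee_6$ in ${\rm Sym}(S)$ and Lemma~\ref{lem:xpyS} to place $E\hat x + E\hat y$ there. The extra note that $\theta_1 \neq \theta_2$ is a fine sanity check, though the paper treats it as automatic.
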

 \begin{proof}  By Definition \ref{def:symS} and Lemmas  \ref{lem:xpyS}, \ref{lem:xStary2}.
 \end{proof}
 
 \begin{proposition} \label{cor:xmy}
 We have
 \begin{align*}
 {\rm ASym}(S) \star {\rm ASym}(S) \subseteq {\rm Sym}(S).
 \end{align*}
 \end{proposition}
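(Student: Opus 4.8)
The plan is to reduce everything to the generator $E\hat x - E\hat y$ of ${\rm ASym}(S)$ and expand the Norton product bilinearly. Since ${\rm ASym}(S) = {\rm Span}\lbrace E\hat x - E\hat y\rbrace$, it suffices to show that the single vector $(E\hat x - E\hat y)\star(E\hat x - E\hat y)$ lies in ${\rm Sym}(S)$.

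First I would expand, using commutativity of $\star$:
\begin{align*}
(E\hat x - E\hat y)\star(E\hat x - E\hat y) = E\hat x \star E\hat x - 2\, E\hat x \star E\hat y + E\hat y \star E\hat y.
\end{align*}
Next I would apply Lemma \ref{lem:xsx} to each of the two ``diagonal'' terms, which gives $E\hat x \star E\hat x + E\hat y \star E\hat y = \vert X \vert^{-1} q^1_{1,1}\,(E\hat x + E\hat y)$. By Lemma \ref{lem:xpyS} we have $E\hat x + E\hat y \in {\rm Sym}(S)$, so this combined term lies in ${\rm Sym}(S)$. For the cross term, Corollary \ref{cor:xStary3} gives $E\hat x \star E\hat y \in {\rm Sym}(S)$ directly. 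Adding these (with the appropriate scalars) shows $(E\hat x - E\hat y)\star(E\hat x - E\hat y) \in {\rm Sym}(S)$, and the conclusion follows.

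There is no real obstacle here: the proposition is an immediate formal consequence of Lemma \ref{lem:xsx}, Lemma \ref{lem:xpyS}, and Corollary \ref{cor:xStary3}, all of which have already been established. The only thing to be careful about is that $\star$ is \emph{not} associative, but associativity is never used — only bilinearity and commutativity of $\star$ on $EV$, which hold by construction of the Norton algebra.
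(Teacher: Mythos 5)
Your proof is correct and follows essentially the same route as the paper: both expand $(E\hat x - E\hat y)\star(E\hat x - E\hat y)$ bilinearly, apply Lemma \ref{lem:xsx} to the diagonal terms, use Lemma \ref{lem:xpyS} to place $E\hat x + E\hat y$ in ${\rm Sym}(S)$, and invoke Corollary \ref{cor:xStary3} for the cross term.
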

 \begin{proof} By Definition \ref{def:symS}, ${\rm ASym}(S) = {\rm Span}\lbrace E\hat x - E \hat y \rbrace$. By Lemmas  \ref{lem:xpyS},  \ref{lem:xsx} and Corollary \ref{cor:xStary3}, 
 \begin{align*} 
 \bigl( E\hat x - E \hat y\bigr) \star  \bigl( E\hat x - E \hat y\bigr) & = E \hat x \star E\hat x + E\hat y \star E\hat y - E\hat x \star E\hat y - E\hat y \star E\hat x \\
 &= \vert X \vert^{-1} q^1_{1,1} \bigl( E\hat x + E \hat y\bigr) -2 E\hat x \star E\hat y
      \\
      &  \in {\rm Sym}(S).
 \end{align*}
 The result follows.
 \end{proof}

 \begin{lemma} \label{lem:xOV} For $1 \leq j \leq 6$ we have
 \begin{align}
 E{\hat x} \star E O^\vee_j &= \vert X \vert^{-1} \sum_{i=1}^6 C_{i,j} E O^\vee_i 
 +  \frac{ \sum_{i=1}^6 \lambda_i C_{i,j}   -\lambda_j q^1_{1,1} }{\vert X \vert} E{\hat x}, \label{eq:xOV} \\
  E{\hat y} \star E O^\vee_j &= \vert X \vert^{-1} \sum_{i=1}^6 C_{i,j} E O^\vee_i 
 +  \frac{ \sum_{i=1}^6 \lambda_i C_{i,j}   -\lambda_j q^1_{1,1} }{\vert X \vert} E{\hat y}. \label{eq:yOV} 
 \end{align}
 \end{lemma}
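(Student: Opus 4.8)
The plan is to reduce everything to the two facts $E\hat x \star E\widehat O_j = \vert X\vert^{-1}\sum_{i=1}^6 C_{i,j} E\widehat O_i$ (Proposition \ref{cor:Nadj}) and $E\hat x \star E\hat x = \vert X\vert^{-1} q^1_{1,1} E\hat x$ (Lemma \ref{lem:xsx}), together with the change of basis $E\widehat O_i = EO^\vee_i + \lambda_i E\hat x$ supplied by Definition \ref{def:Ocheck} and Lemma \ref{lem:EOC}. So for \eqref{eq:xOV} I would first write $EO^\vee_j = E\widehat O_j - \lambda_j E\hat x$ using Lemma \ref{lem:EOC}, then use bilinearity of the Norton product $\star$ to expand
\begin{align*}
E\hat x \star EO^\vee_j = E\hat x \star E\widehat O_j - \lambda_j\, E\hat x \star E\hat x .
\end{align*}

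Next I would substitute Proposition \ref{cor:Nadj} and Lemma \ref{lem:xsx} to get $E\hat x \star EO^\vee_j = \vert X\vert^{-1}\sum_{i=1}^6 C_{i,j} E\widehat O_i - \lambda_j \vert X\vert^{-1} q^1_{1,1} E\hat x$, and then replace each $E\widehat O_i$ by $EO^\vee_i + \lambda_i E\hat x$. Collecting the coefficient of $E\hat x$ yields exactly $\vert X\vert^{-1}\bigl(\sum_{i=1}^6 \lambda_i C_{i,j} - \lambda_j q^1_{1,1}\bigr) E\hat x$, which is the asserted formula \eqref{eq:xOV}.

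For \eqref{eq:yOV} I would run the same computation with the roles of $x$ and $y$ interchanged: write $EO^\vee_j = E\widehat O'_j - \lambda_j E\hat y$ using the second equality in Lemma \ref{lem:EOC}, expand $E\hat y \star EO^\vee_j = E\hat y \star E\widehat O'_j - \lambda_j\, E\hat y \star E\hat y$, and apply the $x\leftrightarrow y$ versions of Proposition \ref{cor:Nadj} and Lemma \ref{lem:xsx}. Here one uses that the $x$-partition of $\Gamma(y)$ has the same intersection matrix $C$ as the $y$-partition of $\Gamma(x)$ (both depend only on $k=\partial(x,y)$, by distance-transitivity and Proposition \ref{lem:Centry}), so $E\hat y \star E\widehat O'_j = \vert X\vert^{-1}\sum_{i=1}^6 C_{i,j} E\widehat O'_i$; then $E\widehat O'_i = EO^\vee_i + \lambda_i E\hat y$ finishes it.

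I do not expect a genuine obstacle here; the argument is bookkeeping. The one point deserving care is the symmetry step for \eqref{eq:yOV}: one must justify that the matrix $C$ governing $E\hat y \star E\widehat O'_j$ is literally the same $C$ that appears on the right-hand side, which is what lets the two displayed equations share the coefficient $\sum_{i=1}^6 \lambda_i C_{i,j} - \lambda_j q^1_{1,1}$. This follows from Proposition \ref{prop:symS} and the distance-transitivity of $\Gamma$, which make the whole configuration symmetric in $x$ and $y$.
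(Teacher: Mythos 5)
Your proposal is correct and matches the paper's proof essentially verbatim: both expand $E\hat x\star EO^\vee_j$ via $EO^\vee_j=E\widehat O_j-\lambda_j E\hat x$, apply Proposition \ref{cor:Nadj} and Lemma \ref{lem:xsx}, substitute back $E\widehat O_i=EO^\vee_i+\lambda_i E\hat x$, and collect coefficients; the paper simply dispatches \eqref{eq:yOV} with ``similarly obtained'' where you spell out the $x\leftrightarrow y$ symmetry. Your extra note on why the same matrix $C$ governs the $x$-partition of $\Gamma(y)$ is accurate and a reasonable thing to make explicit.
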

 \begin{proof} We first obtain \eqref{eq:xOV}.
 By  Lemmas  \ref{lem:EOC}, \ref{lem:xsx}  and Proposition \ref{cor:Nadj},
 \begin{align*}
  E \hat x \star E O^\vee_j 
& = E \hat x \star \Bigl( E \widehat O_j - \lambda_j E\hat x\Bigr) \\
 &= \vert X \vert^{-1} \Biggl( \sum_{i=1}^6 C_{i,j} E \widehat O_i -\lambda_j q^1_{1,1} E\hat x         \Biggr) \\
  &= \vert X \vert^{-1} \Biggl( \sum_{i=1}^6 C_{i,j} \bigl( E O^\vee_i + \lambda_i E\hat x\bigr) -\lambda_j q^1_{1,1} E\hat x         \Biggr).
 \end{align*}
By these comments we obtain \eqref{eq:xOV}. The equation \eqref{eq:yOV} is similarly obtained.
 \end{proof}
 
 \begin{lemma} \label{lem:xydiff} For $1 \leq j \leq 6$,
 \begin{align*}
 E O^\vee_j \star \bigl( E\hat x - E \hat y\bigr)    =  \frac{ \sum_{i=1}^6 \lambda_i C_{i,j}   -\lambda_j q^1_{1,1} }{\vert X \vert}                \bigl( E\hat x - E\hat y\bigr).
 \end{align*}
 \end{lemma}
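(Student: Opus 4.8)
The plan is to derive this immediately from Lemma \ref{lem:xOV} by subtraction. First I would note that the Norton algebra product $\star$ is commutative, so
\begin{align*}
E O^\vee_j \star \bigl( E\hat x - E\hat y\bigr) = E{\hat x}\star E O^\vee_j - E{\hat y}\star E O^\vee_j.
\end{align*}
Next I would substitute the two formulas from Lemma \ref{lem:xOV}, namely \eqref{eq:xOV} for $E{\hat x}\star E O^\vee_j$ and \eqref{eq:yOV} for $E{\hat y}\star E O^\vee_j$. The two expressions share the common term $\vert X\vert^{-1}\sum_{i=1}^6 C_{i,j} E O^\vee_i$, which cancels in the difference, and the remaining terms are $\frac{\sum_{i=1}^6 \lambda_i C_{i,j} - \lambda_j q^1_{1,1}}{\vert X\vert} E{\hat x}$ and $\frac{\sum_{i=1}^6 \lambda_i C_{i,j} - \lambda_j q^1_{1,1}}{\vert X\vert} E{\hat y}$ respectively. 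Subtracting yields exactly
\begin{align*}
\frac{\sum_{i=1}^6 \lambda_i C_{i,j} - \lambda_j q^1_{1,1}}{\vert X\vert}\bigl(E{\hat x}-E{\hat y}\bigr),
\end{align*}
which is the claim.

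There is essentially no obstacle here: the lemma is a one-line corollary of Lemma \ref{lem:xOV} together with commutativity of $\star$, so the only thing to be careful about is bookkeeping of the shared summand $\vert X\vert^{-1}\sum_{i=1}^6 C_{i,j}E O^\vee_i$ and confirming it appears identically (not merely up to sign) in both \eqref{eq:xOV} and \eqref{eq:yOV}, which it does by inspection. No new computation involving the explicit matrices $C$, the sizes $\vert O_i\vert$, or the scalars $\lambda_i$ is needed at this stage; all of that work has already been absorbed into Lemma \ref{lem:xOV}. If desired, one could also remark that this result makes transparent why $E O^\vee_j \star \mathrm{ASym}(S)\subseteq \mathrm{ASym}(S)$, but that consequence would be stated separately rather than inside this proof.
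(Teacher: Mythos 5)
Your proof is correct and is essentially identical to the paper's own argument, which likewise cites Lemma \ref{lem:xOV} together with the commutativity of $\star$; your subtraction step is precisely what the paper's one-line proof leaves implicit.
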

 \begin{proof}  By Lemma \ref{lem:xOV} and since $\star$ is commutative.
 \end{proof} 
 
 \begin{proposition}
 \label{prop:SymStarASym}
 We have
 \begin{align*}
 E O^\vee_1 \star \bigl( E\hat x - E \hat y\bigr) &= \frac{q^{k-1} (q^D+q^{N-D} -2q^{k-1}-q)}{\vert X \vert} \bigl( E\hat x - E \hat y\bigr),\\
  E O^\vee_2 \star \bigl( E\hat x - E \hat y\bigr) &= - \frac{2 q^{k-1} (q^{k-1}-1)}{\vert X \vert} \bigl( E\hat x - E \hat y\bigr),\\
   E O^\vee_3 \star \bigl( E\hat x - E \hat y\bigr) &= - \frac{(q-2)q^{k-1}(2q^{k-1}-1)}{\vert X \vert} \bigl( E\hat x - E \hat y\bigr),\\
    E O^\vee_4 \star \bigl( E\hat x - E \hat y\bigr) &= \frac{(q^D-2q^k)(q^{N-D}-q^k)}{q\vert X \vert} \bigl( E\hat x - E \hat y\bigr),\\
     E O^\vee_5 \star \bigl( E\hat x - E \hat y\bigr) &= \frac{(q^D-q^k)(q^{N-D}-2q^k)}{q \vert X \vert} \bigl( E\hat x - E \hat y\bigr),\\
      E O^\vee_6 \star \bigl( E\hat x - E \hat y\bigr) &= - \frac{2(q^D-q^k)(q^{N-D}-q^k)}{q\vert X \vert} \bigl( E\hat x - E \hat y\bigr).
 \end{align*}
 \end{proposition}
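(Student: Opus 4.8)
The plan is to build directly on Lemma~\ref{lem:xydiff}, which already asserts that for $1 \leq j \leq 6$ the product $E O^\vee_j \star (E\hat x - E\hat y)$ equals the scalar
\begin{align*}
s_j = \frac{\sum_{i=1}^6 \lambda_i C_{i,j} - \lambda_j q^1_{1,1}}{\vert X \vert}
\end{align*}
times $E\hat x - E\hat y$. Thus the entire content of the proposition is the evaluation of the six scalars $s_j$, and there is nothing further to prove once these are matched with the coefficients displayed in the statement.

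First I would assemble the needed data: the closed forms for $\lambda_1,\dots,\lambda_6$ from Lemma~\ref{lem:lambdaValues}, the entries of the $6\times 6$ matrix $C$ from Proposition~\ref{lem:Centry}, and the value $q^1_{1,1} = a_1 = q^{N-D}+q^D-q-2$ from \eqref{eq:q111}. Then I would compute column by column, regarding $\sum_{i=1}^6 \lambda_i C_{i,j}$ as a polynomial in $q$ with $q^k$, $q^D$, $q^{N-D}$, $q^N$ treated as formal symbols, subtract $\lambda_j q^1_{1,1}$, and simplify. Several structural features cut down the work: for $j\in\{1,2,3\}$ one has $C_{6,j}=0$, so only the first five summands contribute; for $j=6$ one has $C_{i,6}=0$ when $i\in\{1,2,3\}$, so only the last three summands contribute; and the two columns $j=4$ and $j=5$ are interchanged by the substitution $q^D \leftrightarrow q^{N-D}$ together with the transposition of the indices $4$ and $5$, under which $C$, the $\lambda_i$, and the claimed answers are all symmetric, so only one of these two columns needs an independent calculation.

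The obstacle here is not conceptual but bookkeeping: the columns $j=2$ and $j=3$ involve the more intricate entries $2q^{k-1}-2-q$, $2q^{k-1}(q-2)$, and $2q^k - 4q^{k-1} - q + 1$, alongside the $(q-2)$-heavy value $\lambda_3$, and it is easy to lose a monomial while expanding. I would therefore also verify the global consistency check provided by Lemma~\ref{lem:OCdep}: since $\sum_{i=1}^6 E O^\vee_i = 0$, the six scalars must satisfy $\sum_{j=1}^6 s_j = 0$, a genuine constraint linking all six displayed coefficients. As a further sanity check, running the same computation with the roles of $x$ and $y$ exchanged must reproduce the identical scalars, consistent with $S(x,y)=S(y,x)$ from Proposition~\ref{prop:symS}.
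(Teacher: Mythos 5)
Your proposal is correct and follows the same route as the paper: the paper's proof likewise appeals to Lemma~\ref{lem:xydiff} and then evaluates the numerator $\sum_{i=1}^6 \lambda_i C_{i,j} - \lambda_j q^1_{1,1}$ using Proposition~\ref{lem:Centry}, Lemma~\ref{lem:lambdaValues}, and \eqref{eq:q111}. Your added observations (the $q^D \leftrightarrow q^{N-D}$ symmetry linking columns $4$ and $5$, and the $\sum_j s_j = 0$ check from Lemma~\ref{lem:OCdep}) are valid and useful sanity checks, though they go beyond what the paper records.
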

 \begin{proof} For the fraction in Lemma \ref{lem:xydiff}, evaluate the numerator 
 using Lemmas \ref{lem:Centry}, \ref{lem:lambdaValues} and \eqref{eq:q111}. The result follows.
 \end{proof}
 
 \noindent  \begin{proposition} \label{cor:SAS}
 We have
 \begin{align*}
  {\rm Sym}(S) \star {\rm ASym}(S) \subseteq {\rm ASym}(S).      
 \end{align*}
\end{proposition}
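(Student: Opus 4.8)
\textit{Proof proposal.} The plan is to reduce the claimed inclusion to the computations already carried out in Lemma~\ref{lem:xydiff} and Proposition~\ref{prop:SymStarASym}. Since the Norton algebra product $\star$ is bilinear, to verify that ${\rm Sym}(S)\star {\rm ASym}(S)\subseteq {\rm ASym}(S)$ it suffices to check that $u\star v\in {\rm ASym}(S)$ for all $u$ in a spanning set of ${\rm Sym}(S)$ and all $v$ in a spanning set of ${\rm ASym}(S)$. By Definition~\ref{def:symS}, the vectors $\lbrace EO^\vee_i\rbrace_{i=1}^6$ span ${\rm Sym}(S)$, and ${\rm ASym}(S) = {\rm Span}\lbrace E\hat x - E\hat y\rbrace$ is one-dimensional.

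So the only thing to establish is that $EO^\vee_i \star (E\hat x - E\hat y)$ lies in ${\rm ASym}(S)$ for $1\leq i\leq 6$. But this is immediate: Lemma~\ref{lem:xydiff} (or, with explicit constants, Proposition~\ref{prop:SymStarASym}) states that $EO^\vee_i \star (E\hat x - E\hat y)$ is a scalar multiple of $E\hat x - E\hat y$, hence an element of ${\rm ASym}(S)$. A general element of ${\rm Sym}(S)\star {\rm ASym}(S)$ is then a linear combination of the $EO^\vee_i \star (E\hat x - E\hat y)$, and therefore lies in ${\rm ASym}(S)$ as well. This completes the argument.

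There is no real obstacle here; the substantive work has already been done in computing the scalars $\sum_{i=1}^6\lambda_i C_{i,j} - \lambda_j q^1_{1,1}$ in Proposition~\ref{prop:SymStarASym}, which uses Lemmas~\ref{lem:Centry},~\ref{lem:lambdaValues} and \eqref{eq:q111}. One remark worth including: the proof does not need the explicit values of those scalars, only the fact recorded in Lemma~\ref{lem:xydiff} that the product $EO^\vee_j \star (E\hat x - E\hat y)$ is \emph{proportional} to $E\hat x - E\hat y$; this in turn rests on Lemma~\ref{lem:xOV} and the commutativity of $\star$.

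\begin{proof}
By Definition~\ref{def:symS}, the vectors $\lbrace EO^\vee_i \rbrace_{i=1}^6$ span ${\rm Sym}(S)$, and ${\rm ASym}(S) = {\rm Span}\lbrace E\hat x - E\hat y\rbrace$. Since $\star$ is bilinear, it suffices to show that $EO^\vee_i \star (E\hat x - E\hat y) \in {\rm ASym}(S)$ for $1 \leq i \leq 6$. This holds by Lemma~\ref{lem:xydiff}, which asserts that $EO^\vee_i \star (E\hat x - E\hat y)$ is a scalar multiple of $E\hat x - E\hat y$. The result follows.
\end{proof}
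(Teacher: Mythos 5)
Your proof is correct and takes essentially the same approach as the paper: the paper cites Definition~\ref{def:symS} together with Proposition~\ref{prop:SymStarASym}, and you cite Definition~\ref{def:symS} together with Lemma~\ref{lem:xydiff} (of which Proposition~\ref{prop:SymStarASym} is just the explicit evaluation). Your observation that only the proportionality, not the explicit scalars, is needed is accurate but does not change the argument in any substantive way.
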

\begin{proof}  By Definition \ref{def:symS} and Proposition  \ref{prop:SymStarASym}.
\end{proof}
 
 \begin{lemma} \label{lem:xpySym} For $1 \leq j \leq 6$,
 \begin{align*}
 (E{\hat x} + E{\hat y} ) \star E O^\vee_j =  2 \vert X \vert^{-1} \sum_{i=1}^6 C_{i,j} E O^\vee_i 
 + \frac{ \sum_{i=1}^6 \lambda_i C_{i,j} - \lambda_j q^1_{1,1}}{\vert X \vert} ( E{\hat x} + E{\hat y}).
 \end{align*}
 \end{lemma}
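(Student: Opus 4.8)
The plan is simply to add the two identities established in Lemma \ref{lem:xOV}. First I would use the bilinearity of the Norton product $\star$ to write
$(E{\hat x} + E{\hat y}) \star E O^\vee_j = E{\hat x}\star E O^\vee_j + E{\hat y}\star E O^\vee_j$,
and then substitute \eqref{eq:xOV} for the first summand and \eqref{eq:yOV} for the second. The two right-hand sides share the identical term $\vert X\vert^{-1}\sum_{i=1}^6 C_{i,j}\,E O^\vee_i$, so adding them doubles that term; meanwhile the scalar multiple of $E{\hat x}$ coming from \eqref{eq:xOV} and the scalar multiple of $E{\hat y}$ coming from \eqref{eq:yOV} have the same coefficient, namely $\bigl(\sum_{i=1}^6 \lambda_i C_{i,j} - \lambda_j q^1_{1,1}\bigr)/\vert X\vert$, so they combine into that coefficient times $(E{\hat x}+E{\hat y})$. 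This is exactly the asserted formula.

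There is essentially no obstacle here: the only point that needs noting is that the coefficient of $E{\hat x}$ in \eqref{eq:xOV} and the coefficient of $E{\hat y}$ in \eqref{eq:yOV} literally coincide, which is transparent from the statement of Lemma \ref{lem:xOV} (the two equations differ only in replacing $E{\hat x}$ by $E{\hat y}$ in the final term). So the proof is a one-line consequence of Lemma \ref{lem:xOV} together with the fact that $\star$ is bilinear, and I would present it as such. (This identity is presumably then used downstream to deduce $(E{\hat x}+E{\hat y})\star {\rm Sym}(S)\subseteq {\rm Sym}(S)$, since the right-hand side manifestly lies in ${\rm Sym}(S)$ by Definition \ref{def:symS} and Lemma \ref{lem:xpyS}, but that is beyond the scope of the present lemma.)
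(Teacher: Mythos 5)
Your proof is correct and is exactly the paper's argument: the paper's proof reads simply ``By Lemma \ref{lem:xOV},'' which amounts to the same addition of \eqref{eq:xOV} and \eqref{eq:yOV} via bilinearity of $\star$ that you spell out.
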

 \begin{proof} By Lemma \ref{lem:xOV}.
 \end{proof}
 
 \begin{proposition} \label{prop:xpySS} We have
 \begin{align*}
 (E{\hat x} + E{\hat y}) \star {\rm Sym}(S) \subseteq {\rm Sym}(S).
 \end{align*}
 \end{proposition}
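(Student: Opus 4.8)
The plan is to reduce everything to Lemma \ref{lem:xpySym}, which already does the computational work. That lemma expresses, for each $1 \leq j \leq 6$, the product $(E{\hat x} + E{\hat y}) \star E O^\vee_j$ as a linear combination of the vectors $E O^\vee_1, \ldots, E O^\vee_6$ together with the single extra vector $E{\hat x} + E{\hat y}$. So the proof has essentially two ingredients: first, that each $E O^\vee_i$ lies in ${\rm Sym}(S)$, which is immediate from Definition \ref{def:symS} since ${\rm Sym}(S) = {\rm Span}\lbrace E O^\vee_i \mid 1 \leq i \leq 6\rbrace$; and second, that $E{\hat x} + E{\hat y}$ lies in ${\rm Sym}(S)$, which is exactly Lemma \ref{lem:xpyS}.

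Concretely, I would argue as follows. Fix $j$ with $1 \leq j \leq 6$. By Lemma \ref{lem:xpySym}, the vector $(E{\hat x} + E{\hat y}) \star E O^\vee_j$ is a real linear combination of $E O^\vee_1, \ldots, E O^\vee_6$ and $E{\hat x} + E{\hat y}$. Each of $E O^\vee_1, \ldots, E O^\vee_6$ belongs to ${\rm Sym}(S)$ by Definition \ref{def:symS}, and $E{\hat x} + E{\hat y}$ belongs to ${\rm Sym}(S)$ by Lemma \ref{lem:xpyS}. Since ${\rm Sym}(S)$ is a subspace, it is closed under linear combinations, so $(E{\hat x} + E{\hat y}) \star E O^\vee_j \in {\rm Sym}(S)$.

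To finish, I would invoke bilinearity of the Norton product $\star$ in its second argument. An arbitrary element $v \in {\rm Sym}(S)$ can be written $v = \sum_{j=1}^6 c_j E O^\vee_j$ for some $c_j \in \mathbb R$ (using the spanning set from Definition \ref{def:symS}), and then $(E{\hat x} + E{\hat y}) \star v = \sum_{j=1}^6 c_j \bigl( (E{\hat x} + E{\hat y}) \star E O^\vee_j \bigr)$ is again a linear combination of vectors in ${\rm Sym}(S)$, hence lies in ${\rm Sym}(S)$. This gives the desired inclusion $(E{\hat x} + E{\hat y}) \star {\rm Sym}(S) \subseteq {\rm Sym}(S)$.

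I do not anticipate a genuine obstacle here: the real content is packaged into Lemma \ref{lem:xpySym} (which in turn rests on Lemma \ref{lem:xOV}) and into Lemma \ref{lem:xpyS}. The only point requiring a moment's care is noticing that the "leftover'' term in Lemma \ref{lem:xpySym} is precisely $E{\hat x} + E{\hat y}$ rather than, say, $E{\hat x}$ or $E{\hat y}$ separately — it is the symmetry of that term that keeps us inside ${\rm Sym}(S)$, and this is exactly why the analogous statement with $E{\hat x}$ or $E{\hat y}$ alone in place of $E{\hat x}+E{\hat y}$ would fail. Everything else is bookkeeping with subspace closure and bilinearity of $\star$.
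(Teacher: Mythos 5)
Your proof is correct and takes exactly the same route as the paper's one-line proof, which cites Definition \ref{def:symS} together with Lemmas \ref{lem:xpyS} and \ref{lem:xpySym}; you have simply unpacked the bilinearity-of-$\star$ bookkeeping that the paper leaves implicit.
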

 \begin{proof}  By Definition \ref{def:symS} and Lemmas  \ref{lem:xpyS},  \ref{lem:xpySym}.
 \end{proof}
 
  \begin{conjecture} \label{prop:SSS} We have
 \begin{align*}
 {\rm Sym}(S) \star {\rm Sym}(S) \subseteq {\rm Sym}(S).
 \end{align*}
 \end{conjecture}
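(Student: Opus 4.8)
Since $\star$ is commutative and, by Definition~\ref{def:symS} and Lemma~\ref{lem:OCdep}, the vectors $\lbrace E O^\vee_i\rbrace_{i=1}^6$ span ${\rm Sym}(S)$, it suffices to show that $E O^\vee_i \star E O^\vee_j \in {\rm Sym}(S)$ for $1 \le i \le j \le 6$ (or, equivalently, the products $h^\vee_i \star h^\vee_j$ for $2 \le i \le j \le 6$, using the basis $\lbrace h^\vee_j\rbrace_{j=2}^6$ of ${\rm Sym}(S)$ from Proposition~\ref{lem:hSbasis}). The plan is to compute these products explicitly. By Proposition~\ref{prop:symDecomp} we have ${\rm Sym}(S) = S \cap (E\hat x - E\hat y)^\perp$, so once a product is exhibited as an explicit linear combination of $\lbrace E\widehat O_a\rbrace_{a=1}^6$, $E\hat x$, $E\hat y$ (hence visibly in $S$), membership in ${\rm Sym}(S)$ reduces to an orthogonality check against $E\hat x - E\hat y$ using Lemma~\ref{lem:xyip} and Theorem~\ref{thm:bbalanced}. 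Observe, though, that producing such a linear combination already subsumes the weaker (also open) statement $S \star S \subseteq S$, so this step cannot be taken for granted.

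Expanding via Lemma~\ref{lem:EOC} and bilinearity of $\star$,
\[
E O^\vee_i \star E O^\vee_j = E\widehat O_i \star E\widehat O_j - \lambda_j\, E\hat x \star E\widehat O_i - \lambda_i\, E\hat x \star E\widehat O_j + \lambda_i\lambda_j\, E\hat x \star E\hat x ,
\]
where the last three terms are already known --- the middle two from Proposition~\ref{cor:Nadj}, the last from Lemma~\ref{lem:xsx} together with \eqref{eq:q111}. Everything thus reduces to the symmetric array $E\widehat O_i \star E\widehat O_j$ $(1 \le i \le j \le 6)$. To evaluate these, I would work coordinate-wise: $(E\widehat O_i)_w = \vert X\vert^{-1}\sum_{u\in O_i}\theta^*_{\partial(w,u)}$, so by Lemma~\ref{lem:uvw},
\[
\langle E\widehat O_i \star E\widehat O_j,\, E\hat z\rangle = \vert X\vert^{-3}\sum_{u\in O_i}\sum_{v\in O_j}\ \sum_{w\in X}\theta^*_{\partial(w,u)}\theta^*_{\partial(w,v)}\theta^*_{\partial(w,z)} \qquad (z \in \Gamma(x)),
\]
which pins down each product since $\lbrace E\hat z : z\in\Gamma(x)\rbrace$ is a basis for $EV$ (Lemma~\ref{lem:comA}).

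The inner sum over $w$ is, up to normalization, the triple product $\langle E\hat u\star E\hat v, E\hat z\rangle$, and is governed by the \emph{triple intersection numbers} of $\Gamma$ attached to the configuration $\lbrace u,v,z\rbrace$; carrying out the sums over $u\in O_i$ and $v\in O_j$ then additionally requires knowing how the classes of the $y$-partition of $\Gamma(x)$, the spheres $\Gamma_\ell(y)$, and the local subconstituents about the vertices $z$ intersect one another. Assembling this three-point incidence data is the main obstacle, and presumably the reason the statement is only conjectured: the combinatorial tables of \cite[Sections~6,~7]{williford} reproduced in Sections~6--8 record how the $y$-partition meets the adjacency relation and the distance-$\ell$ relations to $x$, but not these finer counts. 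One route is to extend that bookkeeping to the required triple counts directly from the rank-one description of adjacency on $X$; a possibly cleaner route is to use the $Q$-polynomial and formally self-dual structure of $\Gamma$ to reduce the triple sums over $w$ to quantities already in hand ($\theta_i$, $\theta^*_i$, $c_i$, $a_i$, $b_i$, and the combinatorial data of Sections~6--8).

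Once the array $\bigl(E\widehat O_i \star E\widehat O_j\bigr)_{i,j}$ has been computed, the verification of $E O^\vee_i\star E O^\vee_j\in{\rm Sym}(S)$ proceeds as in the first paragraph, and several independent consistency checks are available: the already-established inclusions $(E\hat x+E\hat y)\star{\rm Sym}(S)\subseteq{\rm Sym}(S)$ (Proposition~\ref{prop:xpySS}), ${\rm Sym}(S)\star{\rm ASym}(S)\subseteq{\rm ASym}(S)$ (Proposition~\ref{cor:SAS}), and $E\hat x\star E\hat y\in{\rm Sym}(S)$ (Corollary~\ref{cor:xStary3}) must all be recovered as special cases, and the eigenvector relation $CH = H\,{\rm diag}(\vartheta_1,\dots,\vartheta_6)$ (Lemma~\ref{lem:Hcol}) strongly constrains the coefficients. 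I expect this final verification to be mechanical; the genuine difficulty lies entirely in producing the triple-incidence data that feeds it.
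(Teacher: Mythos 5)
The statement you are asked to prove is labeled \emph{Conjecture} \ref{prop:SSS} in the paper, and the paper supplies no proof of it; it is left open. Your writeup correctly recognizes this (``presumably the reason the statement is only conjectured'') and is therefore not a proof but a strategy sketch. As such there is no paper argument to compare against, and your attempt does not close the gap either.

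Your reduction is sound as far as it goes: by bilinearity and Lemma~\ref{lem:EOC} the problem reduces to showing each $E\widehat{O}_i \star E\widehat{O}_j$ lies in $S$, and then checking orthogonality to $E\hat{x}-E\hat{y}$ via Lemma~\ref{lem:xyip}. But the crucial step --- expressing $E\widehat{O}_i \star E\widehat{O}_j$ as a linear combination of $\{E\widehat{O}_a\}_{a=1}^6$, $E\hat{x}$, $E\hat{y}$ --- is precisely what is not available. Your coordinate-wise formula involving $\sum_{w\in X}\theta^*_{\partial(w,u)}\theta^*_{\partial(w,v)}\theta^*_{\partial(w,z)}$ is correct, but evaluating it requires triple intersection numbers for configurations of three vertices $u,v,z$ in $\Gamma(x)$, together with how those triples distribute across the $y$-partition, and the data in \cite[Sections~6,~7]{williford} and Sections~6--8 of the paper (the matrices $C$ and $D^{(\ell)}$) only record pairwise counts. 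Your proposal names the obstruction accurately but provides neither the triple-count data nor a $Q$-polynomial/self-duality shortcut that would bypass it, so the conjecture remains open. Note also that your opening claim that the products $E O^\vee_i \star E O^\vee_j$ land in $S$ ``already subsumes the weaker (also open) statement $S\star S\subseteq S$'' is slightly misstated: what you would be showing is ${\rm Sym}(S)\star{\rm Sym}(S)\subseteq S$, which combined with Propositions~\ref{cor:xmy} and \ref{cor:SAS} would indeed give $S\star S\subseteq S$, but that chain of implications should be spelled out rather than asserted.
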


 \begin{proposition} \label{prop:hEig}
 For $1 \leq j \leq 6$,
 \begin{align*}
 E{\hat x} \star h_j = \vert X \vert^{-1} \vartheta_j h_j.       
 \end{align*}
 \end{proposition}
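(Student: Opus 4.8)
The plan is to reduce the statement to the eigenvector relation $CH = H\,{\rm diag}(\vartheta_1,\dots,\vartheta_6)$ from Lemma \ref{lem:Hcol}. First I would expand $h_j$ via Definition \ref{def:hvec} as $h_j = \sum_{i=1}^6 H_{i,j} E\widehat O_i$. Since the Norton product $\star$ is bilinear (it is obtained from the bilinear entry-wise product $\circ$ followed by the linear map $E$), we get
\begin{align*}
E{\hat x} \star h_j = \sum_{i=1}^6 H_{i,j}\,\bigl( E{\hat x} \star E\widehat O_i\bigr).
\end{align*}

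Next I would apply Proposition \ref{cor:Nadj}, which gives $E{\hat x} \star E\widehat O_i = \vert X\vert^{-1} \sum_{m=1}^6 C_{m,i} E\widehat O_m$. Substituting and interchanging the order of summation yields
\begin{align*}
E{\hat x} \star h_j = \vert X \vert^{-1} \sum_{m=1}^6 \Biggl( \sum_{i=1}^6 C_{m,i} H_{i,j} \Biggr) E\widehat O_m = \vert X \vert^{-1} \sum_{m=1}^6 (CH)_{m,j}\, E\widehat O_m.
\end{align*}
Now Lemma \ref{lem:Hcol} says $CH = H\,{\rm diag}(\vartheta_1,\dots,\vartheta_6)$, so $(CH)_{m,j} = \vartheta_j H_{m,j}$. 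Therefore
\begin{align*}
E{\hat x} \star h_j = \vert X \vert^{-1} \vartheta_j \sum_{m=1}^6 H_{m,j}\, E\widehat O_m = \vert X \vert^{-1} \vartheta_j h_j,
\end{align*}
where the last equality is again Definition \ref{def:hvec}.

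This argument is essentially a routine bookkeeping computation, so there is no real obstacle; the only point to be careful about is that $\star$ distributes over the linear combination defining $h_j$, which is immediate from its definition. I would also remark that the companion statement $E{\hat y} \star h'_j = \vert X\vert^{-1}\vartheta_j h'_j$ follows by interchanging the roles of $x$ and $y$, using Proposition \ref{prop:symS} together with the analogue of Proposition \ref{cor:Nadj} for the $x$-partition of $\Gamma(y)$.
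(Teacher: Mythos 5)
Your proof is correct and takes precisely the same route as the paper, which simply cites Lemma \ref{lem:Hcol}, Definition \ref{def:hvec}, and Proposition \ref{cor:Nadj}; you have spelled out the matrix bookkeeping that the paper leaves implicit.
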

 \begin{proof} By Lemma \ref{lem:Hcol},  Definition \ref{def:hvec}, and Proposition \ref{cor:Nadj}.
 \end{proof}

 \begin{lemma} \label{lem:xyh} We have
 \begin{align*}
 E{\hat x} \star E{\hat y} = \vert X \vert^{-1} \sum_{j=1}^5 \gamma_j \vartheta_j h_j.
 \end{align*}
 \end{lemma}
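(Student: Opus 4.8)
\textbf{Proof proposal for Lemma \ref{lem:xyh}.} The plan is to expand $E\hat x \star E\hat y$ by first replacing $E\hat y$ with its expansion in the basis $\lbrace h_j \rbrace_{j=1}^6$ for $S$, and then applying the eigenvector property of the $h_j$ under the operator $E\hat x \star (\,\cdot\,)$.

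First I would invoke Lemma \ref{lem:gamma}, which gives $E\hat y = \sum_{j=1}^6 \gamma_j h_j$ with the scalars $\gamma_j$ listed there; in particular $\gamma_6 = 0$. Next, using that the Norton product $\star$ is bilinear, I would write
\begin{align*}
E\hat x \star E\hat y = \sum_{j=1}^6 \gamma_j \bigl( E\hat x \star h_j \bigr).
\end{align*}
Then I would apply Proposition \ref{prop:hEig}, which states $E\hat x \star h_j = \vert X\vert^{-1} \vartheta_j h_j$ for $1 \leq j \leq 6$, to obtain
\begin{align*}
E\hat x \star E\hat y = \vert X\vert^{-1} \sum_{j=1}^6 \gamma_j \vartheta_j h_j.
\end{align*}

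Finally, since $\gamma_6 = 0$ by Lemma \ref{lem:gamma}, the $j=6$ term drops out, leaving $E\hat x \star E\hat y = \vert X\vert^{-1} \sum_{j=1}^5 \gamma_j \vartheta_j h_j$, which is the claimed identity. There is no real obstacle here: the argument is a direct two-line combination of Lemma \ref{lem:gamma}, bilinearity of $\star$, and Proposition \ref{prop:hEig}; the only point requiring a glance at the earlier data is the vanishing $\gamma_6 = 0$, which is what truncates the sum at $j=5$. $\hfill\Box$
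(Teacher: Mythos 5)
Your proof is correct and takes exactly the same approach as the paper: the paper's proof cites Lemma \ref{lem:gamma}, Proposition \ref{prop:hEig}, and $\gamma_6=0$, which are precisely the three ingredients you combine via bilinearity of $\star$.
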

 \begin{proof} By Lemma \ref{lem:gamma} and Proposition \ref{prop:hEig} along with $\gamma_6=0$.
\end{proof} 
 
  \begin{lemma} \label{lem:xyhv} We have
 \begin{align*}
 E{\hat x} \star E{\hat y} = \vert X \vert^{-1} \sum_{j=2}^5 \gamma_j \vartheta_j h^\vee_j.
 \end{align*}
 \end{lemma}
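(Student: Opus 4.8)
The plan is to derive this identity directly from Lemma \ref{lem:xyh}, which already expresses $E{\hat x}\star E{\hat y}$ as $\vert X\vert^{-1}\sum_{j=1}^5 \gamma_j\vartheta_j h_j$, by passing from the basis $\lbrace h_j\rbrace$ to the basis $\lbrace h^\vee_j\rbrace_{j=2}^6$ of ${\rm Sym}(S)$. The conversion is governed by Definition \ref{def:hcheck}, which gives $h_j = h^\vee_j + \mu_j E{\hat x}$ for $1\leq j\leq 6$.

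First I would substitute $h_j = h^\vee_j + \mu_j E{\hat x}$ into the right-hand side of Lemma \ref{lem:xyh}. This yields
\begin{align*}
E{\hat x}\star E{\hat y} = \vert X\vert^{-1}\sum_{j=1}^5 \gamma_j\vartheta_j h^\vee_j + \vert X\vert^{-1}\Biggl(\sum_{j=1}^5 \gamma_j\vartheta_j\mu_j\Biggr) E{\hat x}.
\end{align*}
Next I would simplify the two pieces separately. For the first sum, Lemma \ref{lem:h1check}(i) gives $h^\vee_1=0$, so the $j=1$ term drops out and the sum may be taken over $2\leq j\leq 5$, matching the desired expression. For the coefficient of $E{\hat x}$, I would use $\gamma_6=0$ from Lemma \ref{lem:gamma} to rewrite $\sum_{j=1}^5 \gamma_j\vartheta_j\mu_j = \sum_{j=1}^6 \vartheta_j\gamma_j\mu_j$, and then invoke the second identity of Lemma \ref{lem:mugam}, namely $0=\sum_{j=1}^6 \vartheta_j\gamma_j\mu_j$, to conclude that the $E{\hat x}$ term vanishes. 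Combining these two observations gives $E{\hat x}\star E{\hat y}=\vert X\vert^{-1}\sum_{j=2}^5 \gamma_j\vartheta_j h^\vee_j$.

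This is essentially a bookkeeping argument, so I do not expect a genuine obstacle; the one substantive input is the vanishing identity $0=\sum_{j=1}^6 \vartheta_j\gamma_j\mu_j$ of Lemma \ref{lem:mugam}, which is precisely what forces the $E{\hat x}$ contribution to disappear. The only thing to be careful about is tracking the index ranges and keeping straight the two cancellations, one coming from $h^\vee_1=0$ and the other from $\gamma_6=0$ together with the $\mu_1=\theta_1\neq 0$ normalization implicit in $h^\vee_1=h_1-\mu_1 E{\hat x}=0$.
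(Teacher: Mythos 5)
Your argument matches the paper's proof exactly: it starts from Lemma \ref{lem:xyh}, substitutes $h_j = h^\vee_j + \mu_j E{\hat x}$ from Definition \ref{def:hcheck}, uses $h^\vee_1=0$ and $\gamma_6=0$ to adjust the index range, and kills the $E{\hat x}$ contribution via the second identity of Lemma \ref{lem:mugam}. The bookkeeping is carried out correctly.
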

 \begin{proof} By Lemmas  \ref{lem:mugam}, \ref{lem:xyh} and Definition \ref{def:hcheck} along with $h^\vee_1=0$ and $\gamma_6=0$.
\end{proof} 

 \begin{lemma} \label{lem:xyacth} For $1 \leq j \leq 6$ we have
 \begin{align*}
 E{\hat x} \star h^\vee_j &=\vert X \vert^{-1} \vartheta_j h^\vee_j + \vert X \vert^{-1} (\vartheta_j - \vartheta_1)\mu_j E{\hat x},\\
 E{\hat y} \star h^\vee_j &=\vert X \vert^{-1} \vartheta_j h^\vee_j + \vert X \vert^{-1} (\vartheta_j - \vartheta_1) \mu_j E{\hat y}.
 \end{align*}
 \end{lemma}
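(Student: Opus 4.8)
The plan is to reduce everything to the already-established action of $E\hat x$ on the basis $\lbrace h_j \rbrace_{j=1}^6$, namely Proposition \ref{prop:hEig}, together with the formula for $E\hat x \star E\hat x$ from Lemma \ref{lem:xsx}. First recall from Definition \ref{def:hcheck} that $h^\vee_j = h_j - \mu_j E\hat x$. Applying $E\hat x \star -$ and using bilinearity of the Norton product, $E\hat x \star h^\vee_j = E\hat x \star h_j - \mu_j (E\hat x \star E\hat x)$. By Proposition \ref{prop:hEig} the first term equals $\vert X \vert^{-1}\vartheta_j h_j$, and by Lemma \ref{lem:xsx} the second term equals $\mu_j \vert X\vert^{-1} q^1_{1,1} E\hat x$. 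Since $q^1_{1,1} = a_1 = \vartheta_1$ by \eqref{eq:q111} and Definition \ref{def:vth}, we obtain $E\hat x \star h^\vee_j = \vert X \vert^{-1}(\vartheta_j h_j - \vartheta_1 \mu_j E\hat x)$. To finish the first equation, substitute $h_j = h^\vee_j + \mu_j E\hat x$ (again Definition \ref{def:hcheck}): then $\vartheta_j h_j - \vartheta_1 \mu_j E\hat x = \vartheta_j h^\vee_j + (\vartheta_j - \vartheta_1)\mu_j E\hat x$, which is precisely the claimed expression after multiplying by $\vert X\vert^{-1}$.

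For the second equation I would run the identical argument with the roles of $x$ and $y$ interchanged. The key point is that Proposition \ref{cor:Nadj}, and hence Proposition \ref{prop:hEig}, have mirror versions with $x$ replaced by $y$ and $\widehat O_i$ replaced by $\widehat O'_i$: the $x$-partition $\lbrace O'_i\rbrace_{i=1}^6$ of $\Gamma(y)$ has the same quotient matrix $C$ by distance-transitivity (as already used in Proposition \ref{prop:symS} and Lemma \ref{lem:Gp}), so by Lemma \ref{lem:N1} we get $E\hat y \star E\widehat O'_j = \vert X\vert^{-1}\sum_{i=1}^6 C_{i,j} E\widehat O'_i$, and therefore $E\hat y \star h'_j = \vert X\vert^{-1}\vartheta_j h'_j$ via Lemma \ref{lem:Hcol} and Definition \ref{def:hvec}. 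Combining this with $h^\vee_j = h'_j - \mu_j E\hat y$ (Definition \ref{def:hcheck}), Lemma \ref{lem:xsx} applied to the vertex $y$, and $q^1_{1,1} = \vartheta_1$, the same manipulation yields $E\hat y \star h^\vee_j = \vert X\vert^{-1}\vartheta_j h^\vee_j + \vert X\vert^{-1}(\vartheta_j - \vartheta_1)\mu_j E\hat y$.

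I do not anticipate any real obstacle here: every ingredient is in place, and the only bookkeeping is the identity $q^1_{1,1} = \vartheta_1$ and the symmetric counterpart of Proposition \ref{prop:hEig}. One could alternatively observe that $h^\vee_1 = 0$ by Lemma \ref{lem:h1check}, so the $j=1$ cases of both equations are trivial and only $2 \leq j \leq 6$ needs attention; but the argument above handles all $j$ uniformly.
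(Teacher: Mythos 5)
Your proof is correct and takes essentially the same route as the paper, which cites exactly Definition \ref{def:hcheck}, Lemma \ref{lem:xsx}, Proposition \ref{prop:hEig}, and the identity $\vartheta_1=a_1=q^1_{1,1}$, then observes that the second assertion is similarly obtained by interchanging the roles of $x$ and $y$. Your explicit note that the mirror version of Proposition \ref{prop:hEig} (giving $E\hat y \star h'_j = \vert X\vert^{-1}\vartheta_j h'_j$) rests on the $x$-partition of $\Gamma(y)$ having the same quotient matrix $C$ is a helpful clarification of the ``similarly obtained'' step, but it is the same argument.
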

 \begin{proof} The first assertion follows from Definition \ref{def:hcheck}, Lemma  \ref{lem:xsx} along with Proposition \ref{prop:hEig} and  $\vartheta_1=a_1=q^1_{1,1}$.
 The second assertion is similarly obtained.
 \end{proof}
 
 \begin{proposition} \label{prop:xydiff}  For $1 \leq j \leq 6$,
 \begin{align*}
 h^\vee_j \star \bigl( E {\hat x} - E{\hat y} \bigr) = \vert X \vert^{-1}(\vartheta_j - \vartheta_1)\mu_j \bigl( E {\hat x} - E{\hat y} \bigr).
 \end{align*}
 \end{proposition}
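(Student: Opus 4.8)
The plan is to read this proposition off Lemma~\ref{lem:xyacth} by a one-line subtraction. Fix $j$ with $1 \leq j \leq 6$. Since the Norton product $\star$ is bilinear (it is $E$ applied to the entrywise product $\circ$, and both are linear) and commutative, we may write
\[
h^\vee_j \star \bigl( E{\hat x} - E{\hat y}\bigr) = E{\hat x}\star h^\vee_j - E{\hat y}\star h^\vee_j .
\]
Now substitute the two displayed formulas of Lemma~\ref{lem:xyacth}. The common summand $\vert X\vert^{-1}\vartheta_j h^\vee_j$ cancels, and what is left is
\[
\vert X\vert^{-1}(\vartheta_j - \vartheta_1)\mu_j E{\hat x} - \vert X\vert^{-1}(\vartheta_j - \vartheta_1)\mu_j E{\hat y}
= \vert X\vert^{-1}(\vartheta_j - \vartheta_1)\mu_j\bigl( E{\hat x} - E{\hat y}\bigr),
\]
which is exactly the asserted identity.

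There is no real obstacle here: all the work has already been done in Lemma~\ref{lem:xyacth}, whose proof in turn uses Definition~\ref{def:hcheck}, Lemma~\ref{lem:xsx}, Proposition~\ref{prop:hEig}, and the equality $\vartheta_1 = a_1 = q^1_{1,1}$ from \eqref{eq:q111}. The only things to be attentive to in writing the argument are the appeal to bilinearity and commutativity of $\star$ (both recorded just after the definition of the Norton algebra) and not mishandling signs in the scalar $(\vartheta_j-\vartheta_1)\mu_j$. It is worth noting two consistency checks that need no computation: for $j=1$ the factor $\vartheta_1-\vartheta_1$ vanishes, matching $h^\vee_1=0$ from Lemma~\ref{lem:h1check}; and for $j=6$ the factor $\mu_6=0$ from Lemma~\ref{lem:moremu} forces the right-hand side to vanish, so that $\omega = h^\vee_6$ commutes with $E{\hat x}-E{\hat y}$ in the degenerate sense $\omega\star(E{\hat x}-E{\hat y})=0$, which is the input one wants for the subsequent analysis of $\omega$ and $\omega^\perp$.
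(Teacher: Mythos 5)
Your proof is correct and matches the paper's argument exactly: both subtract the two identities in Lemma~\ref{lem:xyacth}, invoking commutativity (and, implicitly, bilinearity) of $\star$ so that $h^\vee_j \star (E\hat{x}-E\hat{y}) = E\hat{x}\star h^\vee_j - E\hat{y}\star h^\vee_j$. The consistency checks at $j=1$ and $j=6$ are a nice addition but do not change the substance.
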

 \begin{proof} By Lemma \ref{lem:xyacth} and since $\star$ is commutative.
 \end{proof}
 
  \begin{proposition} \label{lem:xyactSym} For $1 \leq j \leq 6$,
 \begin{align*}
\bigl( E{\hat x} +E{\hat y} \bigr) \star h^\vee_j =2\vert X \vert^{-1} \vartheta_j h^\vee_j + \vert X \vert^{-1} (\vartheta_j - \vartheta_1)\mu_j \bigl( E{\hat x}+ E{\hat y}\bigr).
 \end{align*}
 \end{proposition}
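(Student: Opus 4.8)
The plan is to obtain this identity as an immediate consequence of Lemma \ref{lem:xyacth} together with the bilinearity of the Norton algebra product $\star$. First I would use that $\star$ is linear in its first argument to write
\begin{align*}
\bigl( E{\hat x} + E{\hat y} \bigr) \star h^\vee_j = E{\hat x} \star h^\vee_j + E{\hat y} \star h^\vee_j
\end{align*}
for each $j$ with $1 \leq j \leq 6$.

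Next I would substitute the two formulas from Lemma \ref{lem:xyacth}, namely $E{\hat x} \star h^\vee_j = \vert X \vert^{-1} \vartheta_j h^\vee_j + \vert X \vert^{-1} (\vartheta_j - \vartheta_1)\mu_j E{\hat x}$ and $E{\hat y} \star h^\vee_j = \vert X \vert^{-1} \vartheta_j h^\vee_j + \vert X \vert^{-1} (\vartheta_j - \vartheta_1)\mu_j E{\hat y}$, and then collect terms. The two copies of $\vert X \vert^{-1} \vartheta_j h^\vee_j$ combine to $2\vert X \vert^{-1} \vartheta_j h^\vee_j$, while the terms $\vert X \vert^{-1} (\vartheta_j - \vartheta_1)\mu_j E{\hat x}$ and $\vert X \vert^{-1} (\vartheta_j - \vartheta_1)\mu_j E{\hat y}$ combine to $\vert X \vert^{-1} (\vartheta_j - \vartheta_1)\mu_j \bigl( E{\hat x} + E{\hat y}\bigr)$. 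This yields exactly the asserted formula, so no further work is needed.

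There is no real obstacle here: the statement is a one-line addition of the two displayed equations in Lemma \ref{lem:xyacth}, and the only ingredient beyond that lemma is the (already established) distributivity of $\star$ over vector addition.
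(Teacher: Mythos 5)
Your proposal is correct and takes exactly the paper's approach: the paper's proof of Proposition \ref{lem:xyactSym} is the one-line remark ``By Lemma \ref{lem:xyacth},'' which is precisely the addition of the two equations of that lemma, using bilinearity of $\star$, that you spell out.
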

 \begin{proof} By Lemma \ref{lem:xyacth}.
 \end{proof}
 

 \section{More about the Norton algebra}
  We continue to discuss the bilinear forms graph $\Gamma=(X, \mathcal R)$ and the fixed vertices $x,y \in X$ at distance $\partial(x,y)=k$.
  Recall the  subspace
  $S =S(x,y)=S(y,x)$ of $EV$ from Definition \ref{def:N} and Proposition  \ref{prop:symS}. 
  In this section, we investigate the vector
  $h^\vee_6 = h_6 = h'_6 \in S$ from Lemma  \ref{lem:h1check}(ii). For notational convenience, we denote this vector by $\omega$.
  We decompose $S$ into an orthogonal direct sum $S={\rm Span}\lbrace \omega\rbrace+\omega^\perp$, where $\omega^\perp$ denotes the
  orthogonal complement of $\omega$ in $S$. We describe the above decomposition from a Norton algebra point of view.
  We also express the strengthened balanced set condition Theorem   \ref{thm:bbalanced} using a Norton algebra point of view.
  
     \begin{definition} \label{def:omega} \rm
   Let $\omega = \omega(x,y)$ denote the vector $h^\vee_6 = h_6 = h'_6$ from Lemma  \ref{lem:h1check}(ii).
  Define $\omega_i = H_{i,6}$ for $1 \leq i \leq 6$, where $H$ is from Definition \ref{def:Hmatrix}.
  By construction,
   \begin{align}
   \omega = \sum_{i=1}^6 \omega_i E \widehat O_i     \label{eq:omega}
   \end{align}
  and
    \begin{align*}
 \omega_1 &=   \frac{ (q^D-q^k)(q^{N-D}-q^k)(q^{k-1}-1)}{q^k(q-1)}, \qquad \quad 
     \omega_2 = \frac{ (q^D-q^k)(q^{N-D}-q^k)}{q(q-1)}, \\
       \omega_3 &= \frac{ (q^D-q^k)(q^{N-D}-q^k)(q^{k-1}-1)}{q^k(q-1)}, \qquad \quad 
       \omega_4 =- \frac{ (q^D-q^k)(q^{k-1}-1)}{q-1}, \\
      \omega_5&= - \frac{ (q^{N-D}-q^k)(q^{k-1}-1)}{q-1}, \qquad \qquad \qquad
      \omega_6 =\frac{ (q^k-1)(q^{k-1}-1)}{q-1}.
  \end{align*}
\noindent Note that $0 \not=\omega \in S$.
   \end{definition}


\noindent Recall the vectors $\lbrace O^\vee_i \rbrace_{i=1}^6$ from Definition \ref{def:Ocheck}.
\begin{lemma} \label{lem:omegaSym} We have
\begin{align*}
\omega = \sum_{i=1}^6 \omega_i E O^\vee_i.
\end{align*}   
\end{lemma}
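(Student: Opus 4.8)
The plan is to derive the identity $\omega = \sum_{i=1}^6 \omega_i E O^\vee_i$ from the already-established expansion $\omega = \sum_{i=1}^6 \omega_i E \widehat O_i$ in \eqref{eq:omega}, using the definition $E O^\vee_i = E \widehat O_i - \lambda_i E\hat x$ from Definition \ref{def:Ocheck} and Lemma \ref{lem:EOC}. Substituting $E \widehat O_i = E O^\vee_i + \lambda_i E\hat x$ into \eqref{eq:omega} gives
\begin{align*}
\omega = \sum_{i=1}^6 \omega_i E O^\vee_i + \Biggl( \sum_{i=1}^6 \omega_i \lambda_i \Biggr) E\hat x,
\end{align*}
so the whole statement reduces to showing that the scalar $\sum_{i=1}^6 \omega_i \lambda_i$ vanishes.

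To show $\sum_{i=1}^6 \omega_i \lambda_i = 0$, the cleanest route is to recognize this sum as (a scalar multiple of) an entry of a product of the matrices already diagonalized in Section 9. Recall that $\omega_i = H_{i,6}$ by Definition \ref{def:omega}, and that $\mu_j = \sum_{i=1}^6 \lambda_i H_{i,j}$ by \eqref{eq:mu}; hence $\sum_{i=1}^6 \omega_i \lambda_i = \sum_{i=1}^6 \lambda_i H_{i,6} = \mu_6$. But Lemma \ref{lem:moremu} states $\mu_6 = 0$. This immediately kills the unwanted term and yields the claim. Alternatively — if one prefers a self-contained verification — one can simply plug the explicit values of $\omega_i$ from Definition \ref{def:omega} and of $\lambda_i$ from Lemma \ref{lem:lambdaValues} into $\sum_{i=1}^6 \omega_i \lambda_i$ and check the rational-function identity directly; both approaches are routine.

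I would write the proof in the first form, since it is short and leverages machinery already in place. The main (and essentially only) step is the observation $\sum_{i=1}^6 \omega_i \lambda_i = \mu_6 = 0$; there is no real obstacle here beyond correctly tracking the index identifications $\omega_i = H_{i,6}$ and $\mu_j = \sum_i \lambda_i H_{i,j}$. So the proof reads: start from \eqref{eq:omega}, substitute Lemma \ref{lem:EOC} (equivalently Definition \ref{def:Ocheck}), collect the coefficient of $E\hat x$, identify it as $\mu_6$ via \eqref{eq:mu} and $\omega_i = H_{i,6}$, and invoke $\mu_6 = 0$ from Lemma \ref{lem:moremu}.

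\begin{proof}
By \eqref{eq:omega} and Lemma \ref{lem:EOC},
\begin{align*}
\omega = \sum_{i=1}^6 \omega_i E \widehat O_i = \sum_{i=1}^6 \omega_i \bigl( E O^\vee_i + \lambda_i E\hat x\bigr) = \sum_{i=1}^6 \omega_i E O^\vee_i + \Biggl( \sum_{i=1}^6 \omega_i \lambda_i \Biggr) E\hat x.
\end{align*}
By Definition \ref{def:omega} we have $\omega_i = H_{i,6}$ for $1 \leq i \leq 6$, so by \eqref{eq:mu},
\begin{align*}
\sum_{i=1}^6 \omega_i \lambda_i = \sum_{i=1}^6 \lambda_i H_{i,6} = \mu_6 = 0,
\end{align*}
where the last equality is from Lemma \ref{lem:moremu}. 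The result follows.
\end{proof}
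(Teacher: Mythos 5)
Your proof is correct. The paper's proof is a one-liner: apply Lemma \ref{lem:hO} with $j=6$, which gives $h^\vee_6 = \sum_{i=1}^6 H_{i,6}\,EO^\vee_i$ and hence the claim since $\omega = h^\vee_6$ and $\omega_i = H_{i,6}$; note that this route does not even need $\mu_6=0$, because Lemma \ref{lem:hO} holds for all $j$. You instead start from $\omega = h_6 = \sum_i \omega_i E\widehat O_i$, substitute $E\widehat O_i = EO^\vee_i + \lambda_i E\hat x$, and must then separately kill the $E\hat x$ term by observing $\sum_i \omega_i \lambda_i = \mu_6 = 0$ — which is exactly the same cancellation that the proof of Lemma \ref{lem:hO} performs for general $j$ (there $\sum_i H_{i,j}\lambda_i - \mu_j = 0$ by definition of $\mu_j$), specialized to $j=6$ together with the vanishing $\mu_6=0$. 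So your argument is a correct unfolding of the same computation; citing Lemma \ref{lem:hO} directly would have been marginally shorter and would have avoided the appeal to $\mu_6 = 0$.
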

\begin{proof}  Apply Lemma \ref{lem:hO} with $j=6$.
\end{proof}   

\begin{lemma} \label{lem:omegaSym2} We have $\omega \in {\rm Sym}(S)$. In other words,
\begin{align*} 
\omega(x,y) = \omega(y,x).
\end{align*}
\end{lemma}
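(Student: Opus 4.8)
The plan is to read off $\omega \in {\rm Sym}(S)$ directly from the expansion of $\omega$ already established in the $O^\vee_i$, and then to obtain the reformulation $\omega(x,y)=\omega(y,x)$ by tracking how the ingredients of $\omega$ transform when $x$ and $y$ are interchanged.

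First I would invoke Lemma \ref{lem:omegaSym}, which gives $\omega = \sum_{i=1}^6 \omega_i E O^\vee_i$. By Definition \ref{def:symS} we have ${\rm Sym}(S) = {\rm Span}\lbrace E O^\vee_i \mid 1 \leq i \leq 6\rbrace$, so $\omega \in {\rm Sym}(S)$ with no further work. (Equivalently one could note $\omega = h^\vee_6$ by Lemma \ref{lem:h1check}(ii) and apply Proposition \ref{lem:hSbasis}.)

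For the reformulation, I would observe that every piece of data defining $E O^\vee_i$ depends on $x,y$ only through $k=\partial(x,y)$, which is symmetric in $x,y$: thus the matrix $H$ of Definition \ref{def:Hmatrix}, the scalars $\lambda_i$ of Theorem \ref{thm:bbalanced}, and hence $\omega_i = H_{i,6}$ are the same whether we start from the ordered pair $(x,y)$ or $(y,x)$. Moreover, by the conventions of Section 8 the $y$-partition of $\Gamma(x)$ attached to the pair $(y,x)$ is precisely the family $\lbrace O'_i\rbrace_{i=1}^6$. Using the second equality of Lemma \ref{lem:EOC}, namely $E O^\vee_i = E\widehat O'_i - \lambda_i E\hat y$, it follows that the vector ``$E O^\vee_i$'' built from $(y,x)$ coincides with the one built from $(x,y)$, and therefore $\omega(y,x) = \sum_{i=1}^6 \omega_i E O^\vee_i = \omega(x,y)$. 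Alternatively, $\omega(y,x) = \sum_{i=1}^6 \omega_i E\widehat O'_i = h'_6$ straight from Definitions \ref{def:hvec} and \ref{def:omega}, and $h'_6 = h_6 = \omega(x,y)$ by Lemma \ref{lem:h1check}(ii).

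I do not anticipate a real obstacle: once Lemma \ref{lem:omegaSym} and Lemma \ref{lem:h1check}(ii) are available, the argument is bookkeeping. The only point needing a little care is to be explicit that $H$, the $\lambda_i$, and the $\omega_i$ are functions of $k$ alone, so that passing from $(x,y)$ to $(y,x)$ genuinely fixes these coefficients and merely exchanges the partition labels $O_i \leftrightarrow O'_i$ and the vectors $\hat x \leftrightarrow \hat y$.
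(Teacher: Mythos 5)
Your proposal is correct and matches the paper's argument. The paper's proof offers exactly the two routes you identify: first, the equality $\omega = h_6 = h'_6$ from Lemma \ref{lem:h1check}(ii) immediately gives $\omega(x,y) = \omega(y,x)$; second, Lemma \ref{lem:omegaSym} together with Definition \ref{def:symS} gives $\omega \in {\rm Sym}(S)$, with Lemma \ref{lem:EOC} supplying the $x\leftrightarrow y$ invariance of the $EO^\vee_i$. Your added remark that $H$, the $\lambda_i$, and the $\omega_i$ depend on $x,y$ only through $k=\partial(x,y)$ is a useful explicit observation that the paper leaves implicit.
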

\begin{proof} The result follows from $\omega = h_6 = h'_6$. The result also follows from Lemmas \ref{lem:EOC},  \ref{lem:omegaSym} and Definition \ref{def:symS}.
\end{proof}    
  
  \begin{proposition} \label{prop:omega} We have
  \begin{align*}
  E\hat x \star \omega = -q \vert X \vert^{-1} \omega, \qquad \qquad
  E \hat y \star \omega = - q \vert X \vert^{-1} \omega.
  \end{align*}
 \end{proposition}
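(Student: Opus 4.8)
The plan is to read both identities off the eigenvector relations for the vectors $h^\vee_j$ that were established in Section 11. Recall from Definition \ref{def:omega}, via Lemma \ref{lem:h1check}(ii), that $\omega = h^\vee_6 = h_6 = h'_6$, recall from Definition \ref{def:vth} that $\vartheta_6 = -q$, and recall from Lemma \ref{lem:moremu} that $\mu_6 = 0$.

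The quickest route is to apply Lemma \ref{lem:xyacth} with $j = 6$. Its first assertion reads
\[
E{\hat x}\star h^\vee_6 = \vert X\vert^{-1}\vartheta_6\, h^\vee_6 + \vert X\vert^{-1}(\vartheta_6 - \vartheta_1)\mu_6\, E{\hat x}.
\]
Since $\mu_6 = 0$, the last term drops out, and substituting $h^\vee_6 = \omega$ and $\vartheta_6 = -q$ yields $E{\hat x}\star\omega = -q\vert X\vert^{-1}\omega$. The second assertion of Lemma \ref{lem:xyacth} with $j = 6$ gives $E{\hat y}\star\omega = -q\vert X\vert^{-1}\omega$ in exactly the same way.

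If one prefers not to route through the $h^\vee_j$, one can instead use $\omega = h_6$ together with Proposition \ref{prop:hEig} (which gives $E{\hat x}\star h_6 = \vert X\vert^{-1}\vartheta_6 h_6$), and $\omega = h'_6$ together with the $x\leftrightarrow y$ analogue of Proposition \ref{prop:hEig}, legitimate because the construction of $S$, the $h_j$, and the $h'_j$ is symmetric under interchanging $x$ and $y$ by Proposition \ref{prop:symS}. Yet another option is to expand $\omega = \sum_{i=1}^6 \omega_i E O^\vee_i$ (Lemma \ref{lem:omegaSym}) and apply Lemma \ref{lem:xOV} termwise, verifying that the resulting coefficient of $E{\hat x}$ is $\sum_{i=1}^6 \bigl(\sum_{\ell=1}^6 \lambda_\ell C_{\ell,i} - \lambda_i q^1_{1,1}\bigr)\omega_i = 0$; this is the more computational path and is the only place any real work could hide. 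In every case there is no genuine obstacle: the essential point is that $\mu_6 = 0$ (equivalently $\gamma_6 = 0$), which is precisely what makes the inhomogeneous $E{\hat x}$ and $E{\hat y}$ terms vanish and promotes $\omega$ to an honest eigenvector of both $E{\hat x}\star$ and $E{\hat y}\star$ with eigenvalue $-q\vert X\vert^{-1}$.
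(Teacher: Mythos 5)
Your proposal is correct and matches the paper's reasoning: the paper's proof simply cites Proposition \ref{prop:hEig} with $j=6$, Lemma \ref{lem:omegaSym2}, and $\vartheta_6=-q$, which is exactly your second route. Your primary route via Lemma \ref{lem:xyacth} and $\mu_6=0$ is an equivalent repackaging, since that lemma is itself derived from Proposition \ref{prop:hEig} together with Definition \ref{def:hcheck}.
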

 \begin{proof} By Proposition  \ref{prop:hEig} (with $j=6$) along with Lemma \ref{lem:omegaSym2} and $\vartheta_6=-q$.
  \end{proof}

 \noindent Recall that $\omega \in S$.
\begin{definition} \label{def:omegap} \rm Define the subspace
\begin{align*}
\omega^\perp = \lbrace v \in S \vert \langle \omega, v \rangle =0 \rbrace.
\end{align*}
\end{definition}

\noindent  By  linear algebra,
\begin{align}
&S = {\rm Span}\lbrace \omega\rbrace + \omega^\perp \qquad \qquad \hbox{\rm (orthogonal direct sum).} \label{eq:Somomp}
\end{align}
\noindent Note that ${\rm dim}\,\omega^\perp = 5$. 

\begin{lemma}
\label{lem:wpbasis}
Each of the following is an orthogonal basis for $\omega^\perp$:
\begin{align*}
\lbrace h_j \rbrace_{j=1}^5, \qquad \qquad \lbrace h'_j \rbrace_{j=1}^5.
\end{align*}
Moreover,
\begin{align}
E{\hat x}  \in \omega^\perp, \qquad \qquad E{\hat y} \in \omega^\perp. \label{eq:xywp}
\end{align}
\end{lemma}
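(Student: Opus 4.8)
The plan is to establish the three assertions of Lemma~\ref{lem:wpbasis} in the order stated, leveraging the orthogonality already proved for the $h$-bases. First I would recall from Proposition~\ref{prop:horthog} that $\lbrace h_j \rbrace_{j=1}^6$ is an orthogonal basis for $S$, and from Definition~\ref{def:omega} that $\omega = h_6$. Hence for $1 \leq j \leq 5$ we have $\langle \omega, h_j \rangle = \langle h_6, h_j \rangle = 0$, so each such $h_j$ lies in $\omega^\perp$. The vectors $\lbrace h_j \rbrace_{j=1}^5$ are linearly independent (being part of a basis for $S$) and they are mutually orthogonal by Proposition~\ref{prop:horthog}; since ${\rm dim}\,\omega^\perp = 5$ by \eqref{eq:Somomp}, they form an orthogonal basis for $\omega^\perp$. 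The identical argument with $\lbrace h'_j \rbrace_{j=1}^6$ in place of $\lbrace h_j \rbrace_{j=1}^6$, using $\omega = h'_6$ and the orthogonality of the primed basis from Proposition~\ref{prop:horthog}, shows $\lbrace h'_j \rbrace_{j=1}^5$ is also an orthogonal basis for $\omega^\perp$.

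For the membership statements \eqref{eq:xywp}, the cleanest route is via the expansions in Lemma~\ref{lem:gamma}, which give $E{\hat y} = \sum_{j=1}^6 \gamma_j h_j$ and $E{\hat x} = \sum_{j=1}^6 \gamma_j h'_j$, together with the fact that $\gamma_6 = 0$. Thus $E{\hat y} = \sum_{j=1}^5 \gamma_j h_j$ is a linear combination of the first five $h_j$, hence lies in ${\rm Span}\lbrace h_j \rbrace_{j=1}^5 = \omega^\perp$; similarly $E{\hat x} = \sum_{j=1}^5 \gamma_j h'_j \in {\rm Span}\lbrace h'_j \rbrace_{j=1}^5 = \omega^\perp$. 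Alternatively one could compute $\langle \omega, E{\hat x} \rangle$ and $\langle \omega, E{\hat y} \rangle$ directly: using $\omega = \sum_i \omega_i E\widehat O_i$ from \eqref{eq:omega} and Lemma~\ref{lem:xyip}, these inner products become explicit sums in the $\theta^*_i$, $\omega_i$, and $\vert O_i\vert$; but the $\gamma$-expansion argument avoids that bookkeeping entirely.

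The only real point requiring care is confirming that $\gamma_6 = 0$ genuinely forces $E{\hat x}, E{\hat y}$ into the span of the first five primed/unprimed vectors — this is immediate once the expansions of Lemma~\ref{lem:gamma} are in hand, so there is essentially no obstacle here. If one prefers the direct inner-product computation, the main (and still routine) obstacle would be verifying $\sum_{i=1}^6 \omega_i \theta^*_{k+\varepsilon'(i)} = 0$ for the appropriate distances $k+\varepsilon'(i)$ of $x$ (resp.\ $y$) to the sets $O_i$ (resp.\ $O'_i$), which follows from \eqref{eq:ths} and Lemma~\ref{lem:xyip} by substitution; but I would present the proof via Lemma~\ref{lem:gamma} to keep it short.
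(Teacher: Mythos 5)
Your proof is correct, and the main argument — that $\omega = h_6 = h'_6$, so $\{h_j\}_{j=1}^5$ and $\{h'_j\}_{j=1}^5$ are mutually orthogonal, lie in $\omega^\perp$, and span it by the dimension count from \eqref{eq:Somomp} — matches the paper's route via Lemma~\ref{lem:hbasis}, Proposition~\ref{prop:horthog}, and Definition~\ref{def:omega}. The only divergence is in establishing \eqref{eq:xywp}: you invoke the full $\gamma$-expansion of Lemma~\ref{lem:gamma} and the fact that $\gamma_6=0$, whereas the paper cites Lemma~\ref{lem:h1}, which gives $h_1=\theta_1 E\hat{x}$ and $h'_1=\theta_1 E\hat{y}$ directly, so $E\hat{x}\in{\rm Span}\{h_1\}\subseteq\omega^\perp$ and $E\hat{y}\in{\rm Span}\{h'_1\}\subseteq\omega^\perp$ with no expansion needed; both arguments are valid, but the paper's one-liner is the more economical.
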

 \begin{proof} By Lemmas \ref{lem:hbasis}, \ref{lem:h1} along with Proposition \ref{prop:horthog} and Definition \ref{def:omega}.
  \end{proof}
 
\begin{lemma} \label{lem:actionxy} We have
\begin{align*}
E{\hat x} \star \omega^\perp \subseteq \omega^\perp, \qquad \qquad E{\hat y} \star \omega^\perp \subseteq \omega^\perp.
\end{align*}
\end{lemma}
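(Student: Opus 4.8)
The plan is to use that $\omega$ is a common eigenvector of the two operators $v \mapsto E\hat x \star v$ and $v\mapsto E\hat y \star v$ (Proposition \ref{prop:omega}), that these operators are self-adjoint with respect to $\langle\,,\,\rangle$, and that they preserve $S$ (Lemma \ref{lem:Norton}); then $\omega^\perp$, being the orthogonal complement inside $S$ of an eigenvector, is itself invariant under each operator.

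Concretely, I would first record the self-adjointness. By Lemma \ref{lem:uvw}, $\langle u\star v, w\rangle = \langle w\star u, v\rangle$ for all $u,v,w\in EV$; taking $u = E\hat x$ and using commutativity of $\star$, this gives $\langle E\hat x \star v, w\rangle = \langle v, E\hat x \star w\rangle$ for all $v,w\in EV$, and likewise for $E\hat y$. By Lemma \ref{lem:Norton}, $E\hat x \star S \subseteq S$ and $E\hat y \star S \subseteq S$. Now let $v \in \omega^\perp$, i.e.\ $v\in S$ with $\langle \omega, v\rangle = 0$. Using self-adjointness together with Proposition \ref{prop:omega},
\begin{align*}
\langle \omega, E\hat x \star v\rangle = \langle E\hat x \star \omega, v\rangle = -q\vert X\vert^{-1}\langle \omega, v\rangle = 0 ,
\end{align*}
while $E\hat x \star v \in S$; hence $E\hat x \star v \in \omega^\perp$. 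The identical computation with $y$ in place of $x$ gives $E\hat y \star v \in \omega^\perp$. This establishes both inclusions.

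There is essentially no obstacle on this route: the statement falls out immediately from Lemma \ref{lem:uvw}, Lemma \ref{lem:Norton}, and Proposition \ref{prop:omega}. If one instead prefers to argue through the orthogonal eigenbases of Lemma \ref{lem:wpbasis}, one uses Proposition \ref{prop:hEig} to see that ${\rm Span}\lbrace h_j\rbrace_{j=1}^5 = \omega^\perp$ consists of eigenvectors of $v\mapsto E\hat x\star v$, and the $y$-analogue $E\hat y \star h'_j = \vert X\vert^{-1}\vartheta_j h'_j$ to handle $E\hat y$ via ${\rm Span}\lbrace h'_j\rbrace_{j=1}^5 = \omega^\perp$. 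In this second approach the only mildly nontrivial point is the primed version of Proposition \ref{prop:hEig}; it holds because, by distance-transitivity, the $x$-partition of $\Gamma(y)$ has the same structure matrix $C$ as in Proposition \ref{lem:Centry}, so Proposition \ref{cor:Nadj} and Lemma \ref{lem:Hcol} apply verbatim with primes, and that too is routine.
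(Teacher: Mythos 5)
Your main argument is essentially the paper's proof: both verify that $E\hat x \star u\in S$ via Lemma \ref{lem:Norton}, then compute $\langle E\hat x\star u,\omega\rangle=\langle u,E\hat x\star\omega\rangle=-q|X|^{-1}\langle u,\omega\rangle=0$ using Lemma \ref{lem:uvw} and Proposition \ref{prop:omega}. The only cosmetic difference is that for the $y$-inclusion the paper invokes the symmetry $\omega(x,y)=\omega(y,x)$ from Lemma \ref{lem:omegaSym2}, whereas you repeat the computation directly with $y$ (equally valid, since Proposition \ref{prop:omega} already records both eigenvalue equations).
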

\begin{proof} We first verify the inclusion on the left.  For $u \in \omega^\perp$ we show that $E{\hat x} \star u \in \omega^\perp$.
By construction $u \in \omega^\perp \subseteq S$, so $E{\hat x} \star u \in S$ by Lemma  \ref{lem:Norton}.
By Lemma \ref{lem:uvw} and Proposition  \ref{prop:omega},
\begin{align*}
\langle E{\hat x} \star u, \omega\rangle = \langle u, E{\hat x} \star \omega \rangle = -q \vert X \vert^{-1} \langle u, \omega\rangle = 0.
\end{align*}
By these comments $E{\hat x} \star u \in \omega^\perp$. We have verified the inclusion on the left. The inclusion on the right follows in view of  Lemma \ref{lem:omegaSym2}.
\end{proof}

\begin{proposition} \label{wpGen}
The subspace $\omega^\perp$ has a basis
\begin{align*}
&E{\hat y}, \qquad \quad E{\hat x} \star E{\hat y}, \qquad \quad E{\hat x} \star ( E{\hat x} \star E{\hat y}), \qquad \quad  E{\hat x} \star (E{\hat x} \star ( E{\hat x} \star E{\hat y})), \\
& E{\hat x} \star ( E{\hat x} \star (E{\hat x} \star ( E{\hat x} \star E{\hat y})))
\end{align*}
and another basis 
\begin{align*}
&E{\hat x}, \qquad \quad E{\hat y} \star E{\hat x}, \qquad \quad E{\hat y} \star ( E{\hat y} \star E{\hat x}), \qquad \quad  E{\hat y} \star (E{\hat y} \star ( E{\hat y} \star E{\hat x})), \\
& E{\hat y} \star ( E{\hat y} \star (E{\hat y} \star ( E{\hat y} \star E{\hat x}))).
\end{align*}
\end{proposition}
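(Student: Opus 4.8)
The plan is first to check that the five vectors in each display lie in $\omega^\perp$, and then to prove they are linearly independent; since ${\rm dim}\,\omega^\perp = 5$, this will give the basis property. For the first display, Lemma~\ref{lem:wpbasis} gives $E{\hat y} \in \omega^\perp$, and Lemma~\ref{lem:actionxy} shows that the map $L\colon \omega^\perp \to \omega^\perp$, $v \mapsto E{\hat x} \star v$, is well defined. By induction on $i$ each vector $L^i E{\hat y}$ lies in $\omega^\perp$, and the five displayed vectors are exactly $L^0 E{\hat y}, L^1 E{\hat y}, \dots, L^4 E{\hat y}$.

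To prove linear independence I would diagonalize $L$ on $\omega^\perp$. By Lemma~\ref{lem:wpbasis} the vectors $\lbrace h_j \rbrace_{j=1}^5$ form a basis of $\omega^\perp$, and by Proposition~\ref{prop:hEig} we have $L h_j = \vert X \vert^{-1} \vartheta_j h_j$ for $1 \le j \le 5$. Using Definition~\ref{def:vth} together with the standing hypotheses $N-D > D \ge 3$ and $q \ge 3$ (recall $q \not= 2$), one checks that $\vartheta_1, \dots, \vartheta_5$ are pairwise distinct: the positive values $\vartheta_1, \vartheta_2, \vartheta_3$ are separated by $q^D - 1$, $q^{N-D} - 1$, and $q^{N-D} - q^D$, each of them differs from $\vartheta_4 = -1$ and from $\vartheta_5 = -q$, and $\vartheta_4 - \vartheta_5 = q - 1 \not= 0$. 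By Lemma~\ref{lem:gamma} we have $E{\hat y} = \sum_{j=1}^5 \gamma_j h_j$, and the explicit formulas there show that $\gamma_j \not= 0$ for $1 \le j \le 5$ (only $\gamma_1$ requires a short computation, reducing to $q^{N-k} - q^{N-D} - q^D + 1 \not= 0$, which holds because $k \le D-1 < N-D$). It follows that $L^i E{\hat y} = \sum_{j=1}^5 \gamma_j (\vert X \vert^{-1} \vartheta_j)^i h_j$, so the matrix expressing $L^0 E{\hat y}, \dots, L^4 E{\hat y}$ in terms of $h_1, \dots, h_5$ factors as a Vandermonde matrix in $\vert X \vert^{-1} \vartheta_1, \dots, \vert X \vert^{-1} \vartheta_5$ times ${\rm diag}(\gamma_1, \dots, \gamma_5)$. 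This matrix is invertible because the $\vartheta_j$ are distinct and the $\gamma_j$ are nonzero, so $L^0 E{\hat y}, \dots, L^4 E{\hat y}$ form a basis of $\omega^\perp$, which is the first display.

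The second display is obtained by interchanging $x$ and $y$ throughout, which is legitimate since $S(x,y) = S(y,x)$ by Proposition~\ref{prop:symS}: thus $E{\hat x} \in \omega^\perp$ by Lemma~\ref{lem:wpbasis}, the map $v \mapsto E{\hat y} \star v$ preserves $\omega^\perp$ by Lemma~\ref{lem:actionxy}, the basis $\lbrace h'_j \rbrace_{j=1}^5$ is an eigenbasis for this map with the same eigenvalues $\vert X \vert^{-1} \vartheta_j$ (Proposition~\ref{prop:hEig}), and $E{\hat x} = \sum_{j=1}^5 \gamma_j h'_j$ by Lemma~\ref{lem:gamma}; the argument of the previous paragraph then applies verbatim. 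The conceptual content is just that iterating $\star E{\hat x}$ on $E{\hat y}$ produces a Krylov sequence for the diagonalizable operator $L$, and the independence of its first five terms says that $E{\hat y}$ is a cyclic vector, which holds because the eigenvalues of $L$ are distinct and $E{\hat y}$ has nonzero component along each eigenvector. The only real labor is the bookkeeping verifying distinctness of the $\vartheta_j$ and nonvanishing of the $\gamma_j$; I do not expect a genuine obstacle.
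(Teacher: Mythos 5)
Your proposal is correct and takes essentially the same approach as the paper: both identify the displayed vectors as the Krylov sequence $\mathcal E^\ell E\hat y$ for $\mathcal E\colon u\mapsto E\hat x\star u$ on $\omega^\perp$, expand $E\hat y$ in the eigenbasis $\lbrace h_j\rbrace_{j=1}^5$ via Lemma \ref{lem:gamma}, and conclude from distinctness of $\lbrace\vartheta_j\rbrace_{j=1}^5$ and nonvanishing of $\lbrace\gamma_j\rbrace_{j=1}^5$. You simply make explicit the Vandermonde calculation that the paper compresses into ``by these comments and linear algebra,'' and your symmetry step for the second basis is the same as the paper's appeal to Lemma \ref{lem:omegaSym2}.
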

\begin{proof}  We verify the first assertion. 
By Lemma \ref{lem:wpbasis} the vectors $\lbrace h_j \rbrace_{j=1}^5$ form a basis for $\omega^\perp$.
By Lemma \ref{lem:actionxy},
there exists a $\mathbb R$-linear map
\begin{align*}
\mathcal E: \quad \begin{split} \omega^\perp &\to \omega^\perp \\
                                                            u &\mapsto  E{\hat x} \star u
                                                           \end{split}
                                                           \end{align*}                                                      
 The proposed basis for $\omega^\perp$ consists of the vectors $\lbrace \mathcal E^\ell  E{\hat y} \rbrace_{\ell=0}^4$.                                                         
By Lemma \ref{lem:gamma} and $\gamma_6=0$,
\begin{align}
E{\hat y} = \sum_{j=1}^5 \gamma_j h_j.       \label{eq:ysumh}
\end{align}
Inspecting the data in Lemma \ref{lem:gamma}, we have $\gamma_j \not=0$ for $1 \leq j \leq 5$. 
By \eqref{eq:ysumh} and Proposition \ref{prop:hEig},
\begin{align*}
      \mathcal E^\ell  E{\hat y} = \vert X \vert^{-\ell} \sum_{j=1}^5 \vartheta_j^\ell  \gamma_j h_j \qquad \qquad (0 \leq \ell \leq 4).
\end{align*}
 By Definition  \ref{def:vth}, the scalars
$\lbrace \vartheta_j \rbrace_{j=1}^5$ are mutually distinct. By these comments and linear algebra, the
vectors $\lbrace \mathcal E^\ell  E{\hat y} \rbrace_{\ell=0}^4$ form a basis for $\omega^\perp$.
We have verified the first assertion. The second assertion follows in view of  Lemma \ref{lem:omegaSym2}.
\end{proof}

\begin{proposition} \label{eq:spanwperp}
The subspace $\omega^\perp$ is spanned by the vectors 
\begin{align*}
& E{\hat z}_1\star ( E{\hat z_2} \star (E{\hat z_3} \star \cdots \star ( E{\hat z_{n-1}} \star E{\hat z_n} ) \cdots )), \\
&  n\geq 1, \qquad \quad z_i \in \lbrace x,y \rbrace \quad (1 \leq i \leq n).
\end{align*}
\end{proposition}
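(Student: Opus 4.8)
The plan is to show that the span of the iterated star-products in question is a subspace that both contains $\omega^\perp$ and is contained in $\omega^\perp$; since it is built from $E\hat x$, $E\hat y$ by repeated application of $\star$, containment in $\omega^\perp$ will be the easy direction and the reverse inclusion will be the substantive one. Let $W$ denote the span of all the vectors $E\hat z_1 \star (E\hat z_2 \star \cdots \star (E\hat z_{n-1} \star E\hat z_n)\cdots)$ with $n\geq 1$ and each $z_i \in \{x,y\}$.

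First I would check $W \subseteq \omega^\perp$. The length-one vectors $E\hat x$, $E\hat y$ lie in $\omega^\perp$ by \eqref{eq:xywp} in Lemma \ref{lem:wpbasis}. For a general spanning vector, peel off the outermost factor: it has the form $E\hat z_1 \star u$ where $u$ is an iterated star-product of strictly smaller length, so by induction $u \in \omega^\perp$, and then $E\hat z_1 \star u \in \omega^\perp$ by Lemma \ref{lem:actionxy} (the inclusions $E\hat x \star \omega^\perp \subseteq \omega^\perp$ and $E\hat y \star \omega^\perp \subseteq \omega^\perp$). Hence $W \subseteq \omega^\perp$.

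Next I would establish the reverse inclusion $\omega^\perp \subseteq W$. This is immediate from Proposition \ref{wpGen}: that proposition exhibits the five vectors $E\hat y$, $E\hat x \star E\hat y$, $E\hat x \star (E\hat x \star E\hat y)$, $E\hat x \star (E\hat x \star (E\hat x \star E\hat y))$, $E\hat x \star (E\hat x \star (E\hat x \star (E\hat x \star E\hat y)))$ as a basis for $\omega^\perp$, and every one of these five vectors is of the required form (take $z_n = y$ and all other $z_i = x$), hence lies in $W$. Therefore $\omega^\perp = \mathrm{Span}$ of this basis $\subseteq W$. Combining the two inclusions gives $\omega^\perp = W$, which is the assertion. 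The main obstacle is really just the bookkeeping in the first inclusion—making the induction on the nesting depth of the star-product precise—but this is routine given Lemma \ref{lem:actionxy}; no genuine computation is needed beyond what Proposition \ref{wpGen} already supplies.
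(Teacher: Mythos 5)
Your proof is correct and takes essentially the same approach as the paper: the forward inclusion follows by induction from \eqref{eq:xywp} and Lemma \ref{lem:actionxy}, and the reverse inclusion is immediate from the basis in Proposition \ref{wpGen}.
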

\begin{proof}  By   \eqref{eq:xywp} and Lemma \ref{lem:actionxy}, the given vectors are contained in $\omega^\perp$.
By Proposition  \ref{wpGen}, the given vectors span $\omega^\perp$.
\end{proof}

\begin{conjecture}\rm The subspace $\omega^\perp$ is the subalgebra of the Norton algebra $EV$ generated by $E{\hat x}$, $E{\hat y}$.
\end{conjecture}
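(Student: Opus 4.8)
Write $\mathcal A$ for the subalgebra of the Norton algebra $EV$ generated by $E\hat x$ and $E\hat y$; the goal is to prove $\mathcal A=\omega^\perp$. One inclusion is essentially already available. Proposition \ref{eq:spanwperp} exhibits a spanning set of $\omega^\perp$ whose members are iterated star products of $E\hat x$ and $E\hat y$, and each such product lies in $\mathcal A$ by an easy induction on the number of factors (a subalgebra containing $E\hat x$ and $E\hat y$ is closed under $\star$). Hence $\omega^\perp\subseteq\mathcal A$.

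For the reverse inclusion $\mathcal A\subseteq\omega^\perp$, note that $E\hat x,E\hat y\in\omega^\perp$ by Lemma \ref{lem:wpbasis} and that $\mathcal A$ is the smallest $\star$-closed subspace of $EV$ containing them; so it is enough to prove $\omega^\perp\star\omega^\perp\subseteq\omega^\perp$. Fix $u,v\in\omega^\perp\subseteq S$. Since $\omega^\perp=\{w\in S\mid\langle w,\omega\rangle=0\}$, this requires (a) $u\star v\in S$ and (b) $\langle u\star v,\omega\rangle=0$. Take the orthogonal basis $\{h_j\}_{j=1}^5$ of $\omega^\perp$ (Lemma \ref{lem:wpbasis}) and recall $h_j=h^\vee_j+\mu_j E\hat x$ (Definition \ref{def:hcheck}); expanding $h_j\star h_m$ and applying Lemmas \ref{lem:xyacth} and \ref{lem:xsx} reduces (a) to $h^\vee_j\star h^\vee_m\in S$ for $2\le j,m\le5$, which is contained in Conjecture \ref{prop:SSS} (and, together with Propositions \ref{cor:xmy} and \ref{cor:SAS}, amounts to $S\star S\subseteq S$). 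For (b), the triple form is totally symmetric (Lemma \ref{lem:uvw}), so $\langle u\star v,\omega\rangle=\langle v,u\star\omega\rangle$; hence it suffices to prove $h_j\star\omega\in\mathbb R\omega$ for $1\le j\le5$. For $j=1$ this is immediate from $h_1=\theta_1 E\hat x$ (Lemma \ref{lem:h1}) and $E\hat x\star\omega=-q|X|^{-1}\omega$ (Proposition \ref{prop:omega}), which give $h_1\star\omega=-q\theta_1|X|^{-1}\omega$. The remaining content of (b) is the ten scalar identities
\[
\langle h_j\star h_m,\omega\rangle=\sum_{z\in X}(h_j)_z\,(h_m)_z\,\omega_z=0\qquad (2\le j\le m\le5);
\]
once these hold, each $h_j\star\omega$ ($2\le j\le5$) is orthogonal to all of $\omega^\perp$ (the pairing against $h_1$ being automatic by symmetry) and, granting $S\star S\subseteq S$, lies in $S$, hence in $\mathbb R\omega$.

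To establish these identities I would compute the Norton multiplication table of $S$ in the basis $\{E\widehat O_i\}_{i=1}^6$. By Lemma \ref{lem:uvw} its structure constants are recovered from the triple inner products $\langle E\widehat O_i\star E\widehat O_j,E\widehat O_\ell\rangle=\sum_{z\in X}(E\widehat O_i)_z(E\widehat O_j)_z(E\widehat O_\ell)_z$ together with invertibility of the Gram matrix $G$ of Definition \ref{def:Gmat}; and since $(E\widehat O_m)_z=|X|^{-1}\sum_t|\{w\in O_m:\partial(z,w)=t\}|\,\theta^*_t$, what is needed is the table of triple intersection numbers of $\Gamma$ relative to the $y$-partition, extracted from the same combinatorial model of \cite[Sections~6,~7]{williford} that produced the matrices $C$ in Proposition \ref{lem:Centry} and $D^{(\ell)}$ in Proposition \ref{def:Dmatrix}. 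With this table in hand, $S\star S\subseteq S$ holds, and, using the coefficients $\omega_i$ of Definition \ref{def:omega}, the ten identities above become a finite verification; the conjecture then follows formally. The principal obstacle is precisely the assembly of this multiplication table: the triple intersection counts for the six parts are numerous and their bookkeeping is delicate, and carrying them out would at the same time settle Conjecture \ref{prop:SSS} and the conjecture $S\star S\subseteq S$ recorded after Lemma \ref{lem:Norton}.
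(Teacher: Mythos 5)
This statement is an open \emph{Conjecture} in the paper; the authors do not prove it, so there is no paper argument to compare against. Your proposal is a reduction, not a proof, and you acknowledge this honestly at the end. The inclusion $\omega^\perp\subseteq\mathcal A$ is indeed a direct consequence of Proposition \ref{eq:spanwperp}, and you are right that the converse reduces to establishing the closure $\omega^\perp\star\omega^\perp\subseteq\omega^\perp$. But your route to that closure rests on two facts neither of which is established: part (a) requires $S\star S\subseteq S$ (equivalently Conjecture \ref{prop:SSS} together with Propositions \ref{cor:xmy} and \ref{cor:SAS}), and part (b) requires the ten triple inner-product identities $\langle h_j\star h_m,\omega\rangle=0$ for $2\le j\le m\le5$. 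The first is itself an open conjecture recorded in the paper (the unlabeled conjecture after Lemma \ref{lem:Norton} and Conjecture \ref{prop:SSS}); the second requires the full table of triple intersection numbers relative to the $y$-partition, which you correctly identify as the bottleneck but do not compute. So the genuine gap is precisely the unperformed combinatorial verification: as written, the proposal replaces one open conjecture with another open conjecture plus a finite but substantial computation. The logical scaffolding around this core is sound, but no step in it closes the gap, and the structure constants of the Norton multiplication on $S$ in the $\lbrace E\widehat O_i\rbrace$ basis would have to be computed before any of this can be called a proof.
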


\noindent In Proposition \ref{prop:symDecomp} and \eqref{eq:Somomp} we decomposed $S$ into an orthogonal direct sum of two subspaces.
Next, we consider how these decompositions are related.

\begin{lemma} \label{lem:3facts} The following sums are orthogonal and direct:
\begin{align*}
S &= {\rm Span}\lbrace \omega\rbrace + {\rm Span} \lbrace E{\hat x}- E{\hat y}\rbrace + \omega^\perp \cap {\rm Sym}(S), \\
{\rm Sym}(S) &= {\rm Span}\lbrace \omega\rbrace +  \omega^\perp \cap {\rm Sym}(S),\\
\omega^\perp &= {\rm Span} \lbrace E{\hat x} - E{\hat y}\rbrace + \omega^\perp \cap {\rm Sym}(S).
\end{align*}
\end{lemma}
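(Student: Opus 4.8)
The plan is to establish all three orthogonal direct sum decompositions by combining the two decompositions $S = \mathrm{Sym}(S) + \mathrm{ASym}(S)$ from Proposition \ref{prop:symDecomp} and $S = \mathrm{Span}\lbrace \omega\rbrace + \omega^\perp$ from \eqref{eq:Somomp}, together with the facts that $\omega \in \mathrm{Sym}(S)$ (Lemma \ref{lem:omegaSym2}) and $E\hat x - E\hat y \in \omega^\perp$ (which follows from \eqref{eq:xywp}, since $E\hat x, E\hat y \in \omega^\perp$ and $\omega^\perp$ is a subspace).

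First I would prove the middle equality. Since $\omega \in \mathrm{Sym}(S)$ and $\omega \neq 0$, the line $\mathrm{Span}\lbrace\omega\rbrace$ is a $1$-dimensional subspace of $\mathrm{Sym}(S)$. Set $\omega^\perp \cap \mathrm{Sym}(S)$; this is orthogonal to $\omega$ by definition of $\omega^\perp$, so the sum $\mathrm{Span}\lbrace\omega\rbrace + \omega^\perp \cap \mathrm{Sym}(S)$ is orthogonal, hence direct. To see it equals $\mathrm{Sym}(S)$: any $v \in \mathrm{Sym}(S)$ can be written, using \eqref{eq:Somomp}, as $v = c\omega + w$ with $w \in \omega^\perp$; then $w = v - c\omega \in \mathrm{Sym}(S)$ as well, so $w \in \omega^\perp \cap \mathrm{Sym}(S)$. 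Thus $\mathrm{dim}(\omega^\perp \cap \mathrm{Sym}(S)) = 4$ by Proposition \ref{prop:symDecomp}.

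Next the bottom equality. We have $E\hat x - E\hat y \in \omega^\perp$ by the remark above, and $E\hat x - E\hat y \in \mathrm{ASym}(S)$ spans that $1$-dimensional space; it is orthogonal to $\mathrm{Sym}(S)$ by Proposition \ref{prop:symDecomp}, in particular to $\omega^\perp \cap \mathrm{Sym}(S)$, so the sum $\mathrm{Span}\lbrace E\hat x - E\hat y\rbrace + \omega^\perp \cap \mathrm{Sym}(S)$ is orthogonal, hence direct, of dimension $1 + 4 = 5 = \mathrm{dim}\,\omega^\perp$. To get equality of subspaces, given $u \in \omega^\perp \subseteq S$, decompose $u = u_s + u_a$ with $u_s \in \mathrm{Sym}(S)$, $u_a \in \mathrm{ASym}(S)$ via Proposition \ref{prop:symDecomp}; since $u_a \in \mathrm{ASym}(S) \subseteq \omega^\perp$ we get $u_s = u - u_a \in \omega^\perp$, so $u_s \in \omega^\perp \cap \mathrm{Sym}(S)$, giving the desired containment; equality then follows from the dimension count. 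Finally, the top equality is the common refinement: $S = \mathrm{Span}\lbrace\omega\rbrace + \mathrm{Sym}(S)$-complement, and plugging the middle decomposition of $\mathrm{Sym}(S)$ into $S = \mathrm{Sym}(S) \oplus \mathrm{ASym}(S)$ yields $S = \mathrm{Span}\lbrace\omega\rbrace \oplus (\omega^\perp \cap \mathrm{Sym}(S)) \oplus \mathrm{Span}\lbrace E\hat x - E\hat y\rbrace$; orthogonality of all three summands follows since $\omega \perp \omega^\perp$, $\mathrm{ASym}(S) \perp \mathrm{Sym}(S)$, and $\omega \in \mathrm{Sym}(S)$ so $\omega \perp E\hat x - E\hat y$.

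The argument is essentially a bookkeeping exercise in finite-dimensional linear algebra, so I do not expect a genuine obstacle; the only point requiring a moment of care is verifying that $E\hat x - E\hat y$ genuinely lies in $\omega^\perp$ and is not already captured by $\omega^\perp \cap \mathrm{Sym}(S)$ — this is immediate because $E\hat x - E\hat y$ spans $\mathrm{ASym}(S)$, which meets $\mathrm{Sym}(S)$ only in $0$. Everything else is forced by orthogonality plus the dimension counts $\mathrm{dim}\,S = 6$, $\mathrm{dim}\,\mathrm{Sym}(S) = 5$, $\mathrm{dim}\,\mathrm{ASym}(S) = 1$, $\mathrm{dim}\,\omega^\perp = 5$, and $\mathrm{dim}(\mathrm{Span}\lbrace\omega\rbrace) = 1$.
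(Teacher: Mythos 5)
Your proof is correct and follows essentially the same route as the paper, which cites precisely the ingredients you unpack (Proposition~\ref{prop:symDecomp}, Lemma~\ref{lem:omegaSym2}, and the decomposition~\eqref{eq:Somomp}, with $E\hat x - E\hat y \in \omega^\perp$ coming from~\eqref{eq:xywp} or, equivalently, from orthogonality of ${\rm ASym}(S)$ to $\omega\in{\rm Sym}(S)$). The only flaw is cosmetic: the phrase ``$S = \mathrm{Span}\lbrace\omega\rbrace + \mathrm{Sym}(S)$-complement'' in your treatment of the top equality is garbled, but the ``plugging in'' argument that follows it is correct and self-contained, so nothing is lost.
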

\begin{proof} By Definition  \ref{def:symS}, Proposition \ref{prop:symDecomp}, Lemma \ref{lem:omegaSym2}, and \eqref{eq:Somomp}.
\end{proof}

\begin{lemma} \label{lem:xpys} The vectors $\lbrace h^\vee_j \rbrace_{j=2}^5$ form a basis for $ \omega^\perp \cap {\rm Sym}(S)$.
\end{lemma}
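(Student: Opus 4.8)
The plan is to identify $\omega^\perp \cap {\rm Sym}(S)$ explicitly and then produce a spanning set of the right size inside it. First I would recall from Lemma~\ref{lem:hSbasis} that $\lbrace h^\vee_j \rbrace_{j=2}^6$ is a basis for ${\rm Sym}(S)$, which has dimension $5$. By Lemma~\ref{lem:h1check}(ii) the vector $h^\vee_6$ equals $\omega$. So among the basis $\lbrace h^\vee_j \rbrace_{j=2}^6$ of ${\rm Sym}(S)$, exactly one basis vector, namely $h^\vee_6 = \omega$, spans the ${\rm Span}\lbrace \omega\rbrace$ summand in the decomposition ${\rm Sym}(S) = {\rm Span}\lbrace \omega \rbrace + \omega^\perp \cap {\rm Sym}(S)$ from Lemma~\ref{lem:3facts}. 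This strongly suggests that the remaining four vectors $\lbrace h^\vee_j \rbrace_{j=2}^5$ should be a basis for the $4$-dimensional space $\omega^\perp \cap {\rm Sym}(S)$.

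To make this rigorous, I would proceed in two steps. First, show $h^\vee_j \in \omega^\perp$ for $2 \le j \le 5$; equivalently $\langle \omega, h^\vee_j \rangle = 0$. Since $\omega = h_6 = h'_6$ and $h^\vee_j = h_j - \mu_j E{\hat x}$ (Definition~\ref{def:hcheck}), we get $\langle \omega, h^\vee_j \rangle = \langle h_6, h_j \rangle - \mu_j \langle h_6, E{\hat x}\rangle$. By Proposition~\ref{prop:horthog} the vectors $\lbrace h_j \rbrace$ are mutually orthogonal, so $\langle h_6, h_j\rangle = 0$ for $j \le 5$; and by Lemma~\ref{lem:orthogxh}, $\langle E{\hat x}, h_6 \rangle = 0$ since $6 \ge 2$. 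Hence $\langle \omega, h^\vee_j \rangle = 0$, so $h^\vee_j \in \omega^\perp$. Combined with $h^\vee_j \in {\rm Sym}(S)$ (already noted in the proof of Proposition~\ref{lem:hSbasis}), we get $h^\vee_j \in \omega^\perp \cap {\rm Sym}(S)$ for $2 \le j \le 5$.

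Second, I would observe that $\lbrace h^\vee_j \rbrace_{j=2}^5$ are linearly independent — this is immediate since they are four of the five vectors in the basis $\lbrace h^\vee_j \rbrace_{j=2}^6$ of ${\rm Sym}(S)$ from Proposition~\ref{lem:hSbasis}. Since Lemma~\ref{lem:3facts} gives ${\rm dim}\,\bigl(\omega^\perp \cap {\rm Sym}(S)\bigr) = {\rm dim}\,{\rm Sym}(S) - 1 = 4$, the four linearly independent vectors $\lbrace h^\vee_j \rbrace_{j=2}^5$ lying in this $4$-dimensional space must form a basis for it. That completes the argument.

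There is no real obstacle here; the proof is a short dimension count resting on orthogonality facts already established (Proposition~\ref{prop:horthog}, Lemma~\ref{lem:orthogxh}) together with the dimension bookkeeping of Lemma~\ref{lem:3facts} and Proposition~\ref{lem:hSbasis}. The only point requiring a moment's care is being clear that $h^\vee_6 = \omega$ (so it is exactly the dropped vector) and that the dimension of $\omega^\perp \cap {\rm Sym}(S)$ is $4$, both of which are supplied by the cited results.

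\begin{proof}
Recall from Lemma~\ref{lem:hSbasis} that $\lbrace h^\vee_j \rbrace_{j=2}^6$ is a basis for ${\rm Sym}(S)$, so in particular each $h^\vee_j$ $(2 \le j \le 5)$ lies in ${\rm Sym}(S)$, and these four vectors are linearly independent.

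Next we show $h^\vee_j \in \omega^\perp$ for $2 \le j \le 5$. By Definition~\ref{def:hcheck} we have $h^\vee_j = h_j - \mu_j E{\hat x}$, and $\omega = h_6$ by Lemma~\ref{lem:h1check}(ii) and Definition~\ref{def:omega}. Therefore
\begin{align*}
\langle \omega, h^\vee_j \rangle = \langle h_6, h_j \rangle - \mu_j \langle h_6, E{\hat x} \rangle.
\end{align*}
By Proposition~\ref{prop:horthog} the vectors $\lbrace h_j \rbrace_{j=1}^6$ are mutually orthogonal, so $\langle h_6, h_j \rangle = 0$ since $j \le 5$. By Lemma~\ref{lem:orthogxh}, $\langle E{\hat x}, h_6 \rangle = 0$ since $6 \ge 2$. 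Hence $\langle \omega, h^\vee_j \rangle = 0$, so $h^\vee_j \in \omega^\perp$ by Definition~\ref{def:omegap}.

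Combining the two paragraphs, $h^\vee_j \in \omega^\perp \cap {\rm Sym}(S)$ for $2 \le j \le 5$, and these four vectors are linearly independent. By Lemma~\ref{lem:3facts}, ${\rm Sym}(S) = {\rm Span}\lbrace \omega \rbrace + \omega^\perp \cap {\rm Sym}(S)$ is an orthogonal direct sum, so
\begin{align*}
{\rm dim}\,\bigl( \omega^\perp \cap {\rm Sym}(S) \bigr) = {\rm dim}\,{\rm Sym}(S) - 1 = 5 - 1 = 4,
\end{align*}
where we used that ${\rm dim}\,{\rm Sym}(S) = 5$ by Proposition~\ref{prop:symDecomp}. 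Thus $\lbrace h^\vee_j \rbrace_{j=2}^5$ is a set of four linearly independent vectors in a $4$-dimensional space, hence a basis for $\omega^\perp \cap {\rm Sym}(S)$.
\end{proof}
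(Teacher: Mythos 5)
Your proof is correct and follows essentially the same route as the paper, which simply cites Proposition~\ref{lem:hSbasis} and Lemma~\ref{lem:wpbasis}. The only cosmetic difference is that you re-derive $h^\vee_j \in \omega^\perp$ directly from Proposition~\ref{prop:horthog} and Lemma~\ref{lem:orthogxh}, whereas the paper gets it immediately from Lemma~\ref{lem:wpbasis} (which says $h_j \in \omega^\perp$ for $j \le 5$ and $E\hat{x} \in \omega^\perp$, so $h^\vee_j = h_j - \mu_j E\hat{x} \in \omega^\perp$); both reach the same dimension count via Lemma~\ref{lem:3facts}.
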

\begin{proof} By  Proposition \ref{lem:hSbasis}  and Lemma \ref{lem:wpbasis}.
\end{proof}

\begin{lemma} \label{lem:xpyAction} We have
\begin{align*}
E{\hat x} + E{\hat y} \in \omega^\perp \cap {\rm Sym}(S).
\end{align*}
Moreover,
\begin{align*}
(E{\hat x} + E{\hat y}) \star \bigl(  \omega^\perp \cap {\rm Sym}(S)   \bigr) \subseteq \omega^\perp \cap {\rm Sym}(S).
\end{align*}
\end{lemma}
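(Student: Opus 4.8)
The plan is to establish the two assertions of Lemma \ref{lem:xpyAction} separately, in each case combining a symmetry statement (membership in ${\rm Sym}(S)$) with an orthogonality statement (membership in $\omega^\perp$), using only facts already available.

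First I would prove $E{\hat x} + E{\hat y} \in \omega^\perp \cap {\rm Sym}(S)$. Membership in ${\rm Sym}(S)$ is exactly Lemma \ref{lem:xpyS}. Membership in $\omega^\perp$ follows because $E{\hat x} \in \omega^\perp$ and $E{\hat y} \in \omega^\perp$ by Lemma \ref{lem:wpbasis}, and $\omega^\perp$ is a subspace of $S$. This gives the first claim with essentially no work.

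For the invariance, I would fix $u \in \omega^\perp \cap {\rm Sym}(S)$ and check that $(E{\hat x} + E{\hat y}) \star u$ lies in both ${\rm Sym}(S)$ and $\omega^\perp$. Since $u \in {\rm Sym}(S)$, Proposition \ref{prop:xpySS} gives $(E{\hat x} + E{\hat y}) \star u \in {\rm Sym}(S)$; in particular this vector lies in $S$, so asking whether it is orthogonal to $\omega$ is the correct way to test membership in $\omega^\perp$. To verify that orthogonality, recall from Proposition \ref{prop:omega} (and bilinearity of $\star$) that $(E{\hat x} + E{\hat y}) \star \omega = -2 q \vert X \vert^{-1} \omega$. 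Then by Lemma \ref{lem:uvw} together with the commutativity of $\star$,
\[
\langle \omega,\, (E{\hat x} + E{\hat y}) \star u \rangle = \langle (E{\hat x} + E{\hat y}) \star \omega,\, u \rangle = -2 q \vert X \vert^{-1} \langle \omega, u \rangle = 0,
\]
where the last equality uses $u \in \omega^\perp$. Hence $(E{\hat x} + E{\hat y}) \star u \in \omega^\perp$, and the invariance follows.

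I do not anticipate a genuine obstacle here; the only point requiring a little care is the logical order, namely that one must first place $(E{\hat x}+E{\hat y})\star u$ inside $S$ via Proposition \ref{prop:xpySS} before it is meaningful to test orthogonality to $\omega$, and that the computation relies on $\omega$ being a common eigenvector for the operators $E{\hat x}\star(\cdot)$ and $E{\hat y}\star(\cdot)$ combined with the self-adjointness of the Norton product recorded in Lemma \ref{lem:uvw}.
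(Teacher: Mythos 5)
Your proof is correct and follows essentially the same route as the paper: the first assertion is obtained by combining Lemma \ref{lem:xpyS} with $E\hat x, E\hat y \in \omega^\perp$ from Lemma \ref{lem:wpbasis}, and the second uses Proposition \ref{prop:xpySS} for the ${\rm Sym}(S)$ half. The only difference is that for the $\omega^\perp$-invariance you re-derive the orthogonality inline (via Proposition \ref{prop:omega} and Lemma \ref{lem:uvw}), whereas the paper simply cites Lemma \ref{lem:actionxy}, which already records exactly that computation; invoking that lemma directly would shorten your argument without changing its substance.
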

\begin{proof} 
The first assertion follows from Lemma \ref{lem:xpyS} and  \eqref{eq:xywp}.
The second assertion follows from Proposition  \ref{prop:xpySS} and Lemma \ref{lem:actionxy}.
\end{proof}

\begin{proposition} \label{prop:BBBbasis}  The subspace  $\omega^\perp \cap {\rm Sym}(S)$     has a basis
\begin{align}
B, \qquad \quad B \star B, \qquad \quad B \star (B \star B), \qquad \quad B \star ( B \star (B \star B)),       \label{eq:4vec}
\end{align}
where $B = E{\hat x} + E{\hat y}$.
\end{proposition}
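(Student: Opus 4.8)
The plan is to realize the four vectors in \eqref{eq:4vec} as the orbit of $B=E{\hat x}+E{\hat y}$ under the operator $\mathcal B\colon u\mapsto B\star u$, and to show that this orbit is linearly independent; since $\omega^\perp\cap{\rm Sym}(S)$ has dimension $4$ by Lemma \ref{lem:xpys}, independence already gives a basis. First I would record that $B\in\omega^\perp\cap{\rm Sym}(S)$ and that $\mathcal B$ maps $\omega^\perp\cap{\rm Sym}(S)$ into itself; both are exactly Lemma \ref{lem:xpyAction}. Consequently each of $B,\ \mathcal BB=B\star B,\ \mathcal B^2B=B\star(B\star B),\ \mathcal B^3B=B\star(B\star(B\star B))$ lies in $\omega^\perp\cap{\rm Sym}(S)$, and it remains only to prove these four vectors are linearly independent.

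For this, the main computational input is Proposition \ref{lem:xyactSym}, which for $2\le j\le 5$ reads $\mathcal B h^\vee_j = 2\vert X\vert^{-1}\vartheta_j h^\vee_j + \vert X\vert^{-1}(\vartheta_j-\vartheta_1)\mu_j B$. Combined with $B=\sum_{j=2}^5\gamma_j h^\vee_j$ (Lemma \ref{lem:xhy}), this expresses $\mathcal B$ in the basis $\{h^\vee_j\}_{j=2}^5$ of $\omega^\perp\cap{\rm Sym}(S)$ as $2\vert X\vert^{-1}$ times a diagonal matrix with entries $\vartheta_2,\vartheta_3,\vartheta_4,\vartheta_5$ plus a rank-one term pointing along $B$. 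Iterating, one obtains by an easy induction on $\ell\ge 0$ that
\begin{align*}
\mathcal B^{\ell}B = \vert X\vert^{-\ell}\sum_{j=2}^5 p_{\ell}(\vartheta_j)\,\gamma_j\,h^\vee_j,
\end{align*}
where $p_0=1$ and $p_{\ell+1}(\lambda)=2\lambda\,p_{\ell}(\lambda)+c_{\ell}$ for suitable constants $c_{\ell}$ (the constant arises because applying $\mathcal B$ contributes a scalar multiple of $B={\rm Span}\{\sum_j\gamma_j h^\vee_j\}$). In particular $p_{\ell}$ has degree exactly $\ell$, so $p_0,p_1,p_2,p_3$ are a basis for the space of real polynomials of degree at most $3$; note that Lemma \ref{lem:mugam} is not needed here, as only the \emph{degree} of $p_\ell$ matters.

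Now suppose $\sum_{\ell=0}^3 a_{\ell}\mathcal B^{\ell}B=0$. By the displayed formula this says $\sum_{j=2}^5 r(\vartheta_j)\gamma_j h^\vee_j=0$, where $r(\lambda)=\sum_{\ell=0}^3 a_{\ell}\vert X\vert^{-\ell}p_{\ell}(\lambda)$ has degree at most $3$. Since $\{h^\vee_j\}_{j=2}^5$ is a basis and $\gamma_j\ne 0$ for $2\le j\le 5$ by Lemma \ref{lem:gamma}, we get $r(\vartheta_j)=0$ for $j=2,3,4,5$. The values $\vartheta_2=q^{N-D}-q-1$, $\vartheta_3=q^D-q-1$, $\vartheta_4=-1$, $\vartheta_5=-q$ are mutually distinct (using $q\ge 3$ and $N-D>D$), so a polynomial of degree at most $3$ with these four roots vanishes; hence $r=0$, and since $p_0,\dots,p_3$ are linearly independent we conclude $a_0=a_1=a_2=a_3=0$. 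Therefore the four vectors in \eqref{eq:4vec} are linearly independent, and thus form a basis for the $4$-dimensional space $\omega^\perp\cap{\rm Sym}(S)$. The only point requiring care is the claim that $B$ is a \emph{cyclic} vector for $\mathcal B$ (not merely that $\mathcal B$ has distinct eigenvalues); the diagonal-plus-rank-one shape of $\mathcal B$, together with the observation that the rank-one perturbation never lowers the polynomial degree, makes this transparent.
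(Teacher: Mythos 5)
Your proof is correct and uses essentially the same approach as the paper: both rest on Lemma \ref{lem:xpys} (the $h^\vee_j$ basis), Lemma \ref{lem:xhy} (with $\gamma_j\neq 0$), Proposition \ref{lem:xyactSym} (the diagonal-plus-rank-one action of $B\star$ on the $h^\vee_j$), and the distinctness of $\vartheta_2,\dots,\vartheta_5$. The paper packages the Vandermonde step by introducing an intermediate basis $B_\ell=\sum_{j=2}^5\vartheta_j^\ell\gamma_j h^\vee_j$ and then shows via the recursion $|X|\,B\star B_\ell = 2B_{\ell+1}+(\mathrm{scalar})\,B$ that the nested $\star$-products have the same span; your polynomial $p_\ell$ with $p_{\ell+1}(\lambda)=2\lambda p_\ell(\lambda)+c_\ell$ is the same upper-triangular change of basis phrased slightly differently.
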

\begin{proof} 
Define the vectors
\begin{align*}
B_\ell = \sum_{j=2}^5 \vartheta^\ell_j \gamma_j h^\vee_j \qquad \qquad (0 \leq \ell \leq 3).
\end{align*}
By Lemma \ref{lem:xpys} and the construction, the vectors $\lbrace B_\ell \rbrace_{\ell=0}^3$ form a basis for $\omega^\perp \cap {\rm Sym}(S)$.
By Lemma  \ref{lem:xhy} we have $B_0=B$.
By Proposition  \ref{lem:xyactSym} the following holds for $0 \leq \ell \leq 2$:
\begin{align}
\vert X \vert B \star B_\ell = 2 B_{\ell+1} + B \sum_{j=2}^5 (\vartheta_j - \vartheta_1) \vartheta^\ell_j \gamma_j \mu_j.
\label{eq:B0Bt}
\end{align}
By \eqref{eq:B0Bt}, the vectors  $\lbrace B_\ell \rbrace_{\ell=0}^3$  and the vectors \eqref{eq:4vec} have the same
span. Consequently, the vectors \eqref{eq:4vec} form a basis for $\omega^\perp \cap {\rm Sym}(S)$.
\end{proof}

\begin{proposition} \label{eq:spanBBB}
The subspace $\omega^\perp \cap {\rm Sym}(S)$ is spanned by the vectors 
\begin{align*}
B, \qquad \quad B \star B, \qquad \quad B \star (B \star B), \qquad \quad B \star ( B \star (B \star B)),  \qquad    \ldots
\end{align*}
where $B = E{\hat x} + E{\hat y}$.
\end{proposition}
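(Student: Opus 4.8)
The plan is to argue exactly as in the proof of Proposition~\ref{eq:spanwperp}, deducing the result from the containment statement in Lemma~\ref{lem:xpyAction} together with the basis produced in Proposition~\ref{prop:BBBbasis}. Throughout, write $B = E{\hat x} + E{\hat y}$.

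First I would check that each of the displayed vectors lies in $\omega^\perp \cap {\rm Sym}(S)$. By Lemma~\ref{lem:xpyAction} we have $B \in \omega^\perp \cap {\rm Sym}(S)$ and $B \star \bigl(\omega^\perp \cap {\rm Sym}(S)\bigr) \subseteq \omega^\perp \cap {\rm Sym}(S)$, so there is a well-defined $\mathbb{R}$-linear map $\mathcal{B}\colon \omega^\perp \cap {\rm Sym}(S) \to \omega^\perp \cap {\rm Sym}(S)$ sending $u \mapsto B \star u$. Each vector in the statement has the form $\mathcal{B}^{\ell} B$ for some $\ell \geq 0$; here it is important that the bracketing is the left-nested one, $B \star (B \star (\cdots \star (B \star B)))$, i.e. iterated left multiplication by $B$, so that no appeal to associativity (which fails in the Norton algebra) is required. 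By induction on $\ell$, every $\mathcal{B}^{\ell} B$ lies in $\omega^\perp \cap {\rm Sym}(S)$, which gives the containment of the displayed set in $\omega^\perp \cap {\rm Sym}(S)$.

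Second, for the spanning claim, Proposition~\ref{prop:BBBbasis} already shows that the four vectors $B$, $B\star B$, $B \star (B \star B)$, $B \star (B \star (B \star B))$ form a basis of $\omega^\perp \cap {\rm Sym}(S)$. Since these four vectors appear among those in the displayed (infinite) list, that list spans $\omega^\perp \cap {\rm Sym}(S)$; together with the previous paragraph this proves the proposition.

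I do not expect any genuine obstacle here: the statement is a formal consequence of Proposition~\ref{prop:BBBbasis} and Lemma~\ref{lem:xpyAction}, and it mirrors the earlier pattern used to pass from Proposition~\ref{wpGen} to Proposition~\ref{eq:spanwperp}. The only point needing a moment's care is the observation that the list in the statement consists precisely of the iterates of the operator $u \mapsto B \star u$, which is exactly the map whose invariance of $\omega^\perp \cap {\rm Sym}(S)$ is supplied by Lemma~\ref{lem:xpyAction}.
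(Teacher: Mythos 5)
Your proposal is correct and takes essentially the same route as the paper: containment of the listed vectors in $\omega^\perp \cap {\rm Sym}(S)$ follows from Lemma~\ref{lem:xpyAction}, and the spanning follows because Proposition~\ref{prop:BBBbasis} exhibits the first four vectors in the list as a basis. Your extra remark about the left-nested bracketing being the iterates of $u \mapsto B \star u$ (so that non-associativity causes no trouble) is a helpful clarification but not a departure from the paper's argument.
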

\begin{proof}  By Lemma \ref{lem:xpyAction},
 the given vectors are contained in $\omega^\perp \cap {\rm Sym}(S)$.
By Proposition  \ref{prop:BBBbasis}, the given vectors span $\omega^\perp \cap {\rm Sym}(S)$.
\end{proof}

\begin{conjecture} \label{conj:oneGen} \rm The subspace $\omega^\perp \cap {\rm Sym}(S)$ is the subalgebra of the Norton algebra $EV$ generated by
$E{\hat x} + E{\hat y}$.
\end{conjecture}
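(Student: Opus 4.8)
\noindent The plan is as follows. One inclusion is free: by Proposition~\ref{prop:BBBbasis} the vectors $B$, $B\star B$, $B\star(B\star B)$, $B\star(B\star(B\star B))$ form a basis of $\omega^\perp\cap{\rm Sym}(S)$, where $B=E{\hat x}+E{\hat y}$, and each of them lies in the subalgebra of the Norton algebra $(EV,\star)$ generated by $B$; hence $\omega^\perp\cap{\rm Sym}(S)$ is contained in that subalgebra. So the content of the statement is the reverse inclusion, and for this it suffices to prove that $W:=\omega^\perp\cap{\rm Sym}(S)$ is closed under $\star$. Indeed, if $W\star W\subseteq W$ then $W$ is a subalgebra containing $B$, hence contains the subalgebra generated by $B$, and with the previous inclusion we get equality.

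To prove $W\star W\subseteq W$ I would exploit the $x\leftrightarrow y$ symmetry. Let $\sigma\colon X\to X$ be the map $z\mapsto x+y-z$, where $X$ is regarded as the additive group of $D\times(N-D)$ matrices over ${\rm GF}(q)$; then $\sigma$ is an involutive automorphism of $\Gamma$ interchanging $x$ and $y$. Let $P\in{\rm Mat}_X(\mathbb R)$ be its permutation matrix. Then $P$ commutes with $A$, hence with $E$, and $P$ preserves the entrywise product, so $P(u\star v)=(Pu)\star(Pv)$ for $u,v\in EV$; thus $P$ restricts to an involutive automorphism of the Norton algebra. Since $\sigma$ carries the combinatorial conditions of Proposition~\ref{prop:parts} defining the $y$-partition of $\Gamma(x)$ to the corresponding conditions for the $x$-partition of $\Gamma(y)$, we get $\sigma(O_i)=O'_i$, so $PE\widehat O_i=E\widehat O'_i$; then Lemma~\ref{lem:EOC} gives $P\,EO^\vee_i=E\widehat O'_i-\lambda_i E{\hat y}=EO^\vee_i$ for $1\le i\le 6$. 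Hence $P$ fixes ${\rm Sym}(S)$ pointwise, $P(E{\hat x}-E{\hat y})=-(E{\hat x}-E{\hat y})$, and $P\omega=\omega$, so $P$ preserves $S$ and $\omega^\perp$; by Lemma~\ref{lem:3facts} the $+1$-eigenspace of $P$ acting on $\omega^\perp$ is exactly $W$. As the fixed subspace of an algebra automorphism is a subalgebra, we conclude: \emph{if $\omega^\perp$ is $\star$-closed, then so is $W$}, which by the first paragraph finishes the proof. Moreover, using Proposition~\ref{eq:spanwperp}, ``$\omega^\perp$ is $\star$-closed'' is equivalent to the conjecture recorded after that proposition, so the statement reduces cleanly to that conjecture.

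Proving that $\omega^\perp$ is $\star$-closed is where the difficulty lies. Using the orthogonal basis $\lbrace h_j\rbrace_{j=1}^5$ of $\omega^\perp$ (Lemma~\ref{lem:wpbasis}) and the fact that $\lbrace h_j\rbrace_{j=1}^6$ is a basis of $S$ (Lemma~\ref{lem:hbasis}), this amounts to showing $h_i\star h_j\in S$ and $\langle h_i\star h_j,\omega\rangle=0$ for $1\le i,j\le 5$ — the first condition being a case of the conjectured $S\star S\subseteq S$. Via Definition~\ref{def:hvec} it comes down to controlling the products $E\widehat O_m\star E\widehat O_n$, which are genuinely new data: Proposition~\ref{cor:Nadj} together with Lemma~\ref{lem:Osum} only gives the \emph{sum} $\sum_m E\widehat O_m\star E\widehat O_n=\theta_1\,E{\hat x}\star E\widehat O_n$, not the individual terms. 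The natural tool is the general $Q$-polynomial formula for $E\hat u\star E\hat v$ in terms of $\partial(u,v)$ (the case $u=x$, $v=y$ being Lemma~\ref{lem:xStary}), applied inside $E\widehat O_m\star E\widehat O_n=\sum_{u\in O_m}\sum_{v\in O_n}E\hat u\star E\hat v$; this then requires, for each pair of cells, the distribution of $\partial(u,v)$ over $O_m\times O_n$ together with the finer incidence data needed to re-expand the resulting vectors in terms of $E\widehat O_1,\dots,E\widehat O_6$.

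The main obstacle is precisely this combinatorial input. The descriptions of $S$, ${\rm Sym}(S)$, $\omega^\perp$ as \emph{spanned} by iterated Norton products do not upgrade to \emph{closure} under the product, because $\star$ is non-associative, so a new ingredient is unavoidable. Two routes seem plausible: (a) compute the pairwise cell statistics above — combinatorial information about the $y$-partition of $\Gamma(x)$ going beyond the tables of \cite{williford} — and then verify closure by a large but finite calculation; or (b) identify $(S,\star)$, or the subalgebra of $EV$ generated by $E{\hat x}$ and $E{\hat y}$, with a concrete algebra whose multiplication is transparent, and read the result off. I expect route (b), if available, to be the more illuminating, and route (a) to be the more likely to succeed with current techniques.
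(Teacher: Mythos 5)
The statement you are attacking is explicitly left open in the paper: it is Conjecture~\ref{conj:oneGen}, with no proof supplied by the authors. So there is no ``paper proof'' to compare your proposal against; the evaluation is of your argument on its own terms.

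Your reduction is correct as far as it goes, and it is a genuinely nice observation. Since $\Gamma$ is a Cayley graph on the additive group of $D\times(N-D)$ matrices with a connection set invariant under negation, the map $\sigma(z)=x+y-z$ is indeed an involutive graph automorphism interchanging $x$ and $y$. Its permutation matrix $P$ commutes with $A$ and hence with $E$ (which is a polynomial in $A$), and $P$ respects the entrywise product, so $P$ restricts to an involutive automorphism of the Norton algebra $(EV,\star)$. The defining conditions of Proposition~\ref{prop:parts} are distance-theoretic and symmetric under $x\leftrightarrow y$, so $\sigma(O_i)=O'_i$ and hence $P\,E\widehat O_i=E\widehat O'_i$; combined with Lemma~\ref{lem:EOC} this gives $P\,EO^\vee_i=EO^\vee_i$, so $P$ fixes ${\rm Sym}(S)$ pointwise, negates $E\hat x-E\hat y$, and fixes $\omega$. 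Using Lemma~\ref{lem:3facts}, the $+1$-eigenspace of $P$ acting on $\omega^\perp$ is exactly $W:=\omega^\perp\cap{\rm Sym}(S)$. It then follows correctly that \emph{if} $\omega^\perp$ is closed under $\star$, then $W$ is a subalgebra containing $B=E\hat x+E\hat y$, which together with the easy inclusion from Proposition~\ref{prop:BBBbasis} yields Conjecture~\ref{conj:oneGen}. (As a side remark, this same $P$-argument gives a clean alternative derivation of several symmetry statements the paper proves by computation, e.g.\ Lemma~\ref{lem:orthog} and the ${\rm Sym}/{\rm ASym}$ decomposition.)

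The genuine gap is that you have reduced Conjecture~\ref{conj:oneGen} to the equivalent assertion that $\omega^\perp$ is $\star$-closed, which is exactly the (unlabeled) conjecture appearing right after Proposition~\ref{eq:spanwperp}, and you do not prove that. You diagnose the obstruction correctly: Propositions~\ref{eq:spanwperp} and~\ref{prop:BBBbasis} only say that these subspaces are \emph{spanned} by certain iterated $\star$-products, and because $\star$ is non-associative this does not imply closure under $\star$; establishing closure would require controlling the products $E\widehat O_m\star E\widehat O_n$ (equivalently, knowing that $S\star S\subseteq S$, another assertion the paper leaves as a conjecture, see Conjecture~\ref{prop:SSS}). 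Your routes (a) and (b) are sensible directions, but neither is executed, so the argument as written does not close the conjecture; it trades one open conjecture for another. If you want to push route (a), the missing combinatorial data is, for each pair $1\le m,n\le 6$, the joint distribution of distances between pairs of vertices in $O_m\times O_n$ together with the decomposition of $E\hat u\star E\hat v$ into the basis $\{E\widehat O_i\}$ for $u\in O_m$, $v\in O_n$; this goes beyond the tables in \cite{williford} and would be the new ingredient.
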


\noindent Recall the strengthened balanced set condition in Theorem  \ref{thm:bbalanced}. In our final result, 
we express this condition using a Norton algebra point of view.

\begin{theorem} \label{thm:main2} For an integer $n\geq 1$ and $1 \leq i \leq n$ let $z_i, {\sf z}_i$ denote a permutation of
$x, y$. 
Then
\begin{align*}
& E{\hat z}_1\star ( E{\hat z_2} \star (E{\hat z_3} \star \cdots \star ( E{\hat z_{n-1}} \star E{\hat z_n} ) \cdots )) \\
&-
E{\hat {\sf z}}_1\star ( E{\hat {\sf z}_2} \star (E{\hat {\sf z}_3} \star \cdots \star ( E{\hat {\sf z}_{n-1}} \star E{\hat {\sf z}_n} ) \cdots )) \\
& \in {\rm Span} \lbrace E{\hat x} - E{\hat y}\rbrace.
\end{align*}
\end{theorem}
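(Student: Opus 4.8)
The plan is to realize the formal substitution $x\leftrightarrow y$ by an honest linear involution of the subspace $S$, and then run an induction on the length of the product. By Proposition~\ref{prop:symDecomp} one may define a linear map $\sigma:S\to S$ that is the identity on ${\rm Sym}(S)$ and minus the identity on ${\rm ASym}(S)$; then $\sigma^2=I$. Writing $B=E{\hat x}+E{\hat y}$ and $a=E{\hat x}-E{\hat y}$, so that $E{\hat x}=\tfrac12(B+a)$ and $E{\hat y}=\tfrac12(B-a)$, and using $B\in{\rm Sym}(S)$ (Lemma~\ref{lem:xpyS}) together with $a\in{\rm ASym}(S)$ (Definition~\ref{def:symS}), one gets $\sigma(E{\hat x})=E{\hat y}$ and $\sigma(E{\hat y})=E{\hat x}$. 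Also, for every $u\in S$ the vector $u-\sigma(u)$ lies in ${\rm ASym}(S)={\rm Span}\{E{\hat x}-E{\hat y}\}$, so the whole theorem will reduce to showing that $\sigma$ sends the first left-nested product to the second.

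The heart of the matter is the compatibility of $\sigma$ with Norton multiplication by $E{\hat x}$ and by $E{\hat y}$: for every $v\in S$,
\[
\sigma(E{\hat x}\star v)=E{\hat y}\star\sigma(v),\qquad\qquad\sigma(E{\hat y}\star v)=E{\hat x}\star\sigma(v),
\]
where the left-hand sides make sense because $E{\hat x}\star v,\,E{\hat y}\star v\in S$ by Lemma~\ref{lem:Norton}. To prove the first identity I would decompose $v=v_s+v_a$ with $v_s\in{\rm Sym}(S)$, $v_a\in{\rm ASym}(S)$, expand $E{\hat x}\star v=\tfrac12(B+a)\star(v_s+v_a)$, and read off the ${\rm Sym}/{\rm ASym}$ components using Proposition~\ref{prop:xpySS} (so $B\star v_s\in{\rm Sym}(S)$), Proposition~\ref{cor:xmy} (so $a\star v_a\in{\rm Sym}(S)$), and Proposition~\ref{cor:SAS} (so $B\star v_a,\,a\star v_s\in{\rm ASym}(S)$); then apply $\sigma$ and separately expand $E{\hat y}\star\sigma(v)=\tfrac12(B-a)\star(v_s-v_a)$ and check that the two sides coincide term by term. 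The second identity is the first with the roles of $x$ and $y$ exchanged, which is legitimate since ${\rm Sym}(S)$, ${\rm ASym}(S)$, and hence $\sigma$, are unchanged under this exchange (recall $S(x,y)=S(y,x)$, and ${\rm Sym}(S)$ is equally spanned by the vectors $E\widehat O'_i-\lambda_iE{\hat y}$; compare Lemma~\ref{lem:omegaSym2}).

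With these two identities available, I would prove by induction on $n$ the statement: for every sequence $z_1,\dots,z_n$ with $z_i\in\{x,y\}$, the left-nested product $P(z_1,\dots,z_n)=E{\hat z}_1\star(\cdots\star(E{\hat z}_{n-1}\star E{\hat z}_n)\cdots)$ lies in $S$ and satisfies $\sigma\bigl(P(z_1,\dots,z_n)\bigr)=P({\sf z}_1,\dots,{\sf z}_n)$, where ${\sf z}_i$ denotes the member of $\{x,y\}$ distinct from $z_i$. The base case $n=1$ is exactly $\sigma(E{\hat z}_1)=E{\hat {\sf z}}_1$ from the first paragraph. For the inductive step, write $P(z_1,\dots,z_n)=E{\hat z}_1\star P(z_2,\dots,z_n)$; by induction $P(z_2,\dots,z_n)\in S$ (alternatively use Corollary~\ref{cor:xy} and Lemma~\ref{lem:Norton}), so the compatibility identity applies and, combined with $\sigma\bigl(P(z_2,\dots,z_n)\bigr)=P({\sf z}_2,\dots,{\sf z}_n)$, yields $\sigma\bigl(P(z_1,\dots,z_n)\bigr)=E{\hat {\sf z}}_1\star P({\sf z}_2,\dots,{\sf z}_n)=P({\sf z}_1,\dots,{\sf z}_n)$. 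Finally $P(z_1,\dots,z_n)-P({\sf z}_1,\dots,{\sf z}_n)=P(z_1,\dots,z_n)-\sigma\bigl(P(z_1,\dots,z_n)\bigr)\in{\rm ASym}(S)={\rm Span}\{E{\hat x}-E{\hat y}\}$, which is the assertion of the theorem.

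The only genuinely new computation is the term-by-term verification of the compatibility identity, and it is short once Propositions~\ref{cor:xmy}, \ref{cor:SAS}, \ref{prop:xpySS} are in hand; so the real work has already been done in establishing those multiplication rules. The remaining hazards are purely organizational: keeping the $x\leftrightarrow y$ symmetry of $\sigma$ honest, and making sure every product that appears stays inside $S$ (which is exactly what Lemma~\ref{lem:Norton} guarantees).
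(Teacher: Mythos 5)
Your proof is correct.

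The underlying idea is the same as the paper's---that swapping $x\leftrightarrow y$ acts on $S$ as the identity on ${\rm Sym}(S)$ and as $-1$ on ${\rm ASym}(S)$, so a left-nested $\star$-product and its $x\leftrightarrow y$-swapped counterpart can only differ by an element of ${\rm ASym}(S)={\rm Span}\{E\hat x-E\hat y\}$---but you realize this in a genuinely different and cleaner way. The paper first shows via Proposition~\ref{eq:spanwperp} that both products lie in $\omega^\perp$, then invokes the three-term orthogonal decomposition of Lemma~\ref{lem:3facts} and a somewhat informal ``exchanging the roles of $x,y$'' step. You instead bypass $\omega$ and $\omega^\perp$ entirely: you make the swap honest by defining the involution $\sigma$ on $S$ (identity on ${\rm Sym}(S)$, minus identity on ${\rm ASym}(S)$), verify $\sigma(E\hat x)=E\hat y$, establish the commutation relations $\sigma(E\hat x\star v)=E\hat y\star\sigma(v)$ and $\sigma(E\hat y\star v)=E\hat x\star\sigma(v)$ by a direct term-by-term check using Propositions~\ref{cor:xmy}, \ref{cor:SAS}, \ref{prop:xpySS}, and close with a straightforward induction on $n$. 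Your route is more elementary in its ingredients (it needs only Sections 10--11, not Section 12) and supplies the explicit symmetry argument that the paper leaves implicit; what it does not buy is the sharper localization of the products inside $\omega^\perp$, which the paper gets as a by-product from Proposition~\ref{eq:spanwperp}.

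One small bookkeeping point worth making explicit when you write this up: in the term-by-term check you need $B\star v_s$, $B\star v_a$, $a\star v_s$, $a\star v_a$ to lie in $S$ before you can speak of their ${\rm Sym}/{\rm ASym}$ components; this is immediate from Lemma~\ref{lem:Norton} since $B$ and $a$ are linear combinations of $E\hat x$, $E\hat y$, but it deserves a sentence.
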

\begin{proof}
Define
\begin{align*}
u &= E{\hat z}_1\star ( E{\hat z_2} \star (E{\hat z_3} \star \cdots \star ( E{\hat z_{n-1}} \star E{\hat z_n} ) \cdots )), \\
{\sf u} &=E{\hat {\sf z}}_1\star ( E{\hat {\sf z}_2} \star (E{\hat {\sf z}_3} \star \cdots \star ( E{\hat {\sf z}_{n-1}} \star E{\hat {\sf z}_n} ) \cdots )).
\end{align*}
The vector ${\sf u}$ is obtained from $u$ by exchanging the roles of $x,y$.
 We have $u, {\sf u} \in \omega^\perp$ by Proposition \ref{eq:spanwperp}.       
By the last equation in Lemma \ref{lem:3facts},
there exists $\alpha \in \mathbb R$ such that
\begin{align*}
u - \alpha (E{\hat x} - E{\hat y}) \in  \omega^\perp \cap {\rm Sym}(S).
\end{align*}
Exchanging the roles of $x,y$ we obtain
\begin{align*}
u - \alpha (E{\hat x} - E{\hat y})={\sf u} - \alpha (E{\hat y} - E{\hat x}).
 \end{align*}
 Therefore,
 \begin{align*}
 u - {\sf u}  = 2 \alpha (E{\hat x} - E{\hat y}) \in {\rm Span}\lbrace E{\hat x} - E{\hat y}\rbrace.
 \end{align*}
 \end{proof}

\begin{definition}\rm The condition in Theorem \ref{thm:main2} will be called the {\it bbalanced set condition}.
\end{definition}

\begin{remark}\rm It seems that the bbalanced set condition is unrelated to the Norton-balanced condition described in   \cite{nortonBalanced}.
\end{remark}

 \section{Directions for future research}
 
 \noindent In this section, we give some suggestions for future research.
 \medskip

\begin{problem}\label{prob:one} \rm
Let $\Gamma=(X,\mathcal R)$ denote a distance-regular graph with diameter $D\geq 3$. Let $E$ denote
a primitive idempotent for $\Gamma$ that is $Q$-polynomial in the sense of \cite[p.~251]{bbit}.
Investigate the case in which the bbalanced set condition shown in
Theorem \ref{thm:main2}
holds for every pair of distinct vertices $x,y \in X$.
If possible, classify the pairs $\Gamma, E$ that have this feature. To get started,
find examples of pairs $\Gamma, E$ that have this feature. Many candidates can be
found in \cite[Section~6.4]{bbit}.
\end{problem}

  \begin{problem}\rm Compute the nucleus \cite[Definition~6.8]{TerNucleus} of the bilinear forms graph. See  \cite{nucleusBOHOU, nucleus, seongNuc} for related work.
 \end{problem}

 \begin{problem}\rm For the bilinear forms graph, compute the fundamental module for the $S_3$-symmetric tridiagonal algebra \cite[Section~9]{vvv}. See \cite{sl4,vvv2, ter2hom} for related work.
 \end{problem}

 

\section{Acknowledgement} 
\noindent 
This paper was written during J. Williford's sabbatical visit to U. Wisconsin-Madison
(8/17/2025--12/14/2025). During this visit,
J. Williford was supported in part by the Simons Foundation Collaboration Grant 711898.


\bigskip


\noindent Paul Terwilliger \hfil\break
\noindent Department of Mathematics \hfil\break
\noindent University of Wisconsin \hfil\break
\noindent 480 Lincoln Drive \hfil\break
\noindent Madison, WI 53706-1388 USA \hfil\break
\noindent email: {\tt terwilli@math.wisc.edu }\hfil\break

\noindent Jason Williford \hfil\break
\noindent  Department of Mathematics and Statistics \hfil\break
\noindent University of Wyoming \hfil\break
\noindent  1000 E. University Ave. \hfil\break
\noindent  Laramie, WY 82071  USA \hfil\break
\noindent Email: {\tt jwillif1@uwyo.edu}\hfil\break

\section{Statements and Declarations}

\noindent {\bf Funding}: The author P. Terwilliger declares that no funds, grants, or other support were received during the preparation of this manuscript. 
The author J. Williford was supported in part by Simons Foundation Collaboration Grant 711898  during the preparation of this manuscript.
\medskip

\noindent  {\bf Competing interests}:  The authors  have no relevant financial or non-financial interests to disclose.
\medskip

\noindent {\bf Data availability}: All data generated or analyzed during this study are included in this published article.

\end{document}